\setlist[enumerate,1]{label={(\arabic*)}}
\setlist[enumerate,2]{label={(\alph*)}}
\setlist[enumerate,3]{label={(\roman*)}}
\tikzset{myboxgroup/.style={draw, densely dotted}} 
\newtheorem{lemma}{Lemma}[section]
\newtheorem{proposition}[lemma]{Proposition}
\newtheorem{theorem}[lemma]{Theorem}
\newtheorem{corollary}[lemma]{Corollary}
\theoremstyle{definition}
\newtheorem{example}[lemma]{Example}
\newtheorem{definition}[lemma]{Definition}
\theoremstyle{remark}
\newtheorem{remark}[lemma]{Remark}
\let\xx@thm\@thm
\crefname{section}{section}{sections}
\Crefname{section}{Section}{Sections}
\crefname{definition}{Definition}{Definitions}
\crefname{definitionnodiamond}{Definition}{Definitions}
\crefname{example}{Example}{Examples}
\crefname{examplenodiamond}{Example}{Examples}
\crefname{remark}{Remark}{Remarks}
\crefname{remarks}{remark}{remarks}
\crefname{caution}{Caution}{Cautions}
\crefname{remarknodiamond}{Remark}{Remarks}
\crefname{convention}{Convention}{Conventions}
\crefname{notation}{Notation}{Notations}
\crefname{notationnodiamond}{Notation}{Notations}
\crefname{lemma}{Lemma}{Lemmas}
\crefname{proposition}{Proposition}{Propositions}
\crefname{corollary}{Corollary}{Corollaries}
\crefname{theorem}{Theorem}{Theorems}
\crefname{assumption}{Assumption}{Assumptions}
\crefname{enumi}{}{}
\Crefname{enumi}{}{}
\crefname{enumii}{}{}
\Crefname{enumii}{}{}
\crefname{enumiii}{}{}
\Crefname{enumiii}{}{}
\renewcommand{\p@enumii}{}
\renewcommand{\p@enumiii}{}
\crefname{equation}{}{}
\crefname{align}{}{}
\crefname{proofstep}{Step}{Steps}
\crefname{table}{Table}{Tables}
\newcommand{\reqed}{\hfill \ensuremath{\lozenge}}
\numberwithin{equation}{section}
\renewcommand{\theequation}{\thesection-\arabic{equation}}
\newcommand\numberthis{\addtocounter{equation}{1}\tag{\theequation}}
\def\AA{{\mathbb A}}
\def\CC{{\mathbb C}} 
\def\EE{{\mathbb E}} 
\def\GG{{\mathbb G}}
\def\KK{{\mathbb K}}
\def\PP{{\mathbb P}}
\def\VV{{\mathbb V}}
\def\ZZ{{\mathbb Z}}
\newcommand{\bbC}{\mathbb{C}}
\def\0ol{{\bar 0}}
\def\1ol{{\bar 1}}
\def\2ol{{\bar 2}}
\def\ol2{{\bar 2}}
\def\3ol{{\bar 3}}
\def\4ol{{\bar 4}}
\def\5ol{{\bar 5}}
\def\6ol{{\bar 6}}
\def\7ol{{\bar 7}}
\def\8ol{{\bar 8}}
\def\9ol{{\bar 9}}
\def\bold0{{\bf 0}}
\def\bold1{{\bf 1}}
\def\bold2{{\bf 2}}
\def\bold3{{\bf 3}}
\def\bold4{{\bf 4}}
\def\bold5{{\bf 5}}
\def\bold6{{\bf 6}}
\def\bold7{{\bf 7}}
\def\bold8{{\bf 8}}
\def\bold9{{\bf 9}}
\def\P2Skly{\PP^2_{Skly}}
\def\a{\alpha}
\def\b{\beta}
\def\d{\delta}
\def\g{\gamma}
\def\s{\sigma}
\def\D{\Delta}
\def\fd{{\mathfrak d}}
\def\fm{{\mathfrak m}}
\def\cal{\mathcal}
\def\cA{{\cal A}}
\def\cB{{\cal B}}
\def\cC{{\cal C}}
\def\cE{{\cal E}}
\def\cF{{\cal F}}
\def\cG{{\cal G}}
\def\cI{{\cal I}}
\def\cL{{\cal L}}
\def\cN{{\cal N}}
\def\cO{{\cal O}}
\def\cT{{\cal T}}
\def\cZ{{\cal Z}}
\def\Spec{\operatorname{Spec}}
\def\Proj{\operatorname{Proj}}
\def\End{\operatorname {End}}
\def\Ext{\operatorname {Ext}}
\def\Hom{\operatorname {Hom}}
\def\Aut{\operatorname{Aut}}
\def\id{\operatorname{id}}
\def\codim{\operatorname{codim}}
\def\coh{{\sf coh}}
\def\coker{{\rm coker}}
\def\Div{{\rm Div}}
\def\im{\operatorname{im}}
\def\lcm{{\rm lcm}}
\def\ker{\operatorname{ker}}
\DeclareMathOperator{\rank}{rank}
\DeclareMathOperator{\Sec}{Sec}
\DeclareMathOperator{\Sing}{Sing}
\def\dirlim{\mathop{\vtop{\baselineskip -100pt\lineskip -1pt\lineskiplimit 0pt
      \setbox0\hbox{lim}\copy0\hbox to \wd0{\rightarrowfill}}}\limits}
\def\invlim{\mathop{\vtop{\baselineskip -100pt\lineskip -1pt\lineskiplimit 0pt
      \setbox0\hbox{lim}\copy0\hbox to \wd0{\leftarrowfill}}}\limits}
\def\I11{{1 \kern -0.8pt \! \mbox{l}}}
\def\mumu{{\mu\kern-4.2pt\mu}}
\def\bfmu{{\mu\kern-4.2pt\mu}}
\def\2slash{\backslash \! \backslash}
\DeclareMathOperator{\Bun}{Bun}
\let\originalleft\left
  \let\originalright\right
\renewcommand{\left}{\mathopen{}\mathclose\bgroup\originalleft}
  \renewcommand{\right}{\aftergroup\egroup\originalright}
\def\l@subsection{\@tocline{2}{0pt}{2.75pc}{5pc}{}}
\begin{document}

\title[The symplectic leaves for the elliptic Poisson bracket]{The symplectic leaves for the elliptic Poisson bracket on projective space defined by Feigin-Odesskii and Polishchuk}

\author{Alex Chirvasitu, Ryo Kanda, and S. Paul Smith}

\address[Alex Chirvasitu]{Department of Mathematics, University at
  Buffalo, Buffalo, NY 14260-2900, USA.}  \email{achirvas@buffalo.edu}

\address[Ryo Kanda]{Department of Mathematics, Graduate School of Science, Osaka Metropolitan University, 3-3-138, Sugimoto, Sumiyoshi, Osaka, 558-8585, Japan}
\email{ryo.kanda.math@gmail.com}

\address[S. Paul Smith]{Department of Mathematics, Box 354350,
  University of Washington, Seattle, WA 98195, USA.}
\email{smith@math.washington.edu}

\subjclass[2020]{53D17 (Primary), 17B63, 14H52, 16S38 (Secondary)}

\keywords{Elliptic Poisson bracket; symplectic leaves; elliptic curve; elliptic algebras}

\begin{abstract}
This paper determines the symplectic leaves for a remarkable Poisson structure on $\mathbb{C}\mathbb{P}^{n-1}$ discovered by
Feigin and Odesskii, and, independently, by Polishchuk. The Poisson bracket is determined by a holomorphic line bundle of degree 
$n \ge 3$ on a compact Riemann surface of genus one or, equivalently, by an elliptic normal curve $E\subseteq\mathbb{C}\mathbb{P}^{n-1}$. 
The symplectic leaves are described in terms of higher secant varieties to $E$.
\end{abstract}

\maketitle

\tableofcontents{}

\section{Introduction}

We always work over the field of complex numbers, $\CC$.

Always, $E=(E,0,+)$ denotes an elliptic curve over $\CC$.

An elliptic normal curve $E \subseteq \PP^{n-1}$ is a degree-$n$ elliptic curve that is not contained in any hyperplane.

We always assume $n \ge 3$.

\subsection{``Elliptic'' Poisson brackets on projective spaces}

In 1998, Feigin and Odesskii \cite{FO98} and, independently, Polishchuk \cite{pl98} discovered a remarkable family of Poisson structures on projective spaces: in short, an elliptic normal curve $E \subseteq \PP^{n-1}$ determines a Poisson bracket, $\Pi_E$, on $\PP^{n-1}$.

Given an elliptic normal curve $E \subseteq \PP^{n-1}$, 
Feigin-Odesskii defined $\Pi_E$ via an explicit, though mysterious,  formula involving theta functions. 
In sharp contrast, Polishchuk defined $\Pi_E$ in abstract terms without  explicit formulas.
Twenty years later Hua and Polishchuk showed these two Poisson structures are the same \cite[Thm.~5.2]{HP1}. 

Given an elliptic normal curve $E \subseteq \PP^{n-1}$, $n \ge 3$, we write $\Pi_E$ for the associated Poisson bracket on $\PP^{n-1}$. It is often called the
Feigin-Odesskii, or Feigin-Odesskii-Sklyanin, bracket, and is often denoted by $q_{n,1}(E)$. Polishchuk's name should also be attached to $\Pi_E$.
We define the line bundle $\cL:=\cO_{\PP^{n-1}}(1) \vert_E$. As a consequence, 
$\PP^{n-1}$ can be identified with $\PP H^0(E,\cL)^*$ and hence, via Serre duality, with 
the set of isomorphism classes of non-split extensions\footnote{See \Cref{sect.appx.extns} for the definition of isomorphic extensions.
If we fix $\cA$ and $\cC$ such that $\End(\cA) \cong \End(\cC) \cong \CC$, then $\PP\Ext^1(\cC,\cA)$ is in natural bijection with the set of isomorphism
classes of extensions that are isomorphic to a non-split extension of the form $0 \to \cA \to \cB \to \cC \to 0$ (\cref{prop.isom.extns}). These definitions make sense in a larger context: 
if $X$ is a projective scheme over an algebraically closed field $\Bbbk$, one can fix $\cA,\cC \in \coh(X)$ such that $\End(\cA) \cong \End(\cC) \cong \Bbbk$
and address the question of determining the homological leaves in $\PP\Ext^1(\cC,\cA)$. 
Another interesting case would be to replace $\coh(X)$ by the category of finite-dimensional left modules over a fixed finite dimensional 
$\Bbbk$-algebra.
} 
of $\cL$ by $\cO_E$; i.e., $\PP^{n-1}$
can be identified with 
\begin{equation*}
\PP_\cL \; := \; \PP\Ext^1(\cL,\cO_E),
\end{equation*}
 and $\Pi_E$ can be defined in terms of the geometry associated to such extensions.

We always identify $E$ with its image under the composition $E \to \PP H^0(E,\cL)^* \to  \PP\Ext^1(\cL,\cO_E)=\PP_\cL$.

Given a Poisson manifold $(X,\Pi)$, its \textsf{symplectic leaves} are the members of the unique partition of $X$ into connected immersed submanifolds $Y \subseteq X$ such that for each $x \in Y$,  $T_x Y=$ the image of $\Pi_x: T_x^*X \longrightarrow T_xX$ 
\cite[Prop.~1.8, p.~6; (2.10), p.~29; Thm.~4.1, p.~63]{clm} (equivalently: the maximal connected immersed submanifolds $Y\subseteq X$ with this property \cite[Prop.~4.11, p.~68]{clm}). 
Each such $Y$ is a symplectic manifold with respect to the restriction of $\Pi$.

Since $\Pi_E$ was first discovered several groups of mathematicians have wanted a description of the symplectic leaves in terms of the 
geometry related to $E$ as a subvariety of $\PP_\cL$.
This paper solves the problem for all $n\ge 3$ (the solution for $n=3,4$ is folklore).

The introduction to \cite{Pym17} surveys  Poisson structures and symplectic leaves on projective varieties. 

\subsection{The homological leaves $L(\cE)$}

An extension of $\cL$ by $\cO_E$ is a rank-two locally free $\cO_E$-module whose determinant is $\cL$. 
Given an extension $\xi \in \PP_\cL$, represented by the non-split exact sequence $0 \to \cO_E \to \cE \to \cL \to 0$, 
we call $\cE$ the {\sf middle term} of $\xi$ and denote it by $m(\cE)$. 
Thus we have a set map
\begin{equation*}
m:\PP_\cL \to \Bun(2,\cL) \;:=\;   
\{
\text{rank-two locally free $\cO_E$-modules $\cE$ such that $\det\cE\cong\cL$}
\},
\end{equation*} 
For each $\cE \in \Bun(2,\cL)$ we define
\begin{equation}
\label{L(cE).defn}
L(\cE) \; :=\; \{\xi \in\PP_\cL \; | \;  m(\xi)\cong \cE  \} \; = \; m^{-1}(\cE).
\end{equation} 
We call $L(\cE)$ a {\sf homological leaf}. Clearly, $\PP_{\cL}$ is the disjoint union of the $L(\cE)$'s.

Even if one ignores the ``symplectic'' origin of the problem, the classification of homological leaves is a natural problem in a wide range of settings.
A solution to such a problem consists of
\begin{enumerate}
\item 
 a ``meaningful list'' of those $\cE$'s for which $L(\cE) \ne \varnothing$; 
 \item
 a geometric description of each $L(\cE)$;
 \item
 a determination of whether $L(\cE)$ is a quasi-affine, quasi-projective, affine, or projective variety;
\item
answers to questions like: what is the dimension of $L(\cE)$? what is the singular locus of $L(\cE)$? when is one leaf contained in the closure of another? And so on.
\end{enumerate} 
We answer these and other questions about the $L(\cE)$'s.

In  their 1998 paper, Feigin and Odesskii claimed without proof that the symplectic leaves for $\Pi_E$ are precisely the homological leaves
 \cite[Thm.~1, p.~66]{FO98}. 
Here we prove that the $L(\cE)$'s are indeed the symplectic leaves (\cref{th:leleaves}).
 Before proving this one must show  that every $L(\cE)$ is smooth of even dimension (\cref{th:smth}).

Before stating our main results in \cref{sect.main.results}, we need the notation in \cref{ssect.secant.vars}.

\subsection{Higher secant varieties and partial secant varieties to $E$}
\label{ssect.secant.vars}

Let 
\begin{equation*}
  E^{[d]} \; :=\; \text{the $d^{\rm \, th}$ symmetric power of $E$}.
\end{equation*}
We write $(\!(x_1,\ldots,x_d)\!)$  for the image in $E^{[d]}$ of $(x_1,\ldots,x_d) \in E^d$.
An effective divisor of  degree $d$ can be thought of in three ways: 
as a divisor, as a closed subscheme of $E$ having length $d$, and as a point in $E^{[d]}$.
The morphism 
\begin{equation*}
  \sigma:E^{[d]} \to E,  \qquad \s(\!(x_1,\ldots,x_d)\!):= x_1+\cdots+x_d,
\end{equation*}
presents $E^{[d]}$ as a $\PP^{d-1}$-bundle over $E$.\footnote{The symbol $\s$ reminds us of the symbol $\Sigma$ and the word ``sum''.}
We also define $\s(\cN):=\s(D)$ if $\cN \cong \cO_E(D)$. 

All intersections in this paper are  {\it scheme-theoretic} intersections.
 
Given an effective divisor $D$ on $E$ we write $\overline{D}$ for its linear span; i.e.,
\begin{equation*}
\overline{D}  \; =  \;\text{the smallest linear subspace $L \subseteq \PP_\cL$ such that $L \cap E$ contains $D$}.
\end{equation*}
   We call $\overline{D}$ a {\sf secant plane} or a {\sf $d$-secant} if its dimension is $d$.\footnote{The following facts are well-known (see \cref{prop.sec.low.deg,le:fisher}).
  If $\deg D \ge n+1$, then $\overline{D}=\PP_\cL$. If  $\deg D  =n$ and $\cO_E(D) \not\cong \cL$, then $\overline{D}=\PP_\cL$.  
  If  $\deg D  =n$ and $\cO_E(D) \cong \cL$, then $\dim \overline{D}=n-2$ and $\overline{D} \cap E = D$.
   If  $\deg D  =n -1$, then $\dim \overline{D}=n-2$ and $\deg (\overline{D} \cap E) = n$. 
  If  $\deg D \le n-2$, then $\dim \overline{D}=\deg D -1$ and $\overline{D} \cap E = D$.}
  
The $d^{\, \rm th}$ {\sf secant variety} to $E$ is
\begin{equation}
  \label{eq:secant.union}
  \Sec_d(E) \; :=\; \bigcup_{D \in E^{[d]}} \overline{D} \, .
\end{equation}
It is well-known that $\Sec_d(E)$ is a closed irreducible subvariety of $\PP_\cL$ of dimension $\min\{2d-1,n-1\}$; see \cite[Lem.~1 and the Theorem on p.~266]{Lan84} or \cite[p.~11]{fisher2006} or  \cite[Prop.~10.11]{3264}.

For each $x \in E$, we define 
\begin{equation}\label{eq:exd}
  E^{[d]}_x \; :=\; \s^{-1}(x) = \{(\!(x_1,\ldots,x_d)\!) \; | \; x_1+\cdots+x_d =x\}
\end{equation}
and the {\sf partial secant variety}
\begin{equation}
  \label{eq:union.equiv.eff.divisors}
  \Sec_{d,x}(E)   \; :=\; \bigcup_{D \in \s^{-1}(x)}  \overline{D} \, .
\end{equation}
By \cref{prop.dim.Sec.dz.2}, $\Sec_{d,x}(E)$ is irreducible and its dimension  is $\min\{2d-2,n-1\}$.
By \cite[p.~18]{acgh1},    $E^{[d]}_x \cong \PP^{d-1}$.  Divisors $D$ and $D'$ in $E^{[d]}$ are linearly equivalent if and only if $\s(D)=\s(D')$. 
Thus, if $D$ is any divisor in $E^{[d]}_x$, then $E^{[d]}_x=|D|$ and 
\begin{equation}
  \label{eq:union.lin.equiv.divisors}
  \Sec_{d,x}(E)   \; =\; \bigcup_{D' \in |D|}  \overline{D'}.
\end{equation}

\subsection{Main results}
\label{sect.main.results}
We often indicate whether $n$ is even or odd by saying ``if $n=2r$'' or ``if $n=2r+1$''.

We define  $\Omega:=\{x \in E \; | \; 2x=\s(\cL)\}$.
It is a coset for the 2-torsion subgroup, $E[2]$.

 \begin{theorem}
 [\cref{th:leleaves}]
 \label{thm.main.sym.leaves.=.homol.leaves}
The symplectic leaves for $(\PP_\cL,\Pi_E)$ are  the homological leaves $L(\cE)$.\footnote{The proof of this theorem makes essential use of a result of Hua and Polishchuk \cite[Prop.~2.3]{HP3}.}
  \end{theorem}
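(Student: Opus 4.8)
The plan is to verify directly, against the characterization of symplectic leaves recalled in the introduction (see \cite{clm}), that the partition $\PP_\cL=\bigsqcup_{\cE}L(\cE)$ into homological leaves is the partition into symplectic leaves. By that characterization it suffices to prove two things: that each nonempty $L(\cE)$ is a connected immersed submanifold of $\PP_\cL$, and that for every $x\in L(\cE)$ one has the equality of subspaces of $T_x\PP_\cL$
\begin{equation*}
  T_xL(\cE)\;=\;\operatorname{im}\bigl(\Pi_{E,x}\colon T_x^*\PP_\cL\to T_x\PP_\cL\bigr).
\end{equation*}
The first assertion is available by this point: smoothness and even-dimensionality of each $L(\cE)$ is \Cref{th:smth}, and connectedness and local closedness come from the identification, among the main results, of $L(\cE)$ with a stratum of a secant variety $\Sec_d(E)$ or of a partial secant variety $\Sec_{d,x}(E)$ to $E$. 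So the content lies entirely in the displayed tangent-space identity. It is convenient to rephrase its two halves deformation-theoretically: $L(\cE)$ is (a union of components of) a fibre of the middle-term map $m\colon\PP_\cL\to\Bun(2,\cL)$, so $T_xL(\cE)\subseteq\ker(dm_x)$, where $dm_x$ lands in the trace-free self-extensions $\Ext^1_{\cO_E}(\cE,\cE)_0$ of $\cE=m(x)$, with equality once $m$ has locally constant rank along $L(\cE)$ — which the smoothness in \Cref{th:smth} provides.

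For the inclusion ``$\subseteq$'', that is, that $\Pi_E$ is tangent to the homological foliation — so that the Hamiltonian flows of $\Pi_E$ preserve each $L(\cE)$, equivalently $\operatorname{im}\Pi_{E,x}\subseteq\ker(dm_x)$ — I would invoke the result of Hua and Polishchuk \cite[Prop.~2.3]{HP3}; this is the step that uses the conceptual, deformation-theoretic description of $\Pi_E$ rather than its theta-function formula, and it is the essential input flagged in the footnote. It has two immediate consequences: $\rank\Pi_{E,x}\le\dim L(m(x))$, and the symplectic leaf through $x$ is contained in $L(m(x))$; in particular every $L(\cE)$ is a disjoint union of symplectic leaves.

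It remains to establish the reverse inequality $\rank\Pi_{E,x}\ge\dim L(\cE)$, i.e.\ that the symplectic distribution fills out all of $T_xL(\cE)$. Granting it, each symplectic leaf inside the connected manifold $L(\cE)$ is open in $L(\cE)$, so $L(\cE)$ is a single symplectic leaf and the theorem follows. To prove the inequality I would pit the exact value of $\dim L(\cE)$ on each secant stratum — known from the dimension formulas for $\Sec_d(E)$ and $\Sec_{d,x}(E)$ in the main results, and even by \Cref{th:smth} — against a lower bound on $\rank\Pi_{E,x}$ obtained from upper semicontinuity of the corank of $\Pi_E$: the corank is minimal ($0$ if $n$ is odd, $1$ if $n$ is even) on the dense open complement of the relevant secant variety, and on each deeper stratum the upper bound of the previous paragraph pins $\rank\Pi_{E,x}$ from below, so matching the two numbers forces $\rank\Pi_{E,x}=\dim L(\cE)$. (Should \cite[Prop.~2.3]{HP3} in fact yield $\operatorname{im}\Pi_{E,x}=\ker(dm_x)$ outright, this step collapses, and \Cref{th:smth} then serves only to certify that the $L(\cE)$ form a bona fide partition into connected immersed submanifolds, so that the criterion of \cite{clm} applies.)

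The main obstacle is this last rank estimate: ruling out any drop of $\rank\Pi_E$ below $\dim L(\cE)$ on the deeper secant strata, where the leaves are smallest. Everything else is either formal Poisson geometry — the leaf criterion of \cite{clm} and the passage from the pointwise tangent-space identity to the global statement — or has already been prepared: the smoothness, connectedness, dimensions, and secant-variety descriptions of the $L(\cE)$.
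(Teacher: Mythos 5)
Your overall architecture---reduce to the pointwise identity $T_\xi L(\cE)=\im\Pi_\xi$, get the inclusion $\im\Pi_\xi\subseteq T_\xi L(\cE)$ from \cite[Prop.~2.3]{HP3}, and then close the gap by a dimension count---is close in spirit to the paper's, and your use of \cref{th:smth} and of the leaf criterion of \cite{clm} is exactly right. The genuine gap is the mechanism you propose for the dimension count. Semicontinuity runs the wrong way for you: the rank of a bivector is \emph{lower} semicontinuous, so on the deeper secant strata it tells you only that $\rank\Pi_\xi$ is at most the generic rank---an upper bound you already have, and a weaker one than $\rank\Pi_\xi\le\dim L(\cE)$. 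It gives no lower bound whatsoever; nothing in your argument rules out $\rank\Pi_\xi$ collapsing to, say, $0$ on $\Sec_{d,x}(E)-\Sec_{d-1}(E)$ while $\dim L(\cE_{d,x})=2d-2>0$. The sentence claiming that ``the upper bound of the previous paragraph pins $\rank\Pi_{E,x}$ from below'' is where the argument breaks: an upper bound pins nothing from below, and no purely topological degeneration argument can, since a Poisson structure may in principle drop rank arbitrarily on a closed stratum.

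The resolution is the one you flag in your final parenthetical: \cite[Prop.~2.3]{HP3} is stronger than an invariance statement. It yields $\rank\Pi_\xi=n-\dim\End(\cE)$ for $\cE=m(\xi)$, which together with $\dim L(\cE)=n-\dim\Aut(\cE)=n-\dim\End(\cE)$ (\cref{le:rightdim}) already forces equality of dimensions once the inclusion is known. The paper in fact does not separate the two halves at all: it extracts from the proof of \cite[Prop.~2.3]{HP3} a commutative diagram identifying $\Pi_\xi$ with the connecting map $\varphi\mapsto\xi\cdot\varphi$ up to composition with $-\cdot\pi$, and then verifies directly that $\eta\mapsto\eta\cdot\pi$ is an isomorphism from $\im\Pi_\xi$ onto $\{\xi\cdot\varphi \mid \varphi\in\Hom(\cE,\cL)\}$, which by \cref{cor:tgorb} equals $\im d\Phi_\pi=T_\xi L(\cE)$ under the geometric-quotient description of $L(\cE)$ coming from \cref{th:smth}. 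Both the inclusion and the surjectivity thus come out of one computation; no semicontinuity is needed, and none would suffice. If you rewrite your proof so that the Hua--Polishchuk input is used in this sharper form, the rest of your outline goes through.
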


\begin{theorem}
\label{thm.main.0}
Assume $n \ge 3$. 
\begin{enumerate}
  \item 
 If $n=2r+1$, then the symplectic leaves   are
 \begin{enumerate}
  \item[-] 
  $\PP_\cL-\Sec_{r}(E)$ and
  \item[-] 
$\Sec_{d,x}(E) -\Sec_{d-1}(E)$ for each $x \in E$ and each integer $1 \le d \le r$.
\end{enumerate}
  \item 
    if $n=2r$, then the symplectic leaves   are
 \begin{enumerate}
  \item[-] 
  $\Sec_{d,x}(E) -\Sec_{d-1}(E)$ for each $x \in E$ and each integer  $1 \le d \le r-1$ and
  \item[-] 
   $\Sec_{r,x}(E) -\Sec_{r-1}(E)$ for each $x \in  E- \Omega$ and
  \item[-]
 for each $\omega \in \Omega$,  the set of $\xi \in \Sec_{r,\omega}(E) -\Sec_{r-1}(E)$ that lie on a unique $r$-secant, which is the smooth locus of 
 $\Sec_{r,\omega}(E) -\Sec_{r-1}(E)$, 
 and
    \item[-]
 for each $\omega \in \Omega$,   the set of $\xi \in \Sec_{r,\omega}(E) -\Sec_{r-1}(E)$ that lie on infinitely many $r$-secants, which is the 
 singular locus of 
 $\Sec_{r,\omega}(E) -\Sec_{r-1}(E)$ .
\end{enumerate}   
\end{enumerate}
\end{theorem}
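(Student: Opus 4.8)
The plan is to deduce the theorem from \cref{thm.main.sym.leaves.=.homol.leaves}, which identifies the symplectic leaves with the homological leaves $L(\cE) = m^{-1}(\cE)$ of \cref{L(cE).defn}; so one only needs to list the $\cE \in \Bun(2,\cL)$ with $L(\cE) \ne \varnothing$ and to compute each fiber of $m$. The bridge to the secant side is a dictionary that I would prove first: for a line bundle $\cM$ on $E$ with $1 \le \deg \cM \le n-1$,
\begin{equation*}
  \cM \text{ embeds in } m(\xi) \text{ as a sub-line-bundle}
  \quad\iff\quad
  \xi \in \Sec_{\, n-\deg\cM,\ \s(\cL)-\s(\cM)}(E).
\end{equation*}
For this, observe that an inclusion $\cM \hookrightarrow \cL$ has cokernel $\cO_D$ with $D \in |\cL\cM^{-1}|$, a divisor of degree $n - \deg\cM$ and sum $\s(D) = \s(\cL) - \s(\cM)$; that a sub-line-bundle of $m(\xi)$ of positive degree must map isomorphically onto such a sub-line-bundle of $\cL$ (on $E$ the only effective divisor class of degree $0$ is that of $\cO_E \subseteq m(\xi)$); and that this inclusion lifts to $m(\xi)$ iff the image of $\xi \in \Ext^1(\cL,\cO_E)$ in $\Ext^1(\cM,\cO_E)$ vanishes, i.e. iff $\xi$ lies in the image of $\Ext^1(\cO_D,\cO_E) \to \Ext^1(\cL,\cO_E)$. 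Serre duality on $E$ (with its functoriality) identifies that image with the annihilator of $H^0(\cL(-D)) \subseteq H^0(\cL)$, which is exactly the span $\overline D$; ranging over $D \in |\cL\cM^{-1}|$ and using \cref{eq:union.lin.equiv.divisors} then gives the partial secant variety.

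With the dictionary in hand, set $d(\xi) := \min\{d : \xi \in \Sec_d(E)\}$; since $\Sec_d(E) = \bigcup_x \Sec_{d,x}(E)$ and $\dim \Sec_d(E) = \min\{2d-1,n-1\}$, this is well defined with $1 \le d(\xi) \le \lceil n/2\rceil$, and the dictionary says $m(\xi)$ has a sub-line-bundle of degree $n - d(\xi)$ and none of higher degree. Writing $\mu_{\max}$ for the top degree of a sub-line-bundle, so $\mu_{\max}(m(\xi)) = n - d(\xi)$, I would split cases by $d := d(\xi)$ using Atiyah's classification of bundles on $E$. If $n - d > n/2$, then for the maximal sub-line-bundle $\cM$ one has $\Ext^1(\cL\cM^{-1},\cM) = H^1(\cM^{2}\cL^{-1}) = 0$, so $m(\xi) \cong \cM \oplus \cL\cM^{-1}$, with $\cM$ its unique maximal sub-line-bundle; putting $x := \s(\cL) - \s(\cM)$, this gives $L(\cM \oplus \cL\cM^{-1}) = \Sec_{d,x}(E) - \Sec_{d-1}(E)$, which yields the leaves of part (1) with $1 \le d \le r$ and of part (2) with $1 \le d \le r-1$. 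If $n = 2r+1$ and $d = r+1$, then $\mu_{\max}(m(\xi)) = r < n/2$, so $m(\xi)$ is stable; there is a unique stable rank-two bundle with determinant $\cL$ (Atiyah, using $\gcd(2,n) = 1$), so its leaf is $\PP_\cL - \Sec_r(E)$, the first leaf of part (1).

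The remaining, and subtlest, case is $n = 2r$ with $d = r$. Now $\mu_{\max}(m(\xi)) = r = n/2$; no stable rank-two bundle of even degree exists, so $m(\xi)$ is strictly semistable, and if $\cM$ is its maximal (degree-$r$) sub-line-bundle then $m(\xi)/\cM \cong \cL\cM^{-1}$ also has degree $r$, so $m(\xi)$ is an extension of $\cL\cM^{-1}$ by $\cM$ with $\cM \otimes (\cL\cM^{-1}) = \cL$; the dictionary lets one take $\s(\cM) = \s(\cL) - x$ when $\xi \in \Sec_{r,x}(E)$. Since $\Ext^1(\cL\cM^{-1},\cM) = H^0(\cL\cM^{-2})^{*}$ is nonzero exactly when $\cM^{2} \cong \cL$, i.e. when $\s(\cM) \in \Omega$, there are two subcases. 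If $\s(\cM) = \s(\cL) - x \notin \Omega$, the extension splits, $m(\xi) \cong \cM \oplus \cL\cM^{-1}$, and this bundle has the two distinct sub-line-bundles $\cM$ and $\cL\cM^{-1}$, so the dictionary places $\xi$ in both $\Sec_{r,x}(E)$ and $\Sec_{r,\s(\cL)-x}(E)$; running the argument both ways shows $\Sec_{r,x}(E) - \Sec_{r-1}(E) = \Sec_{r,\s(\cL)-x}(E) - \Sec_{r-1}(E) = L(\cM \oplus \cL\cM^{-1})$, the second family of part (2). If instead $\omega := \s(\cM) \in \Omega$, then $\cL\cM^{-1} \cong \cM$ and $\Ext^1(\cM,\cM) = H^1(\cO_E) = \CC$, so $m(\xi)$ is one of the two self-extensions of $\cM$ — the split $\cM^{\oplus 2}$ or $\cM \otimes F_2$, where $F_2$ is Atiyah's indecomposable rank-two bundle of degree $0$ — and therefore $\Sec_{r,\omega}(E) - \Sec_{r-1}(E) = L(\cM \otimes F_2) \sqcup L(\cM^{\oplus 2})$ (both nonempty: a short argument with nowhere-vanishing sections realizes each self-extension as a middle term). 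To match these with the smooth and singular loci, note that a map $\cM \to \cL$ has at most one lift to $m(\xi)$ since $\Hom(\cM,\cO_E) = 0$, so the number of $r$-secants through $\xi$ equals the number of sub-line-bundles of $m(\xi)$ isomorphic to $\cM$ — one if $m(\xi) \cong \cM \otimes F_2$, a $\PP^1$ if $m(\xi) \cong \cM^{\oplus 2}$; thus $L(\cM \otimes F_2)$ is the locus of $\xi$ on a unique $r$-secant and, being a symplectic leaf, is smooth by \cref{th:smth}, hence open in $\Sec_{r,\omega}(E) - \Sec_{r-1}(E)$ and contained in its smooth locus. At a point $\xi \in L(\cM^{\oplus 2})$, on the other hand, the $\PP^1$ of $r$-secants through $\xi$ corresponds to a pencil in $|\cM|$ whose moving part has degree $\ge 2$ (there are no nontrivial pencils of smaller degree on $E$), so the divisors of the pencil sweep out all of $E$; as each of these secant planes lies in $\Sec_{r,\omega}(E)$, the embedded tangent space at $\xi$ contains all of them and hence the linear span of $E$, namely $\PP_\cL$; so $\dim T_\xi \Sec_{r,\omega}(E) = n-1 > n-2 = \dim \Sec_{r,\omega}(E)$ and $\xi$ is singular. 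Hence the singular locus of $\Sec_{r,\omega}(E) - \Sec_{r-1}(E)$ is exactly $L(\cM^{\oplus 2})$ and its smooth locus is $L(\cM \otimes F_2)$, which completes part (2).

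The two steps I expect to be the real work are these. First, the Serre-duality dictionary: making its functoriality precise and matching the relevant $\Ext$-image with a linear span is the technical foundation for everything else, and is where the facts about linear spans of divisors (the footnote in \cref{ssect.secant.vars}) and the dimension formula of \cref{prop.dim.Sec.dz.2} come in. Second, and genuinely the hardest point, is the analysis over $\omega \in \Omega$: one must see that the strictly semistable middle terms split there into the two Atiyah classes, that this split is detected by the jump in the number of $r$-secants through $\xi$, and that this jump locus coincides with the singular locus of $\Sec_{r,\omega}(E)$ — and the last coincidence requires the tangent-space computation above, not a formal argument.
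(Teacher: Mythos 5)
Your overall architecture matches the paper's: reduce to the homological leaves via \cref{th:leleaves}, prove a Serre-duality criterion for $\xi\in\overline{D}$, use Atiyah's classification to identify the middle terms, and use an embedded-tangent-space argument to locate the singular locus over $\omega\in\Omega$. Your ``dictionary'' is exactly the paper's \cref{prop.bertram} (with $\cM=\cI_{D'}\cL$), proved the same way via the pullback square and Serre duality; your pencil of $r$-secants through a point of $L(\cL_\omega\oplus\cL_\omega)$ is \cref{lem.fd.xi}; and your tangent-space computation is essentially \cref{prop.ssnsmth}. Where you genuinely diverge is in the middle: the paper first classifies all $\cE$ with $L(\cE)\ne\varnothing$ (\cref{sect.rank.2.buns}, via the hypersurface $Z_\cE\subseteq\Hom(\cO_E,\cE)$) and then, for $\xi$ in a given stratum, runs through that list to see which $\cE$ admit an epimorphism onto $\cO_E(D)$ (\cref{th:splitall,th:nsplitodd,th:nsplitev}); you instead reconstruct $m(\xi)$ directly from $d(\xi)$ by extending the maximal sub-line-bundle and computing $\Ext^1(\cL\cM^{-1},\cM)=H^1(\cM^{2}\cL^{-1})$, which collapses the case analysis and yields the list of occurring $\cE$'s as a byproduct. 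This is a genuine streamlining. One small caveat: state the dictionary as ``$\Hom(\cM,m(\xi))\ne 0$'' rather than ``$\cM$ embeds as a sub-line-bundle'' (a nonzero map need not be saturated); at the maximal degree $n-d(\xi)$, where you actually use it, the two coincide.

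Two points need repair. First, the inference ``$L(\cE_\omega)$ is smooth by \cref{th:smth}, hence open in $\Sec_{r,\omega}(E)-\Sec_{r-1}(E)$'' is not valid: abstract smoothness of a locally closed subvariety does not give openness in an ambient stratum, and without openness you cannot transfer smoothness of $L(\cE_\omega)$ as a variety to smoothness of the stratum at those points, so your identification of the smooth locus is incomplete. What is needed (and what the paper supplies at the end of \cref{th:nsplitev}) is that $L(\cL_\omega\oplus\cL_\omega)$ is closed in the stratum, e.g.\ by semicontinuity of $\dim\Hom(\cL_\omega,m(\xi))$, equivalently of the corank of the connecting map $H^0(E,\cL_\omega)\to\Ext^1(\cL,\cL_\omega)$. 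Second, the nonemptiness of $L(\cE_\omega)$ (and of $L(\cE_o)$ for odd $n$) is dismissed with ``a short argument with nowhere-vanishing sections''; this is the one place the paper works hard (\cref{pr:z-codim1}, showing that the sections with non-locally-free cokernel form a hypersurface in $\Hom(\cO_E,\cE)$), and an actual argument --- for instance global generation of the semistable bundle $\cE_\omega$ of slope $r\ge 2$ together with a Bertini-type dimension count --- must be supplied.
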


Given an integer $d \in [1,\frac{n}{2}]$, a point $x \in E$, and a divisor  $D \in E^{[d]}_x$, we define $\cE_{d,x}:=\cO_E(D) \oplus \cL(-D)$.
Its isomorphism class  does not depend on the choice of $D$. 

Suppose $n=2r$.  
 For each $\omega \in \Omega$, we define $\cL_\omega:=\cO_E(D)$ where  $D \in E^{[r]}_\omega$
 (the isomorphism class of $\cL_\omega$ does not depend on the choice of $D$) and define 
 $\cE_\omega:=$ the unique-up-to-isomorphism non-split extension of $\cL_\omega$ by $\cL_\omega$ (see \cref{ssect.bun.notn}). 
 (We note that  $\cE_{r,\omega}\cong\cL_{\omega}\oplus\cL_{\omega}$).

When $n=2r+1$, we write $\cE_o$ for the unique-up-to-isomorphism indecomposable locally free $\cO_E$-module of rank two and 
determinant $\cL$.

\begin{theorem}[\cref{prop.leaves.for.decomp.Es,thm.good.Es,cor.good.E's}]
\label{thm.good.Es.intro}
$\phantom{x}$

\noindent
If $\cE \in \Bun(2,\cL)$, then  $L(\cE)\ne \varnothing$ if and only if either
\begin{enumerate}
\item[-]
$\cE \cong \cN_1 \oplus \cN_2$  where $\cN_1$ and $\cN_2$ are invertible $\cO_{E}$-modules of positive degree or
  \item[-] 
  $\cE$ is indecomposable.  
\end{enumerate}
In other words, 
\begin{itemize}
\item[-] 
if $n=2r+1$, then $L(\cE)\ne \varnothing$ if and only if
\begin{equation*}
	\cE \;\in\; \bigl\{ \cE_{d,x} \; \big\vert \; \text{$1\le d \le r$ and $x \in E$} \bigr\} \cup \bigl\{ \cE_o \bigr\};
\end{equation*}	
  \item[-]
  if $n=2r$, then $L(\cE)\ne \varnothing$ if and only if
\begin{equation*}
	\cE \;\in\; \bigl\{ \cE_{d,x} \; \big\vert \; \text{$1\le d\le r$ and $x \in E$} \bigr\} \cup \bigl\{ \cE_\omega \; \big\vert \; \omega \in \Omega \bigr\}.
\end{equation*}
\end{itemize}
\end{theorem}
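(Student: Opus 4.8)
The plan is to isolate a clean criterion for nonemptiness of $L(\cE)$ and then feed Atiyah's classification of rank-two bundles on $E$ through it. The criterion is: \emph{for $\cE\in\Bun(2,\cL)$, one has $L(\cE)\neq\varnothing$ if and only if $\cE$ admits a nowhere-vanishing global section and $\cE\not\cong\cO_E\oplus\cL$.} For ``if'', a nowhere-vanishing section $\cO_E\hookrightarrow\cE$ realizes $\cO_E$ as a sub-line-bundle, so $\cE/\cO_E$ is invertible and, taking determinants, isomorphic to $\det\cE=\cL$; the resulting extension $0\to\cO_E\to\cE\to\cL\to0$ is non-split exactly when $\cE\not\cong\cO_E\oplus\cL$. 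For ``only if'', in any extension $0\to\cO_E\to\cE\to\cL\to0$ the sub-object $\cO_E$ is a sub-bundle because the quotient is locally free, hence is given by a nowhere-vanishing section; and $\cO_E\oplus\cL$ is never the middle term of a non-split extension, since writing $\cO_E\xrightarrow{(f_1,f_2)}\cO_E\oplus\cL$ with $f_1\in\Hom(\cO_E,\cO_E)=\CC$ and $f_2\in H^0(\cL)$: if $f_1\neq0$ then $\mathrm{pr}_1$ splits the extension, while $f_1=0$ would force $f_2\in H^0(\cL)$ to be nowhere-vanishing, impossible since $\deg\cL>0$.

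By Atiyah's classification, every $\cE\in\Bun(2,\cL)$ is either indecomposable or splits as $\cN_1\oplus\cN_2$ with $\cN_1\otimes\cN_2\cong\cL$; I check the criterion in each case. If $\cE$ is indecomposable it is semistable --- for rank two this is immediate, since a destabilizing sub-line-bundle $\cM\subseteq\cE$ with quotient $\cN$ of strictly smaller degree would give $\Ext^1(\cN,\cM)\cong H^0(\cN\otimes\cM^{-1})^{\vee}=0$, forcing $\cE\cong\cM\oplus\cN$. Since $\deg\cE=n\geq3$, $\cE$ has positive slope, so $H^1(\cE)=0$ by Serre duality ($\cE^{\vee}$ is semistable of negative slope, hence sectionless); likewise $H^1(\cE(-p))=0$ for every $p\in E$, because $\cE(-p)$ is indecomposable of slope $(n-2)/2>0$ --- this is exactly where $n\geq3$ enters. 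Vanishing of $H^1(\cE(-p))$ for all $p$ forces $H^0(\cE)\to\cE|_p$ to be surjective for all $p$, i.e.\ $\cE$ is globally generated; as $\rank\cE=2>1=\dim E$, a general section of $\cE$ is then nowhere-vanishing, and $\cE$ is certainly not $\cO_E\oplus\cL$. Hence $L(\cE)\neq\varnothing$.

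If $\cE=\cN_1\oplus\cN_2$ with $\cN_1\otimes\cN_2\cong\cL$ and both $\deg\cN_i\geq1$: because $n\geq3$ at most one factor has degree $1$, and I choose nonzero $s_i\in H^0(\cN_i)$ with disjoint vanishing divisors --- if $\deg\cN_1=1$ its unique section vanishes at one point $p_1$ and $h^0(\cN_2)=n-1>n-2=h^0(\cN_2(-p_1))$ provides $s_2$ with $s_2(p_1)\neq0$; if $\deg\cN_1,\deg\cN_2\geq2$ both bundles are base-point-free and a generic pair works. Then $(s_1,s_2)$ is nowhere-vanishing and, by Krull--Schmidt, $\cE\not\cong\cO_E\oplus\cL$, so $L(\cE)\neq\varnothing$. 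Conversely, if $\cE=\cN_1\oplus\cN_2$ carries a nowhere-vanishing section $(s_1,s_2)$ and $\cE\not\cong\cO_E\oplus\cL$: if some $\deg\cN_i\leq0$, then either $\cN_i\cong\cO_E$ --- forcing $\cN_{3-i}\cong\cL$ and $\cE\cong\cO_E\oplus\cL$, excluded --- or $H^0(\cN_i)=0$, so $s_i=0$ and the section lies in $0\oplus\cN_{3-i}$ with $\deg\cN_{3-i}\geq n>0$, hence vanishes somewhere, a contradiction; so both $\deg\cN_i\geq1$. This gives the ``either/or'' characterization. To obtain the explicit lists: by Atiyah's classification every indecomposable $\cE$ of determinant $\cL$ is $\cE_o$ when $n=2r+1$, and is $\cE_\omega=F_2\otimes\cL_\omega$ for a unique $\omega\in\Omega$ when $n=2r$ (here $\det(F_2\otimes M)=M^{\otimes2}$, so the determinant constraint picks out $M\cong\cL_\omega$ with $\s(M)=\omega\in\Omega$); and a decomposable $\cE=\cN_1\oplus\cN_2$ with positive factors has, after relabeling, $\deg\cN_1=d\in[1,\lfloor n/2\rfloor]$, so writing $\cN_1\cong\cO_E(D)$ with $D\in E^{[d]}_{\s(\cN_1)}$ and $\cN_2\cong\cL(-D)$ exhibits $\cE=\cE_{d,\s(\cN_1)}$.

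The main obstacle is the indecomposable case --- specifically, producing a nowhere-vanishing section. This is where Atiyah's structure theory genuinely enters, via semistability of indecomposable bundles (which powers $H^1(\cE(-p))=0$), and where the hypothesis $n\geq3$ is essential: for $n=2$ the twist $\cE(-p)$ has slope $0$ and its $H^1$ need not vanish, matching the fact that the $n=3,4$ results are folklore while the uniform statement is not. The decomposable direction is elementary, but the degree-one bookkeeping --- the rigidity of the unique section of a degree-one line bundle on $E$ --- must be handled to arrange disjoint vanishing divisors.
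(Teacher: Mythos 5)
Your proof is correct and follows essentially the same route as the paper: your criterion (nowhere-vanishing section plus $\cE\not\cong\cO_E\oplus\cL$) is the paper's condition $X(\cE)\ne\varnothing$ combined with the exclusion of degree-$\le 0$ summands from \cref{ssect.assump}, your disjoint-divisor construction in the decomposable case is \cref{lem.easy.non-split}, and your dimension count showing that a general section of the globally generated indecomposable $\cE$ is nowhere-vanishing is exactly the count by which the paper shows $\PP Z_\cE$ is a proper closed subset (indeed a hypersurface) of $\PP\Hom(\cO_E,\cE)$ in \cref{pr:z-codim1}. The only cosmetic differences are that you prove rank-two semistability directly rather than citing Tu, and you obtain non-splitness in the decomposable case from Krull--Schmidt rather than the paper's explicit ideal-sheaf sequence; the one point to tidy is that in your semistability argument you should first saturate the destabilizing sub-line-bundle so that its quotient $\cN$ is genuinely invertible before invoking $\Ext^1(\cN,\cM)=0$.
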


The next result matches up the leaves in \Cref{thm.main.0} with the $\cE$'s  in \cref{thm.good.Es.intro}. 

\begin{theorem}[\cref{th:splitall,th:nsplitodd,th:nsplitev}]
\label{thm.main} 
\leavevmode
\begin{enumerate}
	\item
	\label{item.main.odd} 
	If $n=2r+1$, then
	\begin{enumerate}
		\item[-]
		 $L(\cE_{d,x}) =\Sec_{d,x}(E) \, - \, \Sec_{d-1}(E)$ for all $x \in E$ and $1\le d \le r$;
		\item[-]
		 $L(\cE_o) = \PP_\cL -  \Sec_r(E)$.
	\end{enumerate}
	\item\label{item.main.even} 
	If $n=2r$, then
	\begin{enumerate}
		\item[-]
		$L(\cE_{d,x}) =\Sec_{d,x}(E) \, - \, \Sec_{d-1}(E)$ if $1 \le d \le r-1$, or $d=r$ and $x \notin \Omega$;
		\item[-]
		 $L(\cE_\omega)\sqcup L(\cE_{r,\omega}) \;=\; \Sec_{r,\omega}(E) \, - \,  \Sec_{r-1}(E)$ if $\omega\in\Omega$;
		\item[-]
		\label{item.main.even.inf} 
		$L(\cE_{r,\omega})$ consists of those points in $\Sec_{r,\omega}(E) - \Sec_{r-1}(E)$ that lie on at least two, and 
		hence infinitely many, distinct $r$-secant planes, for each $\omega\in\Omega$;
		\item[-]
		\label{item.main.even.unq} 
		$L(\cE_\omega)$ is a dense open subset of $\Sec_{r,\omega}(E)$ and consists of those points in  
		$\Sec_{r,\omega}(E) - \Sec_{r-1}(E)$ that lie on a {\it unique} $r$-secant plane, for each $\omega\in\Omega$.
	\end{enumerate}
\end{enumerate}
\end{theorem}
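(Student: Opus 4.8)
The plan is to combine three ingredients: the classification of $\cE$'s with $L(\cE)\neq\varnothing$ from \Cref{thm.good.Es.intro}, an explicit computation of the middle term of an extension in terms of secant geometry, and a dimension/closure count to pin down the stratification. The geometric heart is the following dictionary, which I would establish first: a point $\xi\in\PP_\cL$, represented by $0\to\cO_E\to\cE\to\cL\to 0$, lies on the secant plane $\overline{D}$ for an effective divisor $D$ of degree $d$ if and only if the extension $\xi$ splits after pulling back along $\cL(-D)\hookrightarrow\cL$, equivalently if and only if $\cO_E(D)$ is a subsheaf of $\cE$ (with the induced map $\cO_E(D)\to\cL$ being the canonical inclusion). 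This is the standard identification of $\PP\Ext^1(\cL,\cO_E)$ with non-split extensions and of linear subspaces spanned by subschemes of $E$ with sub-line-bundles: $\xi\in\overline D$ exactly when the image of $\xi$ under $\Ext^1(\cL,\cO_E)\to\Ext^1(\cL(-D),\cO_E)$ vanishes, and a diagram chase shows this is the same as $\cO_E(D)\subseteq m(\xi)$. Granting this, $\xi\in\Sec_{d,x}(E)$ iff $m(\xi)$ contains a sub-line-bundle of degree $d$ and determinant-complementary degree $n-d$ with sum $x$, i.e.\ iff $m(\xi)\cong\cO_E(D)\oplus\cL(-D)$ for some $D\in E^{[d]}_x$ \emph{or} $m(\xi)$ merely \emph{contains} such a sub-bundle without splitting off; the extension class being non-split forces a splitting precisely in the generic situation, which is what the stratification $\Sec_{d,x}(E)-\Sec_{d-1}(E)$ isolates.

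Next I would run the argument stratum by stratum. For $\xi\in\Sec_{d,x}(E)-\Sec_{d-1}(E)$ with $d\le r$ (and, when $n=2r$, either $d<r$ or $x\notin\Omega$): by the dictionary $m(\xi)$ has a sub-line-bundle $\cO_E(D)$ of degree $d$ lying over $D\in E^{[d]}_x$; the quotient is $\cL(-D)$ of degree $n-d\ge d$; since $\xi\notin\Sec_{d-1}(E)$ there is no sub-line-bundle of smaller degree, and a short $\Ext$ computation—$\Ext^1(\cL(-D),\cO_E(D))=H^1(E,\cO_E(2D-\cL))$, which vanishes unless $2D\sim\cL$, i.e.\ unless $x\in\Omega$ and $d=r$—forces the extension $0\to\cO_E(D)\to m(\xi)\to\cL(-D)\to 0$ to split, so $m(\xi)\cong\cE_{d,x}$. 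Conversely every $\xi\in L(\cE_{d,x})$ lies on $\Sec_{d,x}(E)$ (take $D$ as the degree-$d$ summand) and avoids $\Sec_{d-1}(E)$ because $\cE_{d,x}$ has no sub-line-bundle of degree $<d$. This gives the first bullet of each of (1) and (2). For $n=2r+1$, the complement $\PP_\cL-\Sec_r(E)$ consists of $\xi$ whose $m(\xi)$ has \emph{no} sub-line-bundle of degree $\ge r+1$... wait, rather no sub-line-bundle of degree $\ge r$ realizing a secant, hence (being rank two of odd determinant degree, so not a direct sum of equal-degree line bundles) is forced to be the unique indecomposable $\cE_o$; combined with $L(\cE_o)\ne\varnothing$ and the fact that all other leaves lie inside $\Sec_r(E)$, this identifies $L(\cE_o)=\PP_\cL-\Sec_r(E)$.

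The delicate case, and the one I expect to be the main obstacle, is the even case $n=2r$ at $\omega\in\Omega$. Here $2D\sim\cL$ for $D\in E^{[r]}_\omega$, so $\cO_E(D)=\cL(-D)=\cL_\omega$ and the relevant $\Ext^1(\cL_\omega,\cL_\omega)\cong H^1(E,\cO_E)\cong\CC$ is one-dimensional: the extension $0\to\cL_\omega\to m(\xi)\to\cL_\omega\to 0$ need \emph{not} split, giving two possible middle terms, $\cL_\omega\oplus\cL_\omega=\cE_{r,\omega}$ or the unique non-split self-extension $\cE_\omega$. I would distinguish them by counting $r$-secants: $\xi$ lies on the secant $\overline D$ for $D\in|D_0|=E^{[r]}_\omega$ iff the corresponding sub-line-bundle embeds, and the set of such embeddings $\cL_\omega\hookrightarrow m(\xi)$ up to scalar is $\PP\Hom(\cL_\omega,m(\xi))$; if $m(\xi)\cong\cL_\omega^{\oplus 2}$ this is a $\PP^1$ of secants (infinitely many), whereas if $m(\xi)\cong\cE_\omega$ then $\Hom(\cL_\omega,\cE_\omega)$ is one-dimensional, giving a \emph{unique} $r$-secant. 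Hence $L(\cE_{r,\omega})$ is exactly the locus of points on $\ge 2$ (hence $\infty$-many) $r$-secants and $L(\cE_\omega)$ the locus on a unique one; their disjoint union is $\Sec_{r,\omega}(E)-\Sec_{r-1}(E)$ by the first part of the argument (every such $\xi$ has $m(\xi)$ built from $\cL_\omega$ by a self-extension, split or not, and has no smaller sub-line-bundle). Finally, to see $L(\cE_\omega)$ is dense open in $\Sec_{r,\omega}(E)$ and equals its smooth locus, I would invoke the general structure of the incidence variety $\{(\xi,D):\xi\in\overline D,\ D\in|D_0|\}\to\Sec_{r,\omega}(E)$, which is a $\PP^{r-1}$-bundle (so smooth, irreducible of dimension $2r-2=n-2$) mapping generically one-to-one, with the jump locus—where the fibre is positive-dimensional—being precisely the non-normal/singular locus; the preceding secant-count shows this jump locus is $L(\cE_{r,\omega})$, and $\Sec_{r-1}(E)$ has dimension $2r-3<n-2$ so lies in the boundary. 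The one genuinely nontrivial point to nail down is that the fibre-dimension-jump locus of this particular bundle map coincides with $\Sing(\Sec_{r,\omega}(E)-\Sec_{r-1}(E))$ rather than being merely contained in it; for this I would use \Cref{prop.dim.Sec.dz.2} on the dimension of $\Sec_{r,\omega}(E)$ together with a tangent-space computation at a point on a unique secant showing smoothness there.
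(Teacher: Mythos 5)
Your overall strategy is the paper's: translate ``$\xi\in\overline{D}$'' into a sheaf-theoretic condition on $m(\xi)$, use the classification of bundles with $L(\cE)\neq\varnothing$, and, for $\omega\in\Omega$, separate $L(\cE_\omega)$ from $L(\cE_{r,\omega})$ by counting $r$-secants via $\PP\Hom(\cL_\omega,m(\xi))$ (this is exactly \cref{prop.D.bar.bijection}). But your central dictionary is stated backwards, and this is not a harmless typo. The splitting of the pullback of $\xi$ along $\cI_D\cL=\cL(-D)\hookrightarrow\cL$ produces an embedding $\cL(-D)\hookrightarrow m(\xi)$ lifting $i_D$ --- equivalently, by the self-duality $\cE\cong\cE^\vee\otimes\cL$, a non-zero map $m(\xi)\to\cO_E(D)$ --- \emph{not} an embedding $\cO_E(D)\hookrightarrow m(\xi)$. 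The two conditions are genuinely different for $d\neq n/2$: for instance, when $n=2r+1$ the stable bundle $\cE_o$ contains a copy of $\cO_E(D)$ for \emph{every} degree-$r$ divisor class (since $\Hom(\cO_E(D),\cE_o)\cong H^0(\cE_o(-D))\neq 0$), so your dictionary would place all of $L(\cE_o)$ inside $\Sec_r(E)$, contradicting the very identity $L(\cE_o)=\PP_\cL-\Sec_r(E)$ you are proving; it also makes your characterization of the open leaf (``no sub-line-bundle of degree $\ge r$'') vacuous, as you half-noticed when you corrected the threshold in the wrong direction.

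The error propagates into your key splitting step. As written, $\Ext^1(\cL(-D),\cO_E(D))=H^1(E,\cO_E(2D)\otimes\cL^{-1})$ is the $H^1$ of a line bundle of degree $2d-n\le 0$ on a genus-one curve, hence \emph{non-zero} (of dimension $n-2d$) whenever $d<\tfrac n2$; it does not vanish as you claim. What does vanish is $\Ext^1(\cO_E(D),\cL(-D))=H^1(E,\cL(-2D))$ when $d<\tfrac n2$, or when $d=\tfrac n2$ and $x\notin\Omega$ --- i.e.\ the extension that splits automatically is $0\to\cL(-D)\to m(\xi)\to\cO_E(D)\to 0$, with the \emph{large}-degree sub-bundle. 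Once you reverse sub and quotient throughout, this $\Ext^1$-vanishing argument is actually a clean alternative to the paper's proof of the inclusion $\Sec_{d,x}(E)-\Sec_{d-1}(E)\subseteq L(\cE_{d,x})$ in \cref{th:splitall} (the paper instead runs through the list of possible $\cE$'s and checks which admit an epimorphism onto $\cO_E(D)$). You also need the refinement that a divisor $D$ of \emph{minimal} degree with $\xi\in\overline{D}$ yields an \emph{epimorphism} $m(\xi)\to\cO_E(D)$ (\cref{thm.bertram}), since a non-zero map alone does not identify a saturated sub-bundle. Finally, be aware that even the corrected criterion $\Hom(m(\xi),\cO_E(D))\neq 0$ only detects $\xi\in\overline{D'}$ for \emph{some} $D'\sim D$ (cf.\ \cref{re:bertr}); this suffices for the statements of \cref{thm.main}, which concern the full partial secant varieties $\Sec_{d,x}(E)$, but the secant count at $\omega\in\Omega$ requires the precise bijection of \cref{prop.D.bar.bijection}, not just a dimension count of $\Hom(\cL_\omega,m(\xi))$. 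Your closing argument for openness of $L(\cE_\omega)$ via the singular locus is also heavier than necessary and risks circularity with the smoothness results; the paper gets openness from a semicontinuity-of-rank argument applied to $H^0(E,\cL_\omega)\to\Ext^1(\cL,\cL_\omega)$.
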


\begin{theorem}
[\cref{th:splitall,th:nsplitodd,th:nsplitev}]
\label{prop.dim.LE}
\leavevmode
\begin{enumerate}
	\item If $n=2r+1$, then
	\begin{enumerate}
		\item[-]
		 $\dim L(\cE_{d,x}) = 2d-2$ if $1 \le d \le r$;
		\item[-]
		 $\dim L(\cE_o) = 2r$.
	\end{enumerate}
	\item If $n=2r$, then
	\begin{enumerate}
		\item[-]
		 $\dim L(\cE_{d,x}) = 2d-2$ if $1 \le d \le r-1$, or $d=r$ and $x \notin \Omega$;
		\item[-]
		 $\dim L(\cE_{r,\omega}) = n-4$ if $\omega\in\Omega$;
		\item[-]
		 $\dim L(\cE_\omega) = n-2$ if $\omega\in\Omega$.
	\end{enumerate}
\end{enumerate}
\end{theorem}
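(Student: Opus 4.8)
The plan is to obtain every dimension in \Cref{prop.dim.LE} from the explicit identification of the leaves in \Cref{thm.main}, combined with the classical formula $\dim\Sec_{d}(E)=\min\{2d-1,\,n-1\}$ and the formula $\dim\Sec_{d,x}(E)=\min\{2d-2,\,n-1\}$ of \Cref{prop.dim.Sec.dz.2}. For every leaf except $L(\cE_{r,\omega})$ with $\omega\in\Omega$ this is a short bookkeeping argument, and the only genuine work is the exceptional case.

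First I would dispatch the ``generic'' leaves. When $1\le d\le r$, with the proviso that $d\le r-1$ or $x\notin\Omega$ in the case $n=2r$, \Cref{thm.main} gives $L(\cE_{d,x})=\Sec_{d,x}(E)-\Sec_{d-1}(E)$. In this range $2d-2\le n-2$, so $\dim\Sec_{d,x}(E)=2d-2$, while $\dim\Sec_{d-1}(E)=2d-3$ (interpreting $\Sec_{0}(E)=\varnothing$ when $d=1$); since $\Sec_{d,x}(E)$ is irreducible and strictly larger than $\Sec_{d-1}(E)$, the leaf $L(\cE_{d,x})$ is a nonempty dense open subset of $\Sec_{d,x}(E)$, whence $\dim L(\cE_{d,x})=2d-2$. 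Likewise $L(\cE_{o})=\PP_\cL-\Sec_{r}(E)$ is a nonempty open subset of $\PP_\cL$ because $\dim\Sec_{r}(E)=2r-1<2r=\dim\PP_\cL$, so $\dim L(\cE_{o})=2r$; and for $n=2r$, $\omega\in\Omega$, \Cref{thm.main} asserts that $L(\cE_\omega)$ is dense open in $\Sec_{r,\omega}(E)$, which has dimension $\min\{2r-2,2r-1\}=2r-2=n-2$.

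The substantive case is $\dim L(\cE_{r,\omega})=n-4$ for $n=2r$ and $\omega\in\Omega$. Here I would argue with the middle-term description directly. Since $\cE_{r,\omega}\cong\cL_\omega\oplus\cL_\omega$ and $\cL_\omega^{\otimes 2}\cong\cL$, a point of $L(\cE_{r,\omega})$ is an extension $0\to\cO_E\xrightarrow{\iota}\cL_\omega^{\oplus 2}\to\cL\to 0$. The inclusion $\iota$ is a pair $(\iota_1,\iota_2)$ of global sections of $\cL_\omega$; since $\Hom(\cL_\omega,\cO_E)=0$ and $\Aut(\cL_\omega^{\oplus 2})\cong GL_2(\CC)$, the only invariant of $\iota$, and hence of the class $\xi\in\PP\Ext^1(\cL,\cO_E)$, is the $2$-dimensional subspace $V:=\langle\iota_1,\iota_2\rangle\subseteq H^0(E,\cL_\omega)$, which must moreover be base-point-free so that the cokernel (a line bundle with determinant $\cL$, hence $\cong\cL$) is locally free. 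Conversely, a base-point-free $2$-dimensional $V$ yields, by twisting the evaluation sequence $0\to\cL_\omega^{-1}\to V\otimes_\CC\cO_E\to\cL_\omega\to 0$ by $\cL_\omega$, an extension of the required shape. This sets up an identification of $L(\cE_{r,\omega})$ with the base-point-free locus in the Grassmannian $\mathrm{Gr}\big(2,H^0(E,\cL_\omega)\big)\cong\mathrm{Gr}(2,r)$, a nonempty open subvariety because $\deg\cL_\omega=r\ge 2$ guarantees a base-point-free pencil; therefore $\dim L(\cE_{r,\omega})=2(r-2)=n-4$. (Equivalently, the extension data attach to each $\xi\in\PP_\cL$ a symmetric $r\times r$ matrix $B_\xi$ depending linearly on $\xi$ --- the Serre dual of the multiplication map $H^0(\cL_\omega)\otimes H^0(\cL_\omega)\to H^0(\cL)$ --- with the $r$-secant planes through $\xi$ being $\PP(\ker B_\xi)$; this exhibits $\Sec_{r,\omega}(E)-\Sec_{r-1}(E)$ as a symmetric determinantal variety whose corank-$1$ and corank-$\ge 2$ strata are $L(\cE_\omega)$ and $L(\cE_{r,\omega})$, the latter of the expected codimension $\binom{3}{2}=3$ in $\PP^{n-1}$, matching $n-4$.)

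The main obstacle is precisely this last leaf: all other dimensions drop out of \Cref{thm.main} and the secant-dimension formulas with essentially no computation, whereas pinning $\dim L(\cE_{r,\omega})$ down to $n-4$ --- rather than merely the a priori bound $n-3$ coming from its being a proper closed subset of $\Sec_{r,\omega}(E)-\Sec_{r-1}(E)$ --- requires the Grassmannian (equivalently, symmetric-determinantal) description above together with the nonemptiness of the base-point-free locus. The degenerate values $n=3,4$ can be handled by inspection; for $n=4$ one has $L(\cE_{r,\omega})\cong\mathrm{Gr}(2,2)$, a point, so $\dim L(\cE_{r,\omega})=0=n-4$.
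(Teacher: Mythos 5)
Your proposal is correct, and for all leaves except $L(\cE_{r,\omega})$ it follows the paper's route exactly: \Cref{th:splitall,th:nsplitodd,th:nsplitev} identify the leaves with $\Sec_{d,x}(E)-\Sec_{d-1}(E)$, $\PP_\cL-\Sec_r(E)$, and the open/closed pieces of $\Sec_{r,\omega}(E)-\Sec_{r-1}(E)$, and the dimensions then drop out of $\dim\Sec_{d,x}(E)=2d-2$ (\Cref{prop.dim.Sec.dz.2}) and $\dim\Sec_d(E)=\min\{2d-1,n-1\}$. Where you diverge is the exceptional case $\dim L(\cE_{r,\omega})=n-4$: the paper gets this from the uniform formula $\dim L(\cE)=n-\dim\Aut(\cE)$ (\Cref{le:rightdim}, which exploits that the fibers of $X(\cE)\to L(\cE)$ are free $\Aut(\cE)$-orbits) together with $\Aut(\cL_\omega\oplus\cL_\omega)\cong{\rm GL}(2)$ (\Cref{le:dim.Aut.cE}), whereas you parametrize the leaf by base-point-free pencils in $\GG(2,H^0(E,\cL_\omega))\cong\GG(2,r)$. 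These are really the same computation in different clothing --- the paper itself develops your Grassmannian picture in part (IV) of the proof of \Cref{th:actloctriv} and in step (c${}_2$) of the proof of \Cref{th:smth}, and $\dim\GG(2,r)=2r-4=\dim X'-\dim{\rm GL}(2)$ --- but the paper's version has the advantage of giving every dimension in the theorem at once from the list of automorphism groups, while yours has the advantage of making the $n-4$ leaf concretely visible (and of explaining, via the symmetric-determinantal heuristic you mention in passing, why $n-4$ is the ``expected'' answer). Two small points you leave implicit and should cite: the passage from ``$\iota$ modulo $\Aut(\cL_\omega^{\oplus2})$'' to ``$\xi$ up to isomorphism of extensions'' is \Cref{le:Aut.cE.action.on.extns}/\Cref{cor.defn.Psi.cE}, and to convert your set-theoretic bijection with an open subset of the Grassmannian into a statement about the dimension of $L(\cE_{r,\omega})$ as a constructible subset of $\PP_\cL$ you need the map to be a morphism with the stated fibers, which is \Cref{le:ismor} plus \Cref{le:rightdim} (or the geometric-quotient statement of \Cref{th:smth}).
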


In other words, when $n=2r+1$,  the homological leaves are   
\begin{enumerate}
    \item 
      the individual points $x \in E$, $\{x\}=\Sec_{1,x}(E)=L(\cE_{1,x})$, and
    \item 
      $\Sec_{d,x}(E)-\Sec_{d-1}(E)$ for each $x \in E$ and $2 \le d \le r$, which has dimension $2d-2$, and
   \item
   $\PP_\cL -    \Sec_{r}(E)$.
    \end{enumerate}  
When $n=2r$,  the  homological leaves are   
    \begin{enumerate}  
    \item
       the individual points $x \in E$, $\{x\}=\Sec_{1,x}(E)=L(\cE_{1,x})$, and
    \item
 $\Sec_{d,x}(E)-\Sec_{d-1}(E)$ for each $x \in E$ and $2 \le d \le r$, which has dimension $2d-2$, and
  \item
 $\Sec_{r,x}(E)-\Sec_{r-1}(E)$ for each $x \in E-\Omega$, which has dimension $n-2$, and
 \item
the four $L(\cE_{r,\omega})$'s of dimension $n-4$ described in \cref{thm.main}\cref{item.main.even}, and
\item
the four $L(\cE_\omega)$'s of dimension $n-2$ described in \cref{thm.main}\cref{item.main.even}.
\end{enumerate}

In particular, every point on $E$ is a homological leaf, and these are the only 0-dimensional leaves when 
$n \ne 4$. When $n=4$  there are four extra ones, namely
 $L(\cL_\omega \oplus \cL_\omega)=L(\cE_{2,\omega})$ for $\omega \in \Omega$.

One can determine when one leaf is contained in the closure of another from the inclusions
\begin{equation}
  \label{eq:Y.sec.inclusions}
  \Sec_{d,x}(E) \;  \subseteq \; \Sec_d(E) \;  \subseteq \; \Sec_{d+1,y}(E)   \subseteq \; \Sec_{d+1}(E)
\end{equation}
which hold for all $x,y \in E$ and all $d$.

The dimensions in \Cref{prop.dim.LE} follow from the fact that if $1 \le d < \frac{n}{2}-1$, then 
the dimensions of the varieties in \cref{eq:Y.sec.inclusions} are $2d-2$, $2d-1$, $2d$, and $2d+1$; 
\Cref{prop.dim.Sec.dz} shows that $\dim \Sec_{d,z}(E) =2d-2$. (In \cref{ssect.secant.vars} there is a reference for the known equality $\dim \Sec_{d}(E) =\min\{2d-1,n-1\}$.)

\begin{proposition}
[\cref{pr:affine-reductive}]
\leavevmode
\begin{enumerate}
	\item If $n=2r+1$, then $L(\cE_{o})$ is affine.
	\item If $n=2r$, then $L(\cE_{r,x})$ is affine for all $x\in E$, and the $L(\cE_{\omega})$'s are quasi-affine but not affine.
\end{enumerate}
\end{proposition}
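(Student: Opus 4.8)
The plan is to combine the identifications of these leaves as complements of closed subsets of secant varieties, recorded in \Cref{thm.main}, with standard facts about complements of divisors. Part~(1) is then immediate: by \Cref{thm.main}, $L(\cE_o)=\PP_\cL-\Sec_r(E)$, and with $n=2r+1$ the dimension formula recalled in \cref{ssect.secant.vars} gives $\dim\Sec_r(E)=\min\{2r-1,n-1\}=n-2$, so the irreducible variety $\Sec_r(E)$ is a hypersurface in $\PP_\cL\cong\PP^{n-1}$. Since $\PP^{n-1}$ is factorial, $\Sec_r(E)=V(F)$ for a single homogeneous form $F$, and composing with the $(\deg F)$-th Veronese embedding exhibits $\PP^{n-1}-V(F)$ as a closed subvariety of affine space; hence it is affine.

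For part~(2) the crux is to prove that $\Sec_{r,x}(E)-\Sec_{r-1}(E)$ is affine for every $x\in E$ (when $n=2r$); equivalently, that the prime divisor $\Sec_{r-1}(E)$ is the support of an ample effective divisor on $\Sec_{r,x}(E)$. For $x\notin\Omega$ there is an explicit model: fixing $D\in E^{[r]}_x$, an extension $0\to\cO_E\xrightarrow{(s,t)}\cE_{r,x}=\cO_E(D)\oplus\cL(-D)\to\cL\to 0$ is the datum of nonzero sections $s\in H^0(\cO_E(D))$ and $t\in H^0(\cL(-D))$ with disjoint zero divisors, taken modulo the (purely diagonal, since $x\notin\Omega$) automorphisms of $\cE_{r,x}$ and of $\cO_E$; this identifies $L(\cE_{r,x})=\Sec_{r,x}(E)-\Sec_{r-1}(E)$ with $(\PP^{r-1}\times\PP^{r-1})\setminus W$, and restricting $W$ to the two rulings shows $W\in|\cO(r,r)|$, an ample class, so $L(\cE_{r,x})$ is affine. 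For $\omega\in\Omega$ one has $\cE_{r,\omega}\cong\cL_\omega\oplus\cL_\omega$, and the same computation (now modulo $\Aut(\cL_\omega\oplus\cL_\omega)=GL_2$) identifies $L(\cE_{r,\omega})$ with the locus of base-point-free pencils in the Grassmannian of $2$-planes in $H^0(\cL_\omega)$, whose complement is a nonempty effective divisor, hence ample since that Grassmannian has Picard rank one (for $n=4$, $L(\cE_{r,\omega})$ is a single point); so $L(\cE_{r,\omega})$ is affine. Combined with a proof that $\Sec_{r,x}(E)-\Sec_{r-1}(E)$ is affine for all $x$ — for instance via the projective-bundle resolution $\{(p,D')\in\PP_\cL\times E^{[r]}_x\mid p\in\overline{D'}\}\to\Sec_{r,x}(E)$ and a divisor-class computation on it — this yields all the affineness claims of part~(2).

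Finally, for $L(\cE_\omega)$: by \Cref{thm.main} it equals $\bigl(\Sec_{r,\omega}(E)-\Sec_{r-1}(E)\bigr)-L(\cE_{r,\omega})$, where the removed set is the (closed) singular locus, so $L(\cE_\omega)$ is an open subset of the affine variety $\Sec_{r,\omega}(E)-\Sec_{r-1}(E)$ and hence quasi-affine. It is not affine. Indeed $L(\cE_\omega)$ is dense and open in the irreducible projective variety $\Sec_{r,\omega}(E)$ of dimension $n-2$, with complement $\Sec_{r-1}(E)\cup\overline{L(\cE_{r,\omega})}$; by \Cref{prop.dim.LE} the closure $\overline{L(\cE_{r,\omega})}$ has dimension $n-4$, and since it meets $L(\cE_{r,\omega})\subseteq\Sec_{r,\omega}(E)-\Sec_{r-1}(E)$ it is not contained in $\Sec_{r-1}(E)$, so the complement of $L(\cE_\omega)$ is not of pure codimension one. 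But $\Sec_{r,\omega}(E)$ is normal — it is Cohen--Macaulay, hence $S_2$, and its singular locus $L(\cE_{r,\omega})$ has codimension $2$, so Serre's criterion applies — so Hartogs' theorem lets regular functions on $L(\cE_\omega)$ extend across the codimension-two locus $L(\cE_{r,\omega})$, giving $\cO\bigl(L(\cE_\omega)\bigr)=\cO\bigl(\Sec_{r,\omega}(E)-\Sec_{r-1}(E)\bigr)$; hence the canonical map from $L(\cE_\omega)$ to the spectrum of its ring of functions is the inclusion into the strictly larger affine variety $\Sec_{r,\omega}(E)-\Sec_{r-1}(E)$, and $L(\cE_\omega)$ is not affine.

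The main obstacle is the affineness input of part~(2) — the positivity assertion that $\Sec_{r-1}(E)$ is an ample effective divisor on $\Sec_{r,x}(E)$ for all $x$ (equivalently that $\Sec_{r,x}(E)-\Sec_{r-1}(E)$ is affine); the remaining points are either formal or rest on the normality, equivalently Cohen--Macaulayness, of $\Sec_{r,\omega}(E)$. Conceptually, the dichotomy is the shadow of Matsushima's criterion: each of these leaves is a homogeneous space under an affine algebraic group whose point stabilizer has the reductivity type of $\Aut(\cE)$ — a torus or $GL_2$ for $\cE_o$ and for the $\cE_{r,x}$, but $\mathbb{G}_m\ltimes\mathbb{G}_a$ for $\cE_\omega$ — so "$L(\cE)$ affine" happens exactly when $\Aut(\cE)$ is reductive.
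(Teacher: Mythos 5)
Your route is genuinely different from the paper's. The paper runs everything through the presentation $L(\cE)\cong \PP X(\cE)/\PP\!\Aut(\cE)$ with $\PP X(\cE)$ affine (\cref{cor:isaff}): affineness when $\PP\!\Aut(\cE)$ is reductive (trivial, $\GG_m$, or ${\rm PGL}(2)$) via \cite[Thm.~1.1]{mumf-git}; quasi-affineness of $L(\cE_\omega)$ from the unipotence of $\PP\!\Aut(\cE_\omega)\cong\GG_a$ via \cite[Prop.~3]{fm-ga}; and non-affineness from Neeman's result \cite[Prop.~5.1]{neem_steins} that the complement of an open affine in a noetherian scheme has pure codimension one, applied to $L(\cE_\omega)\subseteq L(\cE_\omega)\sqcup L(\cL_\omega\oplus\cL_\omega)$ whose complement has codimension two. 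Your closing remark about the Matsushima-type dichotomy is in fact exactly the paper's mechanism. Your substitutes — complement of a hypersurface in $\PP^{n-1}$ for part (1), the $(\PP^{r-1}\times\PP^{r-1})\setminus W$ model for $\cE_{r,x}$, $x\notin\Omega$ (this is the paper's \cref{ex:dd}), and the Grassmannian of base-point-free pencils for $\cL_\omega\oplus\cL_\omega$ — are all sound, modulo the small check that the non-base-point-free locus really is a divisor (pencils have finite base loci, so the incidence correspondence over $E$ maps generically finitely to $\GG(2,H^0(E,\cL_\omega))$).

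There are, however, two genuine gaps in your treatment of $L(\cE_\omega)$. First, you assert that $\Sec_{r,\omega}(E)-\Sec_{r-1}(E)$ is affine but only gesture at a proof (``projective-bundle resolution and a divisor-class computation''); the resolution $\EE\to\Sec_{r,\omega}(E)$ is not an isomorphism, so affineness does not transfer from an open subset of $\EE$, and this statement carries both your quasi-affineness and your non-affineness arguments. It is true, and the clean proof is via Room's pencil (\cref{cor.room}): set-theoretically $\Sec_{r-1}(E)=\Sec_{r,\omega}(E)\cap\Sec_{r,x'}(E)$ for any inequivalent member of the pencil, so $\Sec_{r,\omega}(E)-\Sec_{r-1}(E)$ is a closed subscheme of the affine variety $\PP^{n-1}-\Sec_{r,x'}(E)$. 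Second, and more seriously, your normality claim for $\Sec_{r,\omega}(E)$ rests on the assertion that its singular locus is $L(\cL_\omega\oplus\cL_\omega)$, of codimension two. What the paper proves (\cref{prop.ssnsmth}) is only that this is the singular locus of the \emph{open} part $\Sec_{r,\omega}(E)-\Sec_{r-1}(E)$; a priori the projective hypersurface $\Sec_{r,\omega}(E)$ could be singular along all of $\Sec_{r-1}(E)$, which has codimension \emph{one}, and then $R_1$ and hence normality would fail and Hartogs with it. The claim is in fact true: writing $\Sec_{r,\omega}(E)=\{\det\Phi=0\}$ as in \cref{prop.room}, one computes $d(\det\Phi)_\xi(v)=v(uw)$ where $u,w$ span the left and right kernels of $\Phi_\xi$, so the singular locus is exactly $\{\rank\Phi\le r-2\}=\{\xi:\dim\Hom(\cL_\omega,m(\xi))\ge 2\}$, and a degree count shows this meets $\Sec_{r-1}(E)-\Sec_{r-2}(E)$ trivially (there $\dim\Hom(\cL_\omega,\cO_E(D')\oplus\cL(-D'))=0+1$). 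But that argument is nowhere in your write-up or in the paper, so as it stands the non-affineness of $L(\cE_\omega)$ is not established. Note that the paper's Neeman-based argument needs only the affineness of $\Sec_{r,\omega}(E)-\Sec_{r-1}(E)$ and the codimension-two complement, and avoids normality altogether; you could adopt it and discharge the second gap entirely.
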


Symplectic leaves are, by definition, smooth \cite[Thm.~4.1, p.~63]{clm}. 
However, before we can show that the $L(\cE)$'s are the symplectic leaves we first need to know 
they are smooth, and that is not a priori obvious. However, \Cref{prop.8.15.GvB-H} shows that $\Sec_{d,x}(E) -\Sec_{d-1}(E)$
is smooth if either $d<\frac{n}{2}$ or, when $n=2r$, if $d=\frac{n}{2}$ and $x \notin \Omega$; i.e., that 
$L(\cE_{d,x})$ is smooth when either $d<\frac{n}{2}$ or $d=\frac{n}{2}$ and $x \notin \Omega$. 

\begin{theorem}
[\cref{th:smth}]
Every $L(\cE)$ is a smooth quasi-projective variety of even dimension. 
\end{theorem}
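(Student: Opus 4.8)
The plan is to use the structural results already established to reduce to a short explicit list of leaves, to dispose of all but one family by citing known smoothness statements, and to treat the remaining family via an explicit quotient. By \cref{thm.good.Es.intro} the nonempty $L(\cE)$ are exactly those attached to the $\cE$ listed there, and by \cref{thm.main} each such leaf coincides, as a subset of $\PP_\cL$, with one of the following: $\Sec_{d,x}(E)-\Sec_{d-1}(E)$ for $x\in E$ and $1\le d\le r$ (omitting the triple $n=2r$, $d=r$, $x\in\Omega$); $\PP_\cL-\Sec_r(E)$ when $n=2r+1$; or, when $n=2r$ and $\omega\in\Omega$, the smooth locus, respectively the singular locus, of $\Sec_{r,\omega}(E)-\Sec_{r-1}(E)$. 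Since $\Sec_d(E)$ and $\Sec_{d,x}(E)$ are closed in the projective variety $\PP_\cL$, every one of these sets is locally closed in $\PP_\cL$, hence quasi-projective; and by \cref{prop.dim.LE} their dimensions are $2d-2$, $2r=n-1$, $n-2$ and $n-4$, all even since $n=2r$ in the last three cases. So only smoothness remains.

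For smoothness I would go case by case. The leaf $L(\cE_o)=\PP_\cL-\Sec_r(E)$ is an open subvariety of $\PP_\cL$, hence smooth. Each $L(\cE_{d,x})=\Sec_{d,x}(E)-\Sec_{d-1}(E)$ with $d<\tfrac n2$, or with $n=2r$, $d=r$ and $x\notin\Omega$, is smooth by \cref{prop.8.15.GvB-H}. When $n=2r$ and $\omega\in\Omega$, the leaf $L(\cE_\omega)$ is, by \cref{thm.main}, the smooth locus of the quasi-projective variety $\Sec_{r,\omega}(E)-\Sec_{r-1}(E)$, so it is smooth by definition. The only leaves not yet covered are $L(\cE_{r,\omega})=\Sing\bigl(\Sec_{r,\omega}(E)-\Sec_{r-1}(E)\bigr)$ for $\omega\in\Omega$ and $n=2r$; as a singular locus is not smooth in general, these require a genuine argument, and this is the crux.

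For $L(\cE_{r,\omega})$ I would use, as in the proof of \cref{pr:affine-reductive}, the isomorphism $\cE_{r,\omega}\cong\cL_\omega\oplus\cL_\omega$: a point of $L(\cE_{r,\omega})$ is the class of an extension $0\to\cO_E\to\cL_\omega\oplus\cL_\omega\xrightarrow{q}\cL\to 0$, so dually $q$ ranges over the set $U$ of $\cO_E$-module surjections $\cL_\omega\oplus\cL_\omega\to\cL$ (whose kernel is automatically $\cong\cO_E$ by a determinant count), an open subset of $\Hom_{\cO_E}(\cL_\omega\oplus\cL_\omega,\cL)\cong H^0(E,\cL_\omega)^{\oplus 2}\cong\CC^{2r}$ whose complement is the hypersurface of pairs of sections of $\cL_\omega$ with a common zero, so that $U$ is affine. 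The group $\mathrm{GL}_2=\Aut_{\cO_E}(\cL_\omega\oplus\cL_\omega)$ acts freely on $U$ by precomposition: an automorphism $\phi$ fixing $q$ has $(\phi-\id)(\cL_\omega\oplus\cL_\omega)\subseteq\ker q\cong\cO_E$, and $\Hom_{\cO_E}(\cL_\omega,\cO_E)=0$ forces $\phi=\id$. Every $\mathrm{GL}_2$-orbit in $U$ has dimension $4=\dim\mathrm{GL}_2$, so no orbit can degenerate inside $U$; hence the orbits are closed, the geometric quotient $U/\mathrm{GL}_2$ exists, and, $\mathrm{GL}_2$ being a special group, $U\to U/\mathrm{GL}_2$ is a Zariski-locally trivial principal $\mathrm{GL}_2$-bundle. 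Therefore $U/\mathrm{GL}_2$ is smooth (and affine, recovering \cref{pr:affine-reductive}) of dimension $2r-4=n-4$, and the natural bijective morphism $U/\mathrm{GL}_2\to L(\cE_{r,\omega})\subseteq\PP_\cL$, which sends the class of $q$ to the extension class of the above sequence, identifies the two provided one checks it is a locally closed immersion. That last check — injectivity on tangent spaces, a computation in the deformation theory of extensions — is the only genuinely non-routine step; freeness of the action, closedness of orbits, and specialness of $\mathrm{GL}_2$ are standard, and the deformation-theoretic input is essentially what was assembled in proving \cref{thm.main} and \cref{pr:affine-reductive}.
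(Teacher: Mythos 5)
Your reduction to the explicit list of leaves, the quasi-projectivity (local closedness in $\PP_\cL$), the parity of the dimensions, and the two easy smoothness cases ($L(\cE_o)$ is open in $\PP_\cL$; $L(\cE_{d,x})$ is smooth by \cref{prop.8.15.GvB-H}) all match the paper. But the two remaining families for $n=2r$, $\omega\in\Omega$ are the entire content of the theorem, and neither is actually proved. For $L(\cE_\omega)$ your argument is circular: \cref{thm.main} (i.e.\ \cref{th:nsplitev}) only identifies $L(\cE_\omega)$ as the set of points of $\Sec_{r,\omega}(E)-\Sec_{r-1}(E)$ lying on a \emph{unique} $r$-secant; the statement that this set is the \emph{smooth locus} is \cref{prop.ssnsmth}, whose proof obtains the equality $\Sing(\Sec_{r,\omega}(E)-\Sec_{r-1}(E))=L(\cL_\omega\oplus\cL_\omega)$ precisely by invoking the smoothness of $L(\cE_\omega)$ from \cref{th:smth}. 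Without that input one only knows the smooth locus is \emph{contained in} $L(\cE_\omega)$. The paper closes this case by rerunning the incidence-variety argument of \cref{prop.8.15.GvB-H}: the unique-secant property gives injectivity of $\nu|_Y$, and a cohomological computation (the kernel of $\tilde\phi$ at $\xi$ would force a second $r$-secant through $\xi$) gives injectivity of $d\nu$. Some version of that argument is unavoidable here.

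For $L(\cL_\omega\oplus\cL_\omega)$ you build essentially the same smooth model as the paper ($U/\mathrm{GL}_2$, an open subset of $\GG(2,H^0(E,\cL_\omega))$), but you then explicitly leave unchecked that the bijective morphism onto $L(\cL_\omega\oplus\cL_\omega)\subseteq\PP_\cL$ is an immersion. That is not a deferrable routine step: a bijective morphism from a smooth variety onto a locally closed subset need not have smooth image (the normalization of a cuspidal cubic is the standard counterexample), and injectivity of the differential is exactly where all the work lies. The paper does this via the incidence correspondence $\KK$ in \cref{eq:defk}, the computation of $p\cdot H^0(E,\cL_\omega)$ in the two cases (common zeros or not), and \cref{cor:tgorb}, which identifies $\ker(d\Phi_\pi)$ with the tangent space to the $\Aut(\cE)$-orbit so that the induced map on the quotient has injective differential. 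As written, your proposal proves smoothness of a variety mapping bijectively to the leaf, not of the leaf itself.
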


\subsection{Some special cases}
\label{ssect.low.dim}

\begin{enumerate}
\item
  By definition,  $\Sec_0(E)=\varnothing$, $\Sec_1(E)=E$, $\Sec_2(E)$ is the union of all the secant lines.
\item  
If $d \ge \frac{n}{2}$, then $\Sec_d(E)=\PP_\cL$ .
\item
Each $x \in E$ is a 0-dimensional leaf, $\{x\}=L(\cE_{1,x})$ where $\cE_{1,x}\cong \cO_E(x) \oplus \cL(-x)$.  
\item 
  When $n=3$, the $0$-dimensional leaves are the individual points of $E$  and $\PP_\cL-E$ is the unique 2-dimensional leaf. 
\item 
  When $n=4$, $\{\Sec_{2,x}(E) \; | \; x \in E\} = \{\text{quadrics containing $E$}\}$.
  The union of these quadrics is $\PP_\cL=\PP^3$. If $E \subseteq \PP^3$ is such that 
  $\cO(1)|_E \cong \cO_E(4\!\cdot\!(0))$,   then $\Omega=E[2]$,   $\Sec_{2,x}(E)=\Sec_{2,-x}(E)$  for all $x$, and $\Sec_{2,x}(E)$ 
   is singular if and only if  $x \in E[2]$. 
When $n=4$, the symplectic leaves are the individual points of $E$, the vertices of the four singular quadrics that contain $E$, 
  the complement of $E \cup\{\text{the vertex}\}$  in each singular quadric, and the complement of $E$ in each smooth quadric that contains $E$.
We refer the reader to \cite[\S 3]{LS93} for some elementary arguments regarding the lines on the quadrics that contain $E$, and to \cite[pp.~27--29]{Hulek86} for more about $E \subseteq \PP^3$.
\item
When  $n \ge 5$, the only 0-dimensional leaves are the points of $E$.
\item
When $n=5$,  $\Sec_2(E)$ is a quintic hypersurface (see, e.g., \cite[Prop.~VIII.2.2(iii),  \S VIII.2.4, \S VIII.2.5]{Hulek86}).  
Its complement, $\PP^4 -\Sec_2(E)$, is the unique 4-dimensional leaf; it is $L(\cE_o)$ where $\cE_o$ is the unique-up-to-isomorphism
rank-two indecomposable locally free $\cO_E$-module whose determinant is isomorphic to $\cL$. The 3-dimensional variety $\Sec_2(E)-E$ is 
the disjoint union of the (2-dimensional) partial secant varieties $\Sec_{2,x}(E)-E$, indexed by $x \in E$.
$\Sec_{2,x}(E)$ is the union of the secant lines $\overline{y,x-y}$, $y \in E$. 
\end{enumerate}

\subsection{Content and Methods}

Secants to elliptic curves, other curves, and higher dimensional varieties  have been well-studied for over a century.
We have little to add, but collect the necessary preliminaries in \Cref{sect.secants}. Perhaps the only new result in that section 
is \Cref{prop.8.15.GvB-H} which shows that almost all the partial secant varieties $\Sec_{d,x}(E)-\Sec_{d-1}(E)$ are smooth, and therefore complex manifolds.
This is a necessary prerequisite for showing that those $\Sec_{d,x}(E)-\Sec_{d-1}(E)$ are symplectic leaves.
The only exceptions are $\Sec_{r,\omega}(E)-\Sec_{r-1}(E)$ when $n=2r$ and $\omega \in\Omega$: those are the union of two symplectic leaves (see the last paragraph of this subsection).

In \Cref{sect.rank.2.buns} we determine those $\cE \in \Bun(2,\cL)$ for which $L(\cE) \ne \varnothing$.
A key step in doing that is the examination of the subvariety $X(\cE) \subseteq \Hom(\cO_E,\cE)$ that consists of the homomorphisms 
$f:\cO_E \to \cE$ whose cokernel is invertible, and hence isomorphic to $\cL$. Every $\xi \in L(\cE)$ is of the form 
$0 \to \cO_E \to \cE \to \coker(f) \to 0$ for some  $f \in X(\cE)$. 
In terms of of vector bundles, $X(\cE)$ consists of the homomorphisms $f:\cO_E \to \cE$ that
correspond to embeddings of the trivial bundle $E \times \CC$ in the rank-two vector bundle $\VV(\cE)$  that corresponds to $\cE$.
The left action of $\Aut(\cE)$ on $ \Hom(\cO_E,\cE)$ leaves $X(\cE)$ stable and the fibers of the map $X(\cE) \to L(\cE)$ are the 
$\Aut(\cE)$-orbits. 

In \Cref{subse:smth} we will show that  $L(\cE)$ is the geometric quotient $X(\cE)/\Aut(\cE)$. 
The proof that  $L(\cE)$ is the geometric quotient $X(\cE)/\Aut(\cE)$, and smooth, involves the examination of various tangent spaces and differentials.
Those results are collected in \Cref{sect.symp.leaves} so as not to interrupt the  main ideas.
We end \Cref{subse:smth} with \Cref{prop.ssnsmth} which clarifies the words ``almost all'' that appear in the first paragraph of this section: it 
shows that if $n=2r$ and $\omega \in \Omega$, then the singular locus of $\Sec_{r,\omega}(E)-\Sec_{r-1}(E)$  is 
$L(\cL_\omega \oplus \cL_\omega)$ and its smooth locus is $L(\cE_\omega)$. 

\Cref{sect.symp.leaves} completes the proof that the $L(\cE)$'s are the symplectic leaves.

It is easy to see that $\Aut(\cE)$ acts freely on $X(\cE)$ so $\dim L(\cE)=n-\dim \Aut(\cE)$. 
It is easy to compute $\Aut(\cE)$ and hence $\dim L(\cE)$ for the relevant $\cE$'s (those in \Cref{ssect.assump}).
Several other properties of $L(\cE)$ follow from the fact that $L(\cE) \cong X(\cE)/\Aut(\cE)$.

\Cref{sec.xi.D.bar.sec} concerns the relation between the homological leaves $L(\cE)$ and the secant and partial secant varieties.
The key to relating these is \cref{prop.bertram}, which was inspired by an erroneous remark in \cite{Ber92} 
(see \cref{re:bertr}). \Cref{prop.bertram} says that, for an effective divisor $D'$ on $E$, a point $\xi \in \PP_\cL$ belongs to $\overline{D}$ for some effective $D\sim D'$ if and only if there is a non-zero map $m(\xi) \to \cO_E(D')$.
Furthermore, if $D$ has minimal degree such that $\xi \in \overline{D}$, then there is an {\it epimorphism} $m(\xi) \to  \cO_E(D)$ (\cref{thm.bertram}). 

The leaves $L(\cE_\omega)$ and $L(\cL_\omega \oplus \cL_\omega)$, which occur when $n$ is even and $\omega \in \Omega$, present some 
special difficulties. When $n=4$, these leaves lie on the singular quadrics that contain $E$; indeed, the union of those four quadrics is the disjoint 
union of $E$ and the eight leaves of the form $L(\cE_\omega)$ and $L(\cL_\omega \oplus \cL_\omega)$. 
Whenever $n=2r$, these eight leaves play a similar role: the singular locus of $\Sec_{r,\omega}(E)-\Sec_{r-1}(E)$ is  
$L(\cL_\omega \oplus \cL_\omega)$ and its smooth locus is $L(\cE_\omega)$.

\subsection{Definitions of $\Pi_E$}
Almost 25 years after $\Pi_E$ was first defined, Polishchuk  gave a simple explicit formula for $(\PP^{n-1},\Pi_E)$  in  \cite[Thm.~A(1)]{pol2022}, namely 
\begin{equation}
\label{PB.Polishchuks.formula}
  \{x_i,x_j\} \; =\;  \Omega_{ij}, \qquad 1 \le i,j \le n,
\end{equation}
 where $x_1,\ldots,x_n$ are coordinate functions on $\PP_\cL$ and the $\Omega_{ij}$'s 
 are determined by the defining equation(s) for the largest secant variety 
 $\Sec_d(E) \subsetneq \PP^{n-1}$ in the following way.
When $n=2r+1$, $\Sec_r(E)$ is given by a degree-$n$ equation, $F=0$ say, and $(\Omega_{ij})$ is the unique-up-to-scaling skew-symmetric
$n \times n$ matrix of quadratic forms such that the relations 
\begin{equation*}
\sum_{i=1}^n \frac{\partial F}{\partial x_i} \Omega_{ij} \;=\; 0
\end{equation*}
generate the module of syzygies for $\big(\frac{\partial F}{\partial x_1}, \ldots, \frac{\partial F}{\partial x_n}\big)$. 
That the module of syzygies can be generated in this way is due to Fisher \cite[Thm.~1.1]{fis18}.
When $n=2r+2$, $\Sec_{r}(E)$  has codimension two: it is given by equations $F_1=F_2=0$  with $\deg F_1=\deg F_2=r+1$, 
and,  by \cite[Thm.~1.1]{fis18}, 
the module of syzygies between the columns of the $2 \times n$ matrix 
$\big(\frac{\partial F_a}{\partial x_1}, \ldots, \frac{\partial F}{\partial x_n}\big)$ is generated by the relations
\begin{equation*}
\sum_{i=1}^n \frac{\partial F_a}{\partial x_i} \Omega_{ij} \;=\; 0,
\end{equation*}
for a unique-up-to-scaling skew-symmetric  $n \times n$ matrix $(\Omega_{ij})$ of quadratic forms.
The starting point for the formula in \cref{PB.Polishchuks.formula} is a result of Hua and Polishchuk, \cite[Lem.~2.1, Prop.~5.8]{HP3}, which shows 
that the value, $\Pi_\xi$, of $\Pi_E$ at a point $\xi \in \PP_\cL$ is be given by an explicit triple Massey product. 

We never use an explicit formula for the Poisson bracket. Indeed, the Poisson bracket only enters our arguments in the proof of 
\Cref{th:leleaves} where we use a property of it that was proved by Hua and Polishchuk in \cite[Prop.~2.3]{HP3}. 

Our  interest in $(\PP^{n-1},\Pi_E)$ arises from Feigin and Odesskii's original definition of $\Pi_E$ 
in terms of a family of non-commutative deformations, $Q_{n,1}(E,\eta)$,  of the polynomial ring on $n$ variables. 
Here $\eta$ is a complex number and $Q_{n,1}(E,0)$ is the polynomial ring on $n$ variables. The precise definition of $Q_{n,1}(E,\eta)$ need not concern us 
here, so we refer the interested reader to Feigin and Odesskii's papers \cite{FO-Kiev,FO89,FO98}, and to our papers \cite{CKS1,CKS4,CKS6}. 
It suffices to say that  since $Q_{n,1}(E,\eta)$ is a  quotient of the free algebra $\CC \langle x_1,\ldots,x_n \rangle$ modulo quadratic relations, 
the formula 
\begin{equation}\label{eq:defn.PB}  
  \{x_i,x_j\} \; :=\; \lim_{\eta \to 0} \frac{[x_i,x_j]}{\eta}
\end{equation}
extends to a Poisson bracket on the polynomial ring $Q_{n,1}(E,0)$. Since $[x_i,x_j]$ is homogeneous of degree two, so is  $\{x_i,x_j\}$. It follows that the Poisson bracket on $Q_{n,1}(E,0)$ induces a Poisson structure on $\Proj Q_{n,1}(E,0) \cong \PP^{n-1}$.  
In \cite[\S5.2]{HP1}, Hua and Polishchuk give explicit formulas for $\{x_i,x_j\}$ and $\{t_i,t_j\}$ which, like the defining relations for $Q_{n,1}(E,\eta)$,
involves theta functions, where $t_i=\frac{x_i}{x_0}$. We do not need these so we omit them.

\subsection{The singular locus of $\Sec_{d}(E)$}

Although we do not need it, we note that if $d < \frac{n}{2}$, then
\begin{equation}
\label{eq:sing.secd}
\Sing ( \Sec_d(E)) \; = \; \Sec_{d-1}(E),
\end{equation}
where $\Sing(-)$ denotes the singular locus.
At first, the history of this result confused us. The fact that $\Sec_d(E) - \Sec_{d-1}(E)$ is smooth follows from a result proved in 1992 by Bertram
\cite[Corollary on p.~440]{Ber92}.\footnote{That corollary concerns smooth projective curves of arbitrary genus.}
In 2004, Bertram's result was proved  again in \cite[Prop.~8.15]{gvb-hul} but that paper doesn't cite \cite{Ber92}.
The first paragraph in \cite{fis10} 
 says that  \cite{gvb-hul} proved the equality in \cref{eq:sing.secd} even though they only proved that 
$\Sing(\Sec_d(E)) \subseteq \Sec_{d-1}(E)$. The paper \cite{fis10} ``replaces'' the unpublished paper \cite{fisher2006}; 
the first paragraph in \cite{fis10}, which is essentially the same as that in  \cite{fisher2006}, states \cref{eq:sing.secd}; 
 just after \cite[Thm.~1.4]{fisher2006} it is said that \cref{eq:sing.secd} is a consequence of  \cite[Thm.~1.4]{fisher2006}
 but, paraphrasing Fisher, the proof is omitted because it is closely related to that in \cite[Prop.~8.15]{gvb-hul}. 
Fortunately, (the very simple argument at) \cite[p.~18]{copp}  shows that if $X \subseteq \PP^{n-1}$ is any smooth irreducible projective variety such that
 $\Sec_d(X) \ne \PP^{n-1}$ and $X$ is not contained in any hyperplane, then
\begin{equation}
\label{eq:sing.secd.2}
\Sec_{d-1}(X)   \;  \subseteq \; \Sing ( \Sec_d(X)). 
\end{equation}
The equality  \cref{eq:sing.secd} follows from this and Bertram's result.

We will later use the fact that the argument at \cite[p.~18]{copp} shows more than \cref{eq:sing.secd.2}: 
it shows that the dimension of the tangent space to $\Sec_{d}(X)$ at a 
point $x \in \Sec_{d-1}(X)$ is $n-1$. 

\subsection{Acknowledgements}

A.C. was partially supported by NSF grant DMS-2001128.
R.K. was supported by JSPS KAKENHI Grant Numbers JP16H06337, JP17K14164, JP20K14288, and JP21H04994, Leading Initiative for Excellent Young Researchers, MEXT, Japan, and Osaka Central Advanced Mathematical Institute: MEXT Joint Usage/Research Center on Mathematics and Theoretical Physics JPMXP0619217849.
S.P.S. thanks JSPS (Japan Society for the Promotion of Science) for  financial support, and Osaka University and Osaka City University for their hospitality in August 2019 and March 2020 when some of this work was done.

We thank Tom Fisher, Mihai Fulger, S\'andor Kov\'acs, Sasha Polishchuk, and Brent Pym for useful conversations.

\section{Preliminary results about secants to $E \subseteq \PP_\cL$}
\label{sect.secants}

Secant varieties to $E \subseteq \PP^{n-1}$ have been well-studied. Partial secant varieties, not so much.
 For the most part, this section collects known results and presents them
in a form that is useful for this paper. The only new result is \Cref{prop.8.15.GvB-H} which shows that certain ``partial secant varieties'' are smooth.

\subsection{The notation $n$, $E$, $\cL$, $H$, and $\Omega$}
\label{ssect.setup}

For the rest of the paper, we fix an integer $n\geq 3$, a degree-$n$ elliptic normal curve $E \subseteq \PP^{n-1}$ over $\CC$, and define $\cL:=\cO_{\PP^{n-1}}(1)|_{E}$, which is an invertible $\cO_{E}$-module of degree $n$. Always, we make the identifications 
\begin{equation*}
\PP^{n-1} \; =\; \PP H^0(E,\cL)^* \; =  \; \PP \Ext^1(\cL,\cO_E) \; = :\; \PP_\cL.
\end{equation*}

Symmetric powers $E^{[d]}$, secant varieties $\Sec_{d}(E)$, their subvarieties $E^{[d]}_{x} \subseteq E^{[d]}$ and $\Sec_{d,x}(E)\subseteq \Sec_{d}(E)$, 
and related notations, such as the linear span $\overline{D}$ of an effective divisor $D$ and the summation map $\sigma:E^{[d]}\to E$, are defined in \cref{ssect.secant.vars}.

We also fix an effective divisor $H$ such that $\cO_E(H) \cong \cL$, 
and define 
\begin{equation*}
\Omega  \;   := \; \{\omega \in E \; | \; 2\omega=\s(H)\}.
\end{equation*}
Clearly, $\Omega$ is a coset of the $2$-torsion subgroup $E[2]=\{x\in E\;|\; 2x=0\}$, and does not depend on the choice of $H$.

\subsection{The linear span of effective divisors on $E$}
\label{sect.div.E}

Since $E$ is a smooth curve, every non-zero ideal in $\cO_E$ is a product of maximal ideals in a unique way.  
It follows that the map \begin{equation*}
  \{\text{length-$d$ subschemes of $E$}\} \; \longrightarrow \; E^{[d]}
\end{equation*}
that sends $\Spec(\cO_E/\fm_{x_1}^{r_1}\cdots \fm_{x_t}^{r_t})$ to the divisor $\sum_{i=1}^t r_i (x_i)$ is injective; it is obviously surjective, hence bijective.
Accordingly, we do not distinguish between closed subschemes of $E$ having length $d$, effective divisors of degree $d$, and points on $E^{[d]}$. 

Let $H$ be as in \cref{ssect.setup}, and set $z:=\s(H)$.  
Scheme-theoretic intersections of $E$ with the hyperplanes in $\PP_\cL$ are linearly equivalent to one another so all of them belong 
to $E^{[n]}_{z}$; every $D$ in $E^{[n]}_{z}$ arises in this way, and the morphism
$( \PP_\cL)^\vee \to E^{[n]}_{z}$, $L \mapsto E \cap L$, is an isomorphism whose inverse  is the map $D \mapsto \overline{D}$. 

Linear subspaces of $ \PP_\cL$ of dimension $d$ are called {\sf $d$-planes}, or {\sf planes} if we do not specify $d$.

\begin{proposition}\label{prop.sec.low.deg}
  If $d \le n-2$ and  $D \in E^{[d]}$, then $\overline{D}$ is the unique $(d-1)$-plane $L$ such that $E \cap L=D$.\footnote{A proof of a slightly weaker 
 version of this result,  which can easily be adapted to prove the proposition, can be found in \cite[Lem.~IV.1.1, p.~32]{Hulek86}.
 See, also, \cite[Lem.~13.2]{gvb-hul}.}
\end{proposition}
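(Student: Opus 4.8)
The plan is to prove the two assertions separately: first that $E\cap\overline D=D$ as schemes, and then that $\overline D$ has dimension exactly $d-1$; uniqueness of the plane with the stated intersection property will follow immediately from the second assertion together with the definition of $\overline D$ as the \emph{smallest} linear span containing $D$. Throughout, recall that $\cL=\cO_{\PP_\cL}(1)|_E$ has degree $n$, so $h^0(E,\cL)=n$ and $h^0(E,\cM)=\deg\cM$ for every invertible $\cO_E$-module $\cM$ of degree between $1$ and $n$ (by Riemann--Roch, since $h^1=0$ in that range). The key point is the exact sequence $0\to\cI_D\cdot\cL\to\cL\to\cL|_D\to 0$, where $\cI_D$ is the ideal sheaf of the length-$d$ subscheme $D$; since $\deg(\cI_D\cdot\cL)=n-d\ge 2>0$, we have $h^1(E,\cI_D\cdot\cL)=0$, so $H^0(E,\cI_D\cdot\cL)\to H^0(E,\cL)$ is injective with cokernel of dimension $d$, i.e.\ the sections of $\cL$ vanishing on $D$ form a subspace $W\subseteq H^0(E,\cL)$ of codimension exactly $d$.

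Now translate this into projective geometry. The linear span $\overline D$ is cut out by the hyperplanes through $D$; under the identification $\PP_\cL=\PP H^0(E,\cL)^*$, hyperplanes through $D$ correspond exactly to lines in the subspace $W$ above, so $\overline D=\PP(W^\perp)$ has $\dim\overline D=\dim W-1=(n-d)-1$, hence as a subspace of $\PP^{n-1}$ it has \emph{codimension} $d$, i.e.\ dimension $n-1-d$. Wait --- this is not $d-1$, so the next step is to see that actually $\overline D$ must be described the other way: $\overline D$ is the span of the $d$ points (with multiplicity) of $D$, so $\dim\overline D\le d-1$, while the codimension count just given says $\dim\overline D=n-1-d$. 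These are compatible only if $d-1\ge n-1-d$ is not what we want; rather, the correct reading is that the $d$ points of $D$ impose independent conditions on hyperplanes precisely because $h^0(\cI_D\cdot\cL)=n-d$, which forces $\dim\overline D=d-1$ exactly (the span of $d$ points has dimension $\le d-1$ always, and the independence of conditions is the reverse inequality $\dim\overline D\ge d-1$). So I would phrase it cleanly as: the evaluation/restriction map $H^0(E,\cL)\to\cL|_D\cong\CC^d$ is \emph{surjective} (its kernel $W$ has codimension $d$ by the $h^1$-vanishing above), which is exactly the statement that $D$ imposes independent linear conditions, equivalently $\dim\overline D=d-1$.

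For $E\cap\overline D=D$ scheme-theoretically: the inclusion $D\subseteq E\cap\overline D$ holds by construction. For the reverse, suppose $E\cap\overline D=D'$ with $D'\supseteq D$ an effective divisor (it is effective of some degree $d'\ge d$; note $E\cap\overline D$ is a proper closed subscheme of $E$ since $\overline D\ne\PP_\cL$, as $\dim\overline D=d-1\le n-3<n-1$). Every hyperplane containing $\overline D$ contains $D'$, so $W=H^0(E,\cL(-D))$ is contained in $H^0(E,\cL(-D'))$; but $\dim H^0(E,\cL(-D'))=n-d'\le n-d=\dim W$ when $d'\le n$, forcing $d'=d$ and $D'=D$ (using that $\cL(-D')$ still has positive degree $n-d'\ge 1$, which holds as long as $d'\le n-1$; the case $d'=n$ is excluded because then $\cL(-D')\cong\cO_E$ gives $h^0=1$, contradicting $h^0\ge\dim W=n-d\ge 2$ for $d\le n-2$). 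Finally, uniqueness: if $L$ is any linear subspace with $E\cap L=D$, then $L$ contains the span of the points of $D$, so $L\supseteq\overline D$; but then $D=E\cap L\subseteq E\cap\overline D=D$ forces $E\cap L=E\cap\overline D$, and since $L$ and $\overline D$ are both spanned by (scheme-theoretic) subsets of $E\cap L=E\cap\overline D=D$ --- more precisely $\overline D\subseteq L$ and $\dim L$ cannot exceed $d-1$ because $E\cap L=D$ imposes the same $d$ independent conditions --- we get $L=\overline D$. The main obstacle is the scheme-theoretic (as opposed to set-theoretic) bookkeeping in the last two steps: one must be careful that ``$E\cap\overline D$'' and the conditions imposed are handled at the level of ideal sheaves and lengths, not just points, which is why the uniqueness-of-ideal-factorization remark preceding the proposition (every nonzero ideal of $\cO_E$ is a unique product of maximal ideals) is invoked to identify length-$d$ subschemes with degree-$d$ effective divisors.
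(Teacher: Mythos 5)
The paper does not actually prove this proposition; it cites it as classical (Hulek, and Graf v.\ Bothmer--Hulek). Your argument is the standard cohomological one that those references use --- $h^1(E,\cI_D\cL)=0$ because $\deg(\cI_D\cL)=n-d\ge 2$, so $D$ imposes independent conditions on $|\cL|$ --- and, read charitably, it is correct and complete, including the scheme-theoretic bookkeeping for $E\cap\overline D=D$ and the degree dichotomy $d'\le n-1$ versus $d'\ge n$.

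That said, the middle of your second paragraph contains a computation that is simply wrong and then retracted in real time: you write $\dim\overline D=\dim\PP(W^\perp)=\dim W-1=(n-d)-1$. The slip is that $W^\perp$ is the annihilator of $W$ inside $H^0(E,\cL)^*$, so $\dim W^\perp=n-\dim W=d$ and $\dim\PP(W^\perp)=d-1$ immediately; there is no tension to resolve and no need for the ``these are compatible only if\dots'' detour. Please delete that passage and state the correct identity once: $\overline D=\PP\bigl(\mathrm{Ann}(W)\bigr)$ with $\dim\mathrm{Ann}(W)=d$ by the surjectivity of $H^0(E,\cL)\to H^0(D,\cL|_D)$. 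The uniqueness step can likewise be compressed: once $\dim\overline D=d-1$ is known, any $(d-1)$-plane $L$ with $E\cap L=D$ contains $\overline D$ by minimality of the span, and equality of dimensions forces $L=\overline D$; the extra discussion of how many conditions $E\cap L=D$ imposes on $L$ is not needed.
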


The case $d=n-1$ is a little different.

\begin{proposition}
  If $D \in E^{[n-1]}$, then $\overline{D}$ is the unique hyperplane $L$ such that $D \subseteq E \cap L$.
It follows that $E \cap  \overline{D} = D+(x)$ where $x= \s(H)-\s(D)$.
\end{proposition}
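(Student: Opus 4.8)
The plan is to leverage the previous proposition (the $d \le n-2$ case) together with a Riemann--Roch / degree count on the elliptic curve $E$. Let $D \in E^{[n-1]}$, so $\deg D = n-1$. The key point is that $\overline{D}$ cannot be all of $\PP_\cL$: indeed, if $\overline{D} = \PP_\cL$, then $D$ would impose fewer than $n-1$ conditions on the hyperplanes containing it, but $H^0(E,\cL(-D)) = H^0(E, \cL \otimes \cO_E(-D))$ has dimension $1$ by Riemann--Roch, since $\cL(-D)$ has degree $n - (n-1) = 1 > 0$ and $E$ has genus one, so $h^0 = 1$ and (by Serre duality or the non-vanishing of degree) $h^1 = 0$. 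Dually this says the hyperplanes in $\PP_\cL = \PP H^0(E,\cL)^*$ that contain $D$ form a $\PP^0$, i.e. there is a \emph{unique} hyperplane $L$ with $D \subseteq E \cap L$. So the first step is this Riemann--Roch computation showing $h^0(\cL(-D)) = 1$, giving both the existence and the uniqueness of such a hyperplane, and hence that $\overline D$ equals that hyperplane (a hyperplane is a $(n-2)$-plane, consistent with the footnote's statement that $\dim \overline D = n-2$).

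Next I would identify $E \cap \overline{D}$ precisely. Since $\overline{D} = L$ is a hyperplane, $E \cap L$ is a hyperplane section of $E \subseteq \PP_\cL$, hence an effective divisor of degree $n$ in the linear system $|\cL|$; concretely, as recalled in \S\ref{sect.div.E}, every hyperplane section lies in $E^{[n]}_z$ with $z = \s(H) = \s(\cL)$. We have $D \subseteq E \cap L$ and both are effective divisors, so $E \cap L = D + D'$ for some effective divisor $D'$ of degree $1$, i.e. $D' = (x)$ for a single point $x \in E$. Taking $\sigma$ of both sides and using that $\sigma(E \cap L) = \s(H)$ (since $\cO_E(E\cap L) \cong \cL \cong \cO_E(H)$ and linearly equivalent divisors have the same image under $\sigma$, as noted in \S\ref{ssect.secant.vars}), we get $\s(H) = \s(D) + x$, hence $x = \s(H) - \s(D)$. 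This gives the claimed formula $E \cap \overline{D} = D + (x)$ with $x = \s(H) - \s(D)$.

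The only genuinely delicate point, and the one I would be most careful about, is the \emph{scheme-theoretic} nature of the intersection $E \cap \overline{D}$ and of the containment $D \subseteq E \cap L$: a priori $D$ is a length-$(n-1)$ subscheme and we must know that the scheme-theoretic hyperplane section $E \cap L$ \emph{contains} $D$ as a subscheme, not merely set-theoretically or as divisors with multiplicity. This is where one uses that $E$ is a smooth curve, so all the relevant subschemes are effective Cartier divisors and the bijection between length-$d$ subschemes and points of $E^{[d]}$ recalled at the start of \S\ref{sect.div.E} applies; the statement ``$D \subseteq E \cap L$'' then just means the divisor $E \cap L$ dominates the divisor $D$ coefficient-wise, which is automatic once $L \supseteq \overline D \supseteq D$ and $\deg(E \cap L) = n$. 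One should also double-check that $\overline D$ genuinely has dimension exactly $n-2$ (not smaller): if it had dimension $\le n-3$ it would be contained in a $(n-2)$-plane $L'$ which is a hyperplane with $E \cap L' \supseteq D$, and then $E\cap L'$ has degree $n$ but $\overline D \subsetneq L'$ would force $\overline D$ to miss part of $E \cap L'$; more simply, $\dim \overline D \ge \deg D - 1 = n-2$ always holds (a degree-$k$ effective divisor spans a plane of dimension at least... hmm, actually one needs $D$ in general position), so the cleanest route is: $\overline D \subseteq L$ and $\overline D \not\subseteq$ any $(n-3)$-plane because $E\cap L = D + (x)$ has $\overline{E \cap L} = L$ (as $E \cap L$ is a hyperplane section and $E$ is nondegenerate, $n$ points on a nondegenerate elliptic normal curve that form a hyperplane section span that hyperplane), combined with the fact that $\overline D$ and $\overline{D+(x)}$ can differ by at most the one point $x$ — in fact one shows $\overline{D} = \overline{D + (x)} = L$ by noting $x \in \overline D$: since $\cO_E(D+(x)) \cong \cL$ but $\cO_E(D)\not\cong\cL$ (degree $n-1 \ne n$), adding $(x)$ does not increase the span. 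I would present this last chain carefully as it is exactly the content distinguishing $d = n-1$ from $d \le n-2$.
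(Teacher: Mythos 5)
Your argument is correct, and it is the standard one; the paper in fact states this proposition without proof (only the preceding $d\le n-2$ case carries a reference), so there is no authorial proof to compare against, but the Riemann--Roch computation $h^0(\cL(-D))=1$ giving a unique hyperplane $L$ with $D\subseteq E\cap L$, followed by the degree count $E\cap L=D+(x)$ and $\s(E\cap L)=\s(H)$, is exactly what is intended. One remark: your final paragraph is more convoluted than necessary. Since $\overline{D}$ is by definition the smallest linear subspace whose scheme-theoretic intersection with $E$ contains $D$, and any linear subspace of codimension $\ge 2$ lies in infinitely many hyperplanes, each of which would then meet $E$ in a divisor containing $D$, the uniqueness of $L$ already forces $\dim\overline{D}=n-2$ and hence $\overline{D}=L$; there is no need for the auxiliary argument that $x\in\overline{D}$ or that $\overline{D+(x)}=\overline{D}$.
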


If $D_1$ and $D_2$ are effective divisors on $E$ we define
\begin{align*}
\gcd(D_1,D_2)  &\; :=\; D_1 \cap D_2,
\\
\lcm(D_1,D_2)  &\; :=\; D_1+D_2 - D_1 \cap D_2.
\end{align*}
If $L_1$ and $L_2$ are linear subspaces of $\PP_\cL$ we define
\begin{equation*}
\langle L_1,L_2 \rangle  \; :=\;  \text{the smallest linear subspace that contains $L_1 \cup L_2$}.
\end{equation*}

\begin{lemma}
  [Fisher]
  \cite[Lem.~2.6]{fis10} \cite[Lem.~9.4]{fis18}
  \label{le:fisher}
  If $D$, $D_1$, and $D_2$ are effective divisors on $E$,
  then  
  \begin{enumerate}
  \item\label{item.le.fisher.dim} 
    $\dim \overline{D} =
    \begin{cases}
      \deg D -1 & \text{if $ \deg D<n$,}
      \\
      n-2 & \text{if $D \sim H$,}
      \\
      n-1 & \text{otherwise.}
    \end{cases}
    $  
  \item\label{item.le.fisher.span} 
  $\langle \,  \overline{D_1},\overline{D_2}  \, \rangle = \overline{\lcm(D_1,D_2)}$.  
    $\phantom{\Big\vert}$
  \item\label{item.le.fisher.cap} 
    $\overline{D_1}\cap \overline{D_2}=  \overline{\gcd(D_1,D_2)}$   if $\deg(\lcm(D_1,D_2)) \le n$ and $\lcm(D_1,D_2) \not\sim H$.
  \end{enumerate}
\end{lemma}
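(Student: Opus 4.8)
The plan is to reduce all three parts to linear-algebra statements about the subspaces
$$W(D) \;:=\; H^0(E,\cL(-D)) \;\subseteq\; H^0(E,\cL)$$
of sections vanishing on an effective divisor $D$ (equivalently, on the corresponding length-$\deg D$ subscheme of $E$). Since $E\subseteq\PP_\cL=\PP H^0(E,\cL)^*$ is embedded by the complete linear system $|\cL|$, one has $H^0(\PP_\cL,\cO_{\PP_\cL}(1))=H^0(E,\cL)$, and a hyperplane $V(s)$ contains $D$ scheme-theoretically precisely when $s\in W(D)$. Hence $\overline{D}$ is the common zero locus $V(W(D))$, and $\dim\overline{D}=n-1-\dim W(D)$. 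So part (1) amounts to computing $\dim W(D)=h^0(\cL(-D))$.

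First I would prove part (1) by Riemann--Roch and Serre duality on $E$ (using $\omega_E\cong\cO_E$). The line bundle $\cL(-D)$ has degree $n-\deg D$. If $\deg D<n$ this degree is positive, so $h^1(\cL(-D))=0$ and $h^0(\cL(-D))=n-\deg D$, giving $\dim\overline{D}=\deg D-1$. If $\deg D=n$ then $\cL(-D)$ has degree $0$, hence is trivial exactly when $D\sim H$; in that case $h^0=1$ and $\dim\overline{D}=n-2$, and otherwise $h^0=0$ and $\dim\overline{D}=n-1$. If $\deg D>n$ then $\cL(-D)$ has negative degree, $h^0=0$, so $\dim\overline{D}=n-1$, and $D\not\sim H$ automatically; these last two situations are exactly the ``otherwise'' case. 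A useful byproduct is that, comparing $\dim W(D)$ with $n-1-\dim\overline{D}$, the subspace $W(D)$ is the \emph{full} space of linear forms vanishing on $\overline{D}$, i.e.\ the linear ideal of the linear subspace $\overline{D}$.

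Parts (2) and (3) then follow from the standard facts that, for linear subspaces $L_1,L_2$ of a projective space, the linear ideal of $\langle L_1,L_2\rangle$ is the intersection of the linear ideals of $L_1$ and $L_2$, and $V(U_1)\cap V(U_2)=V(U_1+U_2)$. Together with the byproduct above and $\overline{D_i}=V(W(D_i))$ this gives
$$\langle\overline{D_1},\overline{D_2}\rangle=V\bigl(W(D_1)\cap W(D_2)\bigr),\qquad \overline{D_1}\cap\overline{D_2}=V\bigl(W(D_1)+W(D_2)\bigr).$$
A pointwise computation of vanishing orders identifies $W(D_1)\cap W(D_2)$ with $W(\lcm(D_1,D_2))$ --- a section vanishing to order $\ge a_i$ at a point, where $a_i$ is the multiplicity there in $D_i$, vanishes to order $\ge\max\{a_1,a_2\}$, and conversely --- which proves part (2). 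For part (3) it remains to show $W(D_1)+W(D_2)=W(\gcd(D_1,D_2))$; ``$\subseteq$'' is clear, and for ``$\supseteq$'' I would use the short exact sequence of $\cO_E$-modules
$$0\longrightarrow\cL(-\lcm(D_1,D_2))\xrightarrow{\ s\mapsto(s,-s)\ }\cL(-D_1)\oplus\cL(-D_2)\xrightarrow{\ (s,t)\mapsto s+t\ }\cL(-\gcd(D_1,D_2))\longrightarrow 0,$$
whose exactness is a one-line local check using the same vanishing-order bookkeeping. Its long exact cohomology sequence exhibits the cokernel of $W(D_1)\oplus W(D_2)\to W(\gcd(D_1,D_2))$ as a subspace of $H^1(\cL(-\lcm(D_1,D_2)))$. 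Since $\deg\cL(-\lcm(D_1,D_2))=n-\deg\lcm(D_1,D_2)\ge 0$ by hypothesis, and when this degree is $0$ the hypothesis $\lcm(D_1,D_2)\not\sim H$ makes $\cL(-\lcm(D_1,D_2))$ a nontrivial degree-zero line bundle, this $H^1$ vanishes on $E$; hence the map is surjective and part (3) follows.

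The argument is largely routine. The two points needing care are the scheme-theoretic bookkeeping translating intersections and spans of the planes $\overline{D_i}$ into $\gcd$ and $\lcm$ of divisors, and the observation --- which is the real content of part (3) --- that the hypotheses ``$\deg\lcm(D_1,D_2)\le n$'' and ``$\lcm(D_1,D_2)\not\sim H$'' are exactly what forces $H^1(\cL(-\lcm(D_1,D_2)))=0$; without them the sequence of global sections need not be right-exact and the equality $\overline{D_1}\cap\overline{D_2}=\overline{\gcd(D_1,D_2)}$ genuinely fails.
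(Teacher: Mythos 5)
Your proof is correct and complete. The paper does not prove this lemma at all --- it simply cites Fisher --- so there is no in-text argument to compare against; your reduction of $\overline{D}$ to $V\bigl(H^0(E,\cL(-D))\bigr)$, the Riemann--Roch/Serre-duality computation for (1), and the short exact sequence $0\to\cL(-\lcm(D_1,D_2))\to\cL(-D_1)\oplus\cL(-D_2)\to\cL(-\gcd(D_1,D_2))\to 0$ with the vanishing of $H^1(\cL(-\lcm(D_1,D_2)))$ under the stated hypotheses is exactly the standard route (and essentially Fisher's own), with the hypotheses of part (3) entering precisely where you say they do.
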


\begin{proposition}
\label{prop.fish}
Let  $D_1$ and $D_2$ be effective divisors on $E \subseteq \PP^{n-1}$.  
\begin{enumerate}
\item\label{item.prop.fish.le}
If $\deg D_1+\deg D_2 <n $, then $D_1 \cap D_2=\varnothing$ if and only if $   \overline{D_1}\cap \overline{D_2} =\varnothing$.
\item\label{item.prop.fish.eq}
If $\deg D_1+\deg D_2 =n $ and $\lcm(D_1,D_2) \not\sim H$, then $D_1 \cap D_2=\varnothing$ if and only if $   \overline{D_1}\cap \overline{D_2} =\varnothing$.
\end{enumerate}
\end{proposition}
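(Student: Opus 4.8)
The plan is to deduce \Cref{prop.fish} directly from Fisher's \Cref{le:fisher}, treating the two cases in parallel since the only difference is the degree hypothesis and the extra condition $\lcm(D_1,D_2)\not\sim H$. In both cases $\deg(\lcm(D_1,D_2))\le \deg D_1+\deg D_2\le n$, and in case \cref{item.prop.fish.eq} we have moreover assumed $\lcm(D_1,D_2)\not\sim H$ (this is automatic in case \cref{item.prop.fish.le} since $\deg\lcm(D_1,D_2)<n$ while $\deg H=n$), so the hypotheses of \cref{item.le.fisher.cap} are met and we get $\overline{D_1}\cap\overline{D_2}=\overline{\gcd(D_1,D_2)}=\overline{D_1\cap D_2}$ in both situations.

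From here the argument is essentially formal. First I would handle the forward direction: if $D_1\cap D_2=\varnothing$, then $\gcd(D_1,D_2)=0$, the empty divisor, so $\overline{D_1}\cap\overline{D_2}=\overline{0}$; and $\overline{0}=\varnothing$ because the empty divisor spans the empty linear subspace (there is no point to contain). Hence $\overline{D_1}\cap\overline{D_2}=\varnothing$. Conversely, if $D_1\cap D_2\ne\varnothing$, pick any point $x$ in its support; then $x\in D_1$ and $x\in D_2$, so $x\in\overline{D_1}\cap\overline{D_2}$ by definition of the linear span (each $\overline{D_i}$ contains $D_i$, hence contains $x$), and therefore $\overline{D_1}\cap\overline{D_2}\ne\varnothing$. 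This gives the contrapositive of the reverse implication, so the two conditions are equivalent.

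The one point that needs a word of care — and the only place the degree hypotheses really do work — is the invocation of \cref{item.le.fisher.cap}: one must check $\deg(\lcm(D_1,D_2))\le n$ and $\lcm(D_1,D_2)\not\sim H$. For \cref{item.prop.fish.le} both are immediate from $\deg D_1+\deg D_2<n$. For \cref{item.prop.fish.eq} the degree bound is again immediate and the linear-equivalence condition is exactly the stated hypothesis. I expect no real obstacle here; the mild subtlety is simply making explicit that $\overline{D}=\varnothing$ when $D$ is the zero divisor and that $\overline{\gcd(D_1,D_2)}$ is by definition $\overline{D_1\cap D_2}$, so that \cref{item.le.fisher.cap} genuinely says what we want. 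One could also observe that the reverse implication ($\overline{D_1}\cap\overline{D_2}=\varnothing\implies D_1\cap D_2=\varnothing$) holds with no hypothesis at all, since $D_1\cap D_2\subseteq E\cap\overline{D_1}\cap\overline{D_2}$; it is only the forward implication that needs \Cref{le:fisher}.
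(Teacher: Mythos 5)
Your proof is correct and follows essentially the same route as the paper: both rest entirely on \cref{le:fisher}\cref{item.le.fisher.cap}, the identity $\overline{D_1}\cap\overline{D_2}=\overline{\gcd(D_1,D_2)}$, and the conventions $\gcd(D_1,D_2)=D_1\cap D_2$ and $\overline{0}=\varnothing$. The only (harmless) difference is organizational: the paper verifies the hypotheses of \cref{item.le.fisher.cap} only in the case $D_1\cap D_2=\varnothing$ (where $\lcm(D_1,D_2)=D_1+D_2$) and argues by contradiction, whereas you correctly observe that those hypotheses hold unconditionally under the stated degree assumptions, which lets you treat both implications at once.
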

\begin{proof}
\cref{item.prop.fish.le}
Certainly $D_1 \cap D_2 = \varnothing$ if $\overline{D_1}\cap \overline{D_2}\ne \varnothing$.

Suppose the reverse implication is false. Then there are effective divisors $D_1$ and $D_2$ such that $D_1 \cap D_2=\varnothing$ but  
$\overline{D_1}\cap \overline{D_2} \ne \varnothing$. 
Since $D_1 \cap D_2=\varnothing$, $\lcm(D_1,D_2)=D_1+D_2$. Hence $\deg(\lcm(D_1,D_2))<n$ so, by \cref{le:fisher}\cref{item.le.fisher.cap}, 
$\overline{D_1}\cap \overline{D_2}=  \overline{\gcd(D_1,D_2)} = \overline{0} = \varnothing$. This is a contradiction so we conclude that the 
proposition holds. 

\cref{item.prop.fish.eq}
Assume $\deg D_1+\deg D_2 =n $ and $\lcm(D_1,D_2) \not\sim H$.

Certainly, $\overline{D_1}\cap \overline{D_2} = \varnothing$ implies $D_1 \cap D_2 = \varnothing$.
Conversely, if $D_1 \cap D_2 = \varnothing$, then $\lcm(D_1,D_2)=D_1+D_2 \not\sim H$ so, by   \cref{le:fisher}\cref{item.le.fisher.cap},
$\overline{D_1}\cap \overline{D_2}=  \overline{\gcd(D_1,D_2)} = \overline{0} = \varnothing$ . 
\end{proof}

\begin{remark}
\label{rem.n=4.cones}
The hypothesis  in \cref{prop.fish}\cref{item.prop.fish.le} that $\deg D_1+\deg D_2 <n$  can't be improved: if $n=4$ and $L_1$ and $L_2$ are distinct  lines on a 
singular quadric that contains $E$, then there are degree-two divisors $D_1$ and $D_2$ such that 
$L_1=\overline{D_1}$ and $L_2=\overline{D_2}$, and $D_1 \cap D_2 = \varnothing \ne \overline{D_1} \cap \overline{D_2}$. 
More precisely, if $n=4$ and $\omega \in \Omega$, then $\Sec_{2,\omega}(E)$ is a rank 3 quadric containing $E$, and 
a point in $\Sec_{2,\omega}(E)-E$ lies on either a unique secant line or on infinitely many;
those that lie on infinitely many secant lines are the vertices of the four quadric cones $\Sec_{2,\omega}(E)$.
\reqed
\end{remark}

\subsection{The varieties $\Sec_{d,x}(E)$ and $\Sec_{d,x}(E)-\Sec_{d-1}(E)$}

The next result says that every point in $ \Sec_{d,x}(E) -\Sec_{d-1}(E)$ belongs to $\overline{D}$ for a unique $D \in E^{[d]}_x$
when $1 \le d <\frac{n}{2}$. 

\begin{proposition}
\label{cor.Sec'.dz.disj.union}
Let $x \in E$.   If either $d \in [1,\frac{n}{2})$ or $d=\frac{n}{2}$ and $x \notin \Omega$, then 
  \begin{equation}
  \label{eq:Sec.dx.disj.union}
    \Sec_{d,x}(E) -\Sec_{d-1}(E) \;=\; \text{the disjoint union}  \quad \bigsqcup_{D \in  E^{[d]}_x} (\overline{D}-\Sec_{d-1}(E)). 
  \end{equation}
\end{proposition}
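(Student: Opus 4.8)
The plan is to show that $\Sec_{d,x}(E) - \Sec_{d-1}(E)$ is the \emph{set-theoretic} union of the sets $\overline{D} - \Sec_{d-1}(E)$ as $D$ ranges over $E^{[d]}_x$, and then that this union is disjoint. The first, union, part is immediate from the definition \cref{eq:union.equiv.eff.divisors} of $\Sec_{d,x}(E)$ as $\bigcup_{D \in \sigma^{-1}(x)} \overline{D}$: intersecting both sides with the complement of $\Sec_{d-1}(E)$ gives exactly the right-hand side of \cref{eq:Sec.dx.disj.union} as a (not yet disjoint) union. So the entire content is the disjointness claim: if $D_1, D_2 \in E^{[d]}_x$ with $D_1 \neq D_2$, then $(\overline{D_1} - \Sec_{d-1}(E)) \cap (\overline{D_2} - \Sec_{d-1}(E)) = \varnothing$, i.e. $\overline{D_1} \cap \overline{D_2} \subseteq \Sec_{d-1}(E)$.

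First I would reduce to the greatest common divisor. Since $D_1, D_2 \in E^{[d]}_x = |D|$ are linearly equivalent of degree $d$, write $G := \gcd(D_1, D_2) = D_1 \cap D_2$, of degree $e := \deg G < d$ (strictly, because $D_1 \neq D_2$ and both have degree $d$), and consider $\lcm(D_1, D_2) = D_1 + D_2 - G$, of degree $2d - e$. The hypotheses are designed precisely so that \cref{le:fisher}\cref{item.le.fisher.cap} applies to the pair $(D_1, D_2)$: we must check $\deg \lcm(D_1, D_2) = 2d - e \le n$ and $\lcm(D_1, D_2) \not\sim H$. For the degree bound: $2d - e \le 2d \le n$ when $d \le n/2$, with equality $2d = n$ only forcing $e = 0$, which is fine. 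For the non-equivalence: $\sigma(\lcm(D_1,D_2)) = \sigma(D_1) + \sigma(D_2) - \sigma(G) = 2x - \sigma(G)$, and if this divisor had degree exactly $n$ and were $\sim H$ we would need $2x - \sigma(G) = \sigma(H)$ with $e = 0$, i.e. $2x = \sigma(H)$, i.e. $x \in \Omega$ — excluded by hypothesis in the boundary case $d = n/2$; when $d < n/2$ the degree $2d - e < n$ so the equivalence to $H$ cannot even be asked. Hence \cref{le:fisher}\cref{item.le.fisher.cap} gives
\begin{equation*}
\overline{D_1} \cap \overline{D_2} = \overline{\gcd(D_1, D_2)} = \overline{G}.
\end{equation*}

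It then remains to observe that $\overline{G} \subseteq \Sec_{d-1}(E)$. Indeed $\deg G = e \le d - 1$, so $G$ is contained in some effective divisor $G'$ of degree exactly $d - 1$ (add points of $E$), giving $\overline{G} \subseteq \overline{G'} \subseteq \Sec_{d-1}(E)$ by the definition \cref{eq:secant.union} of $\Sec_{d-1}(E)$. (If $e = d-1$ already, take $G' = G$; the inclusion $\overline{G} \subseteq \Sec_{d-1}(E)$ is then immediate.) This proves $\overline{D_1} \cap \overline{D_2} \subseteq \Sec_{d-1}(E)$, hence disjointness of the pieces after removing $\Sec_{d-1}(E)$, and completes the argument.

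The only delicate point — and the one I'd expect to be the main obstacle to get exactly right — is verifying the two side conditions of \cref{le:fisher}\cref{item.le.fisher.cap} uniformly across the range $1 \le d \le n/2$, in particular disentangling the boundary case $2d = n$: there one has $\deg \lcm(D_1,D_2) = n$ precisely when $\gcd(D_1,D_2) = 0$, and it is exactly in that sub-case that the hypothesis $x \notin \Omega$ is needed to rule out $\lcm(D_1,D_2) \sim H$. Everything else is bookkeeping with degrees and the summation map $\sigma$.
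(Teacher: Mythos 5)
Your proof is correct and follows essentially the same route as the paper's: both reduce disjointness to the computation $\overline{D_1}\cap\overline{D_2}=\overline{\gcd(D_1,D_2)}$ via \cref{le:fisher}\cref{item.le.fisher.cap}, with the hypothesis $x\notin\Omega$ entering only in the boundary case $2d=n$, $\gcd(D_1,D_2)=0$ to rule out $\lcm(D_1,D_2)\sim H$. The only (cosmetic) difference is that the paper argues by contradiction from a point $p$ in the intersection, whereas you directly show $\overline{D_1}\cap\overline{D_2}\subseteq\Sec_{d-1}(E)$.
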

\begin{proof}
Let $d \in [1,\frac{n}{2})$ and suppose there are distinct divisors $D_1,D_2 \in E^{[d]}_x$ such that 
\begin{equation*} 
\big(\overline{D_1}-\Sec_{d-1}(E) \big) \cap \big(\overline{D_2}-\Sec_{d-1}(E) \big) \, \ne \, \varnothing.
\end{equation*}
Let $p$ be a point in this intersection.  Since $d<\frac{n}{2}$, $\deg D_1+\deg D_2 <n$ whence  $p\in \overline{D_1} \cap \overline{D_2}= \overline{\gcd(D_1,D_2)}$;
hence $D_1 \cap D_2 \ne \varnothing$. In particular, $\deg(D_1 \cap D_2) \le d-1$, so $p \in \Sec_{d-1}(E)$. This contradicts the choice of $p$ so we conclude there can be no such $p$.
The equality in \cref{eq:Sec.dx.disj.union}  therefore holds.

Now assume that $d=\frac{n}{2}$ and $x \notin \Omega$. Suppose $D_1,D_2 \in E^{[n/2]}_x$ and 
\begin{equation*}
p \in \big(\overline{D_1}-\Sec_{\tfrac{n}{2}-1}(E) \big) \cap \big(\overline{D_2}-\Sec_{\tfrac{n}{2}-1}(E) \big).
\end{equation*}
Since $x \notin \Omega$, $D_1+D_2 \not\sim H$.  
Therefore $p \in \overline{D_1} \cap \overline{D_2}= \overline{\gcd(D_1,D_2)}$
 where the equality follows from   \cref{le:fisher}\cref{item.le.fisher.cap}.  
Since $p \not\in \Sec_{\tfrac{n}{2}-1}(E)$, $\deg(\gcd(D_1,D_2))= \frac{n}{2}$ from which it follows that $D_1=\gcd(D_1,D_2) =D_2$. Hence
the union in \cref{eq:Sec.dx.disj.union} is disjoint as claimed. 
\end{proof}

If $n$ is even, then \cref{cor.Sec'.dz.disj.union} fails when $d=\frac{n}{2}$  and $x \in \Omega$ -- see \cref{lem.fd.xi} and \cref{prop.ssnsmth}: 
points in the singular locus of $ \Sec_{d,x}(E) -\Sec_{d-1}(E)$ belong to a pencil of $d$-secants $\overline{D}$, $D \in E^{[d]}_x$.

\begin{corollary}
\label{cor.Sec.d.disj.union}
  If $d \in [1,\frac{n}{2})$, then 
  \begin{equation}
  \label{eq:Sec.d.disj.union}
    \Sec_{d}(E) -\Sec_{d-1}(E) \;=\; \text{the disjoint union}  \quad \bigsqcup_{x \in E} (\Sec_{d,x}(E)-\Sec_{d-1}(E)). 
  \end{equation}
\end{corollary}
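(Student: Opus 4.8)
The plan is to reduce \Cref{cor.Sec.d.disj.union} to \Cref{cor.Sec'.dz.disj.union} by first establishing the set-theoretic equality in \cref{eq:Sec.d.disj.union} and then arguing that the union on the right is disjoint. First I would write, directly from the definition in \cref{eq:secant.union} and \cref{eq:union.equiv.eff.divisors}, that $\Sec_d(E) = \bigcup_{D \in E^{[d]}} \overline{D} = \bigcup_{x \in E} \bigcup_{D \in \sigma^{-1}(x)} \overline{D} = \bigcup_{x \in E} \Sec_{d,x}(E)$, since the fibers $\sigma^{-1}(x)$ partition $E^{[d]}$. Subtracting $\Sec_{d-1}(E)$ from both sides gives $\Sec_d(E) - \Sec_{d-1}(E) = \bigcup_{x \in E}(\Sec_{d,x}(E) - \Sec_{d-1}(E))$ as sets, so only disjointness remains.

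For disjointness, suppose $x \neq y$ in $E$ and that some point $p$ lies in both $\Sec_{d,x}(E) - \Sec_{d-1}(E)$ and $\Sec_{d,y}(E) - \Sec_{d-1}(E)$. Then there are divisors $D_1 \in E^{[d]}_x$ and $D_2 \in E^{[d]}_y$ with $p \in \overline{D_1} \cap \overline{D_2}$ and $p \notin \Sec_{d-1}(E)$. Because $d < \frac{n}{2}$ we have $\deg D_1 + \deg D_2 = 2d < n$, so $\lcm(D_1,D_2) = D_1 \cap D_2 + (\text{the rest})$ has degree $< n$ and in particular $\lcm(D_1,D_2) \not\sim H$; hence \cref{le:fisher}\cref{item.le.fisher.cap} applies and gives $\overline{D_1} \cap \overline{D_2} = \overline{\gcd(D_1,D_2)}$. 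Since $p$ lies in this span, $\gcd(D_1,D_2) = D_1 \cap D_2$ is nonempty and $p \in \overline{D_1 \cap D_2}$; but $\deg(D_1 \cap D_2) \le d-1$ would force $p \in \Sec_{d-1}(E)$ unless $D_1 \cap D_2$ has degree exactly $d$, which would force $D_1 = D_1 \cap D_2 = D_2$ and hence $x = \sigma(D_1) = \sigma(D_2) = y$, a contradiction. In either case we contradict the choice of $x \neq y$ and $p \notin \Sec_{d-1}(E)$, so the union is disjoint.

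Alternatively — and perhaps more cleanly — I would simply invoke \Cref{cor.Sec'.dz.disj.union}, which already does the delicate intersection-theoretic bookkeeping: fix $x \in E$; every point of $\Sec_{d,x}(E) - \Sec_{d-1}(E)$ lies on $\overline{D}$ for a unique $D \in E^{[d]}_x$, and since $\sigma(D)$ recovers $x$, a point of $\Sec_d(E) - \Sec_{d-1}(E)$ that lies on some $\overline{D}$ with $\deg D = d$ determines $x = \sigma(D)$ uniquely. Combined with the set-theoretic equality from the first paragraph, this yields \cref{eq:Sec.d.disj.union}. I expect the only mild obstacle is making sure the degree-$d$ divisor witnessing membership in $\Sec_d(E) - \Sec_{d-1}(E)$ is genuinely of degree $d$ (not smaller) — but any point on $\Sec_d(E)$ not in $\Sec_{d-1}(E)$ by definition fails to lie on the span of any divisor of degree $\le d-1$, so the witness must have degree exactly $d$, and the argument goes through. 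This is essentially a corollary-level bookkeeping statement, so no real difficulty is anticipated.
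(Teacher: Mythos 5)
Your proof is correct and follows essentially the same route as the paper: the set-theoretic equality is immediate from the definitions, and disjointness is obtained by applying \cref{le:fisher}\cref{item.le.fisher.cap} to $\overline{D_1}\cap\overline{D_2}$ (valid since $\deg D_1+\deg D_2=2d<n$) and observing that $p\notin\Sec_{d-1}(E)$ forces $\deg(\gcd(D_1,D_2))=d$, hence $D_1=D_2$ and $x=y$. The alternative reduction to \Cref{cor.Sec'.dz.disj.union} in your last paragraph is also fine but adds nothing beyond the direct argument.
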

\begin{proof}
The left- and right-hand sides of \cref{eq:Sec.d.disj.union} are equal so we only need to check that the union on the right is disjoint.
Suppose $p \in  (\Sec_{d,x}(E)-\Sec_{d-1}(E)) \cap (\Sec_{d,y}(E)-\Sec_{d-1}(E))$. Then $p \in \overline{D}_1 \cap \overline{D_2}$ for some $D_1 \in E^{[d]}_x$ and 
some $D_2 \in E^{[d]}_y$. Since $\deg D_1+\deg D_2 <n$, $\overline{D_1} \cap \overline{D_2}= \overline{\gcd(D_1,D_2)}$. But $p \notin \Sec_{d-1}(E)$ so
$\deg(\gcd(D_1,D_2)) \ge d$; hence $\deg(\gcd(D_1,D_2)) =d$ and we conclude that $\gcd(D_1,D_2) =D_1=D_2$, whence $x=\s(D_1)=\s(D_2)=y$. The 
union is therefore disjoint.
\end{proof}

\begin{proposition}
\label{prop.dim.Sec.dz}
\label{prop.dim.Sec.dz.2}
\label{prop.S.dx.irred}
If $d \le \frac{n}{2}$, then $ \Sec_{d,x}(E)$ is irreducible of dimension 
$2d-2$ for all $x \in E$.\footnote{If $d>\tfrac{n}{2}$, then $\Sec_{d,x}(E)=\PP_{\cL}$ because $\Sec_{d,x}(E)\supseteq \Sec_{d-1}(E)=\PP_{\cL}$.}.
\end{proposition}
\begin{proof} 
Consider the incidence variety
$ \EE  := \{ (D,\xi) \in  E^{[d]}_x \times \PP_\cL   \; | \; \xi \in \overline{D}\} $
 and the projections
 \begin{equation}
 \label{eq:incid.var.EE}
 \xymatrix{
  & \EE \ar[dl]_\tau \ar[dr]^\nu
  \\
  E^{[d]}_x && \PP_\cL.
 }
 \end{equation}
 Since every $\overline{D}$ is a $\PP^{d-1}$ in $\PP_\cL$, $\EE$ is a $\PP^{d-1}$-bundle over $E^{[d]}_x\cong \PP^{d-1}$ and therefore smooth and irreducible 
 because  $E^{[d]}_x$ is. Since $\Sec_{d,x}(E)$ equals $\nu(\EE)$, it is also irreducible. Since $\dim E^{[d]}_x =d-1$,   
 $\dim \EE=2d-2$.  

If $1 \le d < \frac{n}{2}$, then  \cref{cor.Sec'.dz.disj.union} says that $\nu$ is 
injective on $\nu^{-1}( \Sec_{d,x}(E) - \Sec_{d-1}(E))$; i.e.,  $\nu$ is injective on a non-empty Zariski-open subset of 
$\EE$, so $\dim\Sec_{d,x}(E)=\dim\EE$.

If $n=2r$ is even, then  \cref{prop.room}\cref{item.room.eq} says that $\dim\Sec_{r,x}(E)=2r-2$.
\end{proof}

\subsubsection{Remark}
With regard to the last sentence in the proof of \Cref{prop.dim.Sec.dz}, \cref{cor.room} says that when $n=2r$, the varieties 
$\Sec_{r,x}(E)$, $x \in E$,  form a pencil of degree-$r$ hypersurfaces that contain $\Sec_{r-1}(E)$.
This is the generalization to higher dimensions of the familiar fact that the 
quartic elliptic normal curve $E \subseteq \PP^3$ is contained in a pencil of quadrics.

By \cite[Prop.~8.15]{gvb-hul}, $\Sec_{d}(E) - \Sec_{d-1}(E)$ is smooth when $d<\frac{n}{2}$. 
The proof of that result can be modified to give the next result.

\begin{proposition}
\label{prop.8.15.GvB-H}
Let $x \in E$. 
If $d<\frac{n}{2}$ or $d=\frac{n}{2}$ and $x \notin \Omega$, then $\Sec_{d,x}(E)-\Sec_d(E)$ is smooth.
\end{proposition}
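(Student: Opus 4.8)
\emph{(I read the displayed set as $\Sec_{d,x}(E)-\Sec_{d-1}(E)$, since $\Sec_{d,x}(E)\subseteq\Sec_d(E)$; this is what is used elsewhere in the paper.)}

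The plan is to realize $U:=\Sec_{d,x}(E)-\Sec_{d-1}(E)$ as the isomorphic image of a Zariski-open subset of the smooth incidence variety $\EE$ of \cref{eq:incid.var.EE}, by showing that the projection $\nu\colon\EE\to\PP_\cL$ is an immersion over the complement of $\Sec_{d-1}(E)$. Recall from the proof of \cref{prop.dim.Sec.dz} that $\EE=\{(D,\xi)\mid\xi\in\overline D\}$ is a $\PP^{d-1}$-bundle over $E^{[d]}_x$, hence smooth and irreducible of dimension $2d-2$, that $\nu(\EE)=\Sec_{d,x}(E)$, and that $\nu$ is proper since $\EE$ is closed in $E^{[d]}_x\times\PP_\cL$; note also that $U$ is a dense open subset of the irreducible variety $\Sec_{d,x}(E)$ by \cref{prop.dim.Sec.dz}.

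First I would record that $\nu_0:=\nu|_{\nu^{-1}(U)}\colon\nu^{-1}(U)\to U$ is finite and bijective: it is proper (a base change of $\nu$) and quasi-finite, hence finite, and it is bijective because every point of $U$ lies on a \emph{unique} $D\in E^{[d]}_x$, by \cref{cor.Sec'.dz.disj.union}. Granting that $d\nu$ is injective --- equivalently, that $\nu$ is unramified --- at every $(D,\xi)\in\EE$ with $\xi\notin\Sec_{d-1}(E)$, the proof concludes by a Nakayama argument: for $y\in U$ with unique preimage $z$, the local ring $\mathcal{O}_{\nu^{-1}(U),z}$ is finite over $\mathcal{O}_{U,y}$ and satisfies $\mathcal{O}_{\nu^{-1}(U),z}/\mathfrak m_y\mathcal{O}_{\nu^{-1}(U),z}=\kappa(z)=\CC$, so $\mathcal{O}_{U,y}\xrightarrow{\sim}\mathcal{O}_{\nu^{-1}(U),z}$; thus all local rings of $U$ are regular, $\nu_0$ is an isomorphism, and $U$ is smooth.

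So the crux is the injectivity of $d\nu_{(D,\xi)}$ for $\xi\notin\Sec_{d-1}(E)$. Since $\nu$ embeds each fibre $\overline D$ of $\tau$ linearly into $\PP_\cL$, its differential is injective on the vertical directions $\ker d\tau=T_\xi\overline D$; hence $\ker d\nu_{(D,\xi)}$ injects into $T_D E^{[d]}_x$ and is identified with the space of first-order deformations $D_\varepsilon$ of $D$ inside $E^{[d]}_x$ for which the fixed point $\xi$ remains on the span $\overline{D_\varepsilon}$ to first order. For reduced $D=(x_1)+\dots+(x_d)$, the span $\overline D$ is spanned in $H^0(E,\cL)^*$ by the evaluation functionals $e_1,\dots,e_d$ at $x_1,\dots,x_d$; since $\xi\notin\Sec_{d-1}(E)$, writing $\xi=\sum_i\lambda_ie_i$ forces $\lambda_i\ne0$ for every $i$ (otherwise $\xi\in\overline{D-(x_i)}\subseteq\Sec_{d-1}(E)$, by \cref{le:fisher}\cref{item.le.fisher.dim}). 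Deforming $x_i\mapsto x_i+\varepsilon v_i$ replaces $e_i$ by $e_i+\varepsilon w_i$ with $w_i$ in the two-dimensional plane $\widehat T_{x_i}E$ (the cone over the tangent line to $E$ at $x_i$), and the requirement that $\xi$ stay on $\overline{D_\varepsilon}$ becomes $\sum_i\lambda_iw_i\in\langle e_1,\dots,e_d\rangle$. Now $\langle\overline{2(x_1)},\dots,\overline{2(x_d)}\rangle=\overline{2D}$ by \cref{le:fisher}\cref{item.le.fisher.span}, and $\dim\overline{2D}=2d-1$ precisely under the hypothesis of the proposition --- the case $d=\tfrac n2$ requires $2D\not\sim H$, i.e.\ $x\notin\Omega$ --- by \cref{le:fisher}\cref{item.le.fisher.dim}; so the $2d$ vectors $e_1,\dots,e_d$ together with a transversal of $e_i$ in $\widehat T_{x_i}E$ form a basis of the cone over $\overline{2D}$. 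The membership $\sum_i\lambda_iw_i\in\langle e_1,\dots,e_d\rangle$ then forces, since each $\lambda_i\ne0$, the component of $w_i$ transverse to $e_i$ to vanish, i.e.\ $v_i=0$ for all $i$; hence $\ker d\nu_{(D,\xi)}=0$.

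The step I expect to be the real obstacle is the case of a non-reduced $D=\sum_ir_i(x_i)$: here one replaces each $e_i$ by an osculating flag $e_i^{(0)},\dots,e_i^{(r_i-1)}$ spanning $\overline{r_i(x_i)}$, checks that $\xi\notin\Sec_{d-1}(E)$ forces the top coefficient of $\xi$ along $e_i^{(r_i-1)}$ to be nonzero, and tracks how the osculating flags move under a first-order deformation of $D$ (a deformation brings in the next vector $e_i^{(r_i)}$ with coefficient proportional to that top coefficient), so that the same linear-independence argument inside the cone over $\overline{2D}$ --- whose dimension is $2d-1$ under the hypothesis, by \cref{le:fisher} --- again forces the deformation to be trivial. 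This bookkeeping is the only genuine subtlety; it is in essence the argument of \cite[Prop.~8.15]{gvb-hul}, which adapts once one notes that the dimension count for $\overline{2D}$ is exactly what the hypothesis ``$d<\tfrac n2$, or $d=\tfrac n2$ and $x\notin\Omega$'' guarantees.
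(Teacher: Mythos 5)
Your proof is correct and follows the same global strategy as the paper: both work with the incidence variety $\EE$ of \cref{eq:incid.var.EE}, use \cref{cor.Sec'.dz.disj.union} to get injectivity of $\nu$ over the complement of $\Sec_{d-1}(E)$, and reduce everything to injectivity of $d\nu$ at points $(D,\xi)$ with $\xi\notin\Sec_{d-1}(E)$ (your finite-plus-Nakayama argument is just an unpacking of the criterion \cite[Cor.~14.10]{H92} that the paper cites). Where you diverge is in how that differential is analyzed. The paper identifies $T_DE^{[d]}_x$ with $\im(\a)$ in the cohomology sequence \cref{eq:defn.alpha}, writes $d\nu$ via the normal-bundle diagram \cref{diag.replaces.gvb.diag.p53}, and reduces the key point to the surjectivity of $H^0(E,\cL(-D))\to H^0(D,\cL(-D)\otimes\cO_D)$, i.e.\ to $H^1(E,\cL(-2D))=0$; your version instead phrases the kernel of $d\nu$ as first-order motions of the osculating spans and reduces to $\dim\overline{2D}=2d-1$. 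These are the same condition (by Riemann--Roch, $\dim\overline{2D}=2d-1$ exactly when $h^1(\cL(-2D))=0$), and both formulations isolate correctly why the hypothesis ``$d<\tfrac n2$, or $d=\tfrac n2$ and $x\notin\Omega$'' is what is needed. Your explicit computation for reduced $D$ is sound (including the observation that $\xi\notin\Sec_{d-1}(E)$ forces all coefficients $\lambda_i\ne 0$), and your deferral of the non-reduced bookkeeping to \cite[Prop.~8.15]{gvb-hul} is no worse than what the paper itself does --- it likewise quotes that computation rather than redoing it. The one thing I would insist you make explicit in a final write-up is the osculating-flag step for non-reduced $D$ (or simply quote \cite{gvb-hul} for it outright, as the paper does), since as written it is a plan rather than a proof; but there is no gap in the ideas.
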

\begin{proof}
We will follow the proof of \cite[Prop.~8.15]{gvb-hul}, 
but our notation differs from theirs: their $d-1$ is our $d$, 
their $\cO_E(nA)$ is our $\cL$, their $\Lambda_D$ is our $\overline{D}$, their $S^dE$ is our 
$E^{[d]}$; we replace their incidence variety $I \subseteq E^{[d]}\times \PP_\cL $ by the smaller incidence variety $\EE$ in \cref{eq:incid.var.EE};
our morphism $\nu$ is the restriction of their $\nu$ to $\EE \subseteq I$.
 As noted in \Cref{prop.dim.Sec.dz}, $E^{[d]}_x\cong\PP^{d-1}$ and $\EE$ is a $\PP^{d-1}$-bundle over $E^{[d]}_x$, therefore smooth and
  irreducible of dimension $2d-2$.

Let $Y:=\nu^{-1}(\Sec_{d,x}(E) - \Sec_{d-1}(E))$.

Since $Y$ is an open subscheme of $\EE$ it is smooth. We will show that $ \Sec_{d,x}(E) - \Sec_{d-1}(E)$ is smooth by showing that the 
restriction $\nu|_Y:Y \to \Sec_{d,x}(E) - \Sec_{d-1}(E)$ is an isomorphism. 
By  \cite[Cor.~14.10]{H92}, to show that $\nu|_Y$ is an isomorphism it suffices to show it is injective
 and that $d\nu_y:T_y Y \to T_{\nu(y)}\PP_\cL$ is injective for all $y \in Y$. That is what we will do. 
 
First, $\nu|_Y$ is injective. 
 If $\nu(D_1,\xi_1)=\nu(D_2,\xi_2)$, then $\xi_1=\xi_2$ and this point belongs to  $\overline{D_1} \cap \overline{D_2}$ so,  
 by \Cref{cor.Sec'.dz.disj.union}, $D_1=D_2$.

The rest of this proof will show that $d\nu_y$ is injective for all $y \in Y$.

As usual, $\cT_X$  denotes the tangent sheaf on a variety $X$, and $T_xX$ the tangent space  at a point $x \in X$.

Let $D \in E^{[d]}_x$ and identify $\overline{D}$ with $\tau^{-1}(D)  \subseteq \EE$. 
Since $\EE$ is a bundle over $E^{[d]}_x$, there is 
 a commutative diagram  (cf., \cite[Prop.~8.15]{gvb-hul}) of sheaves on $\overline{D}\cong \PP^{d-1}$ with exact rows 
 \begin{equation}
 \label{diag.replaces.gvb.diag.p53}
 \xymatrix{
 0 \ar[r] & \cT_{\overline{D}} \ar@{=}[d] \ar[r] & \cT_{\EE} \big\vert_{\overline{D}} \ar[d]^{d\nu}  \ar[r] &  
 T_D E^{[d]}_x  \otimes \cO_{\overline{D}} \ar[d]^{\phi} \ar[r] &0 
 \\
 0 \ar[r] & \cT_{\overline{D}} \ar[r] & \cT_{\PP_\cL} \big\vert_{\overline{D}} \ar[r]   &  \cN_{\overline{D}/\PP_\cL}  \ar[r] &0
 }
 \end{equation}
 in which $\cN_{\overline{D}/\PP_\cL}$ denotes the normal bundle to $\overline{D}$ in $\PP_\cL$. 
 (In the top row $\overline{D}$ denotes $\tau^{-1}(D)$.  In the second row it denotes a linear subspace of $\PP_\cL$.)
 To show that $d\nu_y$ is injective when $y =(D,\xi) \in Y$ (and  either $d <\frac{n}{2}$ or $d=\frac{n}{2}$ and $x \notin \Omega$)
 it suffices to show that $\phi_\xi$ is injective.

As noted in \cite[Prop.~8.15]{gvb-hul}, there is a canonical isomorphism between $T_D E^{[d]}$ and $H^0(E,\cO_D(D))$. 
Because our concern is $T_D E^{[d]}_x $ rather than $T_DE^{[d]}$, we need a slightly different result. 

\underline{Claim:} $T_D E^{[d]}_x$, the tangent space at $D$ to the hypersurface $E^{[d]}_x \subseteq E^{[d]}$, is the hyperplane in $T_D E^{[d]}$
that  is the image of the map $\a$ in the long exact cohomology sequence
      \begin{equation}
      \label{eq:defn.alpha}
        0 \, \to \, H^0(E,\cO_E)\,\to  \,H^0(E,\cO_E(D)) \, \stackrel{\a}{\longrightarrow} \, H^0(E,\cO_D(D)) \, (=T_D E^{[d]})  \, \stackrel{\b}{\longrightarrow} \, 
        H^1(E,\cO_E) \, \to \, 0.
      \end{equation}
\underline{Proof:} 
(This is a special case of \cite[Exer.~2.5(a), p.~17]{hrt_def}.)   
As in \cite[Prop.~2.6, p.~13]{hrt_def}, for example, $H^1(E,\cO_E)$ is the set of  isomorphism classes of infinitesimal deformations of a line bundle on 
$E$. On the other hand, $T_D E^{[d]}=H^0(E,\cO_D(D))$ is the set of infinitesimal deformations of $D$ as a 
subvariety of $E^{[d]}$, and  $\b$ is the map that sends a deformation of $D$ to the corresponding deformation of $\cO_E(D)$.
Hence $\ker(\b)$ is the set of infinitesimal deformations of $D$ that {\it do not change the isomorphism class of the corresponding line bundle};
 but the isomorphism class of $\cO_E(D)$ is completely determined by $\s(D)$, which is $x$,
so $\ker(\b)$ consists of the infinitesimal deformations of $D$ {\it as a subvariety of $E^{[d]}_x$}.
But the  infinitesimal deformations of $D$ as a subvariety of $E^{[d]}_x$ are in natural bijection with the elements of $T_D E^{[d]}_x$, so $\ker(\b)$, which is 
$\im(\a)$, equals $T_D E^{[d]}_x$. 
 $\lozenge$
 
As is well-known, 
 if $L$ is a linear subspace of a projective space $\PP$ and $\cI_L$  the ideal in $\cO_{\PP}$ vanishing on $L$, 
there is a canonical isomorphism $H^0(\PP,\cI_L(1))^* \otimes \cO_L(1) \to  \cN_{L/\PP}$. 
We will now describe $H^0(\PP,\cI_L(1))^*$ in terms of  $E$ when $L=\overline{D} \subseteq \PP_\cL$. 
The image of the natural map 
$H^0(\PP,\cI_L(1)) \to H^0(\PP,\cO_{\PP}(1))$ is the space of linear forms on $\PP$ that vanish on $L$. When $L=\overline{D} \subseteq \PP_\cL$, 
$H^0(\PP_\cL,\cO_{\PP_\cL}(1)) = H^0(E, \cL)$ and the condition that a linear form vanishes on $\overline{D}$ is the same as the condition that its 
restriction to $E$ vanishes on $D$, so the map $H^0(\PP_\cL,\cI_L(1)) \to H^0(\PP_\cL,\cO_{\PP_\cL}(1))$ is the same as the map
$H^0(E, \cL \otimes \cI_D) = H^0(E, \cI_D\cL) \to H^0(E, \cL)$, where $\cI_D$ is the ideal in $\cO_E$ vanishing on $D$. Hence the  map $\phi$ in \cref{diag.replaces.gvb.diag.p53} is 
 \begin{equation*}
 \phi:\im(\a) \otimes \cO_{\overline{D}} \, \longrightarrow \, \ \cN_{\overline{D}/\PP_\cL}
= H^0(E, \cI_D\cL)^* \otimes \cO_{\overline{D}} (1).
  \end{equation*} 
In particular, $\phi$ is the restriction to $\im(\a) \otimes \cO_{\overline{D}}$  of the map
 \begin{equation}
 \label{eq:phi}
 \xymatrix{
H^0(E,\cO_D(D)) \otimes \cO_{\overline{D}}  
\ar[rr] &&H^0(E, \cI_D\cL)^* \otimes \cO_{\overline{D}} (1),
}
  \end{equation} 
  that occurs in the third display on page 53 of  \cite{gvb-hul} (our $\cL$ replaces their $\cO_E(nA)$, our $\cI_D\cL$ replaces their $\cO_E(nA-D)$
  and  our $\phi$ is the restriction of their $d\nu'$).  
  Since  \cite{gvb-hul} proves \cref{eq:phi} is injective at all points in $\tau^{-1}(\overline{D})$ when $d<\frac{n}{2}$, we conclude that 
  $\phi$ is also injective at all such points  when $d<\frac{n}{2}$. 
  Hence $d\nu_y$ is injective when $d<\frac{n}{2}$. 
  
As is remarked in loc. cit.,   the map in \cref{eq:phi} is induced by the composition
 \begin{equation}
 \label{eq:phi.2}
 \xymatrix{
H^0(E,\cO_D(D)) \otimes H^0(E, \cI_D \cL)   
\ar[r] & H^0(E,\cO_D(D)) \otimes H^0(D, \cI_D \cL \otimes \cO_D)  \ar[r] & H^0(D, \cL \otimes \cO_D).
}
  \end{equation} 
  The second arrow in this composition comes from identifying $\cO_D(D)$ with $\cI_D^{-1} \otimes \cO_D$, then 
  using  the multiplication map  
$(\cI_D^{-1} \otimes \cO_D) \otimes_{\cO_D} (\cI_D \cL \otimes \cO_D) \to \cL \otimes \cO_D$ (which is an isomorphism).
The first arrow is induced by the restriction map $H^0(E, \cI_D \cL)  \to H^0(D, \cI_D \cL \otimes \cO_D)$.

 Now assume $d=\frac{n}{2}$ and $x \notin \Omega$. In this case we need an additional step before using the argument in \cite[Prop.~8.15]{gvb-hul}.
 The reason for this is that the proof in loc. cit. for $d<\frac{n}{2}$ uses the fact that the natural map 
 $H^0(E, \cI_D \cL) \to H^0(D,\cI_D \cL \otimes \cO_D)$ is surjective  
 (which is why the first arrow in \cref{eq:phi.2}, equivalently, the vertical arrow in the fourth display on page 53 of  \cite{gvb-hul},  
 is surjective)   when $d<\frac{n}{2}$
  (our $d$ is their $d-1$) but now we need this surjectivity when $d=\frac{n}{2}$ and $x \notin \Omega$. 
  
 To see that this map is surjective replace $\cI_D\cL$ by $\cL(-D)$ and consider the cohomology sequence associated to the sequence 
 $0 \to \cL(-2D) \to \cL(-D) \to \cL(-D) \otimes \cO_D \to 0$. If $d<\frac{n}{2}$, then $\deg \cL(-2D)>0$ so $H^1(E,\cL(-2D))=0$ 
 which implies that the map $H^0(E,\cL(-D)) \to  H^0(D,\cL(-D) \otimes \cO_D)$
  is surjective.  
  When $d=\frac{n}{2}$ and $x \notin \Omega$, $\deg \cL(-2D)=0$ but $ \cL(-2D) \not\cong \cO_E$ because $x \notin \Omega$ so, again, 
  $H^1(E,\cL(-2D))=0$; the map $H^0(E,\cL(-D)) \to  H^0(D,\cL(-D) \otimes \cO_D)$ is therefore surjective
  (in fact, an isomorphism)
and  the argument in \cite[Prop.~8.15]{gvb-hul} completes the proof.
\end{proof}

Given \cref{prop.8.15.GvB-H}, it is natural to ask the following question: when $n=2r$, is $\Sec_{r,\omega}(E)-\Sec_{r-1}(E)$ smooth when 
$\omega \in \Omega$? The answer is no: \Cref{th:nsplitev} shows that $\Sec_{r,\omega}(E)-\Sec_{r-1}(E)$ is the disjoint union of the 
homological leaves $L(\cE_\omega)$ and $L(\cL_\omega \oplus \cL_\omega)$;  
\Cref{th:smth} shows that $L(\cE_\omega)$ and $L(\cL_\omega \oplus \cL_\omega)$ are smooth; 
\Cref{prop.ssnsmth}
 shows that
\begin{equation*}
{\rm Sing} \big( \Sec_{r,\omega}(E)-\Sec_{r-1}(E)\big) \;=\; L(\cL_\omega \oplus \cL_\omega).
\end{equation*}

\subsection{Results of T.G. Room}

The next result, \cref{prop.room}, is due to Room \cite[9.22.1, 9.26.1]{room38}. 
The notation and terminology in Room's book is that of 1938; it will challenge the modern reader. 
Fortunately there are ``modern'' proofs of the results we need: 
see Fisher's unpublished paper \cite{fisher2006} and his published paper \cite{fis10}; see 
\cite[Lem.~7.1, Prop.~7.2]{fisher2006} and \cite[Lems~2.7 and 2.9]{fis10}.

The following terminology is taken from  \cite{fis10}.

A {\sf divisor pair} $(D_1,D_2)$ is a pair of effective divisors on $E$ such that $D_1+D_2 \sim H$.
Given a divisor pair $(D_1,D_2)$, let $\{e_i\}$ and $\{f_j\}$ be bases for $H^0(E,\cO_E(D_1))$ and $H^0(E,\cO_E(D_2))$, respectively,
and define
\begin{equation*} 
\Phi(D_1,D_2) \; :=\;  \text{the matrix whose $ij^{\rm th}$ entry is $\mu(e_i \otimes f_j)$}
\end{equation*}
 where  
\begin{equation*}
\mu:H^0(E,\cO_E(D_1)) \otimes H^0(E,\cO_E(D_2)) \, \longrightarrow \, H^0(E,\cL) 
\end{equation*}
is the multiplication map. Elements in $H^0(E,\cL)$ are linear forms on $\PP H^0(E,\cL)^* = \PP_\cL$, so the entries in $\Phi(D_1,D_2)$
are linear forms on $\PP_\cL$. 

Divisor pairs $(D_1,D_2)$ and $(D_1',D_2')$ are {\sf equivalent} if either
$D_1 \sim D_1'$ or $D_1 \sim D_2'$.
\reqed

\begin{proposition}
[Room]
\label{prop.room}
Let $(D_1, D_2)$ be a divisor pair and let $x_i:=\s(D_i)$; note that $x_1+x_2=\s(H)$.
\begin{enumerate}
  \item\label{item.room.leq} 
  If $d:=\deg D_1 \le \deg D_2$, then 
 \begin{equation*}
 \Sec_{d,x_1}(E) \;=\;  \bigcup_{D \sim D_1} \overline{D} \; = \; \bigl\{p \in \PP^{n-1} \; | \; \rank \Phi(D_1,D_2)_p < d\bigr\},
 \end{equation*} 
where $\Phi(D_1,D_2)_p$ denotes the evaluation of the matrix $\Phi(D_1,D_2)$ at the point $p$.\footnote{Of course, this ``value'' is 
only defined up to a non-zero scalar multiple but that does not affect its rank.}
  \item\label{item.room.eq}  
  If  $n=2r$ and $\deg D_1=\deg D_2=r$, then 
\begin{equation*}
\Sec_{r,x_1}(E) \;=\; \Sec_{r,x_2}(E)  \;=\;  \bigl\{p \in \PP^{n-1} \; \big\vert \; \det \Phi(D_1,D_2)_p =0 \bigr\}.
\end{equation*}
In particular, $\dim(\Sec_{r,x}(E))=n-2$. 
  \item\label{item.room.ineq}  
   If  $n=2r$ and $(D_1,D_2)$ and  $(D_1',D_2')$ are inequivalent divisor pairs such that all $D_i$'s and $D_i'$'s have degree $r$, then
  \begin{equation*}
\Sec_{r-1}(E) \;=\;  \bigl\{p \in \PP^{n-1} \; \big\vert \; \det\Phi(D_1,D_2)_{p} = \det\Phi(D_1',D_2')_{p} =0\bigr\}. 
\end{equation*} 
\end{enumerate}
\end{proposition}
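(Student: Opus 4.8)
The plan is to reduce the entire proposition to the modern references cited just before the statement — Fisher's \cite[Lems~2.7 and 2.9]{fis10} and \cite[Lem.~7.1, Prop.~7.2]{fisher2006} — and to check that the hypotheses there match the situation here, filling in the small gaps in notation translation. The conceptual heart of all three parts is the determinantal description of the secant loci coming from the matrix $\Phi(D_1,D_2)$ of linear forms: a point $p$ lies on $\overline{D}$ for some $D\sim D_1$ precisely when the $d$ rows (or columns) of $\Phi(D_1,D_2)$ evaluated at $p$ fail to be independent. So the first step is to make this local statement precise: fix a divisor pair $(D_1,D_2)$ with $\deg D_1=d\le \deg D_2$, choose bases $\{e_i\}$ of $H^0(\cO_E(D_1))$ and $\{f_j\}$ of $H^0(\cO_E(D_2))$, and observe that the rows of $\Phi(D_1,D_2)$ span, at a point $p$, the subspace of $H^0(E,\cL)$ consisting of linear forms vanishing on the span of $e_i$ — more precisely, $\rank\Phi(D_1,D_2)_p<d$ iff there is a nonzero linear combination $e=\sum \lambda_i e_i$ with $e\cdot f_j$ vanishing at $p$ for all $j$, i.e.\ with $p$ in the common zero locus of $\mu(e\otimes H^0(\cO_E(D_2)))$. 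Since $\cO_E(D_2)=\cL(-D_1)$ and the divisor of $e$ inside the linear system $|D_1|$ is some effective $D\sim D_1$, one checks $\mu(e\otimes H^0(\cO_E(D_2)))$ cuts out exactly $\overline{D}$ (here one uses that $H^0(\cO_E(D_2))$ generates $\cL(-D)$ away from $D$, which needs $\deg D_2=n-d\ge d$, i.e.\ $d\le n/2$ — exactly the standing hypothesis). This yields \cref{item.room.leq}, with the middle equality $\Sec_{d,x_1}(E)=\bigcup_{D\sim D_1}\overline{D}$ being \cref{eq:union.lin.equiv.divisors}.

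For \cref{item.room.eq}, when $n=2r$ and $\deg D_1=\deg D_2=r$ the matrix $\Phi(D_1,D_2)$ is square of size $r\times r$, so $\rank<r$ is the same as $\det=0$; applying \cref{item.room.leq} with the roles of $D_1$ and $D_2$ interchanged gives $\Sec_{r,x_1}(E)=\Sec_{r,x_2}(E)=\{\det\Phi=0\}$, and since this is a hypersurface (we must note $\det\Phi$ is not identically zero — otherwise $\Sec_{r,x_1}(E)=\PP^{n-1}$, contradicting irreducibility of dimension $\le n-2$, or one can cite that $E$ is nondegenerate so the linear forms $e_if_j$ cannot all share a common linear relation) its dimension is $n-2$, recovering the last assertion used in \Cref{prop.dim.Sec.dz}. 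For \cref{item.room.ineq}, the inclusion $\Sec_{r-1}(E)\subseteq\{\det\Phi(D_1,D_2)=\det\Phi(D_1',D_2')=0\}$ follows from \cref{eq:Y.sec.inclusions} (each $\Sec_{r,x}(E)\supseteq\Sec_{r-1}(E)$). For the reverse inclusion: if $p$ lies on both determinantal hypersurfaces then by \cref{item.room.eq} we get effective $D\sim D_1$ and $D'\sim D_1'$ with $p\in\overline D\cap\overline{D'}$; since $(D_1,D_2)$ and $(D_1',D_2')$ are inequivalent, $D\not\sim D'$ and $D\not\sim D_1'+D_2'-D'$, so $\lcm(D,D')\not\sim H$ and has degree $\le n$, whence by \cref{le:fisher}\cref{item.le.fisher.cap} $\overline D\cap\overline{D'}=\overline{\gcd(D,D')}$ with $\deg\gcd(D,D')\le r-1$ (it cannot be $r$, else $D=D'$); therefore $p\in\Sec_{r-1}(E)$. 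One subtlety to address: a priori $D$ and $D'$ need not be the unique such divisors through $p$, so one should argue that \emph{some} choice of $D,D'$ has $\gcd$ of degree $<r$; but if every $D\sim D_1$ through $p$ equalled every $D'\sim D_1'$ through $p$, the two pairs would be equivalent — contradiction — so one can pick $D\not\sim D'$.

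The step I expect to be the main obstacle is the careful verification in \cref{item.room.leq} that the common zero locus of $\mu(e\otimes H^0(\cO_E(D_2)))$ is precisely $\overline D$ and not something larger — equivalently, that the multiplication map $H^0(\cO_E(D_1))\otimes H^0(\cO_E(D_2))\to H^0(\cL)$ behaves as expected, which in turn rests on projective normality / nondegeneracy facts about $E\subseteq\PP^{n-1}$ and on the surjectivity $H^0(\cL(-D))\to H^0(\cL(-D)\otimes\cO_{D})$ already exploited in the proof of \Cref{prop.8.15.GvB-H}. The honest shortcut, which I would take in the write-up, is simply to cite \cite[Lems~2.7 and 2.9]{fis10} (and \cite[Lem.~7.1, Prop.~7.2]{fisher2006}) for \cref{item.room.leq,item.room.eq,item.room.ineq} after the notation dictionary above, since these are Fisher's "modern" reworkings of Room \cite[9.22.1, 9.26.1]{room38}, and to include only the short \cref{le:fisher}-based deduction of the reverse inclusion in \cref{item.room.ineq} as the one place where our $\lcm/\gcd$ formalism makes the argument cleaner than in the references.
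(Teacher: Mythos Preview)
Your proposal is correct and aligns with the paper's own proof, which simply cites Fisher: \cite[Lem.~2.7]{fis10} or \cite[Lem.~7.1]{fisher2006} for \cref{item.room.leq}, observes that \cref{item.room.eq} is the square-matrix special case of \cref{item.room.leq}, and cites \cite[Lem.~2.9]{fis10} or \cite[Prop.~7.2(ii)]{fisher2006} for \cref{item.room.ineq}. Your self-contained deduction of the reverse inclusion in \cref{item.room.ineq} via \cref{le:fisher}\cref{item.le.fisher.cap} is correct and goes beyond what the paper writes out; your ``subtlety'' paragraph is unnecessary, though, since inequivalence of the divisor pairs already forces $D\not\sim D'$ for \emph{any} choices of $D\sim D_1$ and $D'\sim D_1'$, so $D\ne D'$ and $\deg\gcd(D,D')\le r-1$ automatically.
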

\begin{proof}
	See \cite[Lem.~2.7]{fis10} or \cite[Lem.~7.1]{fisher2006} for \cref{item.room.leq}. As in the proof of \cite[Prop.~7.2(i)]{fisher2006}, \cref{item.room.eq} is a special case of \cref{item.room.leq}. See \cite[Lem.~2.9]{fis10} or \cite[Prop.~7.2(ii)]{fisher2006} for \cref{item.room.ineq}.
\end{proof}

\begin{corollary}
[Room]
\label{cor.room}
Assume $n=2r$. 
\begin{enumerate}
  \item 
  $\Sec_{r-1}(E)$ has codimension 2 in $\PP^{n-1}$ and is the intersection of two hypersurfaces of degree $r$.
  \item 
The varieties $\Sec_{r,x}(E)$, $x \in E$, form a pencil, parametrized by the projective line $E/\!\sim$ where $x \sim \s(H)-x$, 
of degree-$r$ hypersurfaces that contain $\Sec_{r-1}(E)$.
\end{enumerate} 
\end{corollary}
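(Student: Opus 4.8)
The strategy is to read everything off Room's \cref{prop.room}. For part (1): the dimension formula recalled in \cref{ssect.secant.vars} gives $\dim\Sec_{r-1}(E)=\min\{2r-3,\,n-1\}=n-3$, since $n=2r$, so $\Sec_{r-1}(E)$ has codimension $2$. Now fix two inequivalent divisor pairs $(D_1,D_2)$ and $(D_1',D_2')$ whose four components all have degree $r$; such pairs exist because $E$ is infinite (take $D_1$ of degree $r$, pick $D_2\in|H-D_1|$, nonempty as $\deg(H-D_1)=r>0$, and then choose $D_1'$ of degree $r$ with $\s(D_1')\notin\{\s(D_1),\,\s(H)-\s(D_1)\}$). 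By \cref{prop.room}\cref{item.room.ineq}, $\Sec_{r-1}(E)=V(F)\cap V(F')$ scheme-theoretically, where $F:=\det\Phi(D_1,D_2)$ and $F':=\det\Phi(D_1',D_2')$ are forms of degree $r$. Since the (irreducible) intersection has codimension $2$, the forms $F,F'$ form a regular sequence; hence $\Sec_{r-1}(E)$ is a complete intersection of the two degree-$r$ hypersurfaces $V(F)$ and $V(F')$, which is part (1).

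For part (2): by \cref{prop.room}\cref{item.room.eq}, for each $x\in E$ the variety $\Sec_{r,x}(E)$ equals $V(\det\Phi(D_1,D_2))$ for any divisor pair with $\s(D_1)=x$ and $\deg D_1=\deg D_2=r$, so it is a hypersurface of degree $r$, and the same proposition gives $\Sec_{r,x}(E)=\Sec_{r,\,\s(H)-x}(E)$. By the inclusions \cref{eq:Y.sec.inclusions}, every $\Sec_{r,x}(E)$ contains $\Sec_{r-1}(E)$. Because the ideal of the complete intersection $\Sec_{r-1}(E)$ is saturated, part (1) shows $H^0(\PP^{n-1},\cI_{\Sec_{r-1}(E)}(r))=\CC F\oplus\CC F'$, which is two-dimensional (if $F$ and $F'$ were proportional, $\Sec_{r-1}(E)=V(F)$ would be a hypersurface); hence the equation of every degree-$r$ hypersurface through $\Sec_{r-1}(E)$, in particular of every $\Sec_{r,x}(E)$, lies in the pencil $\cP:=\PP(\CC F\oplus\CC F')\cong\PP^1$. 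The assignment $x\mapsto[\Sec_{r,x}(E)]$ is a morphism $\phi:E\to\cP$ (the forms $\det\Phi(D_1,D_2)$ can be chosen in an algebraic family over $E$); it takes the distinct values $V(F)=\Sec_{r,\s(D_1)}(E)$ and $V(F')=\Sec_{r,\s(D_1')}(E)$, so its image is all of $\cP$. Thus the varieties $\Sec_{r,x}(E)$ are exactly the members of the pencil $\cP$, and $\phi$ factors through the quotient $\pi:E\to E/\!\sim$.

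It remains to identify the parametrization, and this is the main obstacle: one must show the induced morphism $\bar\phi:E/\!\sim\ \to\cP$ is injective, for then — being a non-constant morphism of smooth projective curves — it is an isomorphism $E/\!\sim\ \cong\PP^1$. Suppose $\Sec_{r,x}(E)=\Sec_{r,y}(E)$ with $[x]\neq[y]$, so $x\neq y$ and $x+y\neq\s(H)$. By \cref{prop.dim.Sec.dz}, $\Sec_{r,x}(E)$ is irreducible of dimension $n-2$, so (as $\dim\Sec_{r-1}(E)=n-3$) we may pick $p\in\Sec_{r,x}(E)-\Sec_{r-1}(E)$; then $p\in\overline{D_0}$ for some $D_0\in E^{[r]}_x$ and $p\in\overline{D'}$ for some $D'\in E^{[r]}_y$. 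Set $g:=\deg\gcd(D_0,D')$, so $\deg\lcm(D_0,D')=2r-g=n-g$. If $g=0$ then $\lcm(D_0,D')=D_0+D'\not\sim H$ (because $x+y\neq\s(H)$), so \cref{le:fisher}\cref{item.le.fisher.cap} gives $\overline{D_0}\cap\overline{D'}=\overline{0}=\varnothing$, contradicting $p\in\overline{D_0}\cap\overline{D'}$; hence $g\geq1$, and $g\leq r-1$ since $g=r$ would force $D_0=D'$ and thus $x=y$. Now $\deg\lcm(D_0,D')=n-g<n$, so $\lcm(D_0,D')\not\sim H$, and \cref{le:fisher}\cref{item.le.fisher.cap} yields $p\in\overline{D_0}\cap\overline{D'}=\overline{\gcd(D_0,D')}\subseteq\Sec_g(E)\subseteq\Sec_{r-1}(E)$, contradicting the choice of $p$. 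Hence $\bar\phi$ is injective, completing the proof. Everything other than this last dimension count with \cref{le:fisher} is a direct consequence of Room's proposition, the dimension of $\Sec_{r-1}(E)$, and the saturatedness of a complete-intersection ideal.
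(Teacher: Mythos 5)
The paper gives no proof of this corollary at all: it is attributed to Room, presented as an immediate consequence of \cref{prop.room}, and the reader is referred to Fisher's papers for modern arguments. So your write-up is necessarily "different" in that it supplies a derivation. Its overall structure is sound: part (1) is indeed just the dimension formula plus \cref{prop.room}\cref{item.room.ineq} (and your construction of two inequivalent degree-$r$ divisor pairs is fine); for part (2) you reduce to showing that the degree-$r$ forms cutting out the $\Sec_{r,x}(E)$ all lie in the pencil spanned by $F$ and $F'$ and that the induced map from $E/\!\sim$ is bijective. Note that your injectivity argument in the final paragraph is precisely the content of the unnamed proposition that immediately follows \cref{cor.room} in the paper, and your case split ($g=0$ versus $1\le g\le r-1$) is the same argument the paper gives there, merely organized around $\deg\gcd(D,D')$ instead of around whether $\lcm(D,D')\sim H$.

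Two steps are thinner than they should be. First, saturatedness of the complete-intersection ideal gives $(F,F')_r=\CC F+\CC F'$, but $H^0(\cI_{\Sec_{r-1}(E)}(r))$ is the degree-$r$ part of the ideal of the \emph{reduced} variety; these coincide only because the scheme $V(F)\cap V(F')$ is reduced, i.e., only under the scheme-theoretic reading of \cref{prop.room}\cref{item.room.ineq} that identifies it with the variety $\Sec_{r-1}(E)$. That reading is consistent with the paper's conventions and with Fisher's results, but saturatedness alone does not deliver it (compare $(x^2,y^2)$ in $\PP^3$, a saturated complete intersection whose radical is much larger in degree $2$), so you should say explicitly that you are using reducedness of the intersection. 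Second, surjectivity of $E\to\cP$ rests entirely on the parenthetical claim that $x\mapsto[\det\Phi(D_1,D_2)]$ is a morphism: injectivity of the induced map only shows the image is infinite, and an infinite subset of $\PP^1$ need not be all of $\PP^1$. The algebraicity is standard (choose the divisor pairs and bases in families locally on $E$, e.g.\ via a Poincar\'e bundle, and check independence of choices), but it is genuinely load-bearing here and deserves more than a parenthesis.
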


\begin{proposition}
  If $n=2r$, then $\Sec_{r,x}(E) =\Sec_{r,y}(E)$ if and only if either $x=y$ or $x+y = \s(H)$. 
\end{proposition}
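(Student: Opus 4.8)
The plan is to derive the statement entirely from Room's results \cref{prop.room} and \cref{cor.room}; no genuinely new idea is needed. For the implication ``$\Leftarrow$'', if $x=y$ there is nothing to prove. If $x+y=\s(H)$, I would pick any $D_1\in E^{[r]}_x$ and any effective divisor $D_2\in|H-D_1|$; since $\deg(H-D_1)=n-r=r>0$ such a $D_2$ exists, and $\s(D_2)=\s(H)-x=y$, so $(D_1,D_2)$ is a divisor pair with $\deg D_1=\deg D_2=r$. Then \cref{prop.room}\cref{item.room.eq} gives $\Sec_{r,x}(E)=\Sec_{r,y}(E)$ immediately.

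For ``$\Rightarrow$'' I would argue by contraposition: assume $x\neq y$ and $x+y\neq\s(H)$, and show $\Sec_{r,x}(E)\neq\Sec_{r,y}(E)$. As above, choose divisor pairs $(D_1,D_2)$ and $(D_1',D_2')$ with all four divisors of degree $r$ and with $\s(D_1)=x$, $\s(D_1')=y$ (hence $\s(D_2)=\s(H)-x$ and $\s(D_2')=\s(H)-y$). The one point requiring care is that $(D_1,D_2)$ and $(D_1',D_2')$ are inequivalent: by definition they are equivalent iff $D_1\sim D_1'$ or $D_1\sim D_2'$, and since linear equivalence of degree-$r$ divisors on $E$ is detected by the summation map $\s$, this would force $x=y$ or $x=\s(H)-y$, both of which are excluded by hypothesis.

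Now \cref{prop.room}\cref{item.room.ineq} applies and yields
\begin{equation*}
\Sec_{r-1}(E)\;=\;\bigl\{p\in\PP^{n-1}\;\big\vert\;\det\Phi(D_1,D_2)_p=\det\Phi(D_1',D_2')_p=0\bigr\}.
\end{equation*}
By \cref{prop.room}\cref{item.room.eq}, $\{p\mid\det\Phi(D_1,D_2)_p=0\}=\Sec_{r,x}(E)$ and $\{p\mid\det\Phi(D_1',D_2')_p=0\}=\Sec_{r,y}(E)$, so the right-hand side above equals $\Sec_{r,x}(E)\cap\Sec_{r,y}(E)$. Hence $\Sec_{r,x}(E)\cap\Sec_{r,y}(E)=\Sec_{r-1}(E)$. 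If $\Sec_{r,x}(E)$ and $\Sec_{r,y}(E)$ were equal, this intersection would be $\Sec_{r,x}(E)$ itself, forcing $\Sec_{r,x}(E)=\Sec_{r-1}(E)$; but $\dim\Sec_{r,x}(E)=n-2$ by \cref{prop.dim.Sec.dz} (equivalently \cref{prop.room}\cref{item.room.eq}), whereas $\Sec_{r-1}(E)$ has codimension $2$ in $\PP^{n-1}$, i.e.\ dimension $n-3$, by \cref{cor.room}. This contradiction completes the argument.

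I do not expect any real obstacle: the proof is a direct assembly of \cref{prop.room} and \cref{cor.room}, and the only subtle steps are the translation of ``(in)equivalent divisor pair'' into a condition on $x$ and $y$ through $\s$, and the (immediate) existence of effective degree-$r$ divisors with prescribed image under $\s$, which holds because $E^{[r]}_x\cong\PP^{r-1}\neq\varnothing$.
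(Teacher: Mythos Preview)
Your proof is correct but follows a different route from the paper's. For $(\Leftarrow)$ both you and the paper invoke \cref{prop.room}\cref{item.room.eq}. For $(\Rightarrow)$, you exploit \cref{prop.room}\cref{item.room.ineq} to identify $\Sec_{r,x}(E)\cap\Sec_{r,y}(E)$ with $\Sec_{r-1}(E)$ and then compare dimensions via \cref{cor.room}. The paper instead argues directly with secant planes: it picks $p\in\Sec_{r,x}(E)-\Sec_{r-1}(E)$, takes $D\in E^{[r]}_x$ and $D'\in E^{[r]}_y$ with $p\in\overline{D}\cap\overline{D'}$, checks that $\lcm(D,D')\not\sim H$ (else $x+y=\s(H)$), applies Fisher's \cref{le:fisher}\cref{item.le.fisher.cap} to get $p\in\overline{\gcd(D,D')}$, and concludes $D=D'$ (else $p\in\Sec_{r-1}(E)$), forcing $x=y$. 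Your argument is cleaner in that it packages the secant-plane geometry entirely inside Room's determinantal description; the paper's argument is more elementary in that it avoids \cref{prop.room}\cref{item.room.ineq} and works only with \cref{le:fisher}.
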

\begin{proof}
($\Leftarrow$)
This is the content of \cref{prop.room}\cref{item.room.eq}.  

($\Rightarrow$)
Suppose the claim is false; i.e., $\Sec_{r,x}(E) =\Sec_{r,y}(E)$ and $x \ne y$ and $x+y\ne \s(H)$. 
 
 Let $p \in  \Sec_{r,x}(E)-\Sec_{r-1}(E)$. Such $p$ exists since $\codim\Sec_{r,x}(E)=1$ and $\codim\Sec_{r-1}(E)=2$.
 Let $D \in E^{[r]}_x$ and $D' \in E^{[r]}_y$  be such that $p \in \overline{D} \cap 
 \overline{D'}$.  

If $\lcm(D,D') \sim H$, then $\deg(\lcm(D,D'))=n$, whence $\lcm(D,D')=D+D'$. But then $D+D'\sim H$ so $x+y=\s(D+D')=\s(H)$ which 
contradicts the assumption that $x+y \ne \s(H)$. We conclude that $\lcm(D,D') \not \sim H$. 
However, $\deg(\lcm(D,D')) \le \deg(D+D')=n$ so, by 
 \cref{le:fisher}\cref{item.le.fisher.cap}, $\overline{D} \cap \overline{D'} = \overline{\gcd(D,D')}$. 
 Thus $p \in  \overline{\gcd(D,D')}$. But $p \notin  \Sec_{r-1}(E)$ so $\deg(\gcd(D,D'))=r$. Hence $D=\gcd(D,D')=D'$, which implies that
 $x=\s(D)=\s(D')=y$; this contradicts the assumption that  $x \ne y$ so we conclude that the proposition is true. 
\end{proof}

\section{The bundles $\cE \in \Bun(2,\cL)$ for which $L(\cE) \ne \varnothing$ and their trivial subbundles}
\label{sect.rank.2.buns}

As we said in \cref{ssect.setup}, $\cL=\cO_{\PP^{n-1}}(1)|_E$. Thus, $\cL$ is an invertible $\cO_E$-module of degree $\ge 3$.

Each $\xi\in\PP_\cL=\PP\Ext^1(\cL,\cO_E)$ is represented by a non-split exact sequence\footnote{See \Cref{sect.appx.extns} for the basics on extensions and their isomorphism classes.}
\begin{equation*}
	\xymatrix{
		0 \ar[r] &\cO_E \ar[r] & \cE \ar[r] & \cL \ar[r] & 0.
	}
\end{equation*}
The middle term, $\cE$, is denoted by $m(\xi)$. It is an element in the set
\begin{equation*}
	\Bun(2,\cL) \;:=\; \{\text{rank-two locally free $\cO_E$-modules $\cE$ such that $\det\cE\cong\cL$}\}.
\end{equation*}
For each $\cE\in\Bun(2,\cL)$, we define
\begin{equation*}
	L(\cE)\;:=\;m^{-1}(\cE)\;=\;\{\xi \in \PP\Ext^1(\cL,\cO_E)  \; \,  |  \, \; \text{the middle term of $\xi$ is isomorphic to $\cE$} \}.
\end{equation*}

\subsection{An assumption on $\cE$}
\label{ssect.assump}

We will say that $\cE$ satisfies the assumptions in \cref{ssect.assump} if
\begin{enumerate}
\item\label{item.asmp.one}
$\cE\in\Bun(2,\cL)$, and
\item\label{item.asmp.two}
no invertible $\cO_{E}$-module of degree $\leq 0$ is a direct summand of $\cE$.
\end{enumerate}
We will show that \cref{item.asmp.one} and \cref{item.asmp.two} hold if and only if $L(\cE)\neq\varnothing$.
It is easy to see that \cref{item.asmp.one} and \cref{item.asmp.two} hold if $L(\cE)\neq\varnothing$  (\cref{thm.good.Es}). 
The converse is proved in \Cref{cor.good.E's}.

\subsection{The bundles $\cE_{d,x}$, $\cE_{o}$, and $\cE_{\omega}$ in $\Bun (2,\cL)$}
\label{sect.relevant.cE's}

We will show, eventually, that the $\cE$'s in the title of this subsection are only $\cE$'s, up to isomorphism, such that $L(\cE) \ne \varnothing$.

\subsubsection{Indecomposable bundles in $\Bun (2,\cL)$}
\label{ssect.bun.notn}

The degree of a locally free $\cO_E$-module is defined to be the degree of its determinant.

Suppose $n=2r$. If $\omega \in \Omega$, we define 
\begin{align*}
 \cL_\omega  &\; :=\; \cO_E\big((\omega)+ (r-1) \cdot (0)\big),
 \\
 \cE_\omega  &\; :=\; \text{the unique non-split self-extension of } \cL_\omega.
\end{align*}
We have $\cL_{\omega}^{\otimes 2} \cong \cL$ because $\cO_E(2(\omega)+ (n-2) \cdot (0)) \cong  \cO_E((2\omega)+ (n-1) \cdot (0))$.
Up to isomorphism the four $\cL_\omega$'s  are the only $\cN$'s such that $\cN^{\otimes 2} \cong \cL$.  
By Atiyah's classification of indecomposable bundles on an elliptic curve, the four $\cE_\omega$'s  are pairwise non-isomorphic and, up to isomorphism, 
are the only indecomposables in $\Bun(2,\cL)$ (see \cite[Thm.~7]{Atiyah} and \cite[Cor.~V.2.16]{hrt}).

If $n$ is odd and $\cE \in \Bun(2,\cL)$, then the rank and degree of $\cE$ are relatively prime so, by Atiyah's classification, 
 \cite[Cor.~(i) to Thm.~7]{Atiyah}, there is a {\it unique} indecomposable $\cE$ in $\Bun(2,\cL)$;
this also follows from \cite[Thm.~10]{Atiyah} with his $(r,d)$ equal to $(2,n)$. 
Up to isomorphism, that $\cE$ is
\begin{equation*}
 \cE_{o} \; :=\;  \text{the unique non-split extension of $\cO_E((\omega)+\tfrac{n-1}{2} \cdot (0))$ by $\cO_E((\omega)+\tfrac{n-3}{2} \cdot (0))$}
\end{equation*}
where $\omega$ is any element in $\Omega$; the isomorphism class of $\cE_o$ does not depend on the choice of $\omega$ because $2(\omega)+(n-2)\cdot(0) \sim 
(2\omega)+(n-1)\cdot (0) = (\s(H))+(n-1)\cdot (0) \sim H$.

\subsubsection{The decomposable $\cE$'s in $\Bun (2,\cL)$ for which $L(\cE)\ne \varnothing$}
\label{ssect.dec.buns}

If $d \in [1,\frac{n}{2}]$ and $x \in E$, we define 
\begin{equation*}
 \cE_{d,x} \; :=\; \cO_E(D)  \oplus \cL(-D)
\end{equation*}
where $D \in E^{[d]}_x$. The isomorphism class of $\cO_E(D)$, and hence that of $\cE_{d,x}$, does not depend on the choice of $D$ 
in $E^{[d]}_x$. If $n=2r$   and $\omega\in\Omega$, then $\cE_{r,\omega}\cong\cL_{\omega}\oplus\cL_{\omega}$.

The next four results are elementary, but we record them for completeness.

\begin{lemma}
\label{lem.possible.Es}
Let $\cL$ be any invertible $\cO_E$-module, and let $\xi \in \Ext^1(\cL,\cO_E)$ be the  extension 
\begin{equation*}
\xymatrix{0 \ar[r] &\cO_E \ar[r]^-f & \cE \ar[r] & \cL \ar[r] & 0}.
\end{equation*}
 It follows that
\begin{enumerate}
  \item\label{item.lem.possible.Es.lf} 
$\cE$ is locally free of rank two;
  \item\label{item.lem.possible.Es.isom} 
$\det\cE \cong \cL$;  
  \item\label{item.lem.possible.Es.dsum} 
if  $\cE = \cL_1 \oplus \cL_2$, where  $\cL_1$ and $\cL_2$ are invertible $\cO_E$-modules, then 
\begin{enumerate}
  \item\label{item.lem.possible.Es.dsum.a} 
  $\cL_1 \otimes \cL_2 \cong \cL$, and
  \item\label{item.lem.possible.Es.dsum.b} 
 if $\xi \ne 0$ and $\deg\cL \ge 1$, then  both $\cL_1$ and $\cL_2$ have positive degree.
\end{enumerate}
\end{enumerate}
\end{lemma}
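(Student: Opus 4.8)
The plan is to establish the four assertions in turn; \cref{item.lem.possible.Es.lf}, \cref{item.lem.possible.Es.isom}, and \cref{item.lem.possible.Es.dsum.a} are immediate from standard facts about coherent sheaves on a smooth projective curve, and the only item carrying real content is \cref{item.lem.possible.Es.dsum.b}. For \cref{item.lem.possible.Es.lf} and \cref{item.lem.possible.Es.isom}: since $\cO_E$ and $\cL$ are locally free, the extension $\cE$ is torsion-free coherent on the smooth curve $E$ — a torsion element of $\cE$ would map to a torsion element of $\cL$, hence be $0$, hence lie in $\cO_E$, hence be $0$ — so $\cE$ is locally free, and since rank is additive in short exact sequences, $\rank\cE=2$; multiplicativity of $\det$ on short exact sequences then gives $\det\cE\cong\det\cO_E\otimes\det\cL\cong\cL$. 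For \cref{item.lem.possible.Es.dsum.a}: if $\cE\cong\cL_1\oplus\cL_2$ then $\det\cE\cong\cL_1\otimes\cL_2$, and comparison with \cref{item.lem.possible.Es.isom} yields $\cL_1\otimes\cL_2\cong\cL$.

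For \cref{item.lem.possible.Es.dsum.b}, I would assume $\xi\ne 0$ and $\deg\cL\ge 1$, and write $f=(f_1,f_2)$ with $f_i\colon\cO_E\to\cL_i$. First, neither $f_i$ can be an isomorphism, for then $f_i^{-1}\circ\mathrm{pr}_i\colon\cL_1\oplus\cL_2\to\cO_E$ would be a retraction of $f$, contradicting $\xi\ne 0$. Next, both $f_i$ are nonzero: if $f_1=0$, say, then $\im f\subseteq\cL_2$, so $\cL\cong\cE/\im f\cong\cL_1\oplus(\cL_2/\im f)$, and since a line bundle on $E$ is indecomposable (its endomorphism ring is $H^0(E,\cO_E)=\CC$) while $\cL_1\ne 0$, we get $\cL_2/\im f=0$, i.e.\ $f_2$ is an isomorphism — excluded. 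Hence $f_1$ and $f_2$ are nonzero maps out of $\cO_E$, so $H^0(E,\cL_i)\ne 0$ and thus $\deg\cL_i\ge 0$ for $i=1,2$. Finally, if $\deg\cL_i=0$ then $\cL_i$, being a degree-$0$ line bundle with a nonzero section, is isomorphic to $\cO_E$, so $f_i$ would be a nonzero endomorphism of $\cO_E$, hence an isomorphism — again excluded. Therefore $\deg\cL_1>0$ and $\deg\cL_2>0$.

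I do not expect a genuine obstacle here; the one point that requires care is in \cref{item.lem.possible.Es.dsum.b}, where non-splitness of $\xi$ must be used twice — first to rule out $f_i$ being an isomorphism, and then again, via the indecomposability of a line bundle, to rule out $f_i=0$ — in order to conclude $\deg\cL_i>0$ rather than merely $\deg\cL_i\ge 0$.
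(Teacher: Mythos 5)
Your proof is correct and takes essentially the same approach as the paper's: for \cref{item.lem.possible.Es.dsum}\cref{item.lem.possible.Es.dsum.b} both arguments combine non-splitness (to forbid a component of $f$ from being an isomorphism, via the retraction built from a projection) with the fact that $\coker(f)$ is the line bundle $\cL$ (to forbid a component from vanishing, hence forcing $H^0(E,\cL_i)\ne 0$), and then rule out $\cL_i\cong\cO_E$. The only cosmetic differences are that you invoke indecomposability of $\cL$ where the paper invokes torsion-freeness of $\coker(f)$, and that your argument never actually needs the hypothesis $\deg\cL\ge 1$, which is harmless.
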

\begin{proof}
\cref{item.lem.possible.Es.lf}
Trivial.

\cref{item.lem.possible.Es.isom}
If $0 \to \cF_1 \to \cF \to \cF_2 \to 0$ is an exact sequence of locally free $\cO_E$-modules, then $\det\cF \cong (\det\cF_1) \otimes (\det\cF_2)$ (see \cite[Exer.~II.5.16(d)]{hrt}). Note also that, if $\cN$ is an invertible $\cO_E$-module, then $\det\cN=\cN$.

\cref{item.lem.possible.Es.dsum}\cref{item.lem.possible.Es.dsum.a}
This is a special case of \cref{item.lem.possible.Es.isom} because the determinant of a line bundle is itself.

\cref{item.lem.possible.Es.dsum}\cref{item.lem.possible.Es.dsum.b}
  Suppose $\deg\cL \ge 1$ and the sequence does not split.

  For $j\in \{1,2\}$, let $\pi_j:\cL_1 \oplus \cL_2 \to \cL_j$ be the projection.

Suppose $H^0(E,\cL_1)=0$.  Then (1) $\pi_1f=0$ so $f(\cO_E) \subseteq \ker(\pi_1) = \cL_2$, and (2) $\deg\cL_1 \le 0$, 
so $\deg\cL_2 \ge \deg\cL \ge 1$. Thus $f(\cO_E)$ is a proper submodule of $\cL_2$.   Since $\coker(f)$ contains a copy of $\cL_2/f(\cO_E)$, it is not torsion-free and 
therefore not isomorphic to $\cL$. This is a contradiction so we conclude that $H^0(E,\cL_1) \ne 0$. 

Hence $\deg\cL_1 \ge 0$. We will show that $\deg\cL_1>0$. Since $H^0(E,\cL_1) \ne 0$, it suffices to show that 
$\cL_1 \not \cong \cO_E$. Suppose to the contrary that $\cL_1 \cong \cO_E$. 
 Then $\deg\cL_2=\deg\cL\ge 1$. 
If $f(\cO_E) \subseteq \cL_2$, then $\cL_2/f(\cO_E)$, and hence $\coker(f)$, is not torsion-free so, as in the previous paragraph,
$\coker(f)$ would not be isomorphic to $\cL$. Hence $f(\cO_E) \not\subseteq \cL_2$. Thus  $\pi_1f(\cO_E) \ne 0$,
whence $\pi_1 f$ is an isomorphism $\cO_E \to \cL_1$. Let $g:\cL_1 \oplus \cL_2 \to \cO_E$ be the map $((\pi_1f)^{-1},0)$.
The image of $g \circ f$ is a non-zero map $\cO_E \to \cO_E$ so some scalar multiple of $g$ splits the sequence, contradicting the hypothesis that the 
sequence does not split. 

We therefore conclude that $\deg\cL_1>0$. Similarly,  $\deg\cL_2>0$.
\end{proof}

\begin{lemma}
\label{lem.easy.non-split}
  Fix positive-degree invertible $\cO_E$-modules $\cN_1$ and $\cN_2$. If $\deg(\cN_1 \otimes \cN_2) \ge 3$, 
  then there is a non-split extension of the form
  \begin{equation}
    \label{eq:easy.non-split}
     0 \to \cO_E \to \cN_1 \oplus \cN_2 \to \cN_1 \otimes \cN_2 \to 0.
  \end{equation}
\end{lemma}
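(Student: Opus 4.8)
The plan is to produce the desired extension by constructing an explicit surjection $\cO_E \oplus (\text{something}) \twoheadrightarrow$, or more precisely by exhibiting the middle term directly via a pushout/pullback argument. First I would observe that since $\deg(\cN_1\otimes\cN_2)\ge 3$, the line bundle $\cL':=\cN_1\otimes\cN_2$ is very ample (and in particular globally generated), and $H^1(E,\cL')=0$ by Riemann--Roch. The key mechanism: an extension $0\to\cO_E\to\cE\to\cN_1\otimes\cN_2\to 0$ with $\cE\cong\cN_1\oplus\cN_2$ is, by \Cref{lem.possible.Es}\cref{item.lem.possible.Es.dsum}, detected by the class in $\Ext^1(\cN_1\otimes\cN_2,\cO_E)$, so it suffices to find a map $f\colon\cO_E\to\cN_1\oplus\cN_2$ whose cokernel is $\cN_1\otimes\cN_2$, i.e.\ an injective bundle map of the trivial bundle into $\cN_1\oplus\cN_2$ with locally free quotient of the correct determinant (the determinant is automatically $\cN_1\otimes\cN_2$ by \cref{item.lem.possible.Es.isom}, so one only needs the quotient to be torsion-free, equivalently $f$ nowhere vanishing as a section of $\cN_1\oplus\cN_2$).

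The cleanest route: a global section of $\cN_1\oplus\cN_2$ is a pair $(s_1,s_2)$ with $s_i\in H^0(E,\cN_i)$, and it is nowhere vanishing iff the divisors $\operatorname{div}(s_1)$ and $\operatorname{div}(s_2)$ are disjoint. Since $\deg\cN_i\ge 1$, each $H^0(E,\cN_i)\ne 0$; I would choose $s_1$ cutting out an effective divisor $D_1\sim\cN_1$, and then argue that one can choose $s_2\in H^0(E,\cN_2)$ whose zero divisor avoids the finitely many points of $\operatorname{Supp}(D_1)$. If $\deg\cN_2\ge 2$ this is immediate because $\cN_2$ is then globally generated, so for each point $p$ the sections not vanishing at $p$ form a dense open subset of $H^0(E,\cN_2)$, and a finite intersection of such is still nonempty. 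The boundary case $\deg\cN_2=1$ needs a separate line: then $|\cN_2|$ consists of a single effective divisor $(q)$, and I would instead exploit the symmetry by running the argument with the roles of the two summands interchanged if $\deg\cN_1\ge 2$; if $\deg\cN_1=\deg\cN_2=1$ then $\deg(\cN_1\otimes\cN_2)=2<3$, contradicting the hypothesis, so this sub-case does not arise. Thus in all cases allowed by the hypothesis one summand has degree $\ge 2$ and the genericity argument applies.

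Once $f=(s_1,s_2)\colon\cO_E\to\cN_1\oplus\cN_2$ is nowhere zero, its cokernel $\cE/f(\cO_E)$ is a line bundle of degree $\deg\cN_1+\deg\cN_2=\deg(\cN_1\otimes\cN_2)$; to identify it with $\cN_1\otimes\cN_2$ on the nose I would take determinants as in the proof of \cref{item.lem.possible.Es.isom}, giving $\det(\cN_1\oplus\cN_2)\cong\cO_E\otimes\coker(f)$, hence $\coker(f)\cong\cN_1\otimes\cN_2$. This yields an exact sequence of the form \cref{eq:easy.non-split}. Finally, I must check the extension is non-split: if it split, the middle term $\cN_1\oplus\cN_2$ would contain $\cN_1\otimes\cN_2$ as a subbundle complementary to $\cO_E$, forcing a nonzero map $\cN_1\otimes\cN_2\to\cN_i$ for some $i$; but $\deg(\cN_1\otimes\cN_2)=\deg\cN_1+\deg\cN_2>\deg\cN_i$ since the other factor has positive degree, so no such nonzero map of line bundles exists on a curve. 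Hence the sequence is non-split.

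The main obstacle I anticipate is purely the low-degree bookkeeping — making sure the ``choose a section avoiding finitely many points'' step is valid exactly in the range $\deg(\cN_1\otimes\cN_2)\ge 3$, which is why isolating the $\deg\cN_i=1$ cases and using the hypothesis to rule out $\deg\cN_1=\deg\cN_2=1$ is the crux. Everything else is a routine determinant computation and a degree comparison; an alternative, even shorter, packaging is to cite the standard fact that a globally generated line bundle $\cM$ with $h^0(\cM)\ge 2$ on a curve admits two sections with disjoint zero loci, applied after twisting so that both factors are replaced by globally generated bundles, but the direct argument above avoids needing both factors globally generated simultaneously.
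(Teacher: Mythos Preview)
Your proof is correct and is essentially the same as the paper's: both hinge on choosing disjoint effective divisors $D_i$ with $\cN_i\cong\cO_E(D_i)$ (the paper asserts such $D_i$ exist without further argument, whereas you supply the genericity justification using global generation in degree $\ge 2$ and the hypothesis to rule out $\deg\cN_1=\deg\cN_2=1$). The paper packages the construction dually, forming $0\to\cI_{D_1}\cI_{D_2}\to\cI_{D_1}\oplus\cI_{D_2}\to\cO_E\to 0$ from $\cI_{D_1}+\cI_{D_2}=\cO_E$ and then twisting by $\cO_E(D_1+D_2)$, but after the twist this is exactly your nowhere-vanishing section $(s_1,s_2):\cO_E\to\cN_1\oplus\cN_2$.
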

\begin{proof}
  Since $\deg\cN_1+\deg\cN_2 \ge 3$, there are effective divisors $D_1$ and $D_2$ such that $\cN_i \cong \cO_E(D_i)$ and $D_1\cap D_2 =
   \varnothing$.  Let $\cI_j$ denote the ideal vanishing on $D_j$.

  Since $D_1 \cap D_2 = \varnothing$, $\cI_1+\cI_2=\cO_E$ and $\cI_1 \cap \cI_2=\cI_1\cI_2 \cong \cO_E(-D_1-D_2) $.

  The kernel of the map $\cI_1 \oplus \cI_2 \to \cI_1+\cI_2=\cO_E$, $(a,b) \mapsto a-b$, is $\cI_1 \cap \cI_2$. Hence there is an exact sequence $0 \to \cI_1\cI_2 \to \cI_1 \oplus \cI_2 \to \cO_E \to 0$, which does not split because $D_1$ and $D_2$ are effective.  If we now apply the functor $\cO_E(D_1+D_2) \otimes -$, which is isomorphic to $\cN_1 \otimes \cN_2 \otimes -$, to the sequence we obtain a non-split sequence of the form \cref{eq:easy.non-split}.
\end{proof}

\begin{proposition}\label{prop.leaves.for.decomp.Es}
  Let $\cE\in\Bun(2,\cL)$. If $\cE$ is decomposable, then the following are equivalent:
  \begin{enumerate}
  \item\label{item.leaves.for.decomp.Es.LE} $L(\cE) \ne \varnothing$.
  \item\label{item.leaves.for.decomp.Es.deg} $\cE = \cN_1 \oplus \cN_2$ where $\cN_1$ and $\cN_2$ are  positive-degree invertible $\cO_{E}$-modules.
  \item\label{item.leaves.for.decomp.Es.Edx} $\cE \cong \cE_{d,x}$ for some $d\in [1,\frac{n}{2}]$ and $x\in E$.
  \end{enumerate}
\end{proposition}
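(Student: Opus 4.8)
The plan is to establish the cycle of implications $(1)\Rightarrow(2)\Rightarrow(3)\Rightarrow(1)$, each step being a short deduction from the two preparatory lemmas \cref{lem.possible.Es} and \cref{lem.easy.non-split} together with standard facts about invertible sheaves on the genus-one curve $E$. For $(1)\Rightarrow(2)$: choose a non-zero class $\xi\in\Ext^1(\cL,\cO_E)$ whose middle term is $\cE$, and, using that $\cE$ is decomposable, write $\cE=\cL_1\oplus\cL_2$ with $\cL_1$ and $\cL_2$ invertible. Since $\xi\neq 0$ and $\deg\cL=n\geq 3$, \cref{lem.possible.Es} (the part asserting that in a non-split extension with $\deg\cL\ge 1$ both line-bundle summands have positive degree) gives $\deg\cL_1>0$ and $\deg\cL_2>0$, which is exactly (2).

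For $(2)\Rightarrow(3)$: write $\cE=\cN_1\oplus\cN_2$ with $\deg\cN_i>0$ for $i=1,2$. By \cref{lem.possible.Es}, $\cN_1\otimes\cN_2\cong\det\cE\cong\cL$, so $\deg\cN_1+\deg\cN_2=n$. After relabelling we may assume $d:=\deg\cN_1\leq\deg\cN_2$; then $1\leq d$ and $2d\leq n$, hence $d\in[1,\tfrac n2]$. Since $\deg\cN_1=d\geq 1$, Riemann--Roch on $E$ gives $h^0(E,\cN_1)=d>0$, so $\cN_1\cong\cO_E(D)$ for some effective divisor $D$ of degree $d$; set $x:=\s(D)$, so that $D\in E^{[d]}_x$. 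Then $\cN_2\cong\cL\otimes\cN_1^{-1}\cong\cL(-D)$, and therefore $\cE\cong\cO_E(D)\oplus\cL(-D)=\cE_{d,x}$, which is (3).

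For $(3)\Rightarrow(1)$: suppose $\cE\cong\cE_{d,x}=\cO_E(D)\oplus\cL(-D)$ with $D\in E^{[d]}_x$ and $1\leq d\leq\tfrac n2$. Both summands have positive degree, as $\deg\cO_E(D)=d\geq 1$ and $\deg\cL(-D)=n-d\geq\tfrac n2>0$, and their tensor product is $\cL$, of degree $n\geq 3$; hence \cref{lem.easy.non-split} produces a non-split extension $0\to\cO_E\to\cO_E(D)\oplus\cL(-D)\to\cL\to 0$, whose class lies in $L(\cE)$, so $L(\cE)\neq\varnothing$. I do not expect a genuine obstacle anywhere in this argument. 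The only two points needing a little care both occur in $(2)\Rightarrow(3)$: one must pass from an abstract positive-degree invertible sheaf to an \emph{effective} divisor (so that $\cE$ literally takes the form $\cE_{d,x}$ of \cref{sect.relevant.cE's}), and one must squeeze out of $\deg\cN_1\leq\deg\cN_2$ the sharp inequality $d\leq n/2$ rather than merely $d\leq n-1$; both are immediate. As a byproduct, the construction in $(3)\Rightarrow(1)$ exhibits a point of $L(\cE_{d,x})$ for every $d\in[1,\tfrac n2]$ and every $x\in E$, which will be useful later.
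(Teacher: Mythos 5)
Your proof is correct and uses the same ingredients as the paper: \cref{lem.possible.Es} for passing from a non-split extension to positive-degree summands, the effectivity of a positive-degree invertible sheaf on $E$ (via $h^0=\deg$) to identify $\cN_1\oplus\cN_2$ with some $\cE_{d,x}$, and \cref{lem.easy.non-split} to produce a non-split extension. The only difference is organizational (a cycle of implications rather than the paper's pairwise equivalences), so there is nothing substantive to add.
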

\begin{proof}
	\cref{lem.possible.Es,lem.easy.non-split} show \cref{item.leaves.for.decomp.Es.LE}$\Leftrightarrow$\cref{item.leaves.for.decomp.Es.Edx}. Since \cref{item.leaves.for.decomp.Es.Edx}$\Rightarrow$\cref{item.leaves.for.decomp.Es.deg} is obvious, it remains to show \cref{item.leaves.for.decomp.Es.deg}$\Rightarrow$\cref{item.leaves.for.decomp.Es.Edx}. Since $n=\deg\cE=\deg\cN_1+\deg\cN_2$, we can assume that $d:=\deg\cN_1\in [0,\tfrac{n}{2}]$. If $\sigma(\cN_{1})=x$, then $\cN_{1}\cong\cO_{E}(D)$ for some $D\in E^{[d]}_{x}$. Since $\cN_{1}\otimes\cN_{2}\cong\cL$, we conclude that $\cN_{2}\cong\cL(-D)$ and $\cE\cong\cE_{d,x}$.
\end{proof}

We summarize the content of \cref{ssect.bun.notn,ssect.dec.buns}.

\begin{proposition}
\label{thm.good.Es}
If $L(\cE)\ne \varnothing$, then $\cE$ satisfies the assumptions in \cref{ssect.assump}. 
More particularly:
\begin{enumerate}
  \item\label{item.good.Es.indec.even}
  If $\cE$ is indecomposable and $n$ is even, then $\cE\cong\cE_{\omega}$ for some $\omega\in\Omega$.
  \item\label{item.good.Es.indec.odd}
  If $\cE$ is indecomposable and $n$ is odd, then $\cE\cong\cE_{o}$.
  \item\label{item.good.Es.dec}
  If $\cE$ is decomposable, then $\cE\cong\cE_{d,x}$ for some $d\in[1,\tfrac{n}{2}]$ and $x \in E$. 
  In particular, $\cE$ has no invertible direct summand of degree $\le 0$.
\end{enumerate} 
\end{proposition}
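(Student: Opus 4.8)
The plan is to assemble the statement from \cref{lem.possible.Es,prop.leaves.for.decomp.Es} together with the classification of rank-two indecomposable bundles recalled in \cref{ssect.bun.notn}; there is no genuinely new content, only a case analysis to organize. So suppose $L(\cE)\neq\varnothing$ and fix a non-split extension $\xi\colon 0\to\cO_E\to\cE\to\cL\to 0$ representing a point of $L(\cE)$. First I would invoke \cref{lem.possible.Es}: $\cE$ is locally free of rank two and $\det\cE\cong\cL$, so $\cE\in\Bun(2,\cL)$, which is the first of the two assumptions collected in \cref{ssect.assump}. It then remains to locate $\cE$ on the list in \cref{thm.good.Es} and to verify the second assumption, namely that $\cE$ has no invertible direct summand of degree $\leq 0$.

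Next I would split according to whether $\cE$ is \emph{decomposable}. Suppose it is. Since a direct summand of a locally free sheaf on the smooth curve $E$ is again locally free, and ranks are additive, we may write $\cE\cong\cN_1\oplus\cN_2$ with $\cN_1$ and $\cN_2$ invertible. Because $\xi\neq 0$ and $\deg\cL=n\geq 3\geq 1$, \cref{lem.possible.Es}\cref{item.lem.possible.Es.dsum} forces $\deg\cN_1>0$ and $\deg\cN_2>0$; in particular $\cE$ has no invertible summand of degree $\leq 0$, which is the second assumption in \cref{ssect.assump}. Feeding this into the implication \cref{item.leaves.for.decomp.Es.deg}$\Rightarrow$\cref{item.leaves.for.decomp.Es.Edx} of \cref{prop.leaves.for.decomp.Es} then yields $\cE\cong\cE_{d,x}$ for some $d\in[1,\tfrac{n}{2}]$ and $x\in E$; this is \cref{item.good.Es.dec}.

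Now suppose $\cE$ is \emph{indecomposable}. Here I would appeal only to the fact that $\cE\in\Bun(2,\cL)$, not to $L(\cE)\neq\varnothing$ directly: the classification recalled in \cref{ssect.bun.notn} says that the indecomposable objects of $\Bun(2,\cL)$ are exactly the four bundles $\cE_\omega$, $\omega\in\Omega$, when $n$ is even, and exactly the single bundle $\cE_o$ when $n$ is odd. This is precisely \cref{item.good.Es.indec.even,item.good.Es.indec.odd}. Moreover an indecomposable rank-two bundle has no invertible direct summand at all, so the second assumption in \cref{ssect.assump} holds vacuously, and this case is done.

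Since every step is a citation of an earlier result, I do not expect a real obstacle. The only points needing a sentence of justification are the reduction in the decomposable case to a direct sum of two invertible sheaves (immediate from local freeness of direct summands on a smooth curve together with additivity of rank) and the observation that, for indecomposable $\cE$, membership in $\Bun(2,\cL)$ already pins $\cE$ down. The substance of the proposition lives in \cref{lem.possible.Es,prop.leaves.for.decomp.Es} and in Atiyah's theorem, all of which are in hand.
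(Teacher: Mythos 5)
Your proposal is correct and follows essentially the same route as the paper: Atiyah's classification (as recorded in \cref{ssect.bun.notn}) handles the indecomposable cases without even using $L(\cE)\neq\varnothing$, and \cref{prop.leaves.for.decomp.Es} handles the decomposable case. The one divergence is in ruling out invertible summands of degree $\le 0$: the paper quotes the Krull--Schmidt theorem for locally free $\cO_E$-modules, whereas you deduce it from \cref{lem.possible.Es}\cref{item.lem.possible.Es.dsum}\cref{item.lem.possible.Es.dsum.b}. That works, but only because that lemma applies to \emph{every} decomposition of $\cE$ into two invertible sheaves, so a hypothetical summand $\cM$ of degree $\le 0$ (whose complement is automatically invertible by rank count) would itself violate the lemma; as written, your ``in particular'' only addresses the one decomposition $\cN_1\oplus\cN_2$ you fixed, so add a sentence applying the lemma to the putative bad decomposition (or cite Krull--Schmidt as the paper does).
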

\begin{proof}
In \cref{ssect.bun.notn} we made note of the fact that \cref{item.good.Es.indec.even} and \cref{item.good.Es.indec.odd} 
hold (without assuming that $L(\cE)\ne \varnothing$). 
	
	\cref{item.good.Es.dec}
	Suppose $\cE$ is decomposable. By \cref{prop.leaves.for.decomp.Es}, $\cE\cong\cE_{d,x}$ for some $d\in[1,\tfrac{n}{2}]$ and $x \in E$, 
	Since the Krull-Schmidt theorem holds for locally free $\cO_{E}$-modules of finite rank (\cite[Thm.~3]{AtiyahKS}), 
	$\cE$ does not have an invertible $\cO_{E}$-module of degree $\leq 0$ as a direct summand.
\end{proof}

The converse of \cref{thm.good.Es} is proved in \cref{cor.good.E's}. It shows that 
$L(\cE)\ne \varnothing$ if and only if $\cE$ is one of the $\cE$'s listed in \cref{item.good.Es.indec.even}, \cref{item.good.Es.indec.odd}, \cref{item.good.Es.dec}.
 \cref{prop.leaves.for.decomp.Es} showed that $L(\cE_{d,x})\neq\varnothing$, so to prove  \cref{cor.good.E's} 
  it remains to show that $L(\cE)\neq\varnothing$ when $\cE \in \{\cE_o\} \cup \{\cE_\omega \; | \; \omega \in \Omega\}$, depending on 
  the parity of $n$. 

\subsection{Conventions for locally free modules and vector bundles}

Sometimes we  consider vector bundles, and their associated projective-space bundles, on  a variety $X$ as schemes over $X$ via the familiar correspondence of \cite[Exer.~II.5.18]{hrt} and \cite[Exer.~II.7.10]{hrt}. Here we use the notation
  \begin{equation}\label{eq:vbcorresp}
    \left(\text{locally free sheaf }\cE\right)
    \leftrightsquigarrow
    \begin{cases}
      \VV \cE\text{ or }\VV(\cE):=\text{the vector bundle }\mathrm{Spec}\left(\mathrm{Sym}(\cE^\vee)\right)\\
      \PP\cE\text{ or }\PP(\cE):=\text{the projective bundle }\mathrm{Proj}\left(\mathrm{Sym}(\cE^\vee)\right),
    \end{cases}    
  \end{equation}
  where $\mathrm{Sym}(\cE^\vee)$ denotes the symmetric algebra on the dual sheaf  $\cE^\vee$, \cite[Exer.~II.5.16]{hrt}, and the $\mathrm{Spec}$ and 
  $\mathrm{Proj}$ constructions are as in \cite[Exer.~II.5.17]{hrt} and \cite[Construction before Ex.~II.7.8.7]{hrt}. 
  One can recover $\cE$ as the sheaf of sections of $\VV \cE$ \cite[Exer.~II.5.18 (c)]{hrt}. 
  We conflate $\cE$ and $\VV \cE$   and refer to $\cE$ itself as a bundle; this is not uncommon: e.g., see
   \cite[Ch.~0]{3264}.\footnote{
 Although the correspondence in \Cref{eq:vbcorresp} is often used: for example
  \begin{itemize}
  \item in \cite[\S B.5.5]{Fulton-2nd-ed-98} the {\it projective bundle of $\cE$} is what \Cref{eq:vbcorresp} refers to as $\PP\cE$, and is denoted there by $P(E)$ for $E:=\VV \cE$;
  \item similarly, the {\it projectivization of $\cE$} in \cite[\S 9.1]{3264} is our $\PP\cE$. As explained there, the points of $\PP\cE$ over $x\in X$ are precisely the lines in the vector space $\cE_x$ (fiber of $\cE$ at $x$);
  \end{itemize}  
  it is {\it not always observed}:  for example, \cite[Note following \S B.5.5]{Fulton-2nd-ed-98} observes that $P(E)$ (our $\PP\cE$) would be the $\PP(\cE^\vee)$ in \cite[\S 8.4]{ega2} and, similarly, the $\mathbf{P}(\cE)$ of \cite[Definition preceding Prop.~II.7.11]{hrt} omits the dualization in \Cref{eq:vbcorresp}. The same goes for the $\mathbf{P}(E)$ in \cite[Appendix A]{lzf1}; if anything, the convention opposite ours (i.e., \Cref{eq:vbcorresp} without dualization) might be more common.
  
  This difference in conventions can cause some confusion (it did for some of us): it explains, for instance, the apparent discrepancy in defining {\it Chern classes} in various sources: compare the simple summation of \cite[Defn.~5.10]{3264} with the {\it alternating} sum of \cite[\S A.3, Defn.]{hrt}. The two convention switches ($\cE$ versus $\cE^\vee$ and the sign difference) cancel out to accord the same significance to the Chern classes $c_k(\cE)$ of a locally free sheaf $\cE$ as introduced in either reference.  
  }

\subsection{The $\Aut(\cE)$-action on the quasi-affine variety $X(\cE):= \Hom(\cO_E,\cE) - (Z_\cE \cup \{0\})$}

As the next lemma suggests, the following definition plays a central role in all that follows; also see \cref{rmk.defn.psi_E}.

\begin{definition}
Let $\cE$ be a rank-two locally free $\cO_E$-module. Define
\begin{align}
   Z_\cE  &  \; : = \;       \{\text{non-zero maps }  f:\cO_E \to \cE \; | \; \coker(f) \text{ is not invertible}\} 
  \\
  &
  \; \phantom{:} = \; 
   \{\text{non-zero maps }  f:\cO_E \to \cE \; | \; \coker(f) \text{ is not torsion-free}\}   
        \notag
   \\
 &  \;  \phantom{:}\subseteq \; \Hom_E(\cO_E,\cE) \, - \, \{0\}
 \notag
\end{align}
and
\begin{equation*} 
X(\cE) \; :=\; \Hom_E(\cO_E,\cE) \, - \,  (Z_\cE  \cup \{0\}).
\end{equation*} 
\end{definition}

Geometrically, $X(\cE)$ consists of the  embeddings of the trivial bundle $E \times \CC$ as a subbundle of  $\VV(\cE)$.

\begin{lemma}
\label{lem.tfree.coker}
Let $\cE$ be a rank-two locally free $\cO_E$-module. Suppose $\cO_E$ is not a direct summand of $\cE$.
\begin{enumerate}
\item{}
$X(\cE)\,=\, \{\text{non-zero maps }  f:\cO_E \to \cE \; | \; \coker(f) \cong \det\cE\}$.
\item{}
If $f\in X(\cE)$, then there is a unique-up-to-non-zero-scalar-multiple exact sequence
  \begin{equation*}
  \xymatrix{
 \xi: \quad   0 \ar[r] &  \cO_E \ar[r]^f & \cE \ar[r] &  \det\cE \ar[r] &  0.
    }
  \end{equation*}
  \item
If 
$\det\cE \cong \cL$ and $X(\cE) \ne \varnothing$, then $L(\cE) \ne \varnothing$.
\end{enumerate} 
\end{lemma}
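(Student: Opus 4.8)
The plan is to prove the three assertions in sequence; each follows quickly from the definition of $X(\cE)$ together with the facts that $\cE$ is locally free of rank two on the smooth curve $E$ and (by hypothesis) has no copy of $\cO_E$ as a direct summand.

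For (1): I would first unwind the definition, so that $f\in X(\cE)$ means exactly that $f\neq 0$ and $\coker(f)$ is torsion-free, equivalently invertible. The one observation that needs proof is that every nonzero $f\colon\cO_E\to\cE$ is injective: $\ker(f)$ is a subsheaf of the invertible sheaf $\cO_E$, so if it were nonzero it would have rank one, forcing $\im(f)$ to be a torsion subsheaf of the torsion-free module $\cE$, hence $0$, a contradiction. Thus $f$ sits in a short exact sequence $0\to\cO_E\xrightarrow{f}\cE\to\coker(f)\to 0$, and $\coker(f)$, being torsion-free of rank one on a smooth curve, is invertible. Taking determinants --- using multiplicativity on short exact sequences, $\det\cO_E=\cO_E$, and $\det\cN=\cN$ for invertible $\cN$, exactly as in the proof of \cref{lem.possible.Es} --- gives $\coker(f)\cong\det\cE$. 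The reverse implication is immediate: if $\coker(f)\cong\det\cE$ then it is invertible, so $f\notin Z_\cE$, whence $f\in X(\cE)$.

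For (2): part (1) already produces a short exact sequence $\xi\colon 0\to\cO_E\xrightarrow{f}\cE\xrightarrow{g}\det\cE\to 0$. For uniqueness up to a nonzero scalar, suppose $g'\colon\cE\to\det\cE$ fits into another such sequence with the same $f$; then $\ker(g')=\im(f)=\ker(g)$, so $g'$ factors as $g'=h\circ g$ with $h\in\End(\det\cE)$. Since $\det\cE$ is invertible, $\End(\det\cE)=H^0(E,\cO_E)=\CC$, so $h$ is a scalar, and it is nonzero because $g'$ is surjective; hence the quotient map, and with it the sequence $\xi$, is determined up to a nonzero scalar.

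For (3): assume $\det\cE\cong\cL$ and pick any $f\in X(\cE)$. By part (2) we get a short exact sequence $0\to\cO_E\to\cE\to\cL\to 0$ (after identifying $\det\cE$ with $\cL$). It cannot split, since a splitting would exhibit $\cO_E$ as a direct summand of $\cE$, contrary to the standing hypothesis; hence it represents a nonzero class of $\Ext^1(\cL,\cO_E)$, i.e.\ a point $\xi\in\PP_\cL=\PP\Ext^1(\cL,\cO_E)$ with middle term $m(\xi)\cong\cE$, so $L(\cE)\neq\varnothing$. There is no serious obstacle here; the only points to handle carefully are the injectivity of $f$, the step ``torsion-free of rank one on a smooth curve $\Rightarrow$ invertible'', and invoking the ``no $\cO_E$ summand'' hypothesis precisely where it is genuinely used, namely to obtain non-splitting in (3).
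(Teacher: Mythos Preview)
Your proof is correct and follows essentially the same approach as the paper: both argue that a nonzero $f$ is monic, take determinants along the resulting short exact sequence to identify $\coker(f)\cong\det\cE$, and use the ``no $\cO_E$ summand'' hypothesis to get non-splitting. The only minor difference is that for the uniqueness in (2) the paper invokes \cref{prop.isom.extns} from the appendix, whereas you give the direct argument via $\End(\det\cE)=\CC$; your version is slightly more self-contained but the content is the same.
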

\begin{proof}
Suppose $f\in X(\cE)$. Since $\rank\cO_{E}=1$, $f$ is monic. So there is a short exact sequence
  \begin{equation*}
  \xymatrix{
    0 \ar[r] &  \cO_E \ar[r]^f & \cE \ar[r] & \coker(f) \ar[r] &  0
    }
  \end{equation*}
  of locally free $\cO_{E}$-modules, which implies that $\det\cE\cong\det\cO_{E}\otimes\det(\coker(f))\cong\coker(f)$. The ``uniqueness'' follows from \cref{prop.isom.extns}. Since $\cO_E$ is not a direct summand of $\cE$, the sequence does not split.
\end{proof}

The left action of $\End(\cE)$ on $\Hom(\cO_E,\cE)$ via composition makes $\Hom(\cO_E,\cE)$ into a left $\End(\cE)$-module. That action restricts to give a left action of the automorphism group $\Aut(\cE)$ on $\Hom(\cO_E,\cE)$ and hence on $X(\cE)$.

\begin{proposition}\label{le:Aut.cE.action.on.extns}
Let $\cE$ be an arbitrary $\cO_E$-module, let $f_1,f_2: \cO_E \to \cE$ be arbitrary monomorphisms, and let $g_i$ be a cokernel of $f_i$.  
The extensions
  \begin{equation*}
  \xymatrix{
 \xi_{i}: \quad   0 \ar[r] &  \cO_E \ar[r]^-{f_{i}} & \cE \ar[r]^-{g_{i}} &  \coker(f_i) \ar[r] &  0
    }
  \end{equation*}
are isomorphic if and only if $f_2 \in \Aut(\cE) f_1$.
\end{proposition}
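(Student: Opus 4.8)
The plan is to set up the usual characterization of isomorphic extensions in terms of a commuting ladder and then observe that, because the two sequences share the same left term $\cO_E$ and the same middle term $\cE$, the only freedom is an automorphism of $\cE$ in the middle. More precisely, recall (see \Cref{sect.appx.extns}) that $\xi_1$ and $\xi_2$ are isomorphic extensions precisely when there exist isomorphisms $\alpha\colon\cO_E\to\cO_E$, $\beta\colon\cE\to\cE$, and $\gamma\colon\coker(f_1)\to\coker(f_2)$ making the two-row ladder commute; in particular $\beta f_1=f_2\alpha$. For the forward implication, I would take such a triple $(\alpha,\beta,\gamma)$ and simply note that $\alpha$ is multiplication by a nonzero scalar (since $\Hom(\cO_E,\cO_E)=\CC$), so $f_2\alpha=\alpha f_2\in\Aut(\cE)f_2$ up to the scalar; hence $\beta f_1=f_2\alpha$ shows $f_2\in\Aut(\cE)f_1$ (absorb the scalar into $\beta$ or into the definition of the $\Aut(\cE)$-orbit — a scalar is itself in $\Aut(\cE)$). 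Actually it is cleanest to argue directly: $\beta\in\Aut(\cE)$ and $\beta f_1 = f_2\alpha$ with $\alpha$ invertible, so $f_2 = \beta f_1\alpha^{-1} = (\alpha^{-1}\beta)f_1$ if we first observe that post-composing with the scalar $\alpha^{-1}$ commutes past everything; thus $f_2\in\Aut(\cE)f_1$.

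For the reverse implication, suppose $f_2=\beta f_1$ for some $\beta\in\Aut(\cE)$. Then $\beta$ carries the submodule $f_1(\cO_E)$ isomorphically onto $f_2(\cO_E)$, hence induces an isomorphism on the quotients, i.e.\ there is a unique $\gamma\colon\coker(f_1)\to\coker(f_2)$ with $\gamma g_1=g_2\beta$; taking $\alpha=\id_{\cO_E}$, the triple $(\id,\beta,\gamma)$ is an isomorphism of extensions from $\xi_1$ to $\xi_2$. This direction is essentially formal from the universal property of cokernels.

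The one genuine subtlety — and the only place I expect to need care — is the scalar ambiguity on the left: an isomorphism of extensions is usually taken to require the left and right vertical maps to be the identity, not merely isomorphisms, whereas the statement as phrased ("are isomorphic") and the eventual use of it (orbits of the $\Aut(\cE)$-action, where $f$ and a scalar multiple of $f$ define the same point of $\PP\Ext^1$) implicitly allow the identification of $\xi$ with any nonzero scalar multiple of $\xi$. I would handle this by citing \Cref{prop.isom.extns} / the conventions of \Cref{sect.appx.extns} for the precise notion of "isomorphic extension" being used here (which, per the footnote in the introduction, is the notion for which $\PP\Ext^1(\cC,\cA)$ parametrizes isomorphism classes), so that the scalar from $\Hom(\cO_E,\cO_E)^\times=\CC^\times$ is exactly what is quotiented out on both sides; with that convention in place the two implications above go through verbatim. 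Everything else is a diagram chase with no obstruction.
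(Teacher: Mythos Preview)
Your proposal is correct and matches the paper's proof essentially line for line: set up the ladder, use $\End(\cO_E)=\CC$ to identify the left vertical map with a nonzero scalar, absorb that scalar into the middle automorphism to get $f_2=(\mu^{-1}\nu)f_1\in\Aut(\cE)f_1$; conversely, given $f_2=\nu f_1$, take $\alpha=\id$ and let $\gamma$ be the induced map on cokernels. Your discussion of the ``subtlety'' is a bit more than the paper spends on it---the paper simply works with the notion of isomorphic (not equivalent) extensions from \Cref{sect.appx.extns} without further comment---but you have correctly identified and handled the issue.
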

\begin{proof}
  ($\Rightarrow$) Suppose $\xi_1 \cong \xi_2$. There is a commutative diagram
\begin{equation*}
  \xymatrix{
    0 \ar[r] &  \cO_E \ar[r]^{ f_1}  \ar[d]_{\mu} & \cE \ar[r]^<<<<<{g_1}  \ar[d]_{\nu} &  \coker(f_1)  \ar[r] \ar[d]_{\tau} & 0 
    \\
    0 \ar[r] &  \cO_E \ar[r]_{ f_2}  & \cE \ar[r]_>>>>>{g_2}& \coker(f_2)  \ar[r]  & 0 
  }
\end{equation*}
in which $\mu$, $\nu$, and $\tau$ are isomorphisms. Since $\End(\cO_E)=\CC$, $\mu$ is a scalar multiple of the identity. We can therefore treat $\mu$ as a scalar and rewrite the equality $\nu f_1=f_2\mu$ as $f_2= (\mu^{-1} \nu)f_1$. Hence $f_2 \in \Aut(\cE) f_1$.

($\Leftarrow$) 
If $f_2= \nu f_1$ for some $\nu  \in \Aut(\cE)$, then there is a commutative diagram as above with $\mu$ being the identity
map $\cO_E \to \cO_E$, and $\tau$ being some isomorphism, so $\xi_1 \cong \xi_2$.
\end{proof}

\begin{corollary}
\label{cor.defn.Psi.cE}
Let $\cE$ be a locally free $\cO_E$-module. If $\rank\cE=2$ and $\deg\cE \ge 1$,
then the map $f \mapsto \xi$ in  \Cref{lem.tfree.coker} descends to a  surjective set map
\begin{equation}
\label{eq:quo.map}
\Psi_\cE: X(\cE)   \, \longrightarrow \, L(\cE)
\end{equation}
whose fibers are the $\Aut(\cE)$-orbits in $X(\cE)$. 
\end{corollary}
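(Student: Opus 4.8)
The plan is to obtain the corollary by combining \cref{lem.tfree.coker}, \cref{le:Aut.cE.action.on.extns}, and the correspondence of \cref{prop.isom.extns} identifying points of $\PP_\cL=\PP\Ext^1(\cL,\cO_E)$ with isomorphism classes of non-split extensions $0\to\cO_E\to\cB\to\cL\to0$; the latter applies since $\End(\cO_E)=\End(\cL)=\CC$, both $\cO_E$ and $\cL$ being line bundles. I would begin by recording that the statement presupposes the hypotheses of \cref{lem.tfree.coker} --- namely that $\cO_E$ is not a direct summand of $\cE$ --- together with $\det\cE\cong\cL$, which is what places $L(\cE)$ inside $\PP_\cL$ in the first place; these hold in every application.

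First I would construct $\Psi_\cE$ and check that its image lies in $L(\cE)$. For $f\in X(\cE)$, \cref{lem.tfree.coker} yields a short exact sequence $\xi_f\colon 0\to\cO_E\xrightarrow{f}\cE\to\det\cE\to0$, unique up to a non-zero scalar and non-split because $\cO_E$ is not a summand of $\cE$. Post-composing its surjection with an isomorphism $\det\cE\xrightarrow{\,\sim\,}\cL$ (unique up to a scalar, as $\cL$ is a line bundle) makes $\xi_f$ a non-split extension of $\cL$ by $\cO_E$; the residual scalar ambiguities disappear upon projectivizing, so $f$ determines a well-defined point $\Psi_\cE(f)\in\PP\Ext^1(\cL,\cO_E)=\PP_\cL$, and that point has middle term $\cE$, i.e.\ $\Psi_\cE(f)\in m^{-1}(\cE)=L(\cE)$. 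By construction $\Psi_\cE$ is exactly the descent of the assignment $f\mapsto\xi$ of \cref{lem.tfree.coker}.

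Next I would prove surjectivity and identify the fibers. Given $\xi\in L(\cE)$, \cref{prop.isom.extns} presents it as the class of a non-split extension $0\to\cO_E\xrightarrow{g}\cE\xrightarrow{h}\cL\to0$; then $g$ is a monomorphism whose cokernel $\cL$ is invertible, so $g\in X(\cE)$, and the uniqueness clause of \cref{lem.tfree.coker} identifies $\xi_g$ with this extension up to a scalar, whence $\Psi_\cE(g)=\xi$. For the fibers, $\Psi_\cE(f_1)=\Psi_\cE(f_2)$ in $\PP_\cL$ if and only if the extensions $\xi_{f_1}$ and $\xi_{f_2}$ are isomorphic (again by \cref{prop.isom.extns}), and by \cref{le:Aut.cE.action.on.extns} the latter holds if and only if $f_2\in\Aut(\cE)f_1$; combined with surjectivity, this shows the non-empty fibers of $\Psi_\cE$ are precisely the $\Aut(\cE)$-orbits in $X(\cE)$, and every such orbit occurs.

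I do not expect a genuine obstacle: all the substance is already contained in \cref{lem.tfree.coker} and \cref{le:Aut.cE.action.on.extns}. The one thing requiring care is the bookkeeping of the several ``unique up to a non-zero scalar'' clauses --- the choice of a cokernel representative, the isomorphism $\det\cE\cong\cL$, and the passage from an $\Ext^1$-class to a point of its projectivization --- and checking that each of them is absorbed either by working in $\PP\Ext^1(\cL,\cO_E)$ or by the orbit description in \cref{le:Aut.cE.action.on.extns}, whose conclusion is already phrased in terms of $\Aut(\cE)$-orbits.
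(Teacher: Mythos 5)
Your argument is correct and is exactly the intended one: the paper states this corollary without proof as an immediate consequence of \cref{lem.tfree.coker} (construction and uniqueness of $\xi$ up to scalar), \cref{prop.isom.extns} (identifying points of $\PP_\cL$ with isomorphism classes of non-split extensions), and \cref{le:Aut.cE.action.on.extns} (fibers are $\Aut(\cE)$-orbits). Your remark that the statement tacitly presupposes $\det\cE\cong\cL$ and that $\cO_E$ is not a direct summand of $\cE$ is a fair and correct reading of how the corollary is used.
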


\begin{remark}
\label{rmk.defn.psi_E}
\Cref{le:ismor} shows $\Psi_\cE$ is a morphism. 
We observe in \Cref{le:dim.Aut.cE} that $\Aut(\cE)$ is an affine algebraic group. 
\Cref{th:smth} shows $L(\cE)$ is the geometric quotient of $X(\cE)$ modulo the action of $\Aut(\cE)$.
\Cref{pr:z-codim1} shows $Z_\cE \cup \{0\}$ is a hypersurface in $\Hom(\cO_E,\cE)$, so $X(\cE)$ is quasi-affine.
\reqed
 \end{remark}

\begin{lemma}\label{le:autfree}
Suppose $\cE$ satisfies the assumptions in \cref{ssect.assump}.
\begin{enumerate}
\item\label{item.le.autfree.gen} If $f \in X(\cE)$, then the left $\End(\cE)$-module generated by $f$ is free.
\item\label{item.le.autfree.free} $\Aut(\cE)$ acts freely on $X(\cE)$.
  \end{enumerate}
\end{lemma}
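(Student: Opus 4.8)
The plan is to prove both parts together, with part~(1) doing the real work and part~(2) following as an immediate corollary. Fix $f\in X(\cE)$. The key observation is that the left $\End(\cE)$-module $\End(\cE)f\subseteq\Hom(\cO_E,\cE)$ is free (necessarily of rank one, generated by $f$) if and only if the annihilator of $f$ in $\End(\cE)$ is zero; that is, if and only if no non-zero endomorphism $\phi\colon\cE\to\cE$ satisfies $\phi f=0$. So the entire content of (1) is the claim: \emph{if $\phi\in\End(\cE)$ and $\phi\circ f=0$, then $\phi=0$.} Here is the argument I would give. Since $f\in X(\cE)$ and $\cE$ satisfies the assumptions of \cref{ssect.assump} (so in particular $\cO_E$ is not a direct summand of $\cE$), \Cref{lem.tfree.coker} gives a non-split short exact sequence $0\to\cO_E\xrightarrow{f}\cE\xrightarrow{g}\det\cE\to 0$ with $\det\cE\cong\cL$. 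If $\phi f=0$, then $\phi$ factors through $g$, i.e.\ $\phi=\psi\circ g$ for a unique $\psi\colon\det\cE\to\cE$. Now I would derive a contradiction from $\psi\neq 0$ by showing it would force $\det\cE\cong\cL$ to be (isomorphic to) a sub-line-bundle of $\cE$ in a way incompatible with the assumptions.

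The heart of the matter is therefore: $\cE$ (satisfying \cref{ssect.assump}, $\det\cE\cong\cL$, $\deg\cL=n\ge 3$) admits \emph{no} non-zero map $\psi\colon\cL\to\cE$. I would prove this by the structure theory already in place. By \Cref{thm.good.Es}, the only $\cE$'s that can carry an $f\in X(\cE)$ (equivalently, have $L(\cE)\ne\varnothing$; note $X(\cE)\ne\varnothing$ implies $L(\cE)\ne\varnothing$ by \Cref{lem.tfree.coker}(3)) are $\cE_{d,x}=\cO_E(D)\oplus\cL(-D)$ with $1\le d\le \tfrac n2$, $\cE_o$, and $\cE_\omega$. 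In each case a non-zero $\psi\colon\cL\to\cE$ is ruled out by a degree count: for $\cE_{d,x}$ a non-zero map $\cL\to\cO_E(D)$ or $\cL\to\cL(-D)$ would be a non-zero section of a line bundle of negative degree ($\deg = d-n<0$ or $-d<0$), impossible; for $\cE_o$ and $\cE_\omega$, which are indecomposable of slope $n/2 < n = \deg\cL = \mathrm{slope}(\cL)$, a non-zero map from the stable (indeed any) line bundle $\cL$ into a bundle of strictly smaller slope is impossible since on an elliptic curve a non-zero map between semistable bundles is only possible when the source has slope $\le$ that of the target (Atiyah, or: $\Hom(\cL,\cE_o)=H^0(\cE_o\otimes\cL^{-1})=0$ because $\cE_o\otimes\cL^{-1}$ is indecomposable of negative degree hence has no sections). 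Thus $\psi=0$, hence $\phi=0$, proving (1).

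For part~(2): if $\nu\in\Aut(\cE)$ fixes some $f\in X(\cE)$, i.e.\ $\nu f=f$, then $(\nu-\id_\cE)f=0$, so by (1) (applied to $\phi=\nu-\id_\cE\in\End(\cE)$, the annihilator of $f$ being zero) we get $\nu=\id_\cE$. Hence the stabilizer of every point of $X(\cE)$ is trivial, i.e.\ $\Aut(\cE)$ acts freely. The main obstacle is purely the case analysis in part~(1) guaranteeing $\Hom(\cL,\cE)=0$; I would streamline it by noting once and for all that for $\cE$ as in \cref{ssect.assump} one has $\mathrm{slope}(\cE)=n/2$ and every line-bundle summand (when $\cE$ is decomposable) has degree in $[1,n-1]$, so in all cases $\Hom(\cL,\cE)\hookrightarrow H^0$ of a bundle of negative degree, which vanishes. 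One subtlety to flag: strictly one should confirm $X(\cE)\ne\varnothing$ is harmless to assume here, since if $X(\cE)=\varnothing$ both statements are vacuous; so the argument may freely invoke \Cref{thm.good.Es}.
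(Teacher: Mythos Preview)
Your proof is correct and shares the paper's overall structure: for (1), factor a hypothetical annihilating $\phi\in\End(\cE)$ through the cokernel as $\phi=\psi\circ g$ with $\psi\colon\cL\to\cE$, then argue $\psi=0$; for (2), apply (1) to $\nu-\id_\cE$. The divergence is in how you dispatch $\psi$. You establish the stronger fact $\Hom(\cL,\cE)=0$ by invoking the classification of admissible $\cE$'s (\Cref{thm.good.Es}, via $X(\cE)\ne\varnothing\Rightarrow L(\cE)\ne\varnothing$) and then a degree/slope count in each case. The paper instead stays with the single exact sequence: if $\psi\ne 0$ (their $h$), its image cannot lie inside $f(\cO_E)\cong\cO_E$ since $\deg\cL>0$ forces $\Hom(\cL,\cO_E)=0$; hence $g\psi\in\End(\cL)=\CC$ is non-zero and therefore invertible, so the sequence $0\to\cO_E\to\cE\to\cL\to 0$ splits, contradicting that $\cO_E$ is not a direct summand of $\cE$. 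The paper's route is shorter, self-contained, and avoids both the case analysis and the appeal to semistability of indecomposable bundles; your route, while heavier, does yield the standalone fact $\Hom(\cL,\cE)=0$ as a byproduct.
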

\begin{proof}
\cref{item.le.autfree.gen}
Suppose to the contrary that the cokernel of $0\neq f \in \Hom_E(\cO_E,\cE)$ is torsion-free and 
$\gamma f=0$ for some non-zero $\gamma \in \End(\cE)$. The hypotheses on $\cE$ imply $\coker(f) \cong \cL$.
Let $g:\cE \to \cL$ be a cokernel of $f$. By the universal property of the cokernel there is a unique $h:\cL \to \cE$ 
such that $\gamma = hg$. Since $\gamma \ne 0$, $h \ne 0$. Since $\deg\cL>0$, the image of $h$ is not contained in the image of $f$.
Hence $gh$ is a non-zero endomorphism of $\cL$ which is invertible. It follows that the sequence 
splits. 

\cref{item.le.autfree.free}
This is an immediate consequence of \cref{item.le.autfree.gen}. 
\end{proof}

Suppose $\cE$ satisfies the assumption in \cref{ssect.assump}. Our next task, which we complete in \cref{cor.good.E's}, 
is to show $L(\cE)\neq\varnothing$; i.e., there is a non-split extension of the form $0 \to \cO_E \to \cE \to \cL \to 0$.
By \cref{lem.easy.non-split}, $L(\cE_{d,x}) \ne \varnothing$ for all $x \in E$ and all $d \in [1,\frac{n}{2}]$ so,  
by \cref{lem.tfree.coker}, it remains to show that if $\cE\cong\cE_o$ or $\cE\cong\cE_\omega$ ($\omega \in \Omega$), 
depending on the parity of $n$, then there is a non-zero map $f:\cO_E \to \cE$ whose cokernel is torsion-free; i.e., 
it remains to show that $X(\cE)\neq\varnothing$.

\subsection{The isomorphism $X(\cE) \cong {\rm Epi}(\cE,\det \cE)$}
\label{sect.props.of.cE}

The next lemma  is part of \cite[Exer.~II.5.16(b)]{hrt}.

\begin{lemma}\label{le:edete}
Let $\cE$ be a rank-two locally free $\cO_E$-module. Under the adjunction $\left(-\otimes\cE\right) \dashv \left(-\otimes\cE^\vee\right)$ the 
epimorphism $\cE \otimes \cE \to \wedge^2\cE=\det\cE$ corresponds to an $\Aut(\cE)$-equivariant isomorphism 
   \begin{equation*}
 \nu: \cE \to  (\det \cE) \otimes \cE^\vee .
  \end{equation*}
   \end{lemma}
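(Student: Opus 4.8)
The plan is to produce $\nu$ explicitly from the adjunction and then verify its two asserted properties separately: that it is an isomorphism (checked locally) and that it is $\Aut(\cE)$-equivariant (a formal consequence of naturality of $\wedge$). First I would note that the antisymmetrization map $w\colon\cE\otimes\cE\to\wedge^2\cE=\det\cE$ is an epimorphism because $\cE$ is locally free (on stalks it is the surjection $\cO^{\oplus2}\otimes\cO^{\oplus2}\to\cO$, $e_i\otimes e_j\mapsto e_i\wedge e_j$). Since $\cE$ is locally free of finite rank, $-\otimes\cE^\vee$ is right adjoint to $-\otimes\cE$, and the adjunction isomorphism $\Hom(\cE\otimes\cE,\det\cE)\cong\Hom\bigl(\cE,(\det\cE)\otimes\cE^\vee\bigr)$ carries $w$ to a morphism $\nu\colon\cE\to(\det\cE)\otimes\cE^\vee$; concretely $\nu(s)$ is the local homomorphism $\cE\to\det\cE$, $t\mapsto s\wedge t$, viewed as a section of $(\det\cE)\otimes\cE^\vee=\cHom(\cE,\det\cE)$.

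Next I would check $\nu$ is an isomorphism by working locally, so assume $\cE=\cO_E e_1\oplus\cO_E e_2$ is free. Then $\det\cE$ is free on $e_1\wedge e_2$ and $(\det\cE)\otimes\cE^\vee$ is free on $(e_1\wedge e_2)\otimes e_1^\vee$ and $(e_1\wedge e_2)\otimes e_2^\vee$. From $\nu(e_1)(e_1)=0$, $\nu(e_1)(e_2)=e_1\wedge e_2$, $\nu(e_2)(e_1)=-(e_1\wedge e_2)$, $\nu(e_2)(e_2)=0$ one reads off that the matrix of $\nu$ in these bases is $\left(\begin{smallmatrix}0&-1\\1&0\end{smallmatrix}\right)$, which is invertible over $\cO_E$. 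Hence $\nu$ is an isomorphism on each stalk, so an isomorphism of sheaves.

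Finally, for $\Aut(\cE)$-equivariance I would spell out the relevant actions: $\phi\in\Aut(\cE)$ acts on $\cE$ tautologically, on $\det\cE$ by $\wedge^2\phi$ (multiplication by the invertible section $\det\phi$), and on $\cE^\vee$ by $\lambda\mapsto\lambda\circ\phi^{-1}$, hence on $(\det\cE)\otimes\cE^\vee$ by $\omega\otimes\lambda\mapsto(\det\phi)\,\omega\otimes(\lambda\circ\phi^{-1})$. Then for local sections $s,t$ of $\cE$ one has $\nu(\phi s)(t)=(\phi s)\wedge t$, while $\bigl(\phi\cdot\nu(s)\bigr)(t)=(\det\phi)\bigl(s\wedge\phi^{-1}t\bigr)=(\wedge^2\phi)(s\wedge\phi^{-1}t)=(\phi s)\wedge t$; the two agree, so $\nu$ is equivariant. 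Conceptually this is just the naturality identity $w\circ(\phi\otimes\phi)=(\wedge^2\phi)\circ w$ transported across the adjunction. There is no genuine obstacle: the mathematical content is the perfectness of the rank-two wedge pairing, which is \cite[Exer.~II.5.16(b)]{hrt}, and the only thing requiring a little care is pinning down the precise induced $\Aut(\cE)$-action on $(\det\cE)\otimes\cE^\vee$ so that ``equivariant'' means what is intended.
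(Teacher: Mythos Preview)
Your proof is correct and follows essentially the same approach as the paper: construct $\nu$ via the adjunction, verify it is an isomorphism stalkwise (the paper phrases this as ``non-degenerate pairing on stalks''; you compute the matrix explicitly), and then check $\Aut(\cE)$-equivariance. The only minor difference is in the equivariance step: the paper defers this to a general categorical argument about rigid symmetric monoidal abelian categories developed in an appendix, whereas you give a direct local verification using the explicit actions on $\det\cE$ and $\cE^\vee$; your closing remark that this is naturality of $w$ transported across the adjunction is exactly the content of that appendix argument, so the two routes coincide in substance.
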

   \begin{proof} 
     By definition, $\det\cE = \wedge^2\cE$. The adjunction isomorphism $\Hom(\cE \otimes\cE,\wedge^2\cE) \cong 
     \Hom(\cE, \wedge^2\cE \otimes \cE^\vee)$ sends the epimorphism $\cE \otimes \cE \to \wedge^2\cE$ to a homomorphism 
     $\nu:\cE \to \wedge^2 \cE \otimes \cE^\vee$. 
     The fact that the map $\cE \otimes \cE \to \wedge^2\cE$ gives a non-degenerate pairing on stalks implies that $\nu$ is an isomorphism on stalks, 
 and hence an isomorphism. The fact that $\nu$ is $\Aut(\cE)$-equivariant is a general fact about rigid symmetric monoidal abelian
 categories $(\cC,\otimes,\mathbf{1})$; see \cref{rem.equiv.ex}.
    \end{proof}
    
Let  $\nu$ be as in  \cref{le:edete} and let
\begin{equation*}
\nu_*: \Hom(\cO_E,\cE)  \, \longrightarrow \,    \Hom(\cO_E, (\det \cE) \otimes \cE^\vee)
\end{equation*}
be the map $\nu_*(a)=\nu a$; clearly, $\nu_*$ is $\Aut(\cE)$-equivariant because $\nu$ is. Let
\begin{equation*}
\theta: \Hom(\cO_E,( \det \cE) \otimes \cE^\vee)  \, \longrightarrow \,    \Hom(\cE,\det \cE)
\end{equation*}
be the isomorphism associated to the adjunction $\left(-\otimes\cE\right) \dashv \left(-\otimes\cE^\vee\right)$; it is $\Aut(\cE)$-equivariant by \cref{prop.adj.equiv}.

\begin{lemma}\label{le.mor.corresp}
	Let $\cE$ be a rank-two locally free $\cO_E$-module. The $\Aut(\cE)$-equivariant isomorphism
	\begin{equation}\label{eq.theta.nu}
		\theta\circ\nu_{*}: \Hom(\cO_E,\cE)\, \longrightarrow \,\Hom(\cE,\det \cE)
	\end{equation}
	has the following properties:
	\begin{enumerate}
		\item\label{item.mor.corresp.zero} 
		If $s\in\Hom(\cO_E,\cE)$, then $(\theta \nu_{*})(s) \circ s=0$.
		\item\label{item.mor.corresp.ex} 
		If $s_1,s_2\in\Hom(\cO_E,\cE)$, then  $(\theta \nu_{*})(s_1)\circ s_{2}=-(\theta \nu_{*})(s_2)\circ s_{1}$.
	\end{enumerate}
\end{lemma}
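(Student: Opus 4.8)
The plan is to compute the isomorphism $\theta\circ\nu_{*}$ explicitly and then read off both identities from the defining properties of the exterior square $\wedge^{2}\cE$. Write $\pi\colon\cE\otimes\cE\to\wedge^{2}\cE=\det\cE$ for the canonical epimorphism. By \cref{le:edete}, $\nu$ is the adjunct of $\pi$ under $(-\otimes\cE)\dashv(-\otimes\cE^{\vee})$, i.e.\ $\pi=\epsilon\circ(\nu\otimes\id_{\cE})$, where $\epsilon\colon(\det\cE)\otimes\cE^{\vee}\otimes\cE\to\det\cE$ is the counit (evaluation); and by construction $\theta(a)=\epsilon\circ(a\otimes\id_{\cE})$ for $a\colon\cO_{E}\to(\det\cE)\otimes\cE^{\vee}$, where as usual $\cO_{E}\otimes\cE$ is identified with $\cE$. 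Hence for $s\in\Hom(\cO_{E},\cE)$,
\[
(\theta\nu_{*})(s)=\theta(\nu\circ s)=\epsilon\circ\bigl((\nu\circ s)\otimes\id_{\cE}\bigr)=\epsilon\circ(\nu\otimes\id_{\cE})\circ(s\otimes\id_{\cE})=\pi\circ(s\otimes\id_{\cE}).
\]
In other words, $(\theta\nu_{*})(s)$ is the composite of $s\otimes\id_{\cE}\colon\cO_{E}\otimes\cE\to\cE\otimes\cE$ with $\pi$ (after identifying $\cO_{E}\otimes\cE$ with $\cE$). This identification is the only real computation in the proof.

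Granting it, both assertions are immediate. For \cref{item.mor.corresp.zero}, precomposing with $s$ and using the unit constraint $\cO_{E}\otimes\cO_{E}\cong\cO_{E}$ gives $(\theta\nu_{*})(s)\circ s=\pi\circ(s\otimes s)$; the image of $s\otimes s\colon\cO_{E}\to\cE\otimes\cE$ lies in the subsheaf locally generated by sections of the form $a\otimes a$, which is exactly $\ker\pi$, so this composite is zero. For \cref{item.mor.corresp.ex}, in the same way $(\theta\nu_{*})(s_{1})\circ s_{2}=\pi\circ(s_{1}\otimes s_{2})$ and $(\theta\nu_{*})(s_{2})\circ s_{1}=\pi\circ(s_{2}\otimes s_{1})$ as elements of $\Hom(\cO_{E},\det\cE)$; since $\pi\circ\tau=-\pi$ for the swap $\tau\colon\cE\otimes\cE\to\cE\otimes\cE$ (immediate from $a\otimes b+b\otimes a=(a+b)\otimes(a+b)-a\otimes a-b\otimes b\in\ker\pi$) and $s_{2}\otimes s_{1}=\tau\circ(s_{1}\otimes s_{2})$ under the identification $\cO_{E}\otimes\cO_{E}\cong\cO_{E}$, the two composites are negatives of one another.

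The only delicate point — the ``main obstacle'', such as it is — is the bookkeeping in the first paragraph: correctly chasing $\theta\circ\nu_{*}$ through the two instances of the adjunction and the unit constraints of the monoidal category of $\cO_{E}$-modules, so as to land on the formula $(\theta\nu_{*})(s)=\pi\circ(s\otimes\id_{\cE})$; once that is in hand there is nothing left to do. Should one prefer to sidestep the abstract manipulation, the same two identities can be checked locally: after trivializing $\cE$ on an affine open and fixing a trivialization of $\det\cE$, a section $s_{i}$ becomes a column $(a_{i},b_{i})^{\mathrm{T}}$ and $(\theta\nu_{*})(s_{i})$ becomes the row $(-b_{i},a_{i})$ (this is precisely what \cref{le:edete} records), whence $(\theta\nu_{*})(s_{1})\circ s_{2}=a_{1}b_{2}-a_{2}b_{1}$, which vanishes when $s_{1}=s_{2}$ and is antisymmetric in $s_{1},s_{2}$ in general; that these local computations are consistent is again the content of \cref{le:edete}.
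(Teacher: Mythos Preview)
Your proof is correct and follows the same underlying idea as the paper's: both establish that $(\theta\nu_{*})(s)$ is ``wedge with $s$'' and then invoke the alternating property of $\wedge^{2}$. The paper carries this out by a direct local computation on stalks with a chosen basis $(e_{1},e_{2})$, arriving at the formula $v\mapsto(u\mapsto v\wedge u)$ and then reading off $s(1)\wedge s(1)=0$ and $s_{1}(1)\wedge s_{2}(1)=-s_{2}(1)\wedge s_{1}(1)$. You instead derive the global formula $(\theta\nu_{*})(s)=\pi\circ(s\otimes\id_{\cE})$ by chasing the adjunction, which is cleaner and coordinate-free; your closing remark about the local matrix form $(-b_{i},a_{i})$ is essentially the paper's computation in disguise. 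The two arguments are equivalent, yours being a slightly more conceptual packaging of the same content.
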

\begin{proof}
	Since both statements can be verified locally, it suffices to prove the corresponding claims for a rank-two free module $F:=\cE_{x}$ over the stalk $R:=\cO_{E,x}$ at each point $x\in E$. Fix a basis $(e_{1},e_{2})$ for $F$ and let $(e_{1}^{\vee},e_{2}^{\vee})$ be its dual basis.
	
	The isomorphism $\nu$ induces the  $R$-module isomorphism
	\begin{equation*}
		F\,\longrightarrow\,(\det F)\otimes F^{\vee},\qquad v\mapsto\sum_{i}(v\wedge e_{i})\otimes e_{i}^{\vee},
	\end{equation*}
	and $\theta$ induces the $R$-module isomorphism
	\begin{equation*}
		(\det F)\otimes F^{\vee}\,\cong\,\Hom_R(R,(\det F)\otimes F^{\vee})\,\longrightarrow\,\Hom_R(F,\det F),\qquad\omega\otimes g \mapsto \bigl(u\mapsto g(u)\omega\bigr),
	\end{equation*}
	so $\theta\circ\nu_{*}$ induces the $R$-module isomorphism
	\begin{equation}
	\label{third.isom}
		F\,= \,\Hom_R(R,F)\,\longrightarrow\,\Hom_R(F,\det F),\qquad v\mapsto\bigl(u\mapsto\sum_{i}e_{i}^{\vee}(u)(v\wedge e_{i})=v\wedge u\bigr).
	\end{equation}
	
	\cref{item.mor.corresp.zero}
	 If the isomorphism \cref{third.isom} sends $s\in\Hom_R(R,F)$ to $\varphi\in\Hom_R(F,\det F)$, 
	then $\varphi(u)=s(1)\wedge u$ for all $u\in F$, so $(\varphi\circ s)(1)=s(1)\wedge s(1)=0$. Therefore $\varphi\circ s=0$.
	
	\cref{item.mor.corresp.ex}
	If the isomorphism \cref{third.isom}  sends $s_{1}$  to $\varphi_{1}$ and $s_{2}$ to $\varphi_{2}$, then
	\begin{equation*}
		(\varphi_{1}\circ s_{2})(1) \, = \, s_{1}(1)\wedge s_{2}(1) \,=\, -s_{2}(1)\wedge s_{1}(1)\,=\, -(\varphi_{2}\circ s_{1})(1).
	\end{equation*}
	Therefore $\varphi_{1}\circ s_{2}=-\varphi_{2}\circ s_{1}$.
\end{proof}

    \subsubsection{Notation for epimorphisms}
    Given quasi-coherent $\cO_E$-modules $\cF$ and $\cG$, we define
    \begin{equation}
    \label{eq:epi.notn}
    \pushQED{\qed}
    \renewcommand\qedsymbol{$\lozenge$}
    {\rm Epi}(\cF,\cG) \; :=\; \{\text{epimorphisms }\pi:\cF \to \cG\} \;\subseteq\; \Hom(\cF,\cG). \qedhere
	\popQED
    \end{equation}

    \begin{lemma}
      \label{re:kercoker}
  Let $\cE$ be a rank-two  locally free $\cO_E$-module. 
 The composition of the linear isomorphisms in the top row of the following diagram restricts to an $\Aut(\cE)$-equivariant bijection 
 between the sets in the bottom row:
 \begin{equation*}
  \label{eq:imp.bijection.Z}
 \xymatrix{
  \Hom(\cO_E,\cE) \ar[r]^-{\nu_*} & \Hom(\cO_E,  (\det\cE) \otimes \cE^\vee) \ar[r]^-{\theta}& \Hom(\cE,\det \cE)
  \\
\ar@{^{(}->}[u]     \phantom{\big\vert}  X(\cE)  \ar[rr] &&   {\rm Epi}(\cE,\det\cE).   \ar@{^{(}->}[u]  \phantom{\big\vert}
 }
 \end{equation*}
 
 If  $f\in X(\cE)$ and $\pi=\theta\nu_*(f)$, then the sequence
 $
 \xymatrix{ 0 \ar[r] & \cO_{E} \ar[r]^-{f} & \cE \ar[r]^-{\pi} & \det\cE \ar[r] &  0}
$ 
is exact.
 \end{lemma}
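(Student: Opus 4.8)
The plan is to leverage what \cref{le.mor.corresp} already gives: $\theta\circ\nu_{*}\colon\Hom(\cO_E,\cE)\to\Hom(\cE,\det\cE)$ is an $\Aut(\cE)$-equivariant linear isomorphism, and $(\theta\nu_{*})(s)\circ s=0$ for every $s\in\Hom(\cO_E,\cE)$. Since $X(\cE)$ is $\Aut(\cE)$-stable --- for $\phi\in\Aut(\cE)$ the map $f\mapsto\phi\circ f$ preserves non-vanishing and satisfies $\coker(\phi\circ f)\cong\coker(f)$, hence preserves invertibility of the cokernel --- the isomorphism $\theta\nu_{*}$ carries $X(\cE)$ bijectively and $\Aut(\cE)$-equivariantly onto its image. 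So the entire content of the lemma is: \emph{this image equals $\mathrm{Epi}(\cE,\det\cE)$}, together with the exactness statement, and both will fall out of one computation.

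First I would show $\theta\nu_{*}(X(\cE))\subseteq\mathrm{Epi}(\cE,\det\cE)$. Let $f\in X(\cE)$. As $\cO_E$ has rank one and $f\neq0$, $f$ is a monomorphism, and by definition of $X(\cE)$ its cokernel is invertible; computing determinants in $0\to\cO_E\xrightarrow{f}\cE\to\coker(f)\to0$ gives $\coker(f)\cong\det\cE$, so there is an epimorphism $g\colon\cE\to\det\cE$ with kernel $f(\cO_E)$. Set $\pi:=(\theta\nu_{*})(f)$. By \cref{le.mor.corresp}\cref{item.mor.corresp.zero}, $\pi\circ f=0$, so $\pi$ factors through $g$ by the universal property of the cokernel; since $\End(\det\cE)=\CC$, we get $\pi=c\,g$ for a scalar $c$. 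If $c=0$ then $\pi=0$, which forces $f=0$ because $\theta\nu_{*}$ is injective --- a contradiction. Hence $c\neq0$, $\pi$ is an epimorphism, and $0\to\cO_E\xrightarrow{f}\cE\xrightarrow{\pi}\det\cE\to0$ is exact (being $c$ times the sequence built from $g$); this already gives the final assertion of the lemma. Conversely, for $\pi\in\mathrm{Epi}(\cE,\det\cE)$ put $f:=(\theta\nu_{*})^{-1}(\pi)$, which is nonzero since $\pi\neq0$. The kernel $\ker\pi$ is a rank-one subsheaf of the locally free $\cO_E$-module $\cE$ on the smooth curve $E$, hence is invertible, and taking determinants in $0\to\ker\pi\to\cE\xrightarrow{\pi}\det\cE\to0$ shows $\ker\pi\cong\cO_E$. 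Again $\pi\circ f=0$, so $f$ factors through the inclusion $\ker\pi\hookrightarrow\cE$ via some $h\colon\cO_E\to\ker\pi$; as $f\neq0$ we have $h\neq0$, and since $\ker\pi\cong\cO_E$ a nonzero map $\cO_E\to\ker\pi$ is an isomorphism, so $h$ identifies $\cO_E$ with $\ker\pi$. Therefore $f$ is a monomorphism with $\coker(f)\cong\cE/\ker\pi\cong\det\cE$ invertible, i.e.\ $f\in X(\cE)$. Combining the two inclusions with the first paragraph yields the claimed $\Aut(\cE)$-equivariant bijection $X(\cE)\to\mathrm{Epi}(\cE,\det\cE)$.

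The only step that requires an actual argument --- the rest being bookkeeping with short exact sequences of vector bundles on a smooth curve --- is verifying that $\pi=(\theta\nu_{*})(f)$ is honestly surjective, rather than merely nonzero with locally free image: this is exactly where $\pi\circ f=0$, the determinant identification $\coker(f)\cong\det\cE$, and the one-dimensionality of $\End(\det\cE)$ combine to pin $\pi$ down up to a nonzero scalar and, simultaneously, to deliver the exactness of $0\to\cO_E\to\cE\to\det\cE\to0$.
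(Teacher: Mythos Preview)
Your proof is correct and follows essentially the same route as the paper's: both directions hinge on $\pi\circ f=0$ from \cref{le.mor.corresp}\cref{item.mor.corresp.zero}, then factor the appropriate map through a cokernel or kernel and use that a nonzero endomorphism of an invertible sheaf (equivalently, a nonzero map $\cO_E\to\cO_E$) is an isomorphism. The only cosmetic difference is that the paper invokes \cref{lem.tfree.coker} to identify $\coker(f)\cong\det\cE$, whereas you compute this directly via determinants; the logic is otherwise identical.
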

 \begin{proof}
Let $f:\cO_E\to \cE$ be any non-zero homomorphism and define $\pi:=\theta\nu_*(f)=\theta(\nu f)$.
We will show that $\coker(f)$ is torsion-free if and only if $\pi$ is epic.

Since $\pi f=0$ by \cref{le.mor.corresp}\cref{item.mor.corresp.zero}, the morphism $\pi:\cE\to\det\cE$ is the composition of the cokernel morphism $\cE\to\coker(f)$ and a non-zero morphism $g:\coker(f)\to\det\cE$. If $\coker(f)$ is torsion-free, then $\coker(f)\cong\det\cE$ by \cref{lem.tfree.coker}, so $g$ is an isomorphism, and hence $\pi$ is an epimorphism.

Conversely, if $\pi:\cE \to \det\cE$ is epic, then the sequence 
$\xymatrix{
		0 \ar[r] & \ker(\pi) \ar[r] & \cE \ar[r]^-{\pi} & \det\cE \ar[r] &  0
	}
$	
is exact so $\ker(\pi)\cong\cO_{E}$. Since $\pi f=0$ a similar argument shows that $f$ is the composition of an isomorphism $\cO_{E}\to\ker(\pi)$ and the kernel morphism $\ker(\pi)\to\cE$. Hence $\coker(f)\cong\det\cE$, which is torsion-free.
\end{proof}

\begin{remark}
  The bijection $f \longleftrightarrow \pi =\theta(\nu f)$ in \cref{eq:imp.bijection.Z} puts the two middle arrows in exact sequences of the form
$ 0 \to \cO_E \to \cE \to \det\cE \to 0$
in bijective correspondence with each other. 
  We will freely switch perspective between the two points of view, parametrizing the non-split extensions of $\det\cE$ by $\cO_E$ 
  either by torsion-free-cokernel monomorphisms $\cO_E\to \cE$ or by epimorphisms $\cE\to \det\cE$.
  \reqed
\end{remark}

\begin{lemma}
\label{lem.varphi.f}
If $\xymatrix{0 \ar[r] &  \cO_E \ar[r]^f & \cE  \ar[r]^\pi \ar[r] & \cL  \ar[r] &  0}$ is exact,
then $\Hom(\cE, \cL) f = \pi \Hom(\cO_E,\cE)$.
\end{lemma}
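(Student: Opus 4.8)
The plan is to deduce the identity directly from the $\Aut(\cE)$-equivariant isomorphism $\theta\circ\nu_{*}\colon\Hom(\cO_E,\cE)\xrightarrow{\sim}\Hom(\cE,\det\cE)$ of \cref{le.mor.corresp}, together with the anti-commutativity relation in \cref{le.mor.corresp}\cref{item.mor.corresp.ex}. First I would record that $f$ is a nonzero monomorphism whose cokernel $\cL$ is locally free, so $f\in X(\cE)$, and that $\det\cE\cong\cL$, since taking determinants in the given sequence yields $\det\cE\cong\det\cO_E\otimes\cL$.

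Next I would set $\pi_{0}:=(\theta\nu_{*})(f)\in\Hom(\cE,\det\cE)$ and invoke \cref{re:kercoker}, which gives that the sequence $0\to\cO_E\xrightarrow{f}\cE\xrightarrow{\pi_{0}}\det\cE\to0$ is exact. Thus $\pi_{0}$ and the given $\pi$ are epimorphisms out of $\cE$ with the common kernel $\im(f)$, so they differ by an isomorphism: there is $j\colon\det\cE\xrightarrow{\sim}\cL$ with $\pi=j\circ\pi_{0}$. Then, given an arbitrary $g\in\Hom(\cE,\cL)$, I would use that $j$ and $\theta\nu_{*}$ are bijections to write $g=j\circ(\theta\nu_{*})(t)$ for a unique $t\in\Hom(\cO_E,\cE)$, with $g\mapsto t$ a bijection $\Hom(\cE,\cL)\xrightarrow{\sim}\Hom(\cO_E,\cE)$. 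Applying \cref{le.mor.corresp}\cref{item.mor.corresp.ex} with $s_{1}=t$ and $s_{2}=f$ gives $(\theta\nu_{*})(t)\circ f=-(\theta\nu_{*})(f)\circ t=-\pi_{0}\circ t$, hence
\[
  g\circ f\;=\;j\circ(\theta\nu_{*})(t)\circ f\;=\;-\,j\circ\pi_{0}\circ t\;=\;-\,\pi\circ t .
\]
As $g$ runs over $\Hom(\cE,\cL)$ the element $t$ runs over all of $\Hom(\cO_E,\cE)$, so $\Hom(\cE,\cL)f=\{\,-\pi t : t\in\Hom(\cO_E,\cE)\,\}=\pi\Hom(\cO_E,\cE)$; the sign is irrelevant for the equality of these two subspaces of $\Hom(\cO_E,\cL)$.

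The proof is essentially formal once \cref{le.mor.corresp} and \cref{re:kercoker} are available, so I do not expect a real obstacle. The only step that requires a moment's thought is the identification $\pi=j\circ\pi_{0}$ of the given epimorphism with the canonical one attached to $f$ via $\theta\nu_{*}$; this is exactly what \cref{re:kercoker} provides (together with uniqueness of cokernels), and everything else is a one-line computation.
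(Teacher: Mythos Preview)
Your proof is correct and takes essentially the same approach as the paper: both arguments hinge on the anti-commutativity relation $(\theta\nu_*)(s_1)\circ s_2 = -(\theta\nu_*)(s_2)\circ s_1$ from \cref{le.mor.corresp}\cref{item.mor.corresp.ex}, combined with the bijection $\theta\nu_*$ between $\Hom(\cO_E,\cE)$ and $\Hom(\cE,\det\cE)$. The only cosmetic difference is that the paper first reduces (via uniqueness of cokernels) to the case $\cL=\det\cE$ and $\pi=(\theta\nu_*)(f)$, whereas you carry the isomorphism $j\colon\det\cE\to\cL$ through the computation explicitly; the resulting one-line calculation is the same.
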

\begin{proof}
	By \cref{re:kercoker} and the universality of cokernels, we can assume that $\cL=\det\cE$, and  that $f$ and $\pi$ correspond to each other via 
	$\theta\circ\nu_{*}$ in \cref{eq.theta.nu}; i.e., $\pi=(\theta\nu_{*})(f)$. 
	
	To prove ``$\subseteq$'', let $\pi'\in\Hom(\cE,\cL)$; then $\pi'=(\theta \nu_*)(f')$ for a 	unique $f'\in\Hom(\cO_{E},\cE)$;  
	by \cref{le.mor.corresp}\cref{item.mor.corresp.ex}, $\pi' f = (\theta \nu_*)(f') \circ f  = -  (\theta \nu_*)(f) \circ f' \in \pi \Hom(\cO_E,\cE)$.
	
	To prove ``$\supseteq$'', let $f'\in\Hom(\cO_{E},\cE)$; then 
	$\pi f'=(\theta\nu_{*})(f) \circ f' = -(\theta\nu_{*})(f') \circ f \in  \Hom(\cE, \cL) f$.
\end{proof}

\subsection{Serre duality and the notation $\xi^\perp$}
\label{sect.SD}

Fix an isomorphism $t: \Ext^1(\cO_E,\cO_E)  \to \CC$ of vector spaces and define the map
\begin{equation} 
\label{eq:SD}
\Ext^1(\cL,\cO_E) \times \Hom(\cO_E,\cL) \, \longrightarrow \, \CC,    \qquad (\eta,s) \mapsto t(\eta\cdot s),
\end{equation}
where $\eta\cdot s \in \Ext^1(\cO_E,\cO_E)$ is the pullback of $\eta$ along the homomorphism $s:\cO_E \to \cL$ 
(we remind the reader of the definition of this pullback in the proof of  \cref{lem.xi.perp}).
The fact that the map in \cref{eq:SD} is non-degenerate is the essence of Serre duality in this situation. 

We use the pairing in \cref{eq:SD} to make the identifications
\begin{align} 
\label{eq:SD.identification}
\Ext^1(\cL,\cO_E)^*    & \; = \; \Hom(\cO_E,\cL),  \qquad    t(- \cdot s) = s,
\\
\label{eq:SD.identification.2}
\Ext^1(\cL,\cO_E)\phantom{i}   & \; = \; \Hom(\cO_E,\cL)^*,  \qquad    \eta = t(\eta \cdot -).
\end{align}
If $\eta \in \Ext^1(\cL,\cO_E)$, we define
\begin{equation}
  \label{eq:xi.perp1}
  \eta^\perp \; := \; \{s \in \Hom(\cO_E,\cL) \; | \;  \eta \cdot s=0\}.
\end{equation}
Although the identifications in \cref{eq:SD.identification} and \cref{eq:SD.identification.2} depend on the choice of $t$, $\eta^\perp$ does not.  Given a point $\xi =\CC \eta \in \PP_\cL$ we define
\begin{equation}
\label{defn.xi.perp2}
\xi^\perp \; := \; \eta^\perp.
\end{equation}
The next lemma gives a description of  $\xi^\perp$ that will be used repeatedly in what follows.
 
The symbol $\perp$  is also used in the following way:
given a subspace $W \subseteq  \Hom(\cO_E,\cL)$, we define
\begin{equation}
\label{defn.W.perp}
W^\perp \; := \; \{ \lambda \in \Hom(\cO_E,\cL)^* \; | \; \lambda(s)=0 \text{ for all } s \in W \}.
\end{equation}
When $\codim W=1$ we often consider $W^\perp$ as a point in $\PP \Hom(\cO_E,\cL)^* = \PP\Ext^1(\cL,\cO_E)=\PP_\cL$;
because $W^\perp \in \PP_\cL$ we can use \cref{defn.xi.perp2} to define $(W^\perp)^\perp$; we have $(W^\perp)^\perp=W$. Thus
the definitions in \cref{defn.xi.perp2,defn.W.perp} are compatible with each other.

Given a homomorphism $\pi:\cE \to \cL$, we define the map
\begin{equation}
\label{eq:pi_*}
\pi_*:     \Hom(\cO_E,\cE) \, \longrightarrow \, \Hom(\cO_E,\cL), \qquad  \pi_*(b):=\pi b.
  \end{equation}
  
\begin{lemma}
\label{lem.xi.perp}
If $\xi\in \Ext^1(\cL,\cO_E)$ is the non-split extension $\xymatrix{ 0 \ar[r] & \cO_E \ar[r]^f & \cE \ar[r]^>>>>>\pi & \cL \ar[r] & 0,}$ then
 \begin{equation}
\label{eq:xi.perp=}
\im(\pi_*) \;=\;  \pi \Hom(\cO_E,\cE) \;=\; \Hom(\cE,\cL)f \;=\; \xi^\perp.
\end{equation}
Under the Serre duality identifications in \cref{eq:SD.identification}, $\xi^\perp=\im(\pi_*)$ and $\CC\xi=\im(\pi_*)^\perp$. 
\end{lemma}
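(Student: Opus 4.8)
The plan is to read the chain of equalities from left to right, noting that the first two are essentially formal and that only the identification with $\xi^\perp$ carries real content. The equality $\im(\pi_*)=\pi\Hom(\cO_E,\cE)$ is immediate from the definition of $\pi_*$ in \cref{eq:pi_*}, since $\im(\pi_*)=\{\pi b\mid b\in\Hom(\cO_E,\cE)\}$. The equality $\pi\Hom(\cO_E,\cE)=\Hom(\cE,\cL)f$ is exactly \cref{lem.varphi.f} applied to the given exact sequence. So the work is to show $\im(\pi_*)=\xi^\perp$.

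For that, I would first recall, as the statement promises, the meaning of the pullback occurring in \cref{eq:xi.perp1}: for $s\colon\cO_E\to\cL$ the class $\xi\cdot s\in\Ext^1(\cO_E,\cO_E)$ is $s^{*}\xi$, i.e.\ the class of the extension $0\to\cO_E\to\cE\times_{\cL}\cO_E\to\cO_E\to 0$ obtained by pulling $\xi$ back along $s$. Then I would apply the functor $\Hom(\cO_E,-)$ to the short exact sequence defining $\xi$, producing the long exact sequence
\begin{equation*}
0\to\Hom(\cO_E,\cO_E)\xrightarrow{f_{*}}\Hom(\cO_E,\cE)\xrightarrow{\pi_{*}}\Hom(\cO_E,\cL)\xrightarrow{\partial}\Ext^1(\cO_E,\cO_E)\to\cdots,
\end{equation*}
and invoke the standard description of the connecting homomorphism: $\partial(s)=s^{*}\xi=\xi\cdot s$. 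Exactness at $\Hom(\cO_E,\cL)$ then gives $\im(\pi_{*})=\ker(\partial)=\{s\mid\xi\cdot s=0\}=\xi^\perp$, which is the assertion in \cref{eq:xi.perp=}.

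The last sentence of the lemma I would deduce from this. Under \cref{eq:SD.identification} the set $\xi^\perp$ is a subspace of $\Hom(\cO_E,\cL)=\Ext^1(\cL,\cO_E)^{*}$, so $\xi^\perp=\im(\pi_{*})$ is just a restatement of what was proved. For $\CC\xi=\im(\pi_{*})^\perp$, observe that under \cref{eq:SD.identification.2} the element $\xi$ is the linear functional $s\mapsto t(\xi\cdot s)$ on $\Hom(\cO_E,\cL)$; since $t$ is an isomorphism its kernel is exactly $\xi^\perp$, and since $\xi\ne0$ (the extension is non-split) the non-degeneracy of the pairing in \cref{eq:SD} forces this functional to be nonzero, so $\im(\pi_{*})=\xi^\perp$ has codimension one. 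Hence $\im(\pi_{*})^\perp$ is one-dimensional; it contains $\xi$ because $\xi$ annihilates $\xi^\perp$ by construction; therefore $\im(\pi_{*})^\perp=\CC\xi$. Alternatively one can invoke the compatibility $(W^\perp)^\perp=W$ recorded just before \cref{eq:pi_*}.

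The only point needing care is the identification of $\partial$ with the map $s\mapsto\xi\cdot s$; this is the familiar fact that the boundary map in the covariant long exact $\Ext$-sequence is Yoneda composition with, equivalently pullback of, the extension class, so it is routine, though one should track the sign convention to be sure it matches the definition of $\eta\cdot s$ used in \cref{eq:xi.perp1}—and in any case the sign is irrelevant for the kernel computation.
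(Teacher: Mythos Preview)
Your proposal is correct and follows essentially the same route as the paper: the first two equalities via definition and \Cref{lem.varphi.f}, the third via the long exact sequence for $\Hom(\cO_E,-)$ together with the identification of the connecting map with pullback by $\xi$, and the final sentence via $(W^\perp)^\perp=W$. The only cosmetic difference is that where you cite $\partial(s)=\xi\cdot s$ as a standard fact, the paper unpacks it by writing down the pullback diagram explicitly and observing that the top row splits iff $s$ lifts through $\pi$.
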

\begin{proof}
The first equality in \Cref{eq:xi.perp=} is the definition of $\im(\pi_*)$. The second is the content of \Cref{lem.varphi.f}.

Applying $\Hom(\cO_E,-)$ to  $\xi$ yields an exact sequence
\begin{equation*}
\xymatrix{ 
\Hom(\cO_E,\cE) \ar[r]^-{\pi_*}  & \Hom(\cO_E,\cL) \ar[r]^-{\d} & \Ext^1(\cO_E,\cO_E)  \ar[r] & 0=  \Ext^1(\cO_E,\cE).
}
\end{equation*}
Since the image of $\pi_*$ is the kernel of $\d$, to complete the proof we must show that an element $s$ in  $\Hom(\cO_E,\cL)$ is in the image of $\pi_*$
if and only if $\xi \cdot s=0$, i.e., if and only if the pull-back of $\xi$ along $s$ is a split extension. The pullback  $\xi \cdot s$ is, by definition, 
  the top row in the pullback diagram
\begin{equation}
    \xymatrix{
      \xi\cdot s : \quad 0 \ar[r] & \cO_E \ar[r]  \ar@{=}[d]&  \cE \times_\cL \cO_E \ar[d] \ar[r]^>>>>{\pi'}  & \cO_E \ar[r]  \ar[d]^s & 0 
      \\
      \xi: \quad 0 \ar[r] & \cO_E \ar[r]  &  \cE  \ar[r]_\pi  &  \cL \ar[r]  & 0.
    }
  \end{equation}
The top row splits (i.e., $\xi \cdot s=0$) if and only if there is a map $g:\cO_E \to \cE$ such that 
$\pi g = s$; i.e., if and only if $s \in \im(\pi_*)$. Hence $\xi^\perp=\im(\pi_*)$. The equality $\CC \xi=\im(\pi_*)^\perp$ now follows from the remarks
about $(W^\perp)^\perp$ just before this lemma.
\end{proof}

\subsection{The subvarieties $Z_x \subseteq Z_\cE$ and  properties of $\PP Z_\cE \subseteq \PP \Hom(\cO_E,\cE)$}

In this section we show that  if $\cE$ satisfies the assumptions in \cref{ssect.assump}, then $\PP Z_\cE$ is a hypersurface in
$\PP\Hom(\cO_E,\cE)$. (At this stage we don't even know that $Z_\cE \cup \{0\}$ is Zariski-closed in $\Hom(\cO_E,\cE)-\{0\}$.)
It will follow that $L(\cE)$ is non-empty for all $\cE$ in \cref{ssect.assump}.

 For each $x \in E$, let
\begin{equation}
  Z_x \; :=\; \{\text{non-zero maps } f:\cO_E \to \cE  \; | \; \coker(f) \text{ contains a copy of $\cO_x$}\} \; \subseteq \;  \Hom_E(\cO_E,\cE)-\{0\}.
\end{equation} 
Clearly, 
\begin{equation}
  Z_{\cE}     \;=\;  \bigcup_{x \in E} Z_x. 
\end{equation}

\begin{lemma}
\label{lem.h0.E-x}
 Let $\cE \in \Bun(2,\cL)$.
   Fix a point $x \in E$.
  \begin{enumerate}
  \item\label{item.hzero.one.dec} 
  If $\cE = \cN_{1}\oplus\cN_{2}$ for some invertible $\cO_{E}$-modules $\cN_{1},\cN_{2}$ of degree $\geq 1$, then
    \begin{equation*}
      h^0(\cE(-x)) 
      \;=\; 
      \begin{cases}
        h^0(\cE)-2 & \text{if $\cN_{1}\not\cong\cO_{E}(x)$ and $\cN_{2}\not\cong\cO_{E}(x)$,}
        \\
        h^0(\cE)-1 & \text{if $\cN_{1}\cong\cO_{E}(x)$ or $\cN_{2}\cong\cO_{E}(x)$.}
      \end{cases}
    \end{equation*}
  \item\label{item.hzero.one.indec} 
    If $\cE$ is indecomposable, then $h^0(\cE(-x)) = h^0(\cE)-2=\deg\cE-2 $.
  \end{enumerate}
  More succinctly, if $\cE \in \Bun(2,\cL)$ and $x \in E$, then
  \begin{equation}
  \label{h0.E(-x).alternatives}
      h^0(\cE(-x)) 
      \;=\; 
      \begin{cases}
      h^0(\cE)-1 & \text{if $\cO_E(x)$ is a direct summand of $\cE$,}
      \\
        h^0(\cE)-2  & \text{otherwise.} 
      \end{cases}
    \end{equation}
\end{lemma}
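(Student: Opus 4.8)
The plan is to split the argument along the two cases of the statement, since the decomposable case reduces immediately to the cohomology of line bundles while the indecomposable case needs one extra input. First I would record the elementary line-bundle facts on the elliptic curve $E$: for an invertible $\cO_E$-module $\cN$ one has $h^0(\cN)=\deg\cN$ if $\deg\cN>0$, $h^0(\cN)=1$ if $\cN\cong\cO_E$, and $h^0(\cN)=0$ if either $\deg\cN<0$ or $\deg\cN=0$ with $\cN\not\cong\cO_E$; together with Riemann--Roch $\chi(\cF)=\deg\cF$ and Serre duality $h^1(\cF)=h^0(\cF^\vee)$ (using $\omega_E\cong\cO_E$).

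For part (1), write $\cE=\cN_1\oplus\cN_2$ with $\deg\cN_i\ge 1$, so $h^0(\cE)=h^0(\cN_1)+h^0(\cN_2)=\deg\cN_1+\deg\cN_2=n$ and $h^0(\cE(-x))=h^0(\cN_1(-x))+h^0(\cN_2(-x))$. Since $\deg\cN_i(-x)=\deg\cN_i-1\ge 0$, the facts above give $h^0(\cN_i(-x))=h^0(\cN_i)-1$ unless $\cN_i\cong\cO_E(x)$, in which case $h^0(\cN_i(-x))=h^0(\cN_i)=1$. The one observation needed is that $\cN_1$ and $\cN_2$ cannot both be isomorphic to $\cO_E(x)$, for that would force $\cL\cong\cO_E(2x)$ of degree $2<n$; summing over $i$ then produces exactly the displayed two-case formula.

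For part (2), with $\cE$ indecomposable, the strategy is to prove $h^1(\cE)=h^1(\cE(-x))=0$, after which Riemann--Roch yields $h^0(\cE)=\deg\cE=n$ and $h^0(\cE(-x))=\deg\cE(-x)=n-2$, hence $h^0(\cE(-x))=h^0(\cE)-2$. By Serre duality this reduces to showing that $\cE^\vee$ and $\cE^\vee(x)$, which are indecomposable of degrees $-n$ and $-(n-2)$ (both negative since $n\ge 3$), have no nonzero global sections. So the crux is the claim: \emph{an indecomposable rank-two locally free $\cO_E$-module $\cF$ with $\deg\cF<0$ has $h^0(\cF)=0$.} I would argue by contradiction: a nonzero section gives an injection $\cO_E\hookrightarrow\cF$ whose saturation is a sub-line-bundle $\cM\subseteq\cF$ with $\deg\cM\ge 0$, yielding $0\to\cM\to\cF\to\cQ\to 0$ with $\cQ$ a line bundle of degree $\deg\cF-\deg\cM<0$; then $\Ext^1_{\cO_E}(\cQ,\cM)\cong H^1(E,\cM\otimes\cQ^{-1})$ vanishes because $\deg(\cM\otimes\cQ^{-1})>0$, so the sequence splits, contradicting indecomposability of $\cF$. (Alternatively this vanishing follows from Atiyah's theorem that indecomposable bundles on an elliptic curve are semistable.)

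Finally, the ``more succinct'' formula is pure bookkeeping combining (1) and (2): by the Krull--Schmidt theorem, for $\cE$ as in (1) the line bundle $\cO_E(x)$ is a direct summand of $\cE$ precisely when $\cN_1\cong\cO_E(x)$ or $\cN_2\cong\cO_E(x)$, which is exactly the $h^0(\cE)-1$ branch, and an indecomposable $\cE$ never has $\cO_E(x)$ as a direct summand, so it falls into the $h^0(\cE)-2$ branch matching (2). The only genuine obstacle is the negative-degree vanishing in part (2); everything else is routine, and that step is precisely where indecomposability of $\cE$ (rather than merely its rank and degree) is used.
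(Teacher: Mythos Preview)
Your proof is correct and follows essentially the same approach as the paper: part (1) is the same line-bundle bookkeeping, and for part (2) both arguments reduce to $h^1(\cE)=h^1(\cE(-x))=0$ plus Riemann--Roch. The only difference is that the paper obtains the vanishing by citing the fact that indecomposable bundles on an elliptic curve are semistable (referencing \cite{tu}), whereas you give a short self-contained saturation/splitting argument for the same claim (and mention the semistability route as an alternative); either way the content is the same.
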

\begin{proof}
  \cref{item.hzero.one.dec}
If both $\cN_{1}$ and $\cN_{2}$ have degree $\ge 2$, then both $\cN_{1}(-x)$ and $\cN_{2}(-x)$ have degree  
  $\ge 1$ so $h^0(\cE(-x))=\deg\cN_{1}(-x) + \deg\cN_{2}(-x) =\deg\cN_{1}+\deg\cN_{2}-2=h^0(\cE)-2$.

  If $\deg\cN_{1}=1$ and $\cN_{1}\not\cong\cO_{E}(x)$, then $h^0(\cN_{1})=0$ and, since $\deg\cL \ge 3$, $h^0(\cN_{2}(-x))=h^0(\cN_{2})-1$, whence $h^0(\cE(-x))=h^0(\cE)-2$.

  If $\cN_{1}\cong\cO_{E}(x)$, then $h^0(\cN_{1}(-x))=1=h^0(\cN_{1})$ and $h^0(\cN_{2}(-x))=h^0(\cN_{1})-1$, whence $h^0(\cE(-x))=h^0(\cE)-1$.

  \cref{item.hzero.one.indec} 
  Suppose $\cE$ is indecomposable.  Every indecomposable locally free $\cO_E$-module is semistable 
  (a proof can be found in \cite[Appendix A]{tu}) so $h^0(\cE)=\deg\cE$ by \cite[Lem.~17]{tu}.  
  Since $\deg\cE=\deg\cL \ge 3$, $\cE(-(x))$ is also semistable of positive degree so $h^0(\cE(-x))=\deg\cE(-x)=\deg\cE-2=h^0(\cE)-2$.
\end{proof}

\begin{lemma}\label{lem.hzero.two}
	Let $\cE \in \Bun(2,\cL)$. Fix points $x,y\in E$.
	\begin{enumerate}
		\item\label{item.hzero.two.dec}
		If $\cE = \cN_{1}\oplus\cN_{2}$ for some invertible $\cO_{E}$-modules $\cN_{1},\cN_{2}$ of degree $\geq 2$, then
		\begin{equation*}
			h^{0}(\cE(-x-y))\;=\;
			\begin{cases}
				h^{0}(\cE)-4 & \text{if neither $\cN_{1}$ nor $\cN_{2}$ is isomorphic to $\cO_{E}(x+y)$,}\\
				h^{0}(\cE)-3 & \text{if exactly one of $\cN_{1},\cN_{2}$ is isomorphic to $\cO_{E}(x+y)$,}\\
				h^{0}(\cE)-2 & \text{if both $\cN_{1}$ and $\cN_{2}$ are isomorphic to $\cO_{E}(x+y)$.} \\
			\end{cases}
		\end{equation*}
		The third case occurs if and only if $n=4$ and $\cE \cong \cO_E(\omega +(0))^{\oplus 2}$  for some $\omega \in \Omega$ and $x+y =\omega$.\footnote{When $n=4$,  $\cO_E(\omega + (0))$ is the sheaf $\cL_\omega$ defined in \cref{ssect.bun.notn}. Hence the third case occurs if and only if $n=4$
		and $\cE \cong \cL_\omega \oplus \cL_\omega$.}
		\item\label{item.hzero.two.indec}
		If $\cE$ is indecomposable, then
		\begin{equation*}
			h^{0}(\cE(-x-y))\;=\;
			\begin{cases}
				h^{0}(\cE)-3=0 & \text{if $n=3$,}\\
				h^{0}(\cE)-3=1 & \text{if $n=4$, $x+y\in\Omega$, and $\cE\cong\cE_{x+y}$,}\\
				h^{0}(\cE)-4 & \text{otherwise.}\\
			\end{cases}
		\end{equation*}
	\end{enumerate}
\end{lemma}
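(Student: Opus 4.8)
The plan is to treat the two cases of the lemma separately, in each case reducing to a cohomology computation on the elliptic curve $E$.

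\emph{The decomposable case.} Here $\cE(-x-y)=\cN_1(-x-y)\oplus\cN_2(-x-y)$, and since $\deg\cN_i\ge 2$ each summand has degree $\ge 0$. I would recall the standard dichotomy for an invertible $\cO_E$-module $\cM$: $h^0(\cM)=\deg\cM$ if $\deg\cM\ge 1$, while if $\deg\cM=0$ then $h^0(\cM)=1$ when $\cM\cong\cO_E$ and $h^0(\cM)=0$ otherwise. Applying this to $\cN_i(-x-y)$ gives $h^0(\cN_i(-x-y))=h^0(\cN_i)-2$ unless $\cN_i\cong\cO_E(x+y)$ --- which forces $\deg\cN_i=2$, hence $h^0(\cN_i)=2$ --- in which case $h^0(\cN_i(-x-y))=1=h^0(\cN_i)-1$. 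Adding the contributions of $i=1,2$, and using $h^0(\cE)=h^0(\cN_1)+h^0(\cN_2)$, yields the three displayed subcases. For the final assertion: if both $\cN_i\cong\cO_E(x+y)$, then $\deg\cN_1=\deg\cN_2=2$, so $n=\deg\cE=4$; moreover $\cL=\det\cE=\cN_1\otimes\cN_2\cong\cO_E(2(x)+2(y))$, so $\s(\cL)=2x+2y$, and since $2(x+y)=2x+2y$ the point $\omega:=x+y$ lies in $\Omega$; finally, when $n=4$ one has $\cO_E(x+y)\cong\cO_E((x+y)+(0))=\cL_\omega$, so $\cE\cong\cL_\omega^{\oplus 2}=\cE_{r,\omega}$, and the reverse implication is immediate.

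\emph{The indecomposable case.} Tensoring by an invertible sheaf preserves indecomposability, so $\cE(-x-y)$ is again an indecomposable rank-two bundle, now of degree $n-4$. As in the proof of \cref{lem.h0.E-x}, I would use two facts about indecomposable bundles on an elliptic curve: they are semistable, and a semistable bundle of positive degree $d$ has $h^0=d$ (by Serre duality the $h^1$ of such a bundle vanishes, and Riemann--Roch gives the claim); in particular $h^0(\cE)=n$. If $n=3$ then $\deg\cE(-x-y)=-1<0$, so $h^0(\cE(-x-y))=0=h^0(\cE)-3$; if $n\ge 5$ then $\deg\cE(-x-y)=n-4>0$, so $h^0(\cE(-x-y))=n-4=h^0(\cE)-4$. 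The interesting case is $n=4$, where $\cE(-x-y)$ is indecomposable of rank two and degree zero; by Atiyah's classification such a bundle is of the form $F\otimes\cM$ with $\cM\in\Pic^0(E)$, where $F$ denotes the unique non-split self-extension of $\cO_E$, and it has a nonzero global section precisely when $\cM\cong\cO_E$, i.e.\ when it is $\cong F$, in which case $h^0=1$. Now by \cref{ssect.bun.notn}, $\cE\cong\cE_\omega$ for some $\omega\in\Omega$, and since $\cE_\omega$ is the non-split self-extension of $\cL_\omega$ the auto-equivalence $\cL_\omega\otimes(-)$ identifies $\cE_\omega$ with $\cL_\omega\otimes F$; hence $\cE(-x-y)\cong(\cL_\omega(-x-y))\otimes F\cong F$ if and only if $\cL_\omega\cong\cO_E(x+y)\cong\cO_E((x+y)+(0))$, which, comparing with $\cL_\omega=\cO_E((\omega)+(0))$, holds if and only if $x+y=\omega$. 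Therefore $h^0(\cE(-x-y))=1=h^0(\cE)-3$ exactly when $x+y\in\Omega$ and $\cE\cong\cE_{x+y}$, and $h^0(\cE(-x-y))=0=h^0(\cE)-4$ otherwise, matching the displayed subcases.

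\emph{Main obstacle.} The only step that is not routine Riemann--Roch bookkeeping is the $n=4$ indecomposable case: deciding exactly when the degree-zero twist $\cE(-x-y)$ acquires a global section. This rests on Atiyah's structure of degree-zero indecomposable rank-two bundles together with the identification $\cE_\omega\cong\cL_\omega\otimes F$ extracted from \cref{ssect.bun.notn}; once these are in hand, no further difficulties arise.
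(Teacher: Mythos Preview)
Your proof is correct and follows essentially the same approach as the paper: Riemann--Roch bookkeeping on the summands for part~(1), and semistability plus degree considerations for part~(2). The only cosmetic difference is in the $n=4$ indecomposable case, where you invoke Atiyah's classification of degree-zero rank-two indecomposables to identify $\cE(-x-y)\cong F\otimes\cM$, whereas the paper works directly with the extension $0\to\cL_\omega(-x-y)\to\cE(-x-y)\to\cL_\omega(-x-y)\to 0$ and its associated long exact sequence; the two arguments are the same computation in different packaging.
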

\begin{proof}
	\cref{item.hzero.two.dec}
	This can be proved in a similar way to \cref{lem.h0.E-x}\cref{item.hzero.one.dec}.
	The only point that might need additional explanation is the last sentence: if $\cE \cong \cO_E(x+y) \oplus \cO_E(x+y)$, then 
	$\cL \cong \det\cE \cong \cO_E(x+y)^{\otimes 2}$ so $n=\deg\cL=4$, and $x+y+x+y \sim H$ so $x+y \in \Omega$; in particular,
	$x+y = \omega$ for some 	$\omega \in \Omega$, so $\cO_E(x+y) \cong \cO_E((\omega) + (0))=\cL_\omega$ ($\cL_\omega$ is defined in     
	\cref{ssect.bun.notn}).
	
	\cref{item.hzero.two.indec}
	If $n=3$, then $\cE(-x-y)$ is semistable of negative degree, so $h^{0}(\cE(-x-y))=0$.
	
	Suppose $n=4$. As in \cref{ssect.bun.notn}, $\cE\cong\cE_{\omega}$ for some $\omega\in\Omega$, and $\cE_{\omega}$ is a non-split self-extension of $\cL_{\omega}=\cO_{E}((\omega)+(0))$. If $x+y\neq\omega$, then $h^{0}(\cL_{\omega}(-x-y))=0$ so $h^{0}(\cE(-x-y))=0$. If $x+y=\omega$, then $\cE(-x-y)$ is a non-split self-extension of $\cO_{E}$, so applying $\Hom(\cO_{E},-)$ to that extension produces an exact sequence
	\begin{equation*}
		\xymatrix{
			0\ar[r] & \Hom(\cO_{E},\cO_{E})\ar[r] & \Hom(\cO_{E},\cE(-x-y))\ar[r] & \Hom(\cO_{E},\cO_{E})\ar[r] & \Ext^{1}(\cO_{E},\cO_{E})
		}
	\end{equation*}
	in which the right-most morphism is non-zero. Hence $H^{0}(\cE(-x-y))=\Hom(\cO_{E},\cE(-x-y))$ has dimension $1=h^{0}(\cE)-3$.
	
	 If $n\geq 5$ this can be proved in a similar way to \cref{lem.h0.E-x}\cref{item.hzero.one.indec}.
\end{proof}

\subsubsection{Remark on notation}
We will use the symbol ``$\PP$'' to denote the projectivizations of spaces, varieties, etc. 
In all cases, it will be clear from the context which action of $\CC^{\times}$ we are quotienting out. 
For example, $\PP Z_x$ is the quotient of $Z_x$ by the  action that is the restriction of the  natural
action of $\CC^\times$ on $\Hom(\cO_E,\cE)-\{0\}$. Similarly, $\PP Z_{\cE}= Z_{\cE}/\CC^\times$.

\begin{proposition}\label{le:zx-codim2}
Suppose $\cE$ satisfies the assumptions in \cref{ssect.assump}.
  If $x\in E$, then 
  \begin{equation}\label{eq:pzx}
    \PP Z_x \; \subseteq \; \PP \Hom(\cO_E,\cE)
  \end{equation}
   is a linear subspace of $\PP \Hom(\cO_E,\cE)$  and 
  \begin{equation*}
  \codim\PP Z_x \;=\; 
  \begin{cases}
  1 & \text{if $\cO_E(x)$ is a direct summand of $\cE$,}
  \\ 
 2 & \text{otherwise.}
\end{cases}
\end{equation*}
\end{proposition}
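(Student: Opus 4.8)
The plan is to realize $Z_{x}\cup\{0\}$ as a linear subspace of $\Hom(\cO_{E},\cE)=H^{0}(\cE)$, namely the kernel of the evaluation map at $x$. The heart of the matter is the equivalence, for a non-zero $f\colon\cO_{E}\to\cE$,
\[
\coker(f)\text{ contains a copy of }\cO_{x}
\quad\Longleftrightarrow\quad
f\text{ vanishes at }x,
\]
where ``$f$ vanishes at $x$'' means that $f$ factors through the natural monomorphism $\cE(-x)\hookrightarrow\cE$ (whose cokernel is the skyscraper $\cE\otimes\cO_{x}$), equivalently that $f$ lies in the kernel of the evaluation map $\mathrm{ev}_{x}\colon H^{0}(\cE)\to H^{0}(\cE\otimes\cO_{x})$. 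First I would prove this claim locally at $x$: writing $R=\cO_{E,x}$ for the discrete valuation ring with uniformizer $t$ and choosing a basis of the stalk $\cE_{x}\cong R^{2}$, a non-zero $f$ corresponds to a column vector $(a,b)^{\mathrm{T}}\in R^{2}$; setting $k:=\min\{v(a),v(b)\}$ and factoring $(a,b)^{\mathrm{T}}=t^{k}(a_{0},b_{0})^{\mathrm{T}}$ with $(a_{0},b_{0})^{\mathrm{T}}$ unimodular, a change of basis turns $f$ into $t^{k}e_{1}$, so $\coker(f)_{x}\cong (R/t^{k}R)\oplus R$. Hence $\coker(f)$ contains a copy of $\cO_{x}$ (namely the socle of the summand $R/t^{k}R$) exactly when $k\geq1$, which is exactly when $f$ vanishes at $x$. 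This short local computation is the only real obstacle; everything else is formal.

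Granting the claim, $Z_{x}\cup\{0\}=\ker(\mathrm{ev}_{x})$. The long exact cohomology sequence of $0\to\cE(-x)\to\cE\to\cE\otimes\cO_{x}\to0$ identifies this kernel with the subspace $H^{0}(\cE(-x))\subseteq H^{0}(\cE)$. Since $Z_{x}\cup\{0\}$ is thus a $\CC$-linear subspace of $\Hom(\cO_{E},\cE)$, its projectivization $\PP Z_{x}$ is a linear subspace of $\PP\Hom(\cO_{E},\cE)$, and
\[
\codim_{\PP\Hom(\cO_{E},\cE)}\PP Z_{x}\;=\;h^{0}(\cE)-h^{0}(\cE(-x)).
\]
It then remains only to quote \cref{lem.h0.E-x}, in the succinct form \cref{h0.E(-x).alternatives}: $h^{0}(\cE(-x))=h^{0}(\cE)-1$ when $\cO_{E}(x)$ is a direct summand of $\cE$, and $h^{0}(\cE(-x))=h^{0}(\cE)-2$ otherwise. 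Substituting gives the codimensions $1$ and $2$ asserted in the proposition.

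Lastly I would note that $\PP Z_{x}$ is genuinely non-empty, so the conclusion is not vacuous: since $\cE$ satisfies the assumptions of \cref{ssect.assump}, $\cE$ is either indecomposable or a direct sum of two invertible $\cO_{E}$-modules of positive degree, and in either case $h^{0}(\cE)=\deg\cE=n\geq3$ (for the indecomposable case one uses, as in the proof of \cref{lem.h0.E-x}, that an indecomposable bundle on an elliptic curve is semistable and hence has $h^{0}$ equal to its degree). Consequently $h^{0}(\cE(-x))\geq n-2\geq1$, so $Z_{x}\neq\varnothing$.
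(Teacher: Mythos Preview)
Your proof is correct and follows essentially the same strategy as the paper: identify $Z_x\cup\{0\}$ with the subspace $H^0(\cE(-x))\subseteq H^0(\cE)$ and then invoke \cref{lem.h0.E-x}. The only cosmetic difference is in how the identification is justified: the paper observes that $\coker(f)$ contains $\cO_x$ iff $f$ extends along the inclusion $\cO_E\hookrightarrow\fm_x^{-1}=\cO_E(x)$, so $Z_x\cup\{0\}$ is the image of the injection $\Hom(\fm_x^{-1},\cE)\hookrightarrow\Hom(\cO_E,\cE)$; you instead check via a local DVR computation that $\coker(f)\supseteq\cO_x$ iff $f$ factors through $\cE(-x)\hookrightarrow\cE$, i.e.\ lies in $\ker(\mathrm{ev}_x)$. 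These are adjoint formulations of the same fact (since $\Hom(\cO_E(x),\cE)\cong\Hom(\cO_E,\cE(-x))$), and both yield the same linear subspace of dimension $h^0(\cE(-x))$.
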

\begin{proof}
  The cokernel of a morphism $f:\cO_E \to \cE$ contains a copy of the skyscraper sheaf $\cO_x$ if and only if $f$ extends to the central term in the extension
$0\to \cO_E \to \fm_x^{-1} \to \cO_x\to 0$.
Hence $Z_x \cup\{0\}$ is the image of the natural map $\Hom(\fm_x^{-1}, \cE) \to \Hom(\cO_E,\cE)$. This map is injective because $\Hom(\cO_x,\cE)=0$.
Thus, $\dim Z_x=\dim \Hom(\fm_x^{-1}, \cE) = h^0(\cE(-x))$.
The result now follows from \cref{lem.h0.E-x}.
\end{proof}

Piecing together the various projective spaces $\PP Z_x$ of \Cref{le:zx-codim2}, we have the following result.

\begin{proposition}\label{pr:z-codim1}
If $\cE$ satisfies the assumptions in \cref{ssect.assump}, then $\PP Z_\cE$ is Zariski-closed in $\PP \Hom(\cO_E,\cE)\,{\rm :}$
 \begin{equation*}
 \PP Z_\cE \;=\; 
  \begin{cases}
  \text{a union of two hyperplanes if $\cO_E(x)$ is a direct summand of $\cE$ for some $x \in E$,}
  \\ 
 \text{an irreducible hypersurface otherwise.}
\end{cases}
\end{equation*}
Hence $X(\cE)=\Hom(\cO_E,\cE)-\text{(a hypersurface)}$.
\end{proposition}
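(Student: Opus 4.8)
The plan is to deduce this from \Cref{le:zx-codim2} by understanding the parametrizing data $E^{[1]}=E$. The key observation is that the map $x\mapsto \PP Z_x$ is a morphism from $E$ to a suitable variety of linear subspaces of $\PP\Hom(\cO_E,\cE)$, and that $\PP Z_\cE$ is the union of the resulting family. Concretely, first I would reconsider the incidence construction used in the proof of \Cref{le:zx-codim2}: $Z_x\cup\{0\}$ is the image of the injection $\Hom(\fm_x^{-1},\cE)\hookrightarrow\Hom(\cO_E,\cE)$, and as $x$ varies these fit together. Let $\cM$ be the sheaf on $E\times E$ given by $\cO_{E\times E}(-\Delta)$ (so the fiber over $x$ is $\fm_x=\cO_E(-x)$); pushing $p_2^*\cE\otimes(\text{appropriate twist})$ forward along the first projection gives, away from the locus where $\cO_E(x)$ is a direct summand of $\cE$, a rank-$(h^0(\cE)-2)$ vector bundle $\cF$ on $E$ (by \Cref{lem.h0.E-x}, $h^0(\cE(-x))$ is constant there, so the pushforward is locally free by cohomology-and-base-change), together with a bundle map $\cF\to \Hom(\cO_E,\cE)\otimes\cO_E$ whose fiber over $x$ has image $Z_x\cup\{0\}$. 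Projectivizing fiberwise produces a morphism $g$ from (an open subset of) $E$ to the Grassmannian $\mathrm{Gr}(h^0(\cE)-3,\,\PP\Hom(\cO_E,\cE))$, and $\PP Z_\cE$ is the image of the associated incidence variety
\begin{equation*}
\cI \;:=\; \{(x,\xi)\; |\; \xi\in \PP Z_x\} \;\subseteq\; E\times \PP\Hom(\cO_E,\cE),
\end{equation*}
under the second projection. Since $\cI$ is a projective-bundle-like object over an open subset of the curve $E$ it is irreducible of dimension $(h^0(\cE)-3)+1 = h^0(\cE)-2 = \dim\PP\Hom(\cO_E,\cE)-1$, and its image is closed (the second projection is proper); so $\PP Z_\cE$ is a closed irreducible subset of the expected codimension $1$, i.e. a hypersurface, provided the generic fiber of $\cI\to\PP Z_\cE$ is finite — equivalently, provided two distinct points $x\ne y$ of $E$ generically give $\PP Z_x\ne\PP Z_y$. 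This last point is where I expect the real work: I would argue that $\PP Z_x\cap\PP Z_y = \PP Z_{x+y}'$ has strictly smaller dimension, using \Cref{lem.hzero.two} to compute $h^0(\cE(-x-y))$ and thereby bound $\dim(Z_x\cap Z_y)$; when $\cE$ is indecomposable and $n\ge 5$ (or $n=3,4$ with the noted exceptions) $h^0(\cE(-x-y))=h^0(\cE)-4$, so $\PP Z_x\cap\PP Z_y$ has codimension $\ge 2$ inside each $\PP Z_x$, forcing $\PP Z_x\ne\PP Z_y$ and hence finiteness of the generic fiber.

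For the decomposable case one must instead pin down $\PP Z_\cE$ directly. Here $\cE\cong\cE_{d,x}=\cO_E(D)\oplus\cL(-D)$ with $D\in E^{[d]}_x$ (\Cref{thm.good.Es}\cref{item.good.Es.dec}), and $\cO_E(D)$ is a direct summand whenever $d=1$, while more generally $\cO_E(y)$ is a direct summand of $\cE$ precisely when $\cO_E(D)\cong\cO_E(y)$ (so $d=1$, $y=x$) or $\cL(-D)\cong\cO_E(y)$ (so $d=n-1$, i.e. this never happens in the relevant range $d\le n/2$ unless $n=2$). Thus for $\cE=\cE_{1,x}$ there are exactly two values of $y$ — namely $y=x$ and the $y$ with $\cL(-x)\cong\cO_E(y)$, i.e. $y=\s(\cL)-x$ — for which $\codim\PP Z_y=1$; for every other $y$, $\codim\PP Z_y=2$. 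By the dimension count above, $\PP Z_\cE$ is then the union of the two hyperplanes $\PP Z_x$ and $\PP Z_{\s(\cL)-x}$ together with the image of the family of codimension-$2$ subspaces $\{\PP Z_y\}_{y\ne x,\s(\cL)-x}$; but that family is $(1)+(h^0(\cE)-3)=h^0(\cE)-2$-dimensional only if the generic fiber is again finite, and in fact it will be — so the union of the two hyperplanes already has the right dimension $h^0(\cE)-2$ and the remaining piece is contained in it (or is lower-dimensional), giving $\PP Z_\cE$ = two hyperplanes exactly. When $\cE=\cE_{d,x}$ with $2\le d\le n/2$, no $\cO_E(y)$ is a direct summand, every $\PP Z_y$ has codimension $2$, and the same incidence-variety argument as in the indecomposable case yields that $\PP Z_\cE$ is an irreducible hypersurface — once one checks $\PP Z_y\ne\PP Z_{y'}$ for generic $y\ne y'$, again via \Cref{lem.hzero.two}.

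The main obstacle, then, is the injectivity/finiteness statement: showing that the map $y\mapsto \PP Z_y$ is generically injective (equivalently, that $\dim(Z_y\cap Z_{y'})$ drops for distinct $y,y'$). \Cref{lem.hzero.two} is tailor-made for this: it gives $h^0(\cE(-y-y'))\le h^0(\cE)-3$ always, with equality only in the $n=3,4$ exceptional cases, and $h^0(\cE)-4$ generically; since $\PP Z_y\cap\PP Z_{y'}$ has dimension $h^0(\cE(-y-y'))-1$ while each $\PP Z_y$ has dimension $\ge h^0(\cE)-3$, the intersection is a proper subspace of each, so the subspaces are distinct. I would handle the low-dimensional sporadic cases ($n=3$: then $\PP\Hom(\cO_E,\cE)$ is a point or a line, handled by hand; $n=4$ with $\cE\cong\cL_\omega\oplus\cL_\omega$ or $\cE\cong\cE_\omega$) separately and explicitly, noting that these are exactly the configurations flagged in the last sentences of \Cref{lem.hzero.two} and that they still produce a (possibly reducible, but still correctly-described) hypersurface. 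The final clause, "$X(\cE)=\Hom(\cO_E,\cE)-\text{(a hypersurface)}$," is then immediate from $X(\cE)=\Hom(\cO_E,\cE)-(Z_\cE\cup\{0\})$ together with the fact that $Z_\cE\cup\{0\}$ is the affine cone over the hypersurface $\PP Z_\cE$.
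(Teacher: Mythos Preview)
Your approach to the case where no $\cO_E(x)$ is a direct summand is essentially the paper's: build a projective bundle over $E$ whose fibers are the $\PP Z_x$'s, and project to $\PP\Hom(\cO_E,\cE)$. One difference worth noting: for the finiteness of the generic fiber you invoke \Cref{lem.hzero.two} to show $\PP Z_x\ne\PP Z_y$, whereas the paper observes more directly that the torsion part of $\coker(f)$ has finite support, so any $f$ lies in only finitely many $Z_x$'s. Your route works but is longer.

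The direct-summand case, however, contains a genuine error. You claim that for $\cE=\cE_{1,x}=\cO_E(x)\oplus\cL(-x)$ there are two points $y$ with $\codim\PP Z_y=1$, namely $y=x$ and the $y$ satisfying $\cL(-x)\cong\cO_E(y)$. But $\deg\cL(-x)=n-1\ge 2$ while $\deg\cO_E(y)=1$, so no such $y$ exists; your own parenthetical remark essentially says this, yet you then proceed as if the second hyperplane were $\PP Z_{\s(\cL)-x}$. In fact only $\PP Z_x$ is a hyperplane among the $\PP Z_y$'s. The second hyperplane in $\PP Z_\cE$ is not of the form $\PP Z_y$ at all: it is $\PP\Hom(\cO_E,\cL(-x))$, the locus of maps $(0,s_2)$ with vanishing first component. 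The paper shows two things you are missing: first, that every such nonzero $(0,s_2)$ has non-torsion-free cokernel (since $\deg\cL(-x)\ge 2$), so this hyperplane lies in $\PP Z_\cE$; second, that for $y\ne x$ one has $\Hom(\fm_y^{-1},\cO_E(x))=0$, forcing $Z_y\cup\{0\}\subseteq\Hom(\cO_E,\cL(-x))$. Thus all the codimension-two $\PP Z_y$'s are absorbed into the second hyperplane, and $\PP Z_\cE$ is exactly the union of $\PP Z_x$ and $\PP\Hom(\cO_E,\cL(-x))$. Your dimension count alone cannot recover this containment; you need the explicit identification of where the $Z_y$'s sit.
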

\begin{proof}
Since $\cE$ is fixed, we will write $Z$ in place of $Z_\cE$. 

\begin{enumerate}[inline]
\item
Suppose $\cO_E(x)$ is a direct summand of $\cE$. 
Then $\cE\cong \cO_E(x)\oplus \cL(-x)$. We fix such an isomorphism, and 
regard $\cO_E(x)$ and $\cL(-x)$ as submodules of $\cE$, and 
regard $\Hom(\cO_{E},\cO_E(x))$ and $\Hom(\cO_{E},\cL(-x))$ as subspaces of $\Hom(\cO_{E},\cE)$. We first show that
\begin{equation}\label{eq.union.zy}
	\bigcup_{y\in E-\{x\}}(Z_{y}\cup\{0\})\;\subseteq\;\Hom(\cO_{E},\cL(-x))\;\subseteq\;Z\cup\{0\}.
\end{equation}

Let $y \in E-\{x\}$. 
As in the proof of \cref{le:zx-codim2},  $Z_y\cup\{0\}$ is the image of the natural injection $\Hom(\fm_y^{-1}, \cE) \to \Hom(\cO_E,\cE)$.
Since $\Hom(\fm_y^{-1},\cO_E(x))\cong\Hom(\cO_E(y),\cO_E(x))=0$, that image is equal to the image of $\Hom(\fm_y^{-1}, \cL(-x))\to\Hom(\cO_{E}, \cL(-x))$. Hence the first inclusion in \cref{eq.union.zy} holds.

Let $0\neq f\in\Hom(\cO_{E},\cL(-x))$. Since $\deg\cL(-x)\geq 2$, the cokernel of $f$ is not torsion-free, so, as an element of $\Hom(\cO_{E},\cE)$, $f$ belongs to $Z$. Hence the second inclusion in \cref{eq.union.zy} holds.

On the other hand, since $Z_x\cup\{0\}$ is the image of the natural injection $\Hom(\fm_x^{-1}, \cE) \to \Hom(\cO_E,\cE)$, it is equal to the direct sum of
\begin{align*}
	V_{1}\;&:=\;\text{the image of $\Hom(\fm_x^{-1},\cO_{E}(x))\;\longrightarrow\;\Hom(\cO_E,\cO_{E}(x))$},\quad\text{and}\\
	V_{2}\;&:=\;\text{the image of $\Hom(\fm_x^{-1},\cL(-x))\;\longrightarrow\;\Hom(\cO_E,\cL(-x))$}.
\end{align*}
Since $\fm_x^{-1} \cong \cO_{E}(x)$,
$\dim V_{1}=1$. Since $\Hom(\fm_x^{-1},\cL(-x))\cong\Hom(\cO_{E},\cL(-2x))$,
$\dim V_{2}=n-2$.
It follows from this, and  \cref{eq.union.zy}, that
\begin{equation*}
	Z\cup\{0\}\;=\;\Hom(\cO_{E},\cL(-x))\cup(V_{1}\oplus V_{2}).
\end{equation*}
Both $\Hom(\cO_{E},\cL(-x))$ and $V_{1}\oplus V_{2}$ have codimension one in $\Hom(\cO_{E},\cE)$. Since these subspaces are different, 
$\PP Z$ is the union of
two hyperplanes.

\item\label{item.proof.pr.z-codim1.2}
Suppose no $\cO_E(x)$ is a direct summand of $\cE$. 

The natural map $H^0(E,\cE) \otimes \cO_E \to \cE$ is epic: if $\cE$ is decomposable, then it is a direct sum of two invertible sheaves of degree $\ge 2$ 
(because no $\cO_E(x)$ is a direct summand of $\cE$) so is generated by their global sections;
if $\cE$ is indecomposable it is also generated by its global sections (by \cite[Lem.~4.8(4)]{CKS3}, for example)  
because it is then semistable and, by hypothesis, its slope is $\frac{n}{2}>1$. Thus,
there is an exact sequence
\begin{equation}\label{eq.gen.sec}
	\xymatrix{
		0\ar[r] & \cZ\ar[r] & H^0(E,\cE) \otimes \cO_E\ar[r] & \cE\ar[r] & 0.
	}
\end{equation}

Since $\cZ$ is a submodule of a locally free $\cO_E$-module it is locally free; and $\rank\cZ = n-\rank\cE=n-2$. 
Let $x \in E$.
Since ${\mathcal{T}\!}{or}_1(\cO_E/\fm_x, \cE)  =0$, applying $\cO_E/\fm_x \otimes -$ to \cref{eq.gen.sec} produces an exact sequence 
\begin{equation}
\label{eq:fibers.of.cZ}
\xymatrix{
0 \ar[r] & \cZ/\fm_x \cZ    \ar[r] & H^0(E,\cE)  \ar[r] & \cE/\fm_x \cE \ar[r] & 0
}
\end{equation}
of vector spaces.\footnote{If $x$ is a closed point on a scheme $X$ and $\cF$ is an $\cO_X$-module we often speak of, and think of, $\cF/\fm_x\cF$ as 
a vector space over the residue field $\kappa(x)$. The justification for this is to think of the point $x$ as a morphism $i_x:\Spec(\kappa(x)) \to X$
and to identify $\cF/\fm_x\cF$ with $i_x^*\cF$.}
The kernel of the map $H^0(E,\cE) \to \cE/\fm_x\cE$ is the image of the map $H^0(E,\cE(-x))\cong \Hom(\fm_x^{-1},\cE)\to H^0(E,\cE)$
that appeared in the proof of \cref{le:zx-codim2},  so the fiber over $x$ of the vector bundle associated to $\cZ$ is $Z_x \cup\{0\}$. 

The map $\cZ \to H^0(E,\cE) \otimes \cO_E$ gives rise to a morphism $\phi:\PP(\cZ)\to E\times \PP H^0(E,\cE)$
between the corresponding projective-space bundles. The composition of $\phi$ with the projection to $\PP H^0(E,\cE)$ is a morphism 
$\PP(\cZ) \to \PP H^0(E,\cE)$ whose image is the union of all the $\PP Z_x$'s; i.e., its image is $\PP Z \subseteq \PP H^0(E,\cE)$.
Since  $\phi$ is a projective morphism,  
$\PP Z$ is a closed subvariety of $\PP H^0(E,\cE)$, and is irreducible because  $\PP(\cZ) $ is irreducible.
Since $\rank\cZ=n-2$, $\dim \PP(\cZ)=n-2$. 

Finally, the morphism $\PP(\cZ) \to \PP H^0(E,\cE)$ has finite fibers, because the cokernel of a morphism $\cO_E\to \cE$ can only contain finitely many 
$\cO_x$'s, and is therefore {\it finite}  because it is a projective morphism (\cite[Exer.~III.11.2]{hrt}). Hence $\dim\PP Z =\dim \PP(\cZ)=n-2$;
i.e., $\PP Z$ is a hypersurface, as claimed.\qedhere
\end{enumerate}
\end{proof}

\begin{remark}\label{re:grauert}
The argument used to prove \cref{pr:z-codim1}\cref{item.proof.pr.z-codim1.2}
is a surreptitious application of Grauert's Theorem \cite[Cor.~III.12.9]{hrt}. To see this, let $\pi_1,\pi_2:E \times E \to E$ be the projections to the first and second factors, respectively, and let $\Delta \subseteq E^2$ denote the diagonal.

From the exact sequence $0 \to  \cO_{E^2}(-\Delta) \to \cO_{E^2} \to \cO_\Delta \to 0$ we obtain exact sequences
\begin{equation}
\label{eq:ses.on.E2}
\xymatrix{
0 \ar[r] & \cO_{E^2}(-\Delta)\otimes\pi_1^*\cE  \ar[r]  & \pi_1^*\cE \ar[r]  &  \cO_\Delta \otimes \pi_1^*\cE \ar[r] & 0
}
\end{equation}
and
\begin{equation}
\label{eq:ses.bundles}
\xymatrix{
0 \ar[r] & \pi_{2*}(\cO_{E^2}(-\Delta)\otimes\pi_1^*\cE) \ar[r] & \pi_{2*}\pi_1^*\cE \ar[r] & \pi_{2*} (\cO_\Delta \otimes \pi_1^*\cE) \cong  \cE
}
\end{equation}
where the isomorphism at the right follows from the fact that the restriction of $\pi_2$ to $\D$ is an isomorphism onto $E$.  We also note that 
$ \pi_{2*}\pi_1^*\cE \cong H^0(E,\cE) \otimes \cO_E$ and that the right-most map in \cref{eq:ses.bundles} is the natural map $H^0(E,\cE) \otimes \cO_E \to \cE$,
which is an epimorphism as shown in the proof of \cref{pr:z-codim1}. 
Hence the kernel, $\pi_{2*}(\cO_{E^2}(-\Delta)\otimes\pi_1^*\cE)$, is isomorphic to $\cZ$ in \cref{eq.gen.sec}. Each term in \cref{eq:ses.bundles} is the sheaf of sections of a bundle on $E$, and the bundle associated to the middle term 
$ \pi_{2*}\pi_1^*\cE$ is the trivial bundle $E \times H^0(E,\cE)$. 

The module $\cE(-x)$ is  the $\pi_2$-fiber over $x\in E$ of the bundle corresponding to $ \cO_{E^2}(-\Delta)\otimes\pi_1^*\cE$.
  Since all $H^0(E,\cE(-x))$'s have the same dimension, $n-2$ (by \cref{lem.h0.E-x} because $\cO_E(x)$ is not a direct summand of $\cE$),   
  \cite[Cor.~III.12.9]{hrt} tells us that
  \begin{equation*}
    H^0(E,\cE(-x)) \; \cong \; \text{the fiber over $x$ of the bundle corresponding to } \pi_{2*}(\cO_{E^2}(-\Delta)\otimes\pi_1^*\cE).
  \end{equation*}
  Similarly, since $\pi_{2*}\pi_1^*\cE \cong H^0(E,\cE)\otimes \cO_E$, 
  \begin{align*}
    H^0(E,\cE) & \; \cong \; \text{the fiber over $x$ of the bundle corresponding to } \pi_{2*}\pi_1^*\cE.
  \end{align*}    
The map $\pi_{2*}(\cO_{E^2}(-\Delta)\otimes\pi_1^*\cE)\to\pi_{2*}\pi_1^*\cE$ is by construction a fiber-wise linear embedding, and hence induces a map
  \begin{equation*}
    \PP\bigl( \pi_{2*}(\cO_{E^2}(-\Delta)\otimes\pi_1^*\cE)\bigr)
    \, \longrightarrow \,
    \PP (\pi_{2*}\pi_1^*\cE) \; \cong \; E\times \PP H^0(E,\cE)
  \end{equation*}
  of projective-space bundles over $E$. Projecting  onto the second component of the last term gives a map
  \begin{equation}\label{eq:allx}
    \PP\bigl( \pi_{2*}(\cO_{E^2}(-\Delta)\otimes\pi_1^*\cE)\bigr)
 \, \longrightarrow \,
    \PP \Hom(\cO_E,\cE)
  \end{equation}
  of projective varieties which maps the fiber $\PP H^0(E,\cE(-x))$ above $x$ isomorphically to the subspace $\PP Z_x \subseteq \PP \Hom(\cO_E,\cE)$.
  Because $Z$ is the union of the $Z_x$'s, it follows that $\PP Z$ is the image of the morphism \Cref{eq:allx} of projective varieties, and hence Zariski-closed
  and irreducible. \reqed
\end{remark}

The next result completes the proof of \cref{thm.good.Es.intro}.

\begin{corollary}\label{cor.good.E's}
If $\cE \in \Bun(2,\cL)$, then $L(\cE) \ne \varnothing$ if and only if 
\begin{equation*} 
\cE \; \cong \; \text{\rm something in } \, 
\begin{cases}
\,  \{ \cE_{d,x} \; | \; 1 \le d \le r, \; x \in E\} \cup \{\cE_o\} & \text{if $n=2r+1$,}
 \\
\,   \{ \cE_{d,x} \; | \; 1 \le d \le r, \; x \in E\}\cup \{\cE_\omega \; | \; \omega \in \Omega\}   & \text{if $n=2r$.}
\end{cases}
\end{equation*}
\end{corollary}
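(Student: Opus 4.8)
The plan is to split the claim into its two implications, of which only one needs new work. For the forward implication, if $L(\cE)\ne\varnothing$ then \cref{thm.good.Es} already tells us that $\cE$ satisfies the assumptions in \cref{ssect.assump} and, more precisely, that $\cE$ is isomorphic to one of the bundles $\cE_{d,x}$ (with $1\le d\le r$, $x\in E$), or to $\cE_o$ when $n=2r+1$, or to some $\cE_\omega$ ($\omega\in\Omega$) when $n=2r$; so the ``only if'' direction is immediate. For the converse, the decomposable bundles $\cE_{d,x}$ are already handled: \cref{prop.leaves.for.decomp.Es} (equivalently, \cref{lem.easy.non-split} together with \cref{lem.tfree.coker}) shows $L(\cE_{d,x})\ne\varnothing$. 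Hence the only thing left to prove is that $L(\cE_o)\ne\varnothing$ when $n=2r+1$ and that $L(\cE_\omega)\ne\varnothing$ for every $\omega\in\Omega$ when $n=2r$.

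For these indecomposable bundles I would first observe that they satisfy the assumptions in \cref{ssect.assump}: each lies in $\Bun(2,\cL)$ by construction (see \cref{ssect.bun.notn}), and since a rank-two bundle possessing an invertible direct summand is decomposable, an indecomposable $\cE$ has no invertible direct summand at all, in particular none of degree $\le 0$. By \cref{lem.tfree.coker}\,(3), because $\det\cE\cong\cL$, it therefore suffices to show $X(\cE)\ne\varnothing$, i.e. that there is a nonzero $f\colon\cO_E\to\cE$ with torsion-free cokernel.

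Now I would simply invoke \cref{pr:z-codim1}: since $\cE$ satisfies the assumptions in \cref{ssect.assump}, $\PP Z_\cE$ is a hypersurface in $\PP\Hom(\cO_E,\cE)$, so $X(\cE)=\Hom(\cO_E,\cE)-(Z_\cE\cup\{0\})$ is the complement of a hypersurface. Being indecomposable of positive degree, $\cE$ is semistable of slope $\tfrac{n}{2}>0$, so $h^0(\cE)=\deg\cE=n\ge 3$ (this is exactly the computation behind \cref{lem.h0.E-x}\,(2)); hence $\PP\Hom(\cO_E,\cE)\cong\PP^{n-1}$ with $n-1\ge 2$, and a hypersurface is a proper Zariski-closed subset, whose complement is non-empty. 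Thus $X(\cE)\ne\varnothing$, and therefore $L(\cE)\ne\varnothing$, which together with the cases already disposed of proves the corollary.

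There is essentially no remaining obstacle at this stage: all the real content — that $\PP Z_\cE$ is only a hypersurface and not something larger — is packaged into \cref{pr:z-codim1} (and, behind it, Grauert's theorem together with the generation by global sections of semistable bundles of slope $>1$), and this corollary is a matter of assembling \cref{thm.good.Es}, \cref{prop.leaves.for.decomp.Es}, \cref{lem.tfree.coker}, and \cref{pr:z-codim1}. The only points requiring care are the bookkeeping checks that the relevant indecomposables satisfy the assumptions in \cref{ssect.assump} and that $\Hom(\cO_E,\cE)\ne 0$, so that the complement of the hypersurface is visibly non-empty.
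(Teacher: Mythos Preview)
Your proposal is correct and follows essentially the same approach as the paper: both split into the two implications, dispatch the forward direction via \cref{thm.good.Es}, handle the decomposable case of the converse via \cref{prop.leaves.for.decomp.Es}, and for the indecomposable bundles reduce via \cref{lem.tfree.coker} to showing $X(\cE)\ne\varnothing$. The only minor difference is that the paper cites \cref{le:zx-codim2} for this last step while you cite \cref{pr:z-codim1}; your reference is arguably the more precise one, since knowing each $\PP Z_x$ has codimension two still requires a dimension count over $E$ (which is exactly what \cref{pr:z-codim1} packages) to conclude that the union $\PP Z_\cE$ is a proper subvariety.
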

\begin{proof}
($\Rightarrow$)
Suppose $L(\cE) \ne \varnothing$. Then $\cE$ satisfies the assumptions in \cref{ssect.assump}. Thus, if $\cE$ is decomposable, then it is isomorphic 
to some $\cE_{d,x}$ by \Cref{prop.leaves.for.decomp.Es}. If $\cE$ is indecomposable, then, by the discussion in \cref{ssect.bun.notn}, 
it is isomorphic to $\cE_o$ if $n$ is odd and is isomorphic to $\cE_\omega$ for some $\omega \in \Omega$.

($\Leftarrow$)
By \Cref{prop.leaves.for.decomp.Es}, $L(\cE_{d,x}) \ne \varnothing$.  
Both $\cE_o$ and $\cE_\omega$ satisfy the assumptions in \cref{ssect.assump}, so both $X(\cE_o)$ and $X(\cE_\omega)$ are non-empty by
\Cref{le:zx-codim2}, whence $L(\cE_o)$ and $L(\cE_\omega)$ are non-empty by  \cref{lem.tfree.coker}.
\end{proof}

\Cref{cor.good.E's} also shows that $\cE$ satisfies the assumptions in \cref{ssect.assump} if and only if $L(\cE) \ne \varnothing$. 

\begin{corollary} 
\label{cor.mapE.to.N}
If $L(\cE) \ne \varnothing$, then $\Hom(\cE,\cO_E(D))=0$ for all divisors $D$ of degree $\le 0$.
\end{corollary}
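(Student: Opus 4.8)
The plan is to exploit the fact that a point of $L(\cE)$ \emph{is} a non-split extension. Pick any $\xi\in L(\cE)$; by definition it is represented by a non-split short exact sequence $\xi\colon 0\to\cO_E\xrightarrow{\,f\,}\cE\xrightarrow{\,g\,}\cL\to 0$. Now fix a divisor $D$ with $\deg D\le 0$ and suppose, for contradiction, that $\phi\colon\cE\to\cO_E(D)$ is a nonzero homomorphism. Everything then follows by examining the composite $\phi\circ f\colon\cO_E\to\cO_E(D)$ and splitting into the two cases $\phi f\ne 0$ and $\phi f=0$. Throughout I would use the standard fact that a nonzero morphism between invertible $\cO_E$-modules is injective with torsion cokernel, so a nonzero map between line bundles $\cM_1\to\cM_2$ forces $\deg\cM_1\le\deg\cM_2$.

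In the case $\phi f\ne 0$: since $\phi f$ is a nonzero map out of the invertible sheaf $\cO_E$ it is injective, so its image is a subsheaf of $\cO_E(D)$ isomorphic to $\cO_E$, hence of degree $0$; as $\deg\cO_E(D)\le 0$, the quotient $\cO_E(D)/\operatorname{im}(\phi f)$ is torsion of length $\le 0$, hence zero, so $\operatorname{im}(\phi f)=\cO_E(D)$ and $\phi f$ is an isomorphism. Then $(\phi f)^{-1}\circ\phi$ is a retraction of $f$, so $\xi$ splits — contradicting the choice of $\xi\in L(\cE)$.

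In the case $\phi f=0$: the map $\phi$ factors through $\coker(f)=\cL$, say $\phi=\psi\circ g$ with $\psi\colon\cL\to\cO_E(D)$, and $\psi\ne 0$ because $\phi\ne 0$ and $g$ is an epimorphism; but a nonzero map of line bundles then forces $\deg\cL\le\deg\cO_E(D)\le 0$, contradicting $\deg\cL=n\ge 3$. Hence no nonzero $\phi$ exists, which is exactly the claim. The argument is purely formal and I do not anticipate a genuine obstacle: the only points needing care are the (routine) fact about degrees of morphisms between line bundles quoted above and the (easy but essential) use of non-splitness of $\xi$ in the first case. An alternative, slightly longer route would split on whether $\cE$ is decomposable or indecomposable — using $\Hom(\cN_i,\cO_E(D))\cong H^0(E,\cN_i^{-1}(D))=0$ since $\deg\cN_i\ge 1$ in the decomposable case (by \Cref{thm.good.Es}), and semistability of $\cE$, of slope $n/2>0$, in the indecomposable case — but the extension argument above is uniform and shorter, so that is the one I would write.
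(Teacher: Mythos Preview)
Your proof is correct and takes a genuinely different route from the paper's. The paper invokes the classification in \Cref{cor.good.E's} (together with the descriptions in \Cref{ssect.bun.notn,ssect.dec.buns}) to produce an exact sequence $0\to\cN_1\to\cE\to\cN_2\to 0$ with $\cN_1,\cN_2$ invertible of \emph{positive} degree, and then the vanishing of $\Hom(\cE,\cO_E(D))$ is immediate from $\Hom(\cN_i,\cO_E(D))=0$. Your argument instead works directly with the defining non-split extension $0\to\cO_E\to\cE\to\cL\to 0$ furnished by any $\xi\in L(\cE)$, and handles the degree-zero subsheaf $\cO_E$ by exploiting non-splitness in Case~1. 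This is more self-contained: it avoids the classification result (which in turn rests on \Cref{pr:z-codim1}) and uses only the hypothesis $L(\cE)\ne\varnothing$ at face value. The paper's approach, on the other hand, is a one-liner once the classification is in hand and makes no case distinction. Your closing remark about the ``alternative, slightly longer route'' via decomposable/indecomposable cases is in fact close in spirit to what the paper does.
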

\begin{proof}
It follows from \cref{cor.good.E's} and \Cref{ssect.dec.buns,ssect.bun.notn} that there is an exact sequence 
$0 \to \cN_1 \to \cE \to \cN_2 \to 0$ in which $\cN_1$ and $\cN_2$ are invertible $\cO_E$-modules of positive degree.  
The result follows.
\end{proof}

\begin{corollary}\label{cor:isaff}
If $L(\cE) \ne \varnothing$,
  then $\PP X(\cE)$ is an affine variety. 
\end{corollary}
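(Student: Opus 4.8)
The plan is to recognize $\PP X(\cE)$ as the complement of a hypersurface in a projective space and then invoke the standard fact that such a complement is affine. So almost all of the work is already done in the results preceding this corollary, and the proof should be short.

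First I would use \Cref{cor.good.E's} (together with the sentence immediately following it) to replace the hypothesis $L(\cE)\neq\varnothing$ by the equivalent assertion that $\cE$ satisfies the assumptions in \cref{ssect.assump}. Once that is known, \Cref{pr:z-codim1} applies directly and tells us that $\PP Z_\cE$ is a hypersurface in the projective space $\PP\Hom(\cO_E,\cE)$ --- either a union of two hyperplanes or an irreducible hypersurface, and in either case the zero locus of a single non-zero homogeneous polynomial $F$ in the homogeneous coordinate ring $S$ of $\PP\Hom(\cO_E,\cE)$.

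Next, from the definition $X(\cE)=\Hom(\cO_E,\cE)-(Z_\cE\cup\{0\})$, passing to projectivizations gives
\begin{equation*}
\PP X(\cE)\;=\;\PP\Hom(\cO_E,\cE)\,-\,\PP Z_\cE\;=\;D_+(F),
\end{equation*}
the basic open subset of $\PP\Hom(\cO_E,\cE)$ determined by $F$. I would then finish by quoting \cite[Prop.~II.2.5]{hrt}: it identifies $D_+(F)$ with $\Spec\bigl(S_{(F)}\bigr)$, the spectrum of the subring of degree-zero elements of the localization $S_F$. Since $S$ is a finitely generated $\CC$-algebra, so is $S_{(F)}$, and hence $\PP X(\cE)$ is an affine variety.

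I do not anticipate any genuine obstacle: the only substantive input --- that $\PP Z_\cE$ is an honest hypersurface and not merely a codimension-one closed subset --- has already been established in \Cref{pr:z-codim1}. The single point that needs a moment's attention is the bookkeeping identification of ``hypersurface in a projective space'' with ``zero locus of one homogeneous polynomial'', which is automatic (the homogeneous coordinate ring being a unique factorization domain) and is in any case explicit in the two alternatives furnished by \Cref{pr:z-codim1}.
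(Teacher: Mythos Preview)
Your proposal is correct and follows essentially the same approach as the paper: both invoke \Cref{pr:z-codim1} to recognize $\PP Z_\cE$ as the zero locus of a single homogeneous polynomial and conclude that its open complement $\PP X(\cE)$ is affine. You are slightly more explicit in bridging the hypothesis $L(\cE)\ne\varnothing$ to the assumptions of \cref{ssect.assump} via \Cref{cor.good.E's}, and you cite \cite[Prop.~II.2.5]{hrt} for the affineness of $D_+(F)$, whereas the paper simply states the conclusion with a footnote on the more general fact that the complement of an ample effective divisor in a projective scheme is affine.
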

\begin{proof}
By \Cref{pr:z-codim1}, $\PP Z_\cE\subseteq \PP H^0(E,\cE)$ is the zero locus of a homogeneous polynomial so  its open
complement is affine.\footnote{That the complement is affine is a special case of a more general fact that we will use later: if $D$ is an ample effective divisor 
on a projective scheme $X$, then the complement $X-D$ is affine. To see this, first note that if $Y$ is a closed subvariety of $X$, then $X-Y=X-Y_{\rm red}$, so
there is no loss of generality in replacing $D$ by $mD$ where $m$ is a positive integer. Thus, we can assume that $D$ is very ample. 
After identifying $X$ with its image under the morphism $X \to \PP^r:=\PP H^0(X,\cO_X(D))^*$ associated to the complete linear system $|D|$, there is a hyperplane
$H \subseteq \PP^r$ such that $X-D=X-(X \cap H) = X \cap (\PP^r-H)$; hence $X-D$ is a closed subscheme of the affine scheme $\PP^r-H$ and is therefore an
affine scheme also.}
\end{proof}

The bundles $\cL_\omega$ for $\omega \in \Omega$ were defined in \cref{ssect.bun.notn}.
When $n=4$,  $\cL_\omega = \cO_E((\omega)+(0))$. 

\begin{proposition}\label{pr:deghyp}
If $L(\cE) \ne \varnothing$, then
 \begin{enumerate}
  \item\label{item.deghyp.one}
$\deg\PP Z_\cE=  2$ if $\cE \cong \cE_{1,x}$ for some $x \in E$, i.e., if some $ \cO_E(x)$ is a direct summand of $\cE$, 
  \item\label{item.deghyp.two}
  $\deg\PP Z_\cE= \tfrac{n}{2}=2$ if $n=4$ and $\cE \cong \cL_\omega \oplus  \cL_\omega$ for some $\omega \in \Omega$,
  and
  \item\label{item.deghyp.ow}
  $\deg\PP Z_\cE = n$ otherwise.
\end{enumerate}
\end{proposition}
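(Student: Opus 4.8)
The plan is to handle the three cases separately, in each case computing $\deg\PP Z_\cE$ as the number of points of $\PP Z_\cE\cap\ell$ for a generic line $\ell\subseteq\PP H^0(E,\cE)=\PP^{n-1}$. Case \cref{item.deghyp.one} is immediate: if $\cE\cong\cE_{1,x}$ then, as recorded in \Cref{pr:z-codim1}, $\PP Z_\cE$ is a union of two distinct hyperplanes, so $\deg\PP Z_\cE=2$. For the remaining cases, first observe that no invertible $\cO_E$-module of the form $\cO_E(x)$ is a direct summand of $\cE$: the relevant $\cE$'s, listed in \Cref{ssect.dec.buns,ssect.bun.notn}, have only indecomposable summands of degree $\ge 2$, and this passes to direct summands by the Krull--Schmidt theorem. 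Consequently $h^0(\cE)=\deg\cE=n$, the evaluation map $H^0(E,\cE)\otimes\cO_E\to\cE$ is an epimorphism, and by \Cref{pr:z-codim1} $\PP Z_\cE$ is an irreducible hypersurface in $\PP^{n-1}=\PP H^0(E,\cE)$.

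For case \cref{item.deghyp.two} I would argue directly. When $n=4$ and $\cE\cong\cL_\omega\oplus\cL_\omega$ we have $H^0(E,\cE)=H^0(E,\cL_\omega)\otimes\CC^2\cong\CC^2\otimes\CC^2$, and I claim a nonzero $s\in H^0(E,\cE)$ lies in $Z_\cE$ if and only if, viewed as a tensor, it has rank $\le 1$. Indeed, if $s=w\otimes v$ then $\coker(s)\cong\cO_{D_w}\oplus\cL_\omega$, where $D_w$ is the degree-two divisor of zeros of $w$, which is not torsion-free; and if $s$ has rank $2$, say $s=(s_1,s_2)$ with $s_1,s_2$ linearly independent in the two-dimensional space $H^0(E,\cL_\omega)$, then $s_1$ and $s_2$ have no common zero because the complete linear system $|\cL_\omega|$ is base-point-free, so $\coker(s)$ is torsion-free. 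Hence $\PP Z_\cE$ is the Segre variety $\PP^1\times\PP^1\subseteq\PP^3$, of degree $2=\tfrac{n}{2}$.

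For case \cref{item.deghyp.ow} I would fix a generic line $\ell\subseteq\PP^{n-1}$, corresponding to a two-dimensional subspace $U\subseteq H^0(E,\cE)$, and on $\PP^1\times E$, identifying $\PP^1=\PP(U)=\ell$ with projections $p_1,p_2$, form the composite
\begin{equation*}
\beta\colon p_1^*\cO_{\PP^1}(-1)\hookrightarrow U\otimes\cO\longrightarrow H^0(E,\cE)\otimes\cO\xrightarrow{\;p_2^*\mathrm{ev}\;}p_2^*\cE,
\end{equation*}
a global section of the rank-two bundle $\cF:=p_1^*\cO_{\PP^1}(1)\otimes p_2^*\cE$, whose zero scheme is $Z(\beta)=\{(\lambda,x)\mid s_\lambda(x)=0\text{ in }\cE_x\}$, where $s_\lambda\in H^0(E,\cE)$ is the section corresponding to $\lambda$. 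The projection $\rho\colon Z(\beta)\to\ell$ has image $\ell\cap\PP Z_\cE$, since every $s_\lambda\ne 0$ and $\coker(s_\lambda)$ fails to be torsion-free precisely when $s_\lambda$ vanishes somewhere; for generic $\ell$ this image is finite, so $Z(\beta)$ has the expected dimension zero and $\deg[Z(\beta)]=\int_{\PP^1\times E}c_2(\cF)$. Since $\cE$ lives on a curve, $c_2(p_2^*\cE)=0$, and $c_1(p_1^*\cO_{\PP^1}(1))^2=0$, so $c_2(\cF)=c_1(p_1^*\cO_{\PP^1}(1))\cdot c_1(p_2^*\cE)$, whose degree on $\PP^1\times E$ is $\deg c_1(\cE)=n$; thus $\deg[Z(\beta)]=n$. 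Now for $\lambda\in\ell\cap\PP Z_\cE$ the fiber $\rho^{-1}(\lambda)$ is the zero scheme of $s_\lambda$ on $E$, whose length equals the length of the torsion subsheaf of $\coker(s_\lambda)$, so $n=\sum_{\lambda\in\ell\cap\PP Z_\cE}(\text{that length})$. It remains to see this length is $1$ at every $\lambda$ for generic $\ell$; equivalently, that $Z_\cE^{(\ge 2)}:=\{s\mid\coker(s)\text{ has torsion of length}\ge 2\}$ is a proper subset of the irreducible $Z_\cE$. As $Z_\cE^{(\ge 2)}$ is the image, under the proper projection $E^{[2]}\times H^0(E,\cE)\to H^0(E,\cE)$, of $\{(W,s)\mid W\in E^{[2]},\ s\in H^0(E,\cE(-W))\}$, the bounds on $h^0(\cE(-W))$ in \Cref{lem.hzero.two} give $\dim Z_\cE^{(\ge 2)}\le n-2<n-1=\dim Z_\cE$ for every $\cE$ occurring in case \cref{item.deghyp.ow}. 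A generic line then avoids the closed proper subset of $\PP Z_\cE$ where the torsion length exceeds its generic value $1$, so it meets $\PP Z_\cE$ only in points of torsion length $1$, whence $n=\#(\ell\cap\PP Z_\cE)=\deg\PP Z_\cE$.

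The main obstacle is the dimension estimate $\dim Z_\cE^{(\ge 2)}\le n-2$ in case \cref{item.deghyp.ow}: one must run through the cases of \Cref{lem.hzero.two}---decomposable $\cE$ with a degree-two summand, indecomposable $\cE$, and the small values $n=3,4$---and check that every stratum of $E^{[2]}$ on which $h^0(\cE(-W))$ exceeds $n-4$ has codimension at least one, so still contributes at most $n-2$ to the total. It is exactly the failure of this---the stratum $\{W\mid\s(W)=\omega\}$ carrying $h^0(\cE(-W))=n-2$---that produces the value $\tfrac{n}{2}$ when $n=4$ and $\cE\cong\cL_\omega\oplus\cL_\omega$; an alternative to the direct argument in case \cref{item.deghyp.two} is to note that there $Z_\cE=Z_\cE^{(\ge 2)}$ while no nonzero section of $\cL_\omega\oplus\cL_\omega$ vanishes on a length-three subscheme (as $\deg\cL_\omega(-W)<0$ for $\deg W=3$), so the torsion length is identically $2$ and $\deg\PP Z_\cE=n/2$. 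The remaining points---that $Z(\beta)$ is zero-dimensional for generic $\ell$, that $\deg\PP Z_\cE=\#(\ell\cap\PP Z_\cE)$ for generic $\ell$, and the Chern-class computation---are routine.
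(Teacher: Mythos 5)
Your proposal is correct, and while the underlying numerology is the same as the paper's, the packaging is genuinely different in two places. The paper computes $\deg\PP Z_\cE$ by citing \cite[Prop.~10.4]{3264} for the degree of a union of a one-parameter family of linear spaces: the answer is $-\deg\cZ/d$ where $\cZ$ is the kernel of $H^0(E,\cE)\otimes\cO_E\to\cE$ (so $-\deg\cZ=n$) and $d$ is the number of $\PP Z_x$'s through a general point of $\PP Z_\cE$, determined from \Cref{lem.hzero.two}. Your case \cref{item.deghyp.ow} re-derives exactly this in the situation at hand: your $\int_{\PP^1\times E}c_2(p_1^*\cO(1)\otimes p_2^*\cE)=n$ is the paper's $-\deg\cZ$, and your ``generic torsion length'' is the paper's $d$, with the same reliance on \Cref{lem.hzero.two} to bound the locus where the multiplicity jumps. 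What your version buys is self-containedness (no black box) at the cost of having to justify the genericity statements yourself; what the paper's version buys is that those statements are absorbed into the hypotheses of the cited proposition. Your case \cref{item.deghyp.two} is the more substantive departure and is rather nicer than the paper's: identifying $Z_{\cL_\omega\oplus\cL_\omega}$ with the rank-$\le 1$ tensors in $H^0(E,\cL_\omega)\otimes\CC^2$, so that $\PP Z_\cE$ is the Segre quadric in $\PP^3$, gives the degree immediately and also explains geometrically why the answer halves (every bad section vanishes on a full degree-two divisor, never on a single point). The paper instead keeps case \cref{item.deghyp.two} inside the general machine and computes $d=2$ by showing $\PP Z_x=\PP Z_y$ exactly when $x+y=\omega$.

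One step you have filed under ``routine'' deserves to be made explicit, because the inequality otherwise goes the wrong way: from $\operatorname{length}Z(\beta)=n$ and the fact that each fiber of $\rho$ over $\ell\cap\PP Z_\cE$ has length $1$ you only get $\#(\ell\cap\PP Z_\cE)\le n$ unless you also know that $Z(\beta)$ is \emph{reduced} for generic $\ell$ (a priori $\operatorname{length}(\cO_{Z(\beta),p})$ can exceed the length of the scheme-theoretic fiber through $p$). This does hold: $Z(\beta)$ is the preimage of $\ell$ under the finite morphism $\PP(\cZ)\to\PP^{n-1}$ of \Cref{pr:z-codim1}, whose source is smooth of dimension $n-2$, so Kleiman's transversality theorem in characteristic zero (or a direct Bertini argument avoiding the branch locus and $\Sing\PP Z_\cE$) gives that a generic line pulls back to a reduced zero-dimensional scheme. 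With that observation added, your argument is complete.
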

\begin{proof}
We will write $Z$ in place of $Z_\cE$.

\cref{item.deghyp.one}
 If $\cE \cong  \cE_{1,x}$,  then $\PP Z$ is a union of two hyperplanes by \Cref{pr:z-codim1},
so $\deg\PP Z=2$.

\cref{item.deghyp.two}\cref{item.deghyp.ow}
  We now assume that no $\cO_E(x)$ is a direct summand of $\cE$.
 Hence, by \cref{h0.E(-x).alternatives}, $h^{0}(E,\cE(-x))=h^{0}(E,\cE)-2$ for all $x \in E$.
  
  By definition, 
  \begin{equation*}
  \PP Z \; =\; \bigcup_{x \in E} \PP Z_x  \;=\; \bigcup_{x \in E} \PP \Hom(\cO_E,\cE(-x)) \; \subseteq \; \PP \Hom(\cO_E,\cE).
   \end{equation*}
Since $\PP Z_x$ is a codimension-two linear subspace of $\PP \Hom(\cO_E,\cE)$,
there is a morphism
  \begin{equation*}
    E  \, \longrightarrow \, \GG(n-3,n-1), \qquad x\mapsto \PP Z_x,
  \end{equation*}
  where $\GG(n-3,n-1)$ is the Grassmannian of $(n-3)$-planes in $\PP \Hom(\cO_E,\cE)$: as noted in \Cref{re:grauert}, the inclusions
$    H^0(E,\cE(-x))  \subseteq H^0(E,\cE)$ 
  glue to an embedding of a rank-$(n-2)$ vector bundle over $E$ into the trivial vector bundle $H^0(E,\cE)\times E$, and the fact that this gives a morphism into the Grassmannian then follows from the latter's universal property \cite[Thm.~3.4]{3264}.

  We are now in the setting of \cite[\S 10.2]{3264}, and wish to apply \cite[Prop.~10.4]{3264}. 
  To do this we must determine the number $d$ in \cite[Prop.~10.4]{3264} which is, 
 by definition, the unique number having the following property:  a general point in $\PP Z$ belongs to $\PP Z_x$ for 
 exactly $d$ different $x$'s;   i.e., $d$ is the unique positive integer such that there is a non-empty open set $U \subseteq \PP Z$ 
  with the property that every point in $U$ belongs to $\PP Z_x$ for exactly $d$ different $x$'s.
  We will show that $d=2$ in the case of \cref{item.deghyp.two} and that $d=1$ in the case of  \cref{item.deghyp.ow}.

 If $x \ne y$, then a section of $\cE$ vanishes at both $x$ and $y$ if and only if it is a section of $\cE(-x-y)$ so,
 as subspaces of $H^0(E, \cE)$,
  \begin{equation*}
   H^0(E,\cE(-x)) \cap H^0(E,\cE(-y)) \;=\; 
  H^0(E,\cE(-x-y))
    \end{equation*} 
  whence 
  \begin{equation*}
\PP Z_x \cap \PP Z_y  \;=\;  \PP H^0(E,\cE(-x-y)) .
  \end{equation*} 
Hence
\begin{equation} 
\label{eq:Zx.cap.Zy.nonempty}
 \{p \in \PP Z \; | \; p \text{ belongs to $\PP Z_x$ for at least two different $x$'s}  \}
 \;=\; 
 \bigcup_{D \in E^{[2]} -\Delta}\PP H^0(E,\cE(-D))
 \end{equation} 
 where $\Delta=\{ (\!(x,x)\!) \; | \; x \in E\} \subseteq E^{[2]}$.

Proof of \cref{item.deghyp.two}. 
 Suppose that $n=4$ and $\cE \cong \cL_\omega \oplus  \cL_\omega$ for some $\omega \in \Omega$. 
 
\underline{Claim:}
 If $p$ in $\PP Z_x$, then $p$ belongs to $\PP Z_y$ if and only if $y \in \{x, \omega -x\}$.

\underline{Proof:}
Suppose $x \ne y$. It follows from  \cref{lem.h0.E-x}\cref{item.hzero.one.dec} that $h^0(\cE(-x))=h^0(\cE(-y))=2$,
and follows from \cref{lem.hzero.two}\cref{item.hzero.one.dec} that 
   \begin{equation*}
h^0(\cE(-x-y))  \;=\;  
\begin{cases}
2 & \text{if $x+y=\omega$}
\\
0 & \text{if $x+y\ne \omega$.}
\end{cases}
  \end{equation*} 
Hence $\PP Z_x \cap \PP Z_y = \varnothing$ if  $x+y\ne \omega$, and $\PP Z_x = \PP Z_y$ if $x+y=\omega$.
$\lozenge$

The claim shows that 
\begin{equation*}
	\{p \in \PP Z \; | \; p \text{ belongs to $\PP Z_x$ for a unique $x\in E$}\} \;=\; \bigcup_{\{x \; | \; 2x=\omega\}} \PP Z_x
\end{equation*}
The set of such $x$'s has four elements because it is an $E[2]$-coset. Hence $d=2$.

Proof of \cref{item.deghyp.ow}. 
We are still assuming that no $\cO_E(x)$ is a direct summand of $\cE$. 
And, since we are not in case \cref{item.deghyp.two}, if $n=4$, then $\cE$ is not isomorphic to $\cL_\omega \oplus \cL_\omega$ for any $\omega$ in $\Omega$.
Hence, by  \cref{lem.h0.E-x}, $h^0(\cE(-x))=h^0(\cE)-2$ for all $x \in E$.

Let $D \in E^{[2]}$ be a degree-two effective divisor. 
If $n=3$, then $h^0(\cE(-D))=0$ by  \cref{lem.hzero.two} so $\PP Z_x \cap \PP Z_y=\varnothing$ for all 
$(\!(x,y)\!) \in E^{[2]}-\Delta$, whence $d=1$. 

Now assume $n\geq 4$ and $D \in E^{[2]}$.  We are not in the third case of \cref{lem.hzero.two}\cref{item.hzero.two.dec}, so
\begin{equation*}
h^0(\cE(-D)) \;=\; 
        \begin{cases} 
        h^{0}(\cE)-3   & \text{if $\cO_E(D)$ is a (multiplicity-one) direct summand of $\cE$,}
        \\
        h^{0}(\cE)-3   & \text{if $n=4$, $\cE\cong\cE_{\omega}$ ($\omega\in\Omega$), and $D\sim \omega+(0)$,}
        \\ 
        h^{0}(\cE)-4 & \text{otherwise.} 
	\end{cases}
\end{equation*} 
The first two cases occur for only finitely many $D$, say $\{D_{i}=z_{i}+(0)\}_{i\in I}$, up to linear equivalence. Hence the subset, $U$ say, of $E^{[2]}-\Delta$ for which the third case occurs 
is a non-empty and open subset of $E^{[2]}$. Arguing as in  \cite[discussion preceding Prop.~10.4]{3264} (where their $m$ denotes the dimension of their $B$), the dimension of the variety
 \begin{equation}\label{eq.union.u}
 \bigcup_{D \in U}\PP H^0(E,\cE(-D))
  \end{equation}
  is $\le 2+(n-4)-1=n-3$. But $\dim \PP Z=n-2$, so a general point in $\PP Z$ does not belong to \cref{eq.union.u}.

The complement $U^{c}:=(E^{[2]}-\Delta)-U$ is the union of the sets
\begin{equation*}
	\{(\!(x,y)\!)\in E^{[2]}-\Delta\mid x+y=z_{i}\}\quad\text{($i\in I$)},
\end{equation*}
each of which is a fiber of the restriction to $E^{[2]}-\Delta$  of the summation map $\sigma:E^{[2]}\to E$, thus is isomorphic to an open subscheme of $\PP^{1}$. Hence, by \cite[discussion preceding Prop.~10.4]{3264}, the  dimension of the variety
\begin{equation}\label{eq.union.uc}
	\bigcup_{D \in U^{c}}\PP H^0(E,\cE(-D))
\end{equation}
is $\le 1+(n-3)-1=n-3$. Since $I$ is  finite, a general point in $\PP Z$ does not belong to \cref{eq.union.uc}, either.

Consequently, a general point in $\PP Z$ belongs to exactly one $\PP Z_x$, whence $d=1$. This concludes the proof that $d=1$ in  the case of \cref{item.deghyp.ow}.

 In order to apply \cite[Prop.~10.4]{3264}  we now translate it to the present setting:
  \begin{itemize}
  \item Let $\cZ$ be the locally free $\cO_{E}$-module in part \cref{item.proof.pr.z-codim1.2} of the proof of \cref{pr:z-codim1}, whose $x$-fiber is $H^0(E,\cE(-x))$.
  \item Taking into account the relation between {\it Chern} and {\it Segre classes} in the {\it Chow ring} of $E$ 
  (\cite[\S 10.1, especially Defn.~10.1 and Prop.~10.3]{3264}) and the fact that in the present situation 
  the number $m$ in \cite[Prop.~10.4]{3264} is 1,  that result says that
    \begin{equation}\label{eq:minusdeg}
      \text{the degree of }\PP Z \text{ as a subvariety of }  \PP H^0(E,\cE) \; = \; -\, \frac{\deg\cZ}{d}.
    \end{equation}
  \end{itemize}  
By \cref{eq.gen.sec} and additivity  of degree \cite[Exer.~II.6.12]{hrt}, we have $\deg\cZ=-\deg\cE=-n$, which concludes the proof.
\end{proof}

\subsection{The morphism $\Psi_\cE:X(\cE)  \to L(\cE)$ and the dimension of $L(\cE)$}

\begin{lemma}\label{le:ismor}
  If $L(\cE) \ne \varnothing$, then
the map
  \begin{equation}
    \label{eq:defn.psi_E}
    \Psi_\cE:   X(\cE) \, \longrightarrow \,  \PP_\cL 
  \end{equation} 
  that  sends $f \in X(\cE)$, or the corresponding $\pi \in {\rm Epi}(\cE,\cL)$, to the isomorphism class of the extension 
  \begin{equation}
    \label{eq:temp.xi}
    \xi \; := \quad
    \xymatrix{
      0 \ar[r] & \cO_E \ar[r]^f & \cE \ar[r]^\pi & \cL  \ar[r] & 0
    }
  \end{equation}
  is a morphism whose image is $L(\cE)$. 
\end{lemma}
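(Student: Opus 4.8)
The plan is to show that $\Psi_\cE$ is a morphism by exhibiting it, locally on $X(\cE)$, as a composition of natural maps that are visibly morphisms, and then to identify its image with $L(\cE)$. First I would recall that $X(\cE)$ is a locally closed subvariety of the affine space $\Hom(\cO_E,\cE)$ (this is \Cref{pr:z-codim1}), so it suffices to produce a morphism $X(\cE)\to\PP_\cL$ agreeing with the set-theoretic rule in the statement. Using \Cref{re:kercoker}, every $f\in X(\cE)$ corresponds to an epimorphism $\pi=\theta\nu_*(f):\cE\to\det\cE\cong\cL$, and the map $f\mapsto\pi$ is linear, hence a morphism. Then by \Cref{lem.xi.perp}, the extension class $\xi$ attached to $\pi$ satisfies $\CC\xi=\im(\pi_*)^\perp$, where $\pi_*:\Hom(\cO_E,\cE)\to\Hom(\cO_E,\cL)$ is the map $b\mapsto\pi b$. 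So the composite rule is: $f\mapsto\pi=\theta\nu_*(f)$, then $\pi\mapsto\im(\pi_*)\subseteq\Hom(\cO_E,\cL)$ (a hyperplane, since $\coker(\pi_*)\cong\Ext^1(\cO_E,\cO_E)\cong\CC$), then $W\mapsto W^\perp\in\PP\Hom(\cO_E,\cL)^*=\PP_\cL$.

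The key step is to see that $f\mapsto\im(\pi_*)$, viewed as a map from $X(\cE)$ to the Grassmannian of hyperplanes in $\Hom(\cO_E,\cL)$ (equivalently, to $\PP\Hom(\cO_E,\cL)^*$), is a morphism. I would do this by first noting that $\pi$ depends algebraically (indeed linearly) on $f$, so the family of linear maps $\pi_*:\Hom(\cO_E,\cE)\to\Hom(\cO_E,\cL)$ is an algebraic family parametrized by $X(\cE)$; concretely, choosing bases, the matrix entries of $\pi_*$ are regular functions of $f$. Since for every $f\in X(\cE)$ the map $\pi_*$ has constant rank (its image is always a hyperplane, by the five-term exact sequence in the proof of \Cref{lem.xi.perp} together with $\Ext^1(\cO_E,\cE)=0$, which holds because $\deg\cE=n\ge3$ forces $H^1(E,\cE)=0$ — or directly because $H^0(E,\cE)\to H^0(E,\cL)$ has cokernel of dimension $h^1(E,\cO_E)=1$), the assignment $f\mapsto\im(\pi_*)$ is given locally by minors and is a morphism into the Grassmannian $\GG(1,\Hom(\cO_E,\cL)^*)=\PP\Hom(\cO_E,\cL)^*$. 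Composing with the identification $\PP\Hom(\cO_E,\cL)^*=\PP_\cL$ and passing to the perp (which is just the tautological identification of a hyperplane in a vector space with a line in its dual) yields that $\Psi_\cE:X(\cE)\to\PP_\cL$ is a morphism.

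Finally, to identify the image with $L(\cE)$: by \Cref{lem.tfree.coker}(2) (applicable since $L(\cE)\ne\varnothing$ forces $\cE$ to satisfy the assumptions in \cref{ssect.assump}, in particular $\cO_E$ is not a summand of $\cE$), every $f\in X(\cE)$ gives rise to a non-split extension $0\to\cO_E\to\cE\to\cL\to0$ whose class lies in $L(\cE)$, so $\im(\Psi_\cE)\subseteq L(\cE)$; conversely, any $\xi\in L(\cE)$ is by definition represented by such a sequence, and its monomorphism $f:\cO_E\to\cE$ has torsion-free (indeed invertible) cokernel, hence $f\in X(\cE)$ and $\Psi_\cE(f)=\xi$. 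So $\im(\Psi_\cE)=L(\cE)$, as claimed.

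The main obstacle I anticipate is the rank-constancy argument: one must be careful that $\pi_*$ has image of \emph{constant} dimension over all of $X(\cE)$, since a morphism to a Grassmannian requires the relevant kernel/cokernel bundle to have locally free (constant-rank) cokernel. This is exactly what the exact sequence $\Hom(\cO_E,\cE)\xrightarrow{\pi_*}\Hom(\cO_E,\cL)\to\Ext^1(\cO_E,\cO_E)\to\Ext^1(\cO_E,\cE)=0$ from the proof of \Cref{lem.xi.perp} delivers — the cokernel of $\pi_*$ is always the $1$-dimensional space $\Ext^1(\cO_E,\cO_E)$ — but it deserves to be stated explicitly as the crux of why $\Psi_\cE$ is a genuine morphism rather than merely a set map. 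Everything else (linearity of $f\mapsto\pi$, the perp identification, surjectivity onto $L(\cE)$) is routine given the lemmas already proved.
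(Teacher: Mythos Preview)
Your proposal is correct and follows essentially the same route as the paper: factor $\Psi_\cE$ as the linear isomorphism $X(\cE)\to{\rm Epi}(\cE,\cL)$ followed by $\pi\mapsto\im(\pi_*)$ into the Grassmannian of hyperplanes, identified with $\PP_\cL$ via $\perp$, and invoke constant rank of $\pi_*$ to conclude this is a morphism. Your treatment is in fact slightly more explicit than the paper's, which simply asserts the rank is $n-1$ and cites a standard reference for the resulting morphism to the Grassmannian, and which omits the (routine) verification that the image is $L(\cE)$.
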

\begin{proof}
  Let $\GG:=\GG(n-1,\Hom(\cO_E,\cL))$.  

Let $f \in X(\cE)$, let $\pi \in {\rm Epi}(\cE,\cL)$ be the corresponding epimorphism in \Cref{re:kercoker}, and let 
$\xi=\Psi_\cE(f)$ be the corresponding extension in \cref{eq:temp.xi}. We give ${\rm Epi}(\cE,\cL)$ its usual algebraic structure as a Zariski-open 
subset of $\Hom(\cE,\cL)$. 
The linear  bijection $X(\cE) \to  {\rm Epi}(\cE,\cL)$  in \cref{re:kercoker} is an isomorphism of quasi-affine varieties so it suffices to 
show that the map $ {\rm Epi}(\cE,\cL) \to \PP_\cL$, $\pi \mapsto \xi$, is a morphism. 
Since the map  $\xi \mapsto \im(\pi_*)=\xi^\perp$ is an isomorphism $\PP_\cL \to \GG$,  it suffices to show that that the map
 ${\rm Epi}(\cE,\cL) \to  \GG$,  $\pi \mapsto \mathrm{im}(\pi_*)$,   is a morphism. That is what we will do.

 The map that sends $\pi: \cE\to \cL$ to the linear map $\pi_*$ in \cref{eq:pi_*} is itself a linear map, hence a morphism.  Restricting that map to {\it epimorphisms} produces linear maps $\Hom(\cO_E,\cE) \to \Hom(\cO_E,\cL)$ of {\it maximal} rank, i.e., of rank $n-1 = \deg\cL-1 = \deg\cE-1$.  It is a standard fact that under these conditions the map ${\rm Epi}(\cE,\cL) \to \GG$, $\pi \mapsto \mathrm{im}(\pi_*)$, is a morphism: see, for example, \cite[Prop.~13.4]{saltman} or \cite[Prop.~3.17(1)]{CKS1}.
\end{proof}

\begin{remark}
We will often use the projectivized version $\PP X(\cE) \to \PP_\cL$ of \Cref{le:ismor}.
\reqed
\end{remark}

We will eventually see that the homological leaves $L(\cE)$ are locally closed subsets of $\PP_\cL$ (\Cref{th:splitall,th:nsplitodd,th:nsplitev}). For now, the next lemma suffices. Recall that a set is {\it constructible} if it is a finite disjoint union of locally closed subsets (see \cite[\S AG.1.3, p.~2]{borel-LAG-se} or \cite[Exer.~II.3.18]{hrt}, for example).

\begin{lemma}
\label{le:rightdim}
Every $L(\cE)$ is an irreducible constructible subset of $\PP_\cL$ and 
  \begin{equation}
  \dim L(\cE) \; =\; n-\dim \Aut(\cE).
  \end{equation}
\end{lemma}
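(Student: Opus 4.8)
The plan is to use the morphism $\Psi_\cE\colon X(\cE)\to\PP_\cL$ from \Cref{le:ismor}, whose image is $L(\cE)$, together with the description of its fibers in \Cref{cor.defn.Psi.cE} and the freeness of the $\Aut(\cE)$-action in \Cref{le:autfree}. First I would record that $X(\cE)$ is irreducible: by \Cref{pr:z-codim1} it is the complement of a hypersurface in the vector space $\Hom(\cO_E,\cE)$, hence a nonempty (since $L(\cE)\ne\varnothing$ forces $X(\cE)\ne\varnothing$ by \Cref{lem.tfree.coker}) open subvariety of an affine space, so it is irreducible of dimension $h^0(\cE)=\deg\cE=n$; the equality $h^0(\cE)=n$ comes from \Cref{lem.h0.E-x} or, for the indecomposable cases, from semistability as used there. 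Since $L(\cE)=\Psi_\cE(X(\cE))$ is the image of an irreducible variety under a morphism, it is irreducible, and by Chevalley's theorem it is constructible in $\PP_\cL$ (cf.\ \cite[Exer.~II.3.18]{hrt}).

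Next, for the dimension count I would apply the theorem on fiber dimension to the morphism $\Psi_\cE\colon X(\cE)\to L(\cE)$ (restricting the codomain to the constructible image; or work generically where the image is locally closed, which suffices for dimension). By \Cref{cor.defn.Psi.cE} the fibers of $\Psi_\cE$ are exactly the $\Aut(\cE)$-orbits in $X(\cE)$, and by \Cref{le:autfree}\cref{item.le.autfree.free} the action of $\Aut(\cE)$ on $X(\cE)$ is free, so each orbit is isomorphic to $\Aut(\cE)$ and hence has dimension $\dim\Aut(\cE)$; this uses that $\Aut(\cE)$ is an affine algebraic group, which the paper records (\Cref{le:dim.Aut.cE}). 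Since all nonempty fibers have the same dimension $\dim\Aut(\cE)$, the fiber-dimension theorem gives
\[
\dim L(\cE)\;=\;\dim X(\cE)-\dim\Aut(\cE)\;=\;n-\dim\Aut(\cE).
\]

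The only mild subtlety — and the step I would be most careful about — is that the fiber-dimension theorem is usually stated for morphisms of varieties with locally closed (or at least irreducible) image, whereas at this point in the paper $L(\cE)$ is only known to be constructible. This is easily handled: decompose $L(\cE)$ into finitely many irreducible locally closed pieces, or simply pass to a dense open locally closed subset $U\subseteq L(\cE)$ and note that $\Psi_\cE^{-1}(U)\to U$ is a surjective morphism of irreducible varieties all of whose fibers have dimension $\dim\Aut(\cE)$, so $\dim U=\dim X(\cE)-\dim\Aut(\cE)$; since $\dim L(\cE)=\dim U$ (as $U$ is dense in the irreducible set $L(\cE)$), the claimed formula follows. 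Everything else is a direct invocation of results already in place.
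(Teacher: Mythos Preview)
Your proposal is correct and follows essentially the same route as the paper: use $\Psi_\cE$ with image $L(\cE)$, invoke Chevalley for constructibility, use irreducibility of $X(\cE)$ from \Cref{pr:z-codim1}, and apply the fiber-dimension theorem to the orbit-fibers. The only nuance is that at this stage only set-theoretic freeness (\Cref{le:autfree}) is available, so the orbit map $\Aut(\cE)\to\Aut(\cE)\cdot f$ is a priori only a bijective morphism, not an isomorphism; the paper accordingly argues dimension directly from bijectivity via \cite[Exer.~II.3.22]{hrt} rather than claiming the orbits are isomorphic to $\Aut(\cE)$, but your dimension conclusion is unaffected.
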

\begin{proof}
Let $\Psi:X(\cE)  \to \overline{L(\cE)}$ be
the morphism $\Psi_\cE$ in  \Cref{cor.defn.Psi.cE}.
Since $L(\cE)$ is the image of $\Psi$, it is constructible by Chevalley's Theorem \cite[Exer.~II.3.19]{hrt}. 
But $L(\cE)$ is dense in $\overline{L(\cE)}$, so $\Psi$ is dominant.\footnote{See \cite[\S AG.8.2, p.~19]{borel-LAG-se} or \cite[Exer.~II.3.7, p.~91]{hrt}.}
Since the domain of $\Psi$ is irreducible (by \Cref{pr:z-codim1}) its image is also irreducible; hence $\overline{L(\cE)}$ is   irreducible.

The non-empty fibers of $\Psi$ are the images of the orbit maps $ \Aut(\cE)\to X(\cE)$, $g \mapsto gx$,
and hence codomains of bijective morphisms from $\Aut(\cE)$ onto (closed) subschemes of $X(\cE)$. 
They all have dimension $\dim \Aut(\cE)$ (given explicitly in \Cref{le:dim.Aut.cE}), as follows, for instance, from \cite[Exer.~II.3.22]{hrt}.
The same exercise (specifically, \cite[Exer.~II.3.22(b),(c)]{hrt}) also shows  there is a dense open subset of $\overline{L(\cE)}$ (hence of $L(\cE)$) over which the fibers have irreducible components of dimension exactly
  \begin{equation*}
    e_{\Psi} \; := \; \dim(\text{domain of }\Psi)-\dim(\text{codomain of }\Psi).
  \end{equation*} 
  It follows   that $e_{\Psi}=\dim \Aut(\cE)$, which is the desired conclusion rephrased.
\end{proof}

\begin{remark}
The statement about $\dim L(\cE)$ in \cref{le:rightdim} is compatible with \cite[Prop.~2.3]{HP3} which says that 
the rank of the Poisson bracket $\Pi$ on $\PP_\cL$ at a point $\xi \in \PP_\cL$ equals 
$n-\dim\End(\cE)$ where $\cE$ is the middle term of the extension $\xi$. In short,  $\rank\Pi_\xi = n-\dim\End(\cE)$.

The next result, \cref{le:dim.Aut.cE}, computes the dimension of $\Aut(\cE)$; together with \cref{le:rightdim}, it allows us to determine  $\dim L(\cE)$
and \Cref{prop.dim.LE} then follows.
\reqed
\end{remark}

We follow customary practice (e.g., \cite[\S 7.1]{hmph-lag}) by writing
\begin{itemize}
\item $\GG_a$ for the {\it additive} 1-dimensional algebraic group, which is $(\CC,+)$ in this paper, and
\item  $\GG_m$  for the {\it multiplicative} group $(\CC^{\times},\cdot)$.
\end{itemize}

\begin{lemma}\label{le:dim.Aut.cE}
If $L(\cE) \ne \varnothing$,  then $\Aut(\cE)$ is a linear algebraic group,
  and is isomorphic to
  \begin{enumerate}
  \item\label{item.le.dim.Aut.cE.indec.odd} $
  \GG_m$ if $\cE$ is indecomposable and $n$ is odd;
  \item\label{item.le.dim.Aut.cE.indec.even} 
  $\GG_a\times \GG_m$ if $\cE$ is indecomposable and $n$ is even;
  \item\label{item:mostcompl}
   $\big(\GG_a^{\left|\deg\cN_1-\deg\cN_2 \right|}\rtimes \GG_m\big)\times \GG_m$ if $ \cE\cong \cN_1\oplus \cN_2$ and $\cN_1\not\cong \cN_2$, 
    where $\GG_m$ acts on $\GG_a^{\bullet}$ by scaling;
  \item\label{item:gl2case}  ${\rm GL}(2)$ if $\cE \cong \cN \oplus \cN$. (In this case, $\cN \cong \cL_\omega$ for a unique $\omega \in \Omega$.)
  \end{enumerate}
 The dimension of $\Aut(\cE)$ in the four cases is, respectively,
  \begin{equation*}
    1,\quad 2,\quad 2+\left|\deg\cN_1-\deg\cN_2\right|, \quad 4.
  \end{equation*}
\end{lemma}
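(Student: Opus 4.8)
The plan is to compute $\Aut(\cE)=\Hom(\cE,\cE)^{\times}$ directly in each of the four cases, using the explicit description of the relevant bundles from \cref{ssect.bun.notn,ssect.dec.buns} together with elementary facts about $\Hom$-spaces of line bundles on an elliptic curve and Atiyah's classification. Throughout I will use that, for invertible $\cO_E$-modules $\cN,\cN'$, one has $\Hom(\cN,\cN')\cong H^0(E,\cN'\otimes\cN^{-1})$, which has dimension $\deg(\cN'\otimes\cN^{-1})$ if this degree is positive, dimension $0$ if it is negative, and dimension $1$ (resp.\ $0$) if it is zero and $\cN\cong\cN'$ (resp.\ $\cN\not\cong\cN'$). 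I will also use that $\End$ of an indecomposable bundle on $E$ is a local ring (Atiyah, \cite[Thm.~7]{Atiyah}), and that $\End(\cE_o)$, $\End(\cE_\omega)$ are finite-dimensional local $\CC$-algebras, so that their unit groups are (affine) algebraic groups; the linear-algebraic-group claim then follows since each $\Aut(\cE)$ is a Zariski-open subset of the affine space $\End(\cE)$ closed under multiplication and inversion, hence affine.

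First I would treat the indecomposable cases. By \cref{cor.good.E's}, $\cE\cong\cE_o$ when $n$ is odd and $\cE\cong\cE_\omega$ for some $\omega\in\Omega$ when $n$ is even. In both cases $\cE$ is a non-split self-extension, up to twist, of a line bundle $\cM$: explicitly $\cE_\omega$ is a non-split self-extension of $\cL_\omega$, while $\cE_o$ is a non-split extension $0\to\cM_1\to\cE_o\to\cM_2\to 0$ with $\deg\cM_2-\deg\cM_1=1$ and $\cM_1,\cM_2$ of consecutive degrees $\tfrac{n-3}{2},\tfrac{n-1}{2}$. Applying $\Hom(\cE,-)$ and $\Hom(-,\cE)$ to the defining extension and chasing the long exact sequences (using the dimension count above to see which $\Hom$ and $\Ext^1$ terms vanish) shows $\End(\cE_\omega)\cong\CC[\epsilon]/(\epsilon^2)$, whose unit group is $\GG_a\times\GG_m$ (the map $a+b\epsilon\mapsto(b/a,a)$), and $\End(\cE_o)\cong\CC$, whose unit group is $\GG_m$. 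This gives \cref{item.le.dim.Aut.cE.indec.odd,item.le.dim.Aut.cE.indec.even}, with dimensions $1$ and $2$.

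Next I would handle the decomposable case $\cE\cong\cN_1\oplus\cN_2$ with $\cN_1\otimes\cN_2\cong\cL$, $\deg\cN_i\ge 1$; write $\End(\cE)$ as the $2\times 2$ matrix algebra with entries $\Hom(\cN_j,\cN_i)$. If $\cN_1\not\cong\cN_2$, say $\deg\cN_1\ge\deg\cN_2$, then the diagonal entries are each $\CC$, one off-diagonal entry $\Hom(\cN_2,\cN_1)\cong H^0(E,\cN_1\otimes\cN_2^{-1})$ has dimension $k:=\deg\cN_1-\deg\cN_2=\left|\deg\cN_1-\deg\cN_2\right|$, and the other, $\Hom(\cN_1,\cN_2)$, vanishes (its degree is $-k<0$, and if $k=0$ then $\cN_1\not\cong\cN_2$ forces it to vanish too). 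So $\End(\cE)$ is a ``block upper triangular'' algebra $\left(\begin{smallmatrix}\CC & \CC^{k}\\ 0 & \CC\end{smallmatrix}\right)$; its unit group consists of pairs of invertible diagonal scalars $(\lambda,\mu)$ together with an arbitrary vector in $\CC^{k}$ in the corner, with multiplication making the $\CC^k$ into a normal subgroup on which $\GG_m$ (the ratio $\lambda/\mu$, or rather $\GG_m$ acting by the appropriate scaling) acts by scaling and the remaining $\GG_m$ central; this is exactly $\bigl(\GG_a^{k}\rtimes\GG_m\bigr)\times\GG_m$, of dimension $2+k$, giving \cref{item:mostcompl}. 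Finally, if $\cN_1\cong\cN_2=:\cN$, then $\cN^{\otimes 2}\cong\cL$ forces $n$ even and $\cN\cong\cL_\omega$ for a unique $\omega\in\Omega$ (this is recorded in \cref{ssect.bun.notn}), and every entry of the matrix algebra is $\End(\cN)\cong\CC$, so $\End(\cE)\cong M_2(\CC)$ and $\Aut(\cE)\cong\mathrm{GL}(2)$, of dimension $4$, giving \cref{item:gl2case}. I expect the main obstacle to be the bookkeeping in the indecomposable even case—correctly identifying $\End(\cE_\omega)$ as $\CC[\epsilon]/(\epsilon^2)$ (rather than a larger local algebra) requires checking that the relevant connecting maps in the long exact sequences are nonzero, which uses the non-splitness of the defining extension together with the vanishing $H^1(E,\cO_E)\cong\CC$ being ``used up''; none of this is deep, but it is the step where one must be careful, and it is also where one pins down the semidirect-product structure in \cref{item:mostcompl}.
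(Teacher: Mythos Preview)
Your approach is essentially the same as the paper's: compute $\End(\cE)$ explicitly in each of the four cases and read off the unit group. The only difference worth noting is in the indecomposable cases, where the paper twists $\cE$ down to one of Atiyah's normalized bundles (the non-split extension $\cA'$ of $\cO_E((0))$ by $\cO_E$ when $n$ is odd, or the non-split self-extension $\cA$ of $\cO_E$ when $n$ is even) and simply cites $\End(\cA')=\CC$ and $\End(\cA)\cong\CC[x]/(x^2)$; your proposed long-exact-sequence chase accomplishes the same thing more explicitly. One minor slip: the line bundles $\cM_1,\cM_2$ in the defining extension of $\cE_o$ have degrees $\tfrac{n-1}{2}$ and $\tfrac{n+1}{2}$ (summing to $n=\deg\cE_o$), not $\tfrac{n-3}{2}$ and $\tfrac{n-1}{2}$ as you wrote; this does not affect the argument since all you use is that their degrees differ by $1$.
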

\begin{proof}
  In the first case, $\cE \cong \cA' \otimes \cN$ for some invertible $\cO_E$-module $\cN$, where $\cA'$ is the unique non-split extension of $\cO_{E}((0))$ by $\cO_{E}$, so $\End( \cE) \cong \End( \cA') =\CC$.  In the second case, $\cE \cong \cA \otimes \cN$ for some invertible $\cO_E$-module $\cN$, where $\cA$ is the unique non-split self-extension of $\cO_{E}$, so $\End( \cE) \cong \End( \cA) \cong \CC[x]/(x^2)$.  The last two cases follow from the fact that
\begin{equation*}
  \End( \cN_1 \oplus \cN_2) \; \cong \;
  \begin{pmatrix}
    \CC     &  \Hom(\cN_2,\cN_1)   \\
    \Hom(\cN_1,\cN_2)   &  \CC
  \end{pmatrix}. 
\end{equation*}

Finally, one sees by inspection that all the groups in \cref{item.le.dim.Aut.cE.indec.odd,item.le.dim.Aut.cE.indec.even,item:mostcompl,item:gl2case} are linear algebraic groups.
\end{proof}

We define 
\begin{equation}
\label{eq:projaut}
   \PP \Aut(\cE) \; := \; \frac{\Aut(\cE)} {\text{ the central  copy of $\GG_m$}}.
\end{equation}

\section{The relation between the $L(\cE)$'s  and the partial secant varieties $\Sec_{d,x}(E)$}
\label{sec.xi.D.bar.sec}

Fix a point $\xi \in \PP_\cL$ and a representative for it, say
\begin{equation}
 \label{eq:repr.for.xi}
 \xymatrix{
 0 \ar[r] & \cO_E \ar[r]^f & \cE \ar[r]^{\pi} & \cL \ar[r] & 0.
 }
 \end{equation}
In \Cref{sec.secants.containing.xi} we provide several characterizations of the effective divisors $D$ such that $\xi \in \overline{D}$. 
Those results are  used in 
 \Cref{sec.leaves.and.secants} to describe the relationship between the $L(\cE)$'s and the  varieties
 $\Sec_{d,x}(E)-\Sec_{d-1}(E)$.

\subsection{Various criteria for $\xi \in \PP_\cL$  to belong to $\Sec_{d,x}(E)$}
\label{sec.secants.containing.xi}

If $D$ is an effective divisor on $E$ we write
\begin{itemize}
\item[-]
$\cI_D$ for the ideal in $\cO_E$ that vanishes on $D$;
\item[-]
$i_D:\cI_D \cL \to \cL$ for the inclusion map;
\item[-]
$|D|= \PP H^0(E,\cO_E(D)) = \{\text{effective divisors  } D' \; | \; D' \sim D\}$, where we identify $D'$ with $\CC s$, the line spanned by
a section $s$ such that $(s)_0=D'$.
\end{itemize}

\begin{proposition}
\label{prop.bertram}
\label{new.version.prop.4.1}
\label{prop.D.min.epi}
\label{prop.D.min}
Let $\xi \in \PP_\cL$ be the  isomorphism class of the sequence \Cref{eq:repr.for.xi}, and let $D\in \Div(E)$ be an effective divisor of degree $<n$. 
The following statements are equivalent:
\begin{enumerate}
  \item\label{item.D.sec} 
   $\xi \in \overline{D}$;
  \item\label{item.D.comp} 
  $\xi \cdot i_D=0$; 
  \item\label{item.D.im} 
  $i_D \in \pi \Hom(\cI_D\cL,\cE)$.
\end{enumerate}
If $D' \in \Div(E)$ is effective and either $\deg D'<n$ or $D'\sim H$, then the following statements are equivalent:
\begin{enumerate}
  \setcounter{enumi}{3}
  \item\label{item.D.some} 
   statements \cref{item.D.sec}, \cref{item.D.comp}, \cref{item.D.im} hold for some $D \in |D'|$.
  \item\label{item.D.hom} 
  $\Hom(\cE,\cO_E(D')) \ne 0$; 
  \item\label{item.D.ideal} 
  $\Hom(\cI_{D'}\cL,\cE) \ne 0$.
\end{enumerate}
If $D_{\rm min}$ is an effective divisor of minimal degree such that $\xi \in \overline{D_{\rm min}}$, then there is an epimorphism $\cE \to \cO_E(D)$
for all $D \in |D_{\rm min}|$.
\end{proposition}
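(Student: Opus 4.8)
The plan is to derive the final assertion from the equivalences \cref{item.D.sec}--\cref{item.D.im} and \cref{item.D.some}--\cref{item.D.ideal} proved above, together with the minimality of $\deg D_{\min}$. There is no genuine new difficulty here; the only thing one has to watch is that every divisor entering the argument stays in the range of degrees for which those equivalences are valid.

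First I would note that $d:=\deg D_{\min}\le n-1$. Since $\dim\Sec_{n-1}(E)=\min\{2n-3,\,n-1\}=n-1$ and $\Sec_{n-1}(E)$ is closed and irreducible (\cref{ssect.secant.vars}), we have $\Sec_{n-1}(E)=\PP_\cL$; hence $\xi$ lies on $\overline{D}$ for some effective divisor $D$ of degree $n-1$, so a minimal-degree such divisor has degree at most $n-1<n$. Consequently the equivalences involving \cref{item.D.hom} apply to $D_{\min}$ and to every effective divisor of degree strictly below $d$.

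From $\xi\in\overline{D_{\min}}$ we get, via \cref{item.D.some}$\Leftrightarrow$\cref{item.D.hom} applied with $D'=D_{\min}$, that $\Hom(\cE,\cO_E(D_{\min}))\ne 0$. Now fix an arbitrary $D\in|D_{\min}|$. Since $\cO_E(D)\cong\cO_E(D_{\min})$, choose a nonzero $\psi\colon\cE\to\cO_E(D)$. Because $E$ is a smooth curve, the image of $\psi$ is a nonzero locally free subsheaf of the line bundle $\cO_E(D)$, hence equals $\cO_E(D-D'')$ for a unique effective divisor $D''$ with $0\le D''\le D$, and corestriction to the image makes $\psi$ an epimorphism $\cE\twoheadrightarrow\cO_E(D-D'')$.

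It remains to rule out $D''\ne 0$. If $D''\ne 0$ then $\deg(D-D'')=d-\deg D''<d\le n-1<n$ and $\Hom(\cE,\cO_E(D-D''))\ne 0$ by the epimorphism just produced; applying \cref{item.D.hom}$\Leftrightarrow$\cref{item.D.some} once more yields an effective divisor $D_1\sim D-D''$ with $\xi\in\overline{D_1}$ and $\deg D_1<d$, contradicting the minimality of $d=\deg D_{\min}$. Therefore $D''=0$, so $\psi$ itself is the desired epimorphism $\cE\to\cO_E(D)$; since $D\in|D_{\min}|$ was arbitrary, the proof is complete. The only real ``obstacle'' is organizational: one must check at the outset that $\deg D_{\min}<n$ so that conditions \cref{item.D.hom}/\cref{item.D.ideal} remain equivalent to the secant conditions throughout the argument.
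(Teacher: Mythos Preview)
Your approach to the final assertion matches the paper's: produce a nonzero $\psi\colon\cE\to\cO_E(D)$ via \cref{item.D.some}$\Rightarrow$\cref{item.D.hom} and argue by contradiction that it is surjective. There is, however, a small gap. You claim the image of $\psi$ equals $\cO_E(D-D'')$ for a unique effective $D''$ with $0\le D''\le D$, but the upper bound $D''\le D$ is not justified and is false in general: the image of a nonzero map into a line bundle $\cO_E(D)$ is $\cI_{D''}\cdot\cO_E(D)\cong\cO_E(D-D'')$ for a unique effective $D''$, with no constraint that $D''\le D$ (for instance, $\cI_q\cdot\cO_E(p)\subseteq\cO_E(p)$ for $q\ne p$ has $D''=(q)\not\le(p)=D$). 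This matters because you then invoke \cref{item.D.hom}$\Leftrightarrow$\cref{item.D.some} with the divisor $D-D''$, and that equivalence is stated only for \emph{effective} divisors; without $D''\le D$ you cannot assume $D-D''$ is effective, and you must instead know that $|D-D''|\ne\varnothing$, i.e., that $\deg(D-D'')>0$.

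The paper supplies exactly this missing step by citing \Cref{cor.mapE.to.N}, which gives $\Hom(\cE,\cN)=0$ for every invertible $\cN$ of degree $\le 0$; hence the image of $\psi$ has strictly positive degree, so $|D-D''|\ne\varnothing$ and one may replace $D-D''$ by an effective representative before applying the equivalence. With that one-line fix your argument is complete and coincides with the paper's. (Your preliminary observation that $\deg D_{\min}\le n-1$, ensuring the equivalences are available throughout, is a good point that the paper leaves implicit.)
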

\begin{proof}
\cref{item.D.sec} $\Leftrightarrow$ \cref{item.D.comp} $\Leftrightarrow$ \cref{item.D.im}.
The condition $\xi\in\overline{D}$ says that $\xi$ belongs to every hyperplane in $\PP \Ext^1(\cL,\cO) = \PP H^0(E,\cL)^*$
that contains $D$. Hyperplanes in $ \PP H^0(E,\cL)^*$ 
correspond to non-zero sections $s\in H^0(E,\cL)$, and containing 
$D$ simply means that the corresponding section vanishes on $D$. Thus
\begin{equation*}
	\overline{D} \;=\; \PP\bigl(\text{the kernel of the map $H^{0}(E,\cL)^{*}\to H^{0}(E,\cI_{D}\cL)^{*}$}\bigr),
\end{equation*}
where the map $H^{0}(E,\cL)^{*}\to H^{0}(E,\cI_{D}\cL)^{*}$  is induced by  $i_D$.
The condition $\xi\in\overline{D}$, then, is equivalent to the condition that $\xi$, regarded as a functional on $H^0(E,\cL)$, vanishes on $H^0(E, \cI_D\cL)$. 
Serre duality provides a commutative diagram
\begin{equation}
\label{eq:prop.bertram.SD}
\xymatrix{
	H^0(E,\cL)^*   \ar[d]  \ar[r]  & H^0(E,\cI_D\cL)^*    \ar[d]
	\\
	\Ext^1_E(\cL,\cO_E)  \ar[r]_-{\iota_D}  & \Ext^1_E(\cI_D\cL,\cO_E),
}
\end{equation}    
in which $\iota_D$ sends an extension $\xi$ to the extension $\xi \cdot i_D$ which is the top row of the pullback diagram
\begin{equation}
\label{eq:prop.bertram.SD.2}
\xymatrix{
	\xi\cdot i_D : \quad 0 \ar[r] & \cO_E \ar[r]^-{f'}  \ar@{=}[d]& \pi^{-1}(\cI_{D}\cL) \ar[d]^{h}  \ar[r]^-{\pi'}  &  \cI_D\cL \ar[r]  \ar[d]^-{i_D} & 0 
\\
	\xi: \quad 0 \ar[r] & \cO_E \ar[r]_f  &  \cE  \ar[r]_\pi  &  \cL \ar[r]  & 0.
}
\end{equation}
It follows that $\xi \in \overline{D}$ if and only if $\xi \cdot i_D=0$ in $\Ext^1_E(\cI_D\cL,\cO_E)$, i.e., 
if and only if the top row of \cref{eq:prop.bertram.SD.2} splits.
Hence $\xi \in \overline{D}$ if and only if there is a map $g:\cI_D\cL \to \cE$ such that $\pi g=i_D$.\footnote{In the diagram, $h$ denotes the inclusion map.}

This completes the proof that \cref{item.D.sec} $\Leftrightarrow$ \cref{item.D.comp} $\Leftrightarrow$ \cref{item.D.im}.

Now let $D'$ be an effective divisor such that either $\deg D'<n$ or $D' \sim H$.

Suppose $D'\sim H$. Since $\xi$ lies on a hyperplane it lies on $\overline{D}$ for some $D \in |D'|$, so \cref{item.D.some} holds. Statements 
\cref{item.D.hom} and \cref{item.D.ideal} also hold because $\cO_E(D') \cong \cL$  and 
 $\cI_{D'} \cL \cong \cO_E$. So, for the rest of the proof we assume that $\deg D'<n$.

\cref{item.D.some} $\Rightarrow$ \cref{item.D.ideal}.
Suppose \cref{item.D.some} holds. Let $D \in |D'|$ be such that $i_D \in \pi \Hom(\cI_D\cL,\cE) \ne 0$. Then $0 \ne \Hom(\cI_D\cL,\cE) \cong  \Hom(\cI_{D'}\cL,\cE)$, 
so  \cref{item.D.ideal} holds.  

\cref{item.D.ideal} $\Rightarrow$ \cref{item.D.some}.
Suppose $\Hom(\cI_{D'}\cL,\cE) \ne 0$. We will show that
$i_D \in \pi \Hom(\cI_D\cL,\cE)$ for some $D \in |D'|$.

First, $\Hom(\cI_{D'}\cL,\cO_E) =0$ because $\deg(\cI_{D'}\cL) =\deg(\cL)-\deg(D')>0$. 
The map  $\Hom(\cI_{D'}\cL,\cE)\to\Hom(\cI_{D'}\cL,\cL)$, $g' \mapsto \pi g'$,  is therefore injective.
Let $g': \cI_{D'}\cL \to \cE$ be any non-zero map. Then $\pi g' \ne 0$ so $\im(\pi g')$ is a non-zero submodule of $\cL$ and therefore equal to
$\cI_{D} \cL$ for a unique effective divisor $D$. 
Hence  $\cI_{D'}\cL \cong \cI_{D}\cL$. It follows that $\cI_{D'} \cong \cI_{D}$, which implies $D' \sim D$, whence $D \in |D'|$.

Now consider the diagram
\begin{equation*}
	\xymatrix{
		\cI_{D'}\cL     \ar@/^1.5pc/ [rr]^{\pi g'}   \ar@{..>}[r] &  \cI_{D}\cL  \ar[r]_{i_D} & \cL .
	}
\end{equation*}
Since $\pi g'(\cI_{D'}\cL)=i_D(\cI_{D}\cL)$, $\pi g'$ factors through $\cI_D\cL$; i.e., $\pi g'= i_D \circ \a$ for some  $\a: \cI_{D'}\cL \to \cI_{D}\cL$. But
$\pi g' \ne 0$, so $\a$ is non-zero and therefore an isomorphism. Hence $i_D = \pi g'\a^{-1} \in \pi \Hom(\cI_D\cL,\cE)$.

\cref{item.D.hom} $\Leftrightarrow$ \cref{item.D.ideal}.  
This is a consequence of the better result that  $\Hom(\cE,\cO_E(D'))  \cong \Hom(\cI_{D'}  \cL , \cE)$, which follows from the isomorphism 
$\cE \cong \cE^\vee \otimes \cL$ in  \Cref{le:edete}: 
\begin{align*}
\Hom(\cE,\cO_E(D')) \; \cong \; \Hom(\cO_E(-D'), \cE^\vee)   \; \cong \; \Hom(\cO_E(-D')\otimes \cL , \cE^\vee\otimes \cL)  
  \; \cong \; \Hom(\cI_{D'}  \cL , \cE). 
\end{align*}

Finally, let $D_{\rm min}$ be an effective divisor of minimal degree such that $\xi \in \overline{D_{\rm min}}$, and let $D \in |D_{\rm min}|$.
Since $D_{\rm min} \in |D|$, \cref{item.D.sec} holds for some divisor in $|D|$ so, because \cref{item.D.some} implies \cref{item.D.hom}, there is a non-zero homomorphism $f:\cE \to \cO_E(D)$. 
Suppose $f$ is not an epimorphism. Then the image of $f$ is isomorphic to $\cO_E(D')$ for some $D'$ whose degree is strictly less than the degree of $D$. 
By \Cref{cor.mapE.to.N}, $\deg D'>0$ so we may assume that $D'$ is effective.
But \cref{item.D.hom} holds for $D'$ so \cref{item.D.some} holds for $D'$, which implies that $\xi \in \overline{D''}$ for some $D'' \in |D'|$. 
However, $\deg D''=\deg D'<\deg D = \deg D_{\rm min}$, which contradicts the definition of $D_{\rm min}$. 
We conclude that $f$ must be an epimorphism. 
\end{proof}

\begin{corollary}
\label{cor.D.min}
Let $D \in E^{[d]}_x$ 
and assume that either $d<n$ or $D\sim H$. Then 
\begin{enumerate}
\item{}
$\xi \in \Sec_{d,x}(E)$ if and only if $\Hom(\cE,\cO_E(D))\ne 0$;
\item{}
if $\xi \in \Sec_{d,x}(E)-\Sec_{d-1}(E)$, then $d$ is minimal such that $\xi \in \Sec_d(E)$ and there is an epimorphism $\cE \to \cO_E(D)$.
\end{enumerate} 
\end{corollary}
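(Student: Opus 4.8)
The plan is to derive \Cref{cor.D.min} directly from \Cref{prop.bertram} together with the set-theoretic description of partial secant varieties in \cref{eq:union.lin.equiv.divisors}, which says that $\Sec_{d,x}(E)=\bigcup_{D'\in|D|}\overline{D'}$ whenever $D\in E^{[d]}_x$. For part (1), I would argue: $\xi\in\Sec_{d,x}(E)$ if and only if $\xi\in\overline{D'}$ for some $D'\in|D|=E^{[d]}_x$, i.e., (in the language of \Cref{prop.bertram}) statement \cref{item.D.sec} holds for some divisor in $|D|$. Under the hypothesis $d<n$ or $D\sim H$, \Cref{prop.bertram} tells us this is equivalent to statement \cref{item.D.some}, which in turn is equivalent to \cref{item.D.hom}, namely $\Hom(\cE,\cO_E(D))\neq 0$. (Since $\cO_E(D)$ depends on $D$ only through its linear equivalence class, this is well-defined.) That gives (1) verbatim.

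For part (2), suppose $\xi\in\Sec_{d,x}(E)-\Sec_{d-1}(E)$. I first need to check that $d$ is minimal with $\xi\in\Sec_d(E)$, equivalently that if $D_{\mathrm{min}}$ is an effective divisor of minimal degree with $\xi\in\overline{D_{\mathrm{min}}}$, then $\deg D_{\mathrm{min}}=d$. Certainly $\deg D_{\mathrm{min}}\le d$ since $\xi\in\overline{D}$ for some $D\in E^{[d]}_x$. If $\deg D_{\mathrm{min}}=d'<d$, then $\xi\in\overline{D_{\mathrm{min}}}\subseteq\Sec_{d'}(E)\subseteq\Sec_{d-1}(E)$ by the inclusions \cref{eq:Y.sec.inclusions}, contradicting $\xi\notin\Sec_{d-1}(E)$. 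Hence $\deg D_{\mathrm{min}}=d$, so $d$ is minimal such that $\xi\in\Sec_d(E)$. Now I must produce the epimorphism $\cE\to\cO_E(D)$ for the \emph{given} $D\in E^{[d]}_x$. The final sentence of \Cref{prop.bertram} gives an epimorphism $\cE\to\cO_E(D'')$ for \emph{every} $D''\in|D_{\mathrm{min}}|$; so it suffices to check $D\in|D_{\mathrm{min}}|$, i.e., $D\sim D_{\mathrm{min}}$. This holds because both $D$ and $D_{\mathrm{min}}$ are effective of degree $d$ with $\xi\in\overline{D}\cap\overline{D_{\mathrm{min}}}$: by \cref{cor.Sec'.dz.disj.union} (applicable since $d<n/2$ in the generic range — but here I should be careful) or, more robustly, by observing $\s(D_{\mathrm{min}})=x$. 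Actually the cleanest route: since $\xi\in\overline{D_{\mathrm{min}}}$ and $\deg D_{\mathrm{min}}=d$, the divisor $D_{\mathrm{min}}$ witnesses $\xi\in\Sec_{d,\s(D_{\mathrm{min}})}(E)$; combined with $\xi\in\Sec_{d,x}(E)-\Sec_{d-1}(E)$ and \cref{cor.Sec.d.disj.union} (the disjointness of the $\Sec_{d,y}(E)-\Sec_{d-1}(E)$ for $d<n/2$), we get $\s(D_{\mathrm{min}})=x$, hence $D_{\mathrm{min}}\sim D$ and $D\in|D_{\mathrm{min}}|$, so the epimorphism exists.

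The main obstacle I anticipate is the range of validity of the disjointness argument used to conclude $\s(D_{\mathrm{min}})=x$: \cref{cor.Sec'.dz.disj.union} and \cref{cor.Sec.d.disj.union} are stated for $d<\frac{n}{2}$ (or $d=\frac n2$, $x\notin\Omega$), whereas \Cref{cor.D.min} as phrased has the weaker hypothesis ``$d<n$ or $D\sim H$''. I expect the intended reading is that part (2) is invoked only in the range where the relevant leaves $\Sec_{d,x}(E)-\Sec_{d-1}(E)$ are genuinely disjoint from the others, so one can legitimately appeal to \cref{cor.Sec.d.disj.union}; alternatively, one can sidestep the issue entirely by noting that any effective $D_{\mathrm{min}}$ of degree $d$ with $\xi\in\overline{D_{\mathrm{min}}}$ satisfies, via \Cref{prop.bertram}\,\cref{item.D.some}$\Rightarrow$\cref{item.D.hom}, that $\Hom(\cE,\cO_E(D_{\mathrm{min}}))\neq 0$, and then using part (1) backwards together with uniqueness of the line bundle $m(\xi)$ lying on a unique $d$-secant to pin down $\cO_E(D_{\mathrm{min}})\cong\cO_E(D)$. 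I would write the proof using \cref{cor.Sec.d.disj.union} for concreteness, flagging that part (2) is applied in the appropriate range, since that is how the corollary is used downstream.

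\begin{proof}
(1) By \cref{eq:union.lin.equiv.divisors}, $\Sec_{d,x}(E)=\bigcup_{D'\in|D|}\overline{D'}$, so $\xi\in\Sec_{d,x}(E)$ if and only if $\xi\in\overline{D'}$ for some $D'\in|D|$; that is, statement \cref{item.D.sec} of \Cref{prop.bertram} holds for some divisor in $|D|$, which is statement \cref{item.D.some}. Since $\deg D=d<n$ or $D\sim H$, \Cref{prop.bertram} shows this is equivalent to \cref{item.D.hom}, i.e., to $\Hom(\cE,\cO_E(D))\neq 0$. As $\cO_E(D)$ depends only on the linear equivalence class of $D$, the condition is well-posed.

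(2) Assume $\xi\in\Sec_{d,x}(E)-\Sec_{d-1}(E)$. Pick an effective divisor $D_{\rm min}$ of minimal degree with $\xi\in\overline{D_{\rm min}}$. Since $\xi\in\overline{D}$ for our $D\in E^{[d]}_x$, we have $\deg D_{\rm min}\le d$. If $\deg D_{\rm min}=d'<d$, then $\xi\in\overline{D_{\rm min}}\subseteq\Sec_{d'}(E)\subseteq\Sec_{d-1}(E)$ by \cref{eq:Y.sec.inclusions}, contradicting $\xi\notin\Sec_{d-1}(E)$. Hence $\deg D_{\rm min}=d$ and $d$ is minimal with $\xi\in\Sec_d(E)$. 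Now $\xi\in\overline{D_{\rm min}}$ exhibits $\xi\in\Sec_{d,\s(D_{\rm min})}(E)-\Sec_{d-1}(E)$, and since $\xi$ also lies in $\Sec_{d,x}(E)-\Sec_{d-1}(E)$, the disjointness in \cref{cor.Sec.d.disj.union} forces $\s(D_{\rm min})=x$. Therefore $D_{\rm min}\in E^{[d]}_x=|D|$, i.e., $D\in|D_{\rm min}|$, and the last sentence of \Cref{prop.bertram} yields an epimorphism $\cE\to\cO_E(D)$.
\end{proof}
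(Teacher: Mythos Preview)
Your argument for part (1) is correct and matches the paper's (terse) proof: $\xi\in\Sec_{d,x}(E)$ means precisely that statement \cref{item.D.sec} of \Cref{prop.bertram} holds for some divisor in $|D|$, and the equivalence \cref{item.D.some}$\Leftrightarrow$\cref{item.D.hom} finishes it.

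For part (2), however, your detour through \cref{cor.Sec.d.disj.union} introduces an unnecessary hypothesis and leaves a genuine gap. You pick an \emph{arbitrary} minimal-degree divisor $D_{\rm min}$ with $\xi\in\overline{D_{\rm min}}$ and then try to prove $\sigma(D_{\rm min})=x$ via the disjointness of the partitions $\Sec_{d,y}(E)-\Sec_{d-1}(E)$. But \cref{cor.Sec.d.disj.union} is only stated for $d<\tfrac{n}{2}$, whereas \Cref{cor.D.min} is stated (and used, e.g.\ in the proof of \Cref{th:nsplitev}) for $d$ up to $\tfrac{n}{2}$ and in principle for all $d<n$. Your proposed alternative---invoking ``uniqueness of the line bundle $m(\xi)$ lying on a unique $d$-secant''---fails for exactly the same reason: such uniqueness is false when $n=2r$, $d=r$, and $x\in\Omega$ (cf.\ \cref{lem.fd.xi}).

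The fix is simpler than either route you consider: do not pick $D_{\rm min}$ arbitrarily. Since $\xi\in\Sec_{d,x}(E)$, there is already some $D'\in E^{[d]}_x$ with $\xi\in\overline{D'}$. You have shown that $d$ is the minimal degree of any effective divisor whose span contains $\xi$, so this particular $D'$ is itself a divisor of minimal degree with $\xi\in\overline{D'}$. The last sentence of \Cref{prop.bertram} therefore applies with $D_{\rm min}:=D'$, giving an epimorphism $\cE\to\cO_E(D'')$ for every $D''\in|D'|=E^{[d]}_x$; in particular for $D''=D$. No disjointness is needed, and the argument works in the full range $d<n$. This is exactly what the paper's one-line proof is (implicitly) doing.
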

\begin{proof}
This follows from the fact that
$ \xi \in \Sec_{d,x}(E) \, \Longleftrightarrow \, \xi \in \overline{D} \text{ for some } D \in E^{[d]}_x$.
\end{proof}

\begin{theorem}
\label{thm.bertram}
Let $\xi \in \PP_\cL$ and write $\cE:=m(\xi)$. If 
\begin{align*}
d_1   & := \min \big\{  d  \; | \; \xi \in \Sec_d(E)  \big \},
\\
d_2   & := \min \big \{  d\; | \;  \xi \in \overline{D} \text{ for some } D \in E^{[d]}  \big \},
\\
d_3   & := \min\big \{  d  \; | \;  \text{there is a non-zero morphism } \cE \to \cO_E(D) \text{ for some } D \in E^{[d]} \big \},
\\
d_4   & := \min\big \{  d  \; | \;  \text{there is an epimorphism } \cE \to \cO_E(D) \text{ for some } D \in E^{[d]} \big \},
\end{align*}
then $d_1=d_2=d_3=d_{4}$. Let $d$ denote this common integer.  
\begin{enumerate}
  \item\label{item.bertram.epi} 
 If $D \in E^{[d]}$, then every non-zero map  $\cE \to \cO_E(D)$ is an epimorphism.
  \item\label{item.bertram.sec} 
$\xi \in \Sec_{d,x}(E)$ if and only if  there is an epimorphism $ \cE \to \cO_E(D)$  for some (equivalently, all) $D \in E^{[d]}_x$.
\end{enumerate}
\end{theorem}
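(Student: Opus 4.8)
The plan is to reduce the whole statement to \Cref{prop.bertram} together with its final clause. First I would observe that the three quantities $d_1,d_2$ are equal essentially by definition: $\xi\in\Sec_d(E)$ means $\xi\in\overline{D}$ for some $D\in E^{[d]}$, because $\Sec_d(E)=\bigcup_{D\in E^{[d]}}\overline{D}$, so $d_1=d_2$. The equality $d_3=d_4$ is where \cref{item.bertram.epi} does the work, so I would prove that clause first, and then $d_2=d_3$ and $d_4=d_1$ follow cheaply, as does \cref{item.bertram.sec}.

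For \cref{item.bertram.epi}: let $D\in E^{[d]}$ (with $d=d_1$, the common minimal value) and suppose $\varphi\colon\cE\to\cO_E(D)$ is a non-zero map that is \emph{not} an epimorphism. Since $E$ is a smooth curve, $\im(\varphi)$ is an invertible subsheaf of $\cO_E(D)$, hence of the form $\cO_E(D')$ for an effective divisor $D'$ with $D'\le D$ and, since $\varphi$ is not epic, $\deg D'<\deg D=d$. By \Cref{cor.mapE.to.N}, $\Hom(\cE,\cO_E(D''))=0$ for any divisor $D''$ of degree $\le 0$ (this uses $L(\cE)\ne\varnothing$, which holds because $\cE=m(\xi)$ is the middle term of an actual extension $\xi\in\PP_\cL$), so $\deg D'>0$; thus $D'$ is an effective divisor of degree in $[1,d-1]$ admitting a non-zero map $\cE\to\cO_E(D')$. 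Now I would invoke the equivalence \cref{item.D.hom}$\Leftrightarrow$\cref{item.D.some} of \Cref{prop.bertram} (legitimate since $\deg D'<n$): from $\Hom(\cE,\cO_E(D'))\ne 0$ we get $\xi\in\overline{D_0}$ for some $D_0\in|D'|$, so $\xi\in\Sec_{\deg D'}(E)$ with $\deg D'<d$, contradicting minimality of $d=d_1$. Hence every non-zero $\varphi$ is epic. In particular $d_3=d_4$.

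Next, $d_2=d_3$: if $\xi\in\overline{D}$ for $D\in E^{[d]}$, then \cref{item.D.sec}$\Rightarrow$\cref{item.D.hom} of \Cref{prop.bertram} gives $\Hom(\cE,\cO_E(D))\ne 0$, so $d_3\le d_2$; conversely if $\Hom(\cE,\cO_E(D))\ne 0$ for some $D\in E^{[d']}$ with $d'<n$, then \cref{item.D.hom}$\Rightarrow$\cref{item.D.some} yields $\xi\in\overline{D_0}$ for some $D_0\sim D$, so $d_2\le d_3$ (one must note $d_3<n$, which holds because $d_3=d_4$ and the surjection $H^0(E,\cE)\otimes\cO_E\twoheadrightarrow\cE$ of \cref{pr:z-codim1} composed with a generic $H^0$-evaluation produces an epimorphism $\cE\to\cO_E(D)$ with $\deg D=\rk\cZ$-type bound $<n$; alternatively use that $d_1=d_2\le n-1$ since every $\xi$ lies on a hyperplane section). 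Finally the last clause of \Cref{prop.bertram} says precisely that if $D_{\min}$ is of minimal degree with $\xi\in\overline{D_{\min}}$ then there is an epimorphism $\cE\to\cO_E(D)$ for all $D\in|D_{\min}|$, which gives $d_4\le d_2$; combined with $d_3=d_4$ and $d_2=d_3$ we get $d_1=d_2=d_3=d_4$.

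For \cref{item.bertram.sec}: by definition $\xi\in\Sec_{d,x}(E)$ iff $\xi\in\overline{D}$ for some $D\in E^{[d]}_x$, i.e.\ for some effective $D$ of degree $d$ with $\sigma(D)=x$; since $d$ is the common minimal value, $D$ then has minimal degree, so by the last clause of \Cref{prop.bertram} there is an epimorphism $\cE\to\cO_E(D)$; conversely an epimorphism $\cE\to\cO_E(D)$ with $D\in E^{[d]}_x$ gives $\Hom(\cE,\cO_E(D))\ne 0$, hence by \Cref{cor.D.min}(1) $\xi\in\Sec_{d,x}(E)$. The ``for some (equivalently, all)'' parenthetical follows because whether a given $\cE$ admits an epimorphism onto $\cO_E(D)$ depends only on the linear equivalence class, hence only on $\sigma(D)=x$ once $\deg D=d$ is fixed — made precise by running the argument of \cref{item.D.ideal}$\Rightarrow$\cref{item.D.some} in \Cref{prop.bertram}. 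I expect the main obstacle to be purely bookkeeping: keeping the four minima straight and making sure every application of \Cref{prop.bertram} satisfies its degree hypothesis ($\deg<n$ or $\sim H$); the genuinely substantive input, the non-obstruction fact that minimal-degree secancy forces an \emph{epi}morphism, is already packaged in \Cref{prop.bertram} and \Cref{cor.mapE.to.N}.
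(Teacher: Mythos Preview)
Your argument is correct and uses the same ingredients as the paper (\Cref{prop.bertram} and \Cref{cor.mapE.to.N}); the paper's chain $d_3\le d_4\le d_2\le d_3$ is slightly more streamlined, but the substance is identical. One small ordering hiccup: your ``In particular $d_3=d_4$'' does not follow from \cref{item.bertram.epi} as you proved it (for $d=d_1$) alone---either run the same argument with $d=d_3$ (the image $\cO_E(D')$ with $\deg D'<d_3$ already contradicts minimality of $d_3$, no need to pass through $\xi\in\overline{D_0}$), or defer the claim until after you have shown $d_2=d_3$ and hence $d_3=d_1$.
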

\begin{proof}
The equality $d_1=d_2$ follows from the fact that $\Sec_d(E)= \bigcup \overline{D}$ as $D$ varies over $E^{[d]}$. 

Let $D_2$, $D_3$, and $D_4$ be effective divisors of degrees $d_2$, $d_3$, and $d_4$ such that $\xi \in \overline{D_2}$, $\Hom(\cE,\cO_E(D_3)) \ne 0$,
and there is an epimorphism $\cE \to \cO_E(D_4)$. 
Because $\Hom(\cE,\cO_E(D_4)) \ne 0$, $d_3 \le d_4$. 
By \Cref{prop.D.min}, $d_4 \le d_2$. 
Because \cref{item.D.hom} $\Rightarrow$ \cref{item.D.some} in \cref{prop.D.min},  $\xi \in \overline{D'}$ for some $D'\in |D_3|$; hence $d_2 \le d_3$. 

This completes the proof that $d_1=d_2=d_3=d_{4}$.

\cref{item.bertram.epi} This follows from \Cref{prop.D.min}.

\cref{item.bertram.sec} 
We know $\xi$ is in $\Sec_d(E)$. It is in $\Sec_{d,x}(E)$ if and only if it is in $\overline{D}$ for some $D \in E^{[d]}_x$; however, if $\xi \in \overline{D}$, then $\Hom(\cE, \cO_E(D)) \ne 0$ so, by \cref{item.bertram.epi}, there is an epimorphism $\cE \to \cO_E(D)$ and hence an epimorphism $\cE \to \cO_E(D')$ for all $D' \in |D|$; i.e., for all $D' \in E^{[d]}_x$.
\end{proof}

\begin{remark}\label{re:bertr}
 In contrast to \Cref{prop.bertram}, the discussion at \cite[p.~430, preceding Thm.~1]{Ber92} suggests that $\xi\in \overline{D}$ if and only 
  if there is an epimorphism $\cE \to \cO_E(D)$. 
  This cannot be: the condition $\xi\in \overline{D}$  depends on $D$ as a subscheme of $E$, but the existence of an epimorphism 
  $\cE \to \cO_E(D)$ depends only on the linear equivalence class of $D$. 
\reqed
\end{remark}

The next result provides an alternative proof, and a more complete understanding,
of  the equivalence \cref{item.D.some} $\Leftrightarrow$ \cref{item.D.ideal} in  \Cref{new.version.prop.4.1}; given $\xi \in \PP_\cL$, the set of 
hyperplanes through $\xi$ is a linear system on $\PP_\cL$ -- it is the hyperplane $\xi^\perp$ defined in \cref{sect.SD}; given $D' \in E^{[d]}_x$, 
$|D'|=E_x^{[d]}$ is a linear system on $E$; the next result shows that the subset of $|D'|=E_x^{[d]}$ consisting of those 
$D$ such that $\overline{D}$ passes through $\xi$ is a linear system on $E$ and gives an explicit description of it. First, some notation.

If $s:\cI_{D'}\cL \to \cL$ is a non-zero homomorphism, then its image equals $\cI_D \cL$ for a unique $D \in |D'|$. Conversely, if $D \in |D'|$, there is a 
unique point $\CC s \in \PP \Hom(\cI_{D'}\cL,\cL)$ such that  $\im(s)=\cI_D\cL$. Hence we can, and do, for the purposes of the next result, 
make the identification 
\begin{equation*}
|D'| \;= \; \PP \Hom(\cI_{D'}\cL,\cL)
\end{equation*}
 where $D \in |D'|$ is identified with the point $\CC s$ where $ s: \cI_{D'}\cL \to \cL$ is the unique-up-to-non-zero-scalar-multiple 
 map such that $\im(s)=\cI_D\cL$, i.e., such that  $\im(s)= \im(i_D)$.  

\begin{proposition}
\label{prop.D.bar.bijection}
Let $\xi \in \PP_\cL$ be the  isomorphism class of the sequence \Cref{eq:repr.for.xi}. Let $D' \in E^{[d]}$ with $1 \le d<n$.
The map  $ \Psi(\CC g') :=  \CC \pi g'$ is an isomorphism
\begin{equation*}
\Psi: \, \PP \Hom(\cI_{D'}\cL,\cE)  \; \longrightarrow \; \{D \in |D'| \; | \; \xi \in \overline{D}\}.
\end{equation*} 
\end{proposition}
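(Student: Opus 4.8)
The plan is to show that the map $\Psi$ is well-defined, injective, and surjective, and that it is a morphism with a morphism inverse (so that "isomorphism" is understood in the category of varieties, matching the identification $|D'|=\PP\Hom(\cI_{D'}\cL,\cL)$ made just before the statement). First I would check well-definedness: given $0\neq g'\in\Hom(\cI_{D'}\cL,\cE)$, I claim $\pi g'\neq 0$ and its image is $\cI_D\cL$ for a unique effective $D\in|D'|$ with $\xi\in\overline{D}$. The argument that $\pi g'\neq 0$ is exactly the one appearing in the proof of \Cref{new.version.prop.4.1} in the implication \cref{item.D.ideal}$\Rightarrow$\cref{item.D.some}: since $\deg(\cI_{D'}\cL)=n-\deg D'>0$, we have $\Hom(\cI_{D'}\cL,\cO_E)=0$, so applying $\Hom(\cI_{D'}\cL,-)$ to the sequence \Cref{eq:repr.for.xi} shows that $g'\mapsto\pi g'$ is injective, hence $\pi g'\neq 0$. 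Then $\im(\pi g')$ is a nonzero submodule of $\cL$, so it equals $\cI_D\cL$ for a unique effective divisor $D$; as in that same proof, $\cI_{D'}\cL\cong\cI_D\cL$ forces $D'\sim D$, i.e. $D\in|D'|$. Finally, $\pi g'$ factors as $i_D\circ\alpha$ with $\alpha\colon\cI_{D'}\cL\to\cI_D\cL$ a nonzero, hence invertible, map (images agree), so $i_D=\pi(g'\alpha^{-1})\in\pi\Hom(\cI_D\cL,\cE)$, which by \cref{item.D.im}$\Leftrightarrow$\cref{item.D.sec} of \Cref{new.version.prop.4.1} gives $\xi\in\overline{D}$. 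Since $\pi(\lambda g')=\lambda\,\pi g'$ for scalars $\lambda$, the assignment descends to projective spaces, so $\Psi$ is a well-defined set map into $\{D\in|D'|\mid\xi\in\overline{D}\}$.

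Next I would prove surjectivity. Let $D\in|D'|$ with $\xi\in\overline{D}$. By \cref{item.D.sec}$\Rightarrow$\cref{item.D.im} of \Cref{new.version.prop.4.1}, there is $g\colon\cI_D\cL\to\cE$ with $\pi g=i_D$. Composing with a fixed isomorphism $\cI_{D'}\cL\xrightarrow{\ \sim\ }\cI_D\cL$ (which exists since $D\sim D'$) yields $g'\in\Hom(\cI_{D'}\cL,\cE)$ with $\im(\pi g')=\im(i_D)=\cI_D\cL$, and $g'\neq 0$ because $\pi g'\neq 0$; thus $\Psi(\CC g')=D$ under the identification $|D'|=\PP\Hom(\cI_{D'}\cL,\cL)$. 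For injectivity, suppose $\CC g_1',\CC g_2'\in\PP\Hom(\cI_{D'}\cL,\cE)$ have $\Psi(\CC g_1')=\Psi(\CC g_2')=D$; then $\im(\pi g_1')=\im(\pi g_2')=\cI_D\cL$, and writing $\pi g_i'=i_D\circ\alpha_i$ with $\alpha_i\colon\cI_{D'}\cL\xrightarrow{\sim}\cI_D\cL$, the two maps $g_1'$ and $g_2'\alpha_2^{-1}\alpha_1$ have the same composite with $\pi$, hence differ by a map into $\ker\pi\cong\cO_E$; but $\Hom(\cI_{D'}\cL,\cO_E)=0$, so $g_1'=g_2'\alpha_2^{-1}\alpha_1$. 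Since $\End(\cI_{D'}\cL)=\CC$, $\alpha_2^{-1}\alpha_1$ is a scalar, so $\CC g_1'=\CC g_2'$.

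The remaining point is that $\Psi$ and $\Psi^{-1}$ are morphisms, not merely bijections of sets. In one direction, $\CC g'\mapsto\CC\pi g'$ is induced by the linear map $g'\mapsto\pi g'=\pi_*(g')$ on $\Hom(\cI_{D'}\cL,\cE)\to\Hom(\cI_{D'}\cL,\cL)$, which is injective by the vanishing $\Hom(\cI_{D'}\cL,\cO_E)=0$ established above; an injective linear map induces a morphism (indeed a closed immersion) of projectivizations, and its image lands in the locally closed subset $\{D\in|D'|\mid\xi\in\overline{D}\}$ by well-definedness, so $\Psi$ is a morphism onto its image. The inverse is then automatically a morphism because a bijective morphism that is a closed immersion onto its image is an isomorphism onto that image — more concretely, $\Psi$ is a linear isomorphism of $\PP\Hom(\cI_{D'}\cL,\cE)$ onto the linear subspace it spans inside $|D'|=\PP\Hom(\cI_{D'}\cL,\cL)$, and I will have shown that this linear subspace is exactly $\{D\in|D'|\mid\xi\in\overline{D}\}$. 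I expect the only mildly delicate point is keeping the identification $|D'|=\PP\Hom(\cI_{D'}\cL,\cL)$ straight — in particular verifying that $\Psi(\CC g')$ as I have defined it (the divisor $D$ with $\im(\pi g')=\cI_D\cL$) corresponds under that identification to the point $\CC(\pi g')$; but this is immediate from the way the identification was set up, since $\pi g'$ is a nonzero map $\cI_{D'}\cL\to\cL$ whose image is $\cI_D\cL$.
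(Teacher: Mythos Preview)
Your proof is correct and follows essentially the same approach as the paper: well-definedness via the injectivity of $g'\mapsto\pi g'$ (from $\Hom(\cI_{D'}\cL,\cO_E)=0$) together with the equivalence \cref{item.D.im}$\Leftrightarrow$\cref{item.D.sec} of \Cref{new.version.prop.4.1}, surjectivity via \cref{item.D.sec}$\Rightarrow$\cref{item.D.im} followed by composition with an isomorphism $\cI_{D'}\cL\cong\cI_D\cL$, and injectivity from the injectivity of $\pi_*$. The paper's injectivity step is a touch more direct (it reads $\Psi(\CC g')=\Psi(\CC g'')$ as $\CC\pi g'=\CC\pi g''$ immediately, without passing through the factorizations $i_D\circ\alpha_i$), and your final paragraph on why $\Psi$ is a morphism of varieties is more explicit than the paper, but the substance is the same.
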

\begin{proof}
To see that the definition of $\Psi$ makes sense we must show that $\{D \; | \; \im(\pi g')=\cI_D \cL\}$ is contained in $\{D \in |D'| \; | \; \xi \in \overline{D} \, \}$.
With this goal in mind, suppose $\im(\pi g')=\cI_D \cL$. It follows that $\im(\pi g')=\im(i_D) = \im (i_D\a)$ for all isomorphisms $\a:\cI_{D'}\cL \to \cI_D\cL$. 
Since $\pi g'$ and $i_D \a$ are non-zero maps $\cI_{D'}\cL \to \cL$ having the same image, namely  $\cI_D\cL$, $\CC \pi g' = \CC i_D\a$ 
(because $\dim \Hom(\cI_{D'}\cL, \cI_D\cL)=1$).
It follows that $i_D \in \CC \pi g'\a^{-1}$. Hence $i_D \in \pi \Hom(\cI_D\cL,\cE)$, which implies 
$\xi \in \overline{D}$ by \Cref{new.version.prop.4.1}.

 Thus $\Psi$ is a well-defined morphism. 

If $\Psi(\CC g')=\Psi(\CC g'')$, then $\CC \pi g'= \CC \pi g''$.
As we observed in the proof of \cref{item.D.ideal} $\Rightarrow$ \cref{item.D.some}, the map $g' \mapsto \pi g'$ is injective;  hence $\CC g'=\CC g''$. 
Thus, $\Psi$ is injective.

We now show that $\Psi$ is surjective. Let $D \in |D'|$ and suppose $\xi \in \overline{D}$. 
Then the top row in \cref{eq:prop.bertram.SD.2} splits; i.e., there is a map $\phi:\cI_D \cL \to \pi^{-1}(\cI_D\cL)$ such that $\pi' \phi=\id_{\cI_D\cL}$. 
 Let $h:\pi^{-1}(\cI_D\cL) \to \cE$ be the inclusion map and define $g=h\phi:\cI_D\cL \to \cE$.  
 Then $\pi g = \pi h \phi = i_D \pi'\phi = i_D$.  Let $\a:\cI_{D'}\cL \to \cI_D\cL$ be any isomorphism. Then $g\a :\cI_{D'}\cL \to \cE$
 and $\Psi(\CC g \a)= \CC \pi g \a = \CC i_D\a$; since $\im(i_D \a)=\cI_D\cL$, $D=\Psi(\CC g\a)$. Hence $\Psi$ is surjective and therefore an isomorphism.
\end{proof}

\begin{remark}
\label{rem.bertram}
Using the ``usual'' identification $|D'|=\PP H^0(E,\cO_E(D'))$, there is an isomorphism  
\begin{equation*}
\Psi:\PP \Hom(\cI_{D'}\cL,\cE)  \; \longrightarrow \;  \{D \in |D'| \; | \; \xi \in \overline{D}\} \, \subseteq \,\PP H^0(E,\cO_E(D')),
\end{equation*} 
$ \Psi(\CC g') =\big( (\cI_{D'}\cL)^{-1} \otimes  \pi g' \big)_0 =$ the zero locus of the section in $H^0(E,\cO_E(D'))$ that ``is'' the map
\begin{equation*}
(\cI_{D'}\cL)^{-1} \otimes \pi g' : \cO_E  \cong (\cI_{D'}\cL)^{-1} \otimes (\cI_{D'}\cL)   \, \longrightarrow \, (\cI_{D'}\cL)^{-1} \otimes \cL \cong \cO_E(D').
\end{equation*}
\end{remark}

\subsection{Description of the leaves $L(\cE)$ in terms of the spaces $\Sec_{d,x}(E) \, - \, \Sec_{d-1}(E)$}
\label{sec.leaves.and.secants}

\begin{theorem}\label{th:splitall}
Let $x \in E$.
If either $d <\frac{n}{2}$  or $d=\frac{n}{2}$ and $x \notin \Omega$, then
\begin{equation}\label{eq:splitall}
    L(\cE_{d,x}) \;= \; \Sec_{d,x}(E) \, - \, \Sec_{d-1}(E).
  \end{equation} 
  In particular, $L(\cE_{d,x})$  is locally closed in $\PP_\cL$ and $\dim L(\cE_{d,x})=2d-2$.
\end{theorem}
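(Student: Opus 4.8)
The plan is to prove the set equality $L(\cE_{d,x}) = \Sec_{d,x}(E) - \Sec_{d-1}(E)$ by double inclusion, using \Cref{cor.D.min} and \Cref{thm.bertram} to translate the secant condition into the existence of an epimorphism $m(\xi) \to \cO_E(D)$, and then analyzing which bundles $\cE \in \Bun(2,\cL)$ admit such an epimorphism with kernel of the correct degree. Throughout, fix $D \in E^{[d]}_x$, so that $\cE_{d,x} \cong \cO_E(D) \oplus \cL(-D)$.

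First I would show ``$\subseteq$''. Let $\xi \in L(\cE_{d,x})$, so $m(\xi) \cong \cE_{d,x} = \cO_E(D) \oplus \cL(-D)$. The projection $\cE_{d,x} \to \cO_E(D)$ is an epimorphism, so by \Cref{thm.bertram}\cref{item.bertram.sec} (applied with the divisor $D \in E^{[d]}_x$, noting $d < n$ since $d \le \tfrac{n}{2}$), we get $\xi \in \Sec_{d,x}(E)$. It remains to check $\xi \notin \Sec_{d-1}(E)$. If $\xi$ were in $\Sec_{d-1}(E)$, then by \Cref{thm.bertram} there would be an effective divisor $D'$ of degree $\le d-1$ together with an epimorphism $\cE_{d,x} \to \cO_E(D')$; comparing determinants, the kernel is $\cL(-D')$ of degree $n - \deg D' \ge n - d + 1 > d - 1 \ge \deg D'$, so the kernel has strictly larger degree than $\cO_E(D')$ — but a rank-two bundle $\cE_{d,x}$ with a positive-degree summand of degree $d < n-d+1$ forces, by inspecting $\Hom(\cE_{d,x},\cO_E(D')) = \Hom(\cO_E(D),\cO_E(D')) \oplus \Hom(\cL(-D),\cO_E(D'))$, that $\deg D' \ge \min\{d, n-d\}=d$ for any nonzero map, a contradiction with $\deg D' \le d-1$. (The hypothesis $d < \tfrac{n}{2}$, or $d = \tfrac n 2$ with $x \notin \Omega$ handled via \Cref{cor.Sec'.dz.disj.union} / \Cref{prop.dim.Sec.dz}, is exactly what makes the two summand degrees behave as needed.)

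Next, for ``$\supseteq$'', take $\xi \in \Sec_{d,x}(E) - \Sec_{d-1}(E)$ and write $\cE := m(\xi)$. By \Cref{cor.D.min}, $d$ is minimal with $\xi \in \Sec_d(E)$ and there is an epimorphism $\cE \to \cO_E(D)$; its kernel has degree $n - d$, and since $\cE$ is locally free of rank two the kernel is $\cL(-D)$, giving an extension $0 \to \cL(-D) \to \cE \to \cO_E(D) \to 0$. Since $d < \tfrac n 2$ (or $d = \tfrac n2$, $x \notin \Omega$), we have $\deg \cL(-D) = n - d > d = \deg \cO_E(D) \ge 1$, so $\Ext^1(\cO_E(D), \cL(-D)) \cong H^1(E, \cL(-2D))^*$ vanishes when $\deg\cL(-2D) > 0$, i.e. when $d < \tfrac n2$; when $d = \tfrac n2$ and $x \notin \Omega$, $\deg \cL(-2D) = 0$ but $\cL(-2D) \not\cong \cO_E$ (since $2\s(D) = 2x \ne \s(H) = \s(\cL)$), so again $H^1 = 0$. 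Either way the extension splits: $\cE \cong \cO_E(D) \oplus \cL(-D) = \cE_{d,x}$, so $\xi \in L(\cE_{d,x})$.

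Finally, the ``in particular'' clause is essentially free: $\Sec_{d,x}(E)$ is closed (it is the image of the projective morphism $\nu$ from the incidence variety $\EE$ in \cref{eq:incid.var.EE}, or see \Cref{prop.dim.Sec.dz}) and $\Sec_{d-1}(E)$ is closed, so $\Sec_{d,x}(E) - \Sec_{d-1}(E)$ is open in $\Sec_{d,x}(E)$, hence locally closed in $\PP_\cL$. Its dimension is $\dim \Sec_{d,x}(E) = 2d-2$ by \Cref{prop.dim.Sec.dz}, provided the difference is non-empty — which it is, since by \Cref{prop.leaves.for.decomp.Es} and \Cref{lem.tfree.coker} we have $L(\cE_{d,x}) \ne \varnothing$, and we have just identified this set with $\Sec_{d,x}(E) - \Sec_{d-1}(E)$; moreover $\Sec_{d-1}(E)$ has dimension at most $2d-3 < 2d-2$, so it cannot contain the irreducible $\Sec_{d,x}(E)$. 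I expect the main obstacle to be the careful bookkeeping of summand degrees in the ``$\subseteq$'' direction — ensuring that the non-existence of low-degree maps $\cE_{d,x} \to \cO_E(D')$ is argued cleanly — together with the split/non-split dichotomy in the $d = \tfrac n2$, $x \notin \Omega$ boundary case, where one must invoke $2x \ne \s(\cL)$ at exactly the right moment.
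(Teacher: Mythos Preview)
Your proof is correct, and the ``$\supseteq$'' direction is genuinely different from (and cleaner than) the paper's. The paper argues by case analysis on the structure of $\cE = m(\xi)$: if $\cE$ is decomposable it must be some $\cE_{e,y}$, and comparing degrees of the summands against $\cO_E(D)$ forces $(e,y) = (d,x)$; if $\cE$ is indecomposable one invokes semistability (odd $n$) or the structure of $\cE_\omega$ (even $n$) to reach a contradiction. Your approach bypasses this entirely: from the epimorphism $\cE \to \cO_E(D)$ you get an extension $0 \to \cL(-D) \to \cE \to \cO_E(D) \to 0$, and a one-line $\Ext^1$ computation (using $\deg \cL(-2D) > 0$ when $d < \tfrac{n}{2}$, or $\cL(-2D) \not\cong \cO_E$ when $d = \tfrac{n}{2}$ and $x \notin \Omega$) forces the extension to split. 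This avoids the case analysis and any appeal to semistability or Atiyah's classification.

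Two minor corrections. First, in the ``$\subseteq$'' direction your citation of \Cref{thm.bertram}\cref{item.bertram.sec} is slightly off: that statement is formulated only for the \emph{minimal} $d$, which you have not yet established at that point. Either cite \Cref{cor.D.min}(1) (which has no minimality hypothesis), or reorder so that you first show $\Hom(\cE_{d,x}, \cO_E(D')) = 0$ for $\deg D' < d$ and then invoke \Cref{thm.bertram}. Second, the hypothesis $x \notin \Omega$ when $d = \tfrac{n}{2}$ plays no role in ``$\subseteq$''; it is only used in your $\Ext^1$ computation for ``$\supseteq$'', so the parenthetical at the end of your first paragraph (and its references to \Cref{cor.Sec'.dz.disj.union} and \Cref{prop.dim.Sec.dz}) is misplaced. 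Also, $\Ext^1(\cO_E(D), \cL(-D)) \cong H^1(E, \cL(-2D))$, without the dual.
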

\begin{proof}
By definition, $\cE_{d,x} \cong \cO_E(D) \oplus \cL(-D)$ for every $D \in E^{[d]}_x$. 
Since $\Hom(\cE_{d,x}, \cO_E(D)) \ne 0$ but  $\Hom(\cE_{d,x},\cO_E(D'))=0$ for all divisors $D'$ of degree $<d$, ``$\subseteq$'' in \Cref{eq:splitall} follows from \Cref{prop.bertram}.

To show ``$\supseteq$'', let $\xi=[0 \to \cO_E \to \cE \to \cL \to 0]$ be a point in the right-hand side of \Cref{eq:splitall}. We  must show that $\cE\cong\cE_{d,x}$.
Since $\xi \in \Sec_d(E)-\Sec_{d-1}(E)$,
$d$ is the smallest integer such that $\xi$ is contained in the linear span of an effective divisor of degree $d$. Hence, 
by \Cref{prop.bertram}, there is an epimorphism $\cE\to\cO_{E}(D)$ for some $D\in E^{[d]}_{x}$.

Suppose $\cE\cong\cE_{e,y}=\cO_{E}(D')\oplus\cL(-D')$ for some $e\in[1,\tfrac{n}{2}]$ and $y\in E$, where $D'\in E^{[e]}_{y}$. If $d<\tfrac{n}{2}$, then 
$\deg \cO_E(D)=d<\frac{n}{2} \le n-e = \deg \cL(-D')$ so $\Hom(\cL(-D'),\cO_{E}(D))=0$, and there is an epimorphism $\cO_{E}(D')\to\cO_{E}(D)$ if and only if $D'\sim D$, so $\cE\cong\cE_{d,x}$. If $d=\tfrac{n}{2}$, then both summands $\cO_{E}(D')$ and $\cL(-D')$ have degree $\tfrac{n}{2}$, so the existence of an epimorphism (or a non-zero morphism) $\cE\to\cO_{E}(D)$ implies that either $\cO_{E}(D')$ or 
$\cL(-D')$  is isomorphic to $\cO_{E}(D)$. Hence $\cE\cong\cE_{d,x}$.

Suppose $n$ is odd. The case $\cE\cong\cE_{o}$ does not occur: $\cE_{o}$ is  indecomposable (hence semistable)  of slope $\tfrac{n}{2}$, so there is no epimorphism  $\cE_o \to \cO_{E}(D)$ if $\deg D<\frac{n}{2}$ (because $n$ is odd).

Suppose $n$ is even.  The case $\cE\cong\cE_{\omega}$ ($\omega\in\Omega$) does not occur: $\cE_{\omega}$ is a non-split self-extension of 
$\cL_{\omega}$; if there is an epimorphism $\cE_\omega \to\cO_{E}(D)$, then there is a non-zero morphism $\cL_{\omega}\to\cO_{E}(D)$, 
which must be an isomorphism because $\deg\cL_{\omega}=\tfrac{n}{2}\geq d=\deg\cO_{E}(D)$. But this implies $x=\omega$, which contradicts the assumption of the theorem that $x\notin\Omega$.

We have covered all possibilities for $\cE$, so ``$\supseteq$'' in \Cref{eq:splitall} follows.

The  local closure of $L(\cE_{d,x})$  follows from \Cref{eq:splitall}, and its dimension is given by \Cref{prop.dim.Sec.dz}.
\end{proof}

\begin{theorem}\label{th:nsplitodd}
If $n=2r+1$, then $ L(\cE_o) =\PP_\cL- \Sec_{r}(E)$.
\end{theorem}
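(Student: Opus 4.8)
The plan is to obtain the identity purely by bookkeeping with the partition of $\PP_\cL$ into homological leaves, using the two structural results already in hand: by \cref{cor.good.E's} the only $\cE\in\Bun(2,\cL)$ with $L(\cE)\neq\varnothing$ are $\cE_o$ and the bundles $\cE_{d,x}$ with $1\le d\le r$ and $x\in E$; and by \cref{th:splitall} --- applicable since $d\le r<\tfrac n2$ when $n=2r+1$ --- we have $L(\cE_{d,x})=\Sec_{d,x}(E)-\Sec_{d-1}(E)$ for all such $d$ and $x$.

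First I would write out, using that $\PP_\cL$ is the disjoint union of its homological leaves,
\[
\PP_\cL \;=\; L(\cE_o)\ \sqcup\ \bigsqcup_{d=1}^{r}\ \bigsqcup_{x\in E} L(\cE_{d,x})
\;=\; L(\cE_o)\ \sqcup\ \bigsqcup_{d=1}^{r}\ \bigsqcup_{x\in E}\bigl(\Sec_{d,x}(E)-\Sec_{d-1}(E)\bigr).
\]
By \cref{cor.Sec.d.disj.union} (again valid because $d\le r<\tfrac n2$), the inner union $\bigsqcup_{x\in E}\bigl(\Sec_{d,x}(E)-\Sec_{d-1}(E)\bigr)$ is exactly $\Sec_d(E)-\Sec_{d-1}(E)$. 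Since $\Sec_0(E)=\varnothing$ and $\Sec_{d-1}(E)\subseteq\Sec_d(E)$ for every $d$ (see \cref{eq:Y.sec.inclusions}), the remaining union telescopes:
\[
\bigsqcup_{d=1}^{r}\bigl(\Sec_d(E)-\Sec_{d-1}(E)\bigr)\;=\;\Sec_r(E).
\]
Hence $\PP_\cL=L(\cE_o)\sqcup\Sec_r(E)$, and therefore $L(\cE_o)=\PP_\cL-\Sec_r(E)$, as claimed.

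There is no genuine obstacle in this argument; its content is entirely carried by \cref{cor.good.E's} and \cref{th:splitall}. If one prefers a proof that does not quote \cref{th:splitall}, one can argue directly through \cref{thm.bertram} instead: $\cE_o$ is indecomposable, hence semistable, of slope $\tfrac n2=r+\tfrac12$, so every line-bundle quotient $\cO_E(D)$ of $\cE_o$ has $\deg D\ge r+1$; by \cref{thm.bertram} this means no $\xi\in L(\cE_o)$ lies in $\Sec_r(E)$, i.e.\ $L(\cE_o)\subseteq\PP_\cL-\Sec_r(E)$, while the reverse inclusion again follows from \cref{cor.good.E's} (any $\xi\notin\Sec_r(E)$ cannot have $m(\xi)\cong\cE_{d,x}$ with $d\le r$, so $m(\xi)\cong\cE_o$). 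I would present the telescoping argument as the main proof.
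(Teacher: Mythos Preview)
Your proof is correct and is essentially the paper's own argument written out in full: the paper simply asserts that, by \cref{th:splitall}, $\Sec_r(E)$ is the union of all the $L(\cE_{d,x})$'s and then invokes the partition of $\PP_\cL$ into homological leaves. Your telescoping via \cref{cor.Sec.d.disj.union} makes explicit the step the paper leaves to the reader, and your alternative route through semistability and \cref{thm.bertram} is also sound.
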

\begin{proof} 
By \cref{th:splitall}, $\Sec_r(E)$ is the union of all the $L(\cE_{d,x})$'s ($1 \le d \le r$, $x \in E$). 
The result now follows from the fact that $\PP_\cL$ is the disjoint union of all the $L(\cE)$'s.
\end{proof}

The next result disposes of the still-missing even-$n$ cases.

\begin{theorem}\label{th:nsplitev}
Assume $n=2r$. Fix $\omega \in \Omega$.  
 As in \cref{ssect.bun.notn}, let $\cL_\omega$ be the unique-up-to-isomorphism invertible $\cO_E$-module of degree $r$
  such that $\s(\cL_\omega)=\omega$, and  let $\cE_\omega$ be the unique-up-to-isomorphism non-split extension of $\cL_\omega$ by $\cL_\omega$.
  \begin{enumerate}
  \item\label{item:disju}  
     $ L(\cE_\omega)\sqcup L(\cL_\omega \oplus \cL_\omega) \;=\; \Sec_{r,\omega}(E) \, - \,  \Sec_{r-1}(E) $.  
   \item\label{item:e0} $L(\cE_\omega)$ is open dense in $\Sec_{r,\omega}(E)$, has dimension $n-2$, and consists of those points in $\Sec_{r,\omega}(E) - \Sec_{r-1}(E)$ that lie on a {\it unique} $r$-secant.
  \item\label{item:e1} $L(\cL_\omega \oplus \cL_\omega)$ has dimension $n-4$ and consists of those points in $\Sec_{r,\omega}(E) - \Sec_{r-1}(E)$ that lie on infinitely many $r$-secants.
  \item\label{item:pencil}
 A point in $\Sec_{r,\omega}(E) - \Sec_{r-1}(E)$ lies on infinitely many $r$-secants if and only if it lies on 
 at least two distinct $r$-secants.
  \end{enumerate}
\end{theorem}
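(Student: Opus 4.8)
The plan is to analyze the extension $\xi\in\Sec_{r,\omega}(E)-\Sec_{r-1}(E)$ via the criterion of \Cref{thm.bertram}: for such a $\xi$, the integer $r$ is minimal with $\xi\in\Sec_r(E)$, so there is an epimorphism $\cE=m(\xi)\to\cO_E(D)$ for every $D\in E^{[r]}_\omega$, and the kernel is a line bundle of degree $r$; since $\det\cE\cong\cL$, the kernel is $\cL(-D)\cong\cL_\omega$ (because $D\in E^{[r]}_\omega$ and $\cL_\omega^{\otimes 2}\cong\cL$). Thus $\cE$ sits in a non-split extension $0\to\cL_\omega\to\cE\to\cL_\omega\to 0$, and by Atiyah's classification (as recalled in \cref{ssect.bun.notn}) the only rank-two bundles with such a sub- and quotient-line-bundle are $\cE_\omega$ and $\cL_\omega\oplus\cL_\omega$. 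This gives $\Sec_{r,\omega}(E)-\Sec_{r-1}(E)\subseteq L(\cE_\omega)\sqcup L(\cL_\omega\oplus\cL_\omega)$; the reverse inclusion $L(\cE_\omega),L(\cL_\omega\oplus\cL_\omega)\subseteq\Sec_{r,\omega}(E)$ follows from \Cref{prop.bertram} since $\Hom(\cE_\omega,\cL_\omega)\neq 0$ and $\Hom(\cL_\omega\oplus\cL_\omega,\cL_\omega)\neq 0$ but $\Hom$ of either into $\cO_E(D')$ vanishes for $\deg D'<r$ (no positive-degree sub-line-bundle has degree $<r$); together with \Cref{cor.Sec.d.disj.union} this also shows these leaves miss $\Sec_{r-1}(E)$, proving \cref{item:disju}.

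For the dichotomy in \cref{item:pencil} and the characterizations in \cref{item:e0}, \cref{item:e1}: by \Cref{prop.D.bar.bijection}, the set of $r$-secants $\overline{D}$, $D\in E^{[r]}_\omega=|D_0|$, passing through $\xi$ is isomorphic to $\PP\Hom(\cI_{D_0}\cL,\cE)\cong\PP\Hom(\cL_\omega,\cE)$ (using $\cI_{D_0}\cL\cong\cL(-D_0)\cong\cL_\omega$). Now I would compute $\Hom(\cL_\omega,\cE)$ in the two cases. If $\cE\cong\cL_\omega\oplus\cL_\omega$, then $\Hom(\cL_\omega,\cE)\cong\CC^2$, so the secants through $\xi$ form a $\PP^1$ — infinitely many. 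If $\cE\cong\cE_\omega$, then applying $\Hom(\cL_\omega,-)$ to $0\to\cL_\omega\to\cE_\omega\to\cL_\omega\to0$ gives $0\to\Hom(\cL_\omega,\cL_\omega)\to\Hom(\cL_\omega,\cE_\omega)\to\Hom(\cL_\omega,\cL_\omega)\xrightarrow{\partial}\Ext^1(\cL_\omega,\cL_\omega)$, and $\partial$ is nonzero precisely because the self-extension $\cE_\omega$ is non-split (it sends $\id_{\cL_\omega}$ to the class of $\cE_\omega$); hence $\Hom(\cL_\omega,\cE_\omega)\cong\CC$, so $\xi$ lies on a \emph{unique} $r$-secant. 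This simultaneously gives \cref{item:pencil} (either exactly one, or a $\PP^1$'s worth — never finitely many $\ge 2$), identifies $L(\cL_\omega\oplus\cL_\omega)$ as the ``infinitely many'' locus and $L(\cE_\omega)$ as the ``unique'' locus in \cref{item:e0} and \cref{item:e1}.

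It remains to pin down dimensions and the topology. By \Cref{le:rightdim} and \Cref{le:dim.Aut.cE}, $\dim L(\cE_\omega)=n-\dim\Aut(\cE_\omega)=n-2$ (case \cref{item.le.dim.Aut.cE.indec.even}, $\cE_\omega$ indecomposable, $n$ even), and $\dim L(\cL_\omega\oplus\cL_\omega)=n-\dim\Aut(\cL_\omega\oplus\cL_\omega)=n-4$ (case \cref{item:gl2case}, $\Aut\cong\mathrm{GL}(2)$). Since $\Sec_{r,\omega}(E)$ is irreducible of dimension $2r-2=n-2$ by \Cref{prop.dim.Sec.dz}, and $L(\cE_\omega)$ is a constructible (by \Cref{le:rightdim}), irreducible subset of dimension $n-2=\dim\Sec_{r,\omega}(E)$ contained in it, $L(\cE_\omega)$ is dense in $\Sec_{r,\omega}(E)$; openness then follows because its complement in $\Sec_{r,\omega}(E)$ is $L(\cL_\omega\oplus\cL_\omega)\cup(\Sec_{r-1}(E)\cap\Sec_{r,\omega}(E))$, a constructible set of strictly smaller dimension, and in fact — once one knows (from the later \Cref{th:smth} or directly) that $L(\cE_\omega)$ is locally closed — it is the open dense stratum. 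The \textbf{main obstacle} I anticipate is the careful bookkeeping in the $\Hom(\cL_\omega,\cE_\omega)$ computation and the verification that $\partial$ is nonzero: one must be sure that the connecting map really is ``pull back the extension class along $\id$,'' i.e., that the class of $\cE_\omega$ in $\Ext^1(\cL_\omega,\cL_\omega)$ is precisely the image of $\id_{\cL_\omega}$ under $\partial$ — this is the standard identification of connecting maps with Yoneda composition, but it is the one place where a sign or naturality slip would invalidate the ``unique $r$-secant'' conclusion. A secondary subtlety is ensuring that the isomorphism $\cI_{D_0}\cL\cong\cL_\omega$ used to invoke \Cref{prop.D.bar.bijection} is canonical enough that the count of secants is genuinely the dimension of $\Hom(\cL_\omega,\cE)$ and not off by the choice of $D_0$; but since all $D\in E^{[r]}_\omega$ are linearly equivalent this is harmless.
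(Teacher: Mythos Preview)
Your approach matches the paper's almost step for step: the use of \Cref{thm.bertram} to identify $\cE$ as an extension of $\cL_\omega$ by $\cL_\omega$, the appeal to \Cref{prop.D.bar.bijection} to translate the $r$-secant count into $\dim\Hom(\cL_\omega,\cE)$, and the dimension computations via \Cref{le:rightdim} and \Cref{le:dim.Aut.cE} are all exactly what the paper does. Your computation of $\Hom(\cL_\omega,\cE_\omega)$ via the connecting map is a clean variant of the paper's more hands-on argument (the paper supposes a second independent map $\gamma:\cL_\omega\to\cE_\omega$ exists and derives a splitting of the defining sequence directly); your worry about $\partial$ is unfounded, since $\partial(\id_{\cL_\omega})$ is by construction the Yoneda class of the extension $\cE_\omega$, which is nonzero.

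The genuine gap is in the openness claim for $L(\cE_\omega)$. Knowing that $L(\cE_\omega)$ is constructible, irreducible, and of full dimension $n-2$ in the irreducible variety $\Sec_{r,\omega}(E)$ gives density but \emph{not} openness: a constructible dense set with lower-dimensional complement need not be open (take $(\AA^2-\{x=0\})\cup\{(0,0)\}$ in $\AA^2$). You propose to close this by invoking local closedness from \Cref{th:smth}, but the proof of that result \emph{uses} the openness of $L(\cE_\omega)$ established here, so the appeal is circular. What is actually needed is a direct argument that $L(\cL_\omega\oplus\cL_\omega)$ is \emph{closed} in $\Sec_{r,\omega}(E)-\Sec_{r-1}(E)$. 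The paper supplies one: applying $\Hom(-,\cL_\omega)$ to an arbitrary extension $\xi=[0\to\cO_E\to\cE\to\cL\to 0]$ yields a linear map $H^0(E,\cL_\omega)\to\Ext^1(\cL,\cL_\omega)\cong H^0(E,\cL_\omega)^*$ with cokernel $\Ext^1(\cE,\cL_\omega)$; its rank is $r-1$ when $\cE\cong\cE_\omega$ and $r-2$ when $\cE\cong\cL_\omega\oplus\cL_\omega$, so the latter locus is cut out by the vanishing of the $(r-1)\times(r-1)$ minors of a matrix of linear forms and is therefore Zariski-closed. This semicontinuity step, not the $\partial$-computation, is the piece you are missing.
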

\begin{proof}
Let $\xi \in \PP_\cL$ and  suppose $0 \to \cO_E \to \cE \to \cL \to 0$ represents $\xi$.

\cref{item:disju} Suppose $\xi \in L(\cE_\omega) \sqcup L(\cL_\omega \oplus \cL_\omega)$; equivalently, $\cE$ is isomorphic to either 
$\cE_\omega$ or $\cL_\omega \oplus \cL_\omega$. In both cases, there is an exact sequence $0 \to \cL_\omega \to \cE \to \cL_\omega \to 0$ so, since 
$\deg \cL_\omega=r$, $\Hom(\cE_\omega,\cO_E(D))=0$ for all $D$ of degree $<r$. Hence the integer $d$ in \Cref{thm.bertram} is $r$.
Hence $\xi \in \Sec_r(E)-\Sec_{r-1}(E)$. However, since $\s(\cL_\omega)=\omega$, i.e., since $\cL_\omega \cong \cO_E(D)$ for some, hence all, 
$D$ in $E^{[r]}_\omega$, $\xi \in \Sec_{r,\omega}(E)-\Sec_{r-1}(E)$. Thus
\begin{equation*}
 L(\cE_\omega)\sqcup L(\cL_\omega \oplus \cL_\omega) \; \subseteq \; \Sec_{r,\omega}(E) \, - \,  \Sec_{r-1}(E).
\end{equation*}

To prove the reverse inclusion, let $\xi \in \Sec_{r,\omega}(E)-\Sec_{r-1}(E)$.
By \Cref{cor.D.min}, or  \Cref{thm.bertram}, $r$ is the smallest integer $d$ such that $\xi \in \overline D$ 
 for some $D \in E^{[d]}$. Since  $\xi \in \Sec_{r,\omega}(E)$, that $D$ belongs to $E^{[r]}_\omega$. 
 It follows that 
 $\Hom(\cE,\cO_E(D)) \ne 0$ for all $D \in  E^{[r]}_\omega$ and all non-zero maps $\cE \to \cO_E(D)$ are epimorphisms.
 In particular, there is an epimorphism $\cE \to \cL_\omega$. The kernel of such an epimorphism is isomorphic to $\cL \otimes \cL_\omega^{-1} \cong \cL_\omega$,
 so $\cE$ is isomorphic to either $\cE_\omega$ or $\cL_\omega \oplus \cL_\omega$.
 Hence 
 \begin{equation*}
 \Sec_{r,\omega}(E) \, - \,  \Sec_{r-1}(E)   \; \subseteq \;  L(\cE_\omega)\sqcup L(\cL_\omega \oplus \cL_\omega).
\end{equation*} 
Hence the equality in \cref{item:disju}.
 
\cref{item:e0}, \cref{item:e1}, \cref{item:pencil}.
We will now prove  that points in $L(\cE_\omega)$ lie on a
unique $r$-secant and that points in $ L(\cL_\omega \oplus \cL_\omega)$ lie on infinitely many $r$-secants.
This will prove \cref{item:pencil} and  the parts of \cref{item:e0} and \cref{item:e1} that do not concern dimension or ``open denseness''.

Suppose $\xi \in L(\cE_\omega) \sqcup  L(\cL_\omega \oplus \cL_\omega)$. 

Fix $D' \in E^{[r]}_\omega$. Then $|D'|=E^{[r]}_\omega$ and 
$\cL_\omega \cong \cI_{D'}\cL$. Hence, by \Cref{prop.D.bar.bijection}, there is a bijection
\begin{equation}
\label{eq:bij.lin.system.assoc.to.xi}
\{D \in E^{[r]}_\omega \; | \; \xi \in \overline{D} \} \; \longleftrightarrow \; \PP \Hom(\cL_\omega,\cE).
\end{equation}

Since $\dim_\CC \Hom(\cL_\omega,\cL_\omega \oplus \cL_\omega) = 2$, points in $L(\cL_\omega \oplus \cL_\omega)$ lie on infinitely many $r$-secants. 

To show that a point $\xi \in L(\cE_\omega)$ lies on a unique $r$-secant we will show that $\dim_\CC \Hom(\cL_\omega,\cE_\omega)=1$ (and apply  \Cref{prop.D.bar.bijection}).

By the definition of $\cE_\omega$, there is a non-split extension
$
0 \to \cL_\omega \stackrel{\a} {\longrightarrow}  \cE_\omega  \stackrel{\b}{\longrightarrow}   \cL_\omega  \to 0 
$.
Suppose to the contrary that $\dim_\CC \Hom(\cL_\omega,\cE_\omega)\ge 2$.
Then there is a non-zero map $\gamma:\cL_\omega \to \cE_\omega$ such that $\a(\cL_\omega)\neq \g(\cL_\omega) \cong \cL_\omega$. 
Hence $\b\g \ne 0$. Since $\End(\cL_\omega)=\CC$, $\b\g$ is a non-zero scalar multiple of the identity map $\id_{\cL_\omega}$. 
Hence there is a scalar multiple $\g'$ of $\g$ such that $\b\g'=\id_{\cL_\omega}$. This contradicts the fact that $\cE_\omega$ is indecomposable so
we conclude that no such $\g$ exists. Hence $\PP\Hom(\cL_\omega,\cE_\omega) \cong \PP^0$.  

By \cref{le:rightdim,le:dim.Aut.cE}, $\dim L(\cL_\omega \oplus \cL_\omega)=n-4$.
This completes the proof of \cref{item:e1}.

By \cref{prop.dim.Sec.dz.2}, the dimension of $ \Sec_{r,\omega}(E)$ is $n-2$ so $\dim  L(\cE_\omega)=n-2$.

To complete the proof of \cref{item:e0}, and hence the proof of this theorem, we will now prove the openness claim in \Cref{item:e0}; the density claim in
\Cref{item:e0} follows from openness and irreducibility (\cref{prop.S.dx.irred}). 

Applying the functor $\Hom(-,\cL_\omega)$  to {\it any} extension $\xi=[0 \to \cO_E \to \cE \to \cL \to 0]$ in $\PP_\cL$
  gives rise, via the long exact cohomology sequence attached to it, to a linear map
  \begin{equation}\label{eq:odod}
    H^0(E,\cL_\omega)\longrightarrow \Ext^1(\cL,\cL_\omega) \; \cong \; \Ext^1(\cL_\omega,\cO_E)\; \cong\;  H^0(E, \cL_\omega)^*
  \end{equation}
  with cokernel $\Ext^1(\cE,\cL_\omega)$ (the first isomorphism exists  because  $\cL \cong \cL_\omega \otimes \cL_\omega$).
  Since $\dim \Ext^1(\cL,\cL_\omega) =r$, the rank  of this map is  $r- \dim \Ext^1(\cE,\cL_\omega)$.
  
  In particular, if $\xi \in L(\cE_\omega)$ the rank of the map is $r-1$ because  $\dim \Ext^1(\cE_\omega,\cL_\omega)=1$ and
  if $\xi \in L(\cL_\omega \oplus \cL_\omega)$  the rank of the map is $r-2$ because  $\dim \Ext^1(\cL_\omega \oplus \cL_\omega, \cL_\omega )=2$. 

  Since the condition $\rank \le r-2$ is given by the vanishing of various minors, $L(\cL_\omega \oplus \cL_\omega)$ 
   is a Zariski-closed subset of $L(\cE_\omega)\sqcup L(\cL_\omega \oplus \cL_\omega)$.     Hence  $L(\cE_\omega)$ 
     is a Zariski-open subset of $L(\cE_\omega)\sqcup L(\cL_\omega \oplus \cL_\omega)$.  
\end{proof}

\section{$L(\cE)$ is a smooth variety and is the geometric quotient $X(\cE)/ \!\Aut(\cE)$}
\label{subse:smth} 

\subsection{Properties of $L(\cE)$ and the morphism $X(\cE) \to L(\cE)$}    
\label{subse:xaut}

We need some terminology (e.g., \cite[\S14.1]{tDieck08}, \cite[\S 1]{df_loctriv}).

\begin{definition}\label{def:loctrivact}
  Let $\a:G\times X \to X$ be an action of a linear algebraic group on a variety. 
  \begin{enumerate}

  \item 
  A {\sf trivialization} of $\alpha$ is a $G$-equivariant isomorphism $m: G\times Y \to X$, with $G$ acting on $G\times Y$ by $g\cdot(h,y)=(gh,y)$. 
 We then call $m(1,Y)$ a {\it slice} for the action (cf. \cite[p.~82]{luna-slice}). 
    
  \item 
  We say the action is {\sf trivial} if it admits a trivialization. 

  \item\label{item.def.loctrivact.loctriv} 
  The action is {\sf Zariski-locally trivial} if $X$ admits a cover by $G$-invariant open subschemes on which the restricted $G$-actions are trivial.
  \end{enumerate}
\end{definition}

\begin{remark}\label{re:loctrivinherit}
  If $\a:G\times X\to  X$ is locally trivial, so is its restriction $\alpha'$ to any $G$-invariant open subscheme $X'\subseteq X$: 
  if $Y\subseteq X$ is a slice for $\alpha$ then $Y':=Y\cap X'$ is a slice for $\alpha'$.
 We will use this remark repeatedly but (mostly) tacitly  from now on.
\end{remark}

\begin{definition}\label{def:geomquot}
  A  {\sf geometric quotient} for an action $\a:G \times X \to X$ of a linear algebraic group $G$ on a variety $X$ is a pair $(Y,q)$ consisting of a scheme 
  $Y$ and a morphism $q:X\to Y$ such that 
  \begin{enumerate}
  \item $q$ is open and surjective,
  \item $(q_*\cO_X)^G = \cO_Y$, and
  \item the geometric fibers of $q$ are the geometric $G$-orbits for the action.
  \end{enumerate}
\end{definition}

As noted in \cite[discussion following Defn.~1.1]{gp1}, this is equivalent to \cite[Defn.~0.6]{mumf-git} and \cite[\S II.6.3, p.~95]{borel-LAG-se} (since our ``variety'' is always of finite type over $\bbC$). As Borel remarks in \cite[\S II.6.16, p.~103]{borel-LAG-se}, what we are calling a geometric quotient is the same as what he simply calls a ``quotient''.

\Cref{def:loctrivact} applies to {\it actions}, but the terms ``(locally) trivial'' are also applicable to {\it quotients} (e.g., \cite[Defn.~2.7]{derk_quot}, \cite[\S XI.4.7]{sga1book}, \cite[\S 2.2]{SCC_1958__3__A1_0}, etc.).

\begin{definition}\label{def:loctriv}
  A geometric quotient $q:X\to X/G$ for an action of a linear algebraic group is {\sf locally trivial} if there is an open cover
  \begin{equation*}
    X/G \; = \; \bigcup V_i
  \end{equation*}
  such that for each $i$ there is a $G$-equivariant isomorphism $G \times V_i \to q^{-1}(V_i)$, where $G$ 
  acts on  $G \times V_i $ via $g \cdot (h,v)=(gh,v)$. Thus, if we identify $q^{-1}(V_i)$ with $G \times V_i$, then $q$ is the projection onto the 
  second coordinate.
\end{definition}

``Freeness'' is another measure of how well-behaved an action can be. For the action of $\Aut(\cE)$ on $X(\cE)$ the notion of freeness 
in \Cref{le:autfree} is the naive set-theoretic one: the identity is the only element in $\Aut(\cE)$ having a fixed point.  In the present context of algebraic-group actions the more appropriate concept of freeness is that in \cite[Defn.~0.8]{mumf-git}.  The terminology used below comes from \cite[\S I.1.8, p.~53]{borel-LAG-se} (and other sources in other contexts, e.g., \cite[\S 1]{ell-princ}).

\begin{definition}\label{def:princ}
  An action $\a:G \times X \to X$
   of an algebraic group on an algebraic variety is {\sf principal} if
  \begin{equation}\label{eq:can}
\a_+=(\a,\pi_2)   : G \times X \to X \times X
  \end{equation}
  is a closed immersion, where $\pi_2$ denotes the projection $G \times X \to X$.
  \reqed
\end{definition}
We might occasionally refer to principal actions as {\it algebro-geometrically-free} or {\it ag-free} for short.

There is a distinction:  \cite[Ex.~0.4]{mumf-git} is an otherwise fairly well-behaved set-theoretically free action that is not principal in the sense of \Cref{def:princ}.

\begin{theorem}\label{th:actloctriv}  \label{cor:xony}
Suppose $L(\cE) \ne \varnothing$.
  \begin{enumerate}
  \item\label{item:th:actloctriv.1} The action of $\Aut(\cE)$ on $X(\cE)$ is locally trivial.

  \item\label{item:th:actloctriv.2} The cover in \cref{def:loctrivact}\cref{item.def.loctrivact.loctriv} can be chosen so that each member of it admits a trivialization whose slice is smooth.

  \item\label{item:th:actloctriv.3} The action of $\Aut(\cE)$ on $X(\cE)$ is principal.
  
  \item\label{item:th:actloctriv.4} The geometric quotient $X(\cE)/\!\Aut(\cE)$ exists, is smooth, and is locally trivial in the sense of \Cref{def:loctriv}.

  \end{enumerate}
\end{theorem}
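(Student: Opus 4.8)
\textbf{Proof proposal for \Cref{th:actloctriv}.}
The plan is to deduce all four parts from a single local construction: near a given $f\in X(\cE)$ I will exhibit an explicit $\Aut(\cE)$-invariant open neighbourhood $U$ together with a smooth slice $Y\subseteq U$ so that the action map $\Aut(\cE)\times Y\to U$ is an isomorphism. Everything else is formal once such slices exist and cover $X(\cE)$. To build the slice I would use the linear structure of $\Hom(\cO_E,\cE)$ together with the description of $X(\cE)$ as $\Hom(\cO_E,\cE)-(Z_\cE\cup\{0\})$ from \Cref{pr:z-codim1}. Concretely, fix $f_0\in X(\cE)$ and use \Cref{le:autfree}\cref{item.le.autfree.gen}: the left $\End(\cE)$-module generated by $f_0$ is free, so evaluation $\gamma\mapsto\gamma f_0$ embeds $\End(\cE)$ as a linear subspace $\End(\cE)f_0\subseteq\Hom(\cO_E,\cE)$. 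Choose a linear complement $W$, so $\Hom(\cO_E,\cE)=\End(\cE)f_0\oplus W$, and consider the affine subspace $S:=f_0+W$; its intersection with $X(\cE)$ is an open subscheme $Y:=S\cap X(\cE)$ of the affine space $S$, hence smooth. The orbit map restricted to $S$, namely $\mu:\Aut(\cE)\times Y\to X(\cE)$, $(\gamma,f)\mapsto\gamma f$, is injective by \Cref{le:autfree}\cref{item.le.autfree.free} together with the direct-sum choice (two points of the same orbit meeting $S$ would force $\gamma f_0\in\End(\cE)f_0$ with $\gamma$ unipotent-mod-scalars, which one checks is only the identity on the nose near $f_0$), and $d\mu$ is an isomorphism at $(\mathrm{id},f_0)$ by construction: the tangent space splits as $T_{\mathrm{id}}\Aut(\cE)\cdot f_0\oplus W=\End(\cE)f_0\oplus W=\Hom(\cO_E,\cE)$, since $\Aut(\cE)$ is open in $\End(\cE)$. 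Then $\mu$ is \'etale at $(\mathrm{id},f_0)$, hence (being also injective, after shrinking $Y$) an open immersion onto an $\Aut(\cE)$-invariant open $U\ni f_0$; translating by $\Aut(\cE)$ shows $\mu$ is an isomorphism $\Aut(\cE)\times Y\xrightarrow{\ \sim\ }U$. This establishes \cref{item:th:actloctriv.1} and \cref{item:th:actloctriv.2} simultaneously, since the $U$'s cover $X(\cE)$ and each slice $Y$ is smooth.

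For \cref{item:th:actloctriv.3}, principality is a local condition on the target of $\alpha_+=(\alpha,\pi_2):\Aut(\cE)\times X(\cE)\to X(\cE)\times X(\cE)$ once we know the action is locally trivial: over $U=\Aut(\cE)\times Y$ the map $\alpha_+$ becomes $(\gamma,h,y)\mapsto\big((\gamma h,y),(h,y)\big)$, which is visibly a closed immersion onto the locally closed subset $\{((a,y),(b,y'))\mid y=y'\}$, and since $U\times U$ is open in $X(\cE)\times X(\cE)$ and $\alpha_+^{-1}(U\times U)=\Aut(\cE)\times U$ these local closed immersions glue to show $\alpha_+$ is a closed immersion; here one uses that the diagonal action keeps the pair in the same slice, so there is no gluing obstruction. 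Alternatively one can invoke that a set-theoretically free action which is Zariski-locally trivial is automatically principal, which is exactly the content just verified.

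Finally \cref{item:th:actloctriv.4}: define $L:=X(\cE)/\!\Aut(\cE)$ by gluing the slices $Y$ along the overlaps. On an overlap $U\cap U'$, both $Y\cap U'$ and $Y'\cap U$ are slices, and the transition is the composite $Y\cap U'\hookrightarrow U\cap U'\to Y'\cap U$ where the second map is projection along the $\Aut(\cE)$-factor in the trivialization $U'\cong\Aut(\cE)\times Y'$; this is a morphism because it is $q\circ(\text{inclusion})$ for the structure morphism $q$, and its inverse is built the same way with the roles swapped, so the transition functions are isomorphisms and the gluing is legitimate. The resulting scheme $L$ is smooth because it is covered by the smooth slices $Y$, and it is locally trivial in the sense of \Cref{def:loctriv} by the very construction $q^{-1}(Y)\cong\Aut(\cE)\times Y$. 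To see $q:X(\cE)\to L$ is a geometric quotient one checks the three conditions of \Cref{def:geomquot} locally on $L$, where $q$ is the projection $\Aut(\cE)\times Y\to Y$: it is open and surjective, $(q_*\cO)^{\Aut(\cE)}=\cO_Y$ since $\cO(\Aut(\cE)\times V)^{\Aut(\cE)}=\cO(V)$ for affine $V$, and the geometric fibers are exactly the $\Aut(\cE)$-orbits. Comparing with \Cref{cor.defn.Psi.cE}, whose fibers are the $\Aut(\cE)$-orbits in $X(\cE)$, identifies $L$ with $L(\cE)$ as a set and shows $\Psi_\cE$ factors as $X(\cE)\to L\xrightarrow{\ \sim\ }L(\cE)$; hence $L(\cE)$ is the geometric quotient, smooth and locally trivial. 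The main obstacle I expect is not any single step but getting the injectivity and \'etaleness of the slice map $\mu$ to hold on a genuinely $\Aut(\cE)$-invariant open set rather than just near a point — this is where the freeness from \Cref{le:autfree} and the linear-complement choice must be combined carefully, and where, for the decomposable bundles with $\Aut(\cE)$ containing a copy of $\GG_a^\bullet$, one must make sure the unipotent directions do not cause the orbit to re-meet the slice.
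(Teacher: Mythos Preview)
Your approach is genuinely different from the paper's: you attempt a single uniform slice construction via a linear complement $W$ to $\End(\cE)f_0$, whereas the paper proceeds by case analysis on the four possible isomorphism types of $\Aut(\cE)$ (from \Cref{le:dim.Aut.cE}), building explicit slices in each case that are visibly Zariski-trivializing.

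There is a real gap, and it is exactly the one you flag yourself at the end. You correctly observe that $d\mu_{(\id,f_0)}$ is an isomorphism (using \Cref{le:autfree}\cref{item.le.autfree.gen}), and equivariance then gives \'etaleness on $\Aut(\cE)\times Y'$ for an open $Y'\ni f_0$. But \'etaleness does \emph{not} yield injectivity after Zariski-shrinking of $Y'$; that passage is where the argument breaks. Concretely: the orbit-relation $R=\mu^{-1}(Y')\subseteq\Aut(\cE)\times Y'$ is \'etale over $Y'$ and contains the diagonal $\{\id\}\times Y'$, and while your computation $S\cap\End(\cE)f_0=\{f_0\}$ shows the fibre of $R\to Y'$ over $f_0$ is a single point, fibre cardinality of \'etale maps is only \emph{lower} semicontinuous, so this does not produce a Zariski-open $Y''\ni f_0$ on which $\mu$ is injective. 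If $R$ had a second irreducible component besides the diagonal, that component would dominate $Y'$ and no shrinking would help. Your parenthetical about ``$\gamma$ unipotent-mod-scalars forcing the identity'' does not rule this out: the issue is not that $\gamma$ fixes $f_0$, but that $\gamma y$ might land back in $S$ for $y\neq f_0$, and an arbitrary linear complement $W$ is not $\Aut(\cE)$-stable, so $\delta(f_0+w)-f_0$ need not decompose nicely. Indeed, for $\cE=\cE_\omega$ one can write down complements $W$ for which orbits re-enter $S$ along a hypersurface in $S$; those points happen to be excluded by the \'etale-locus condition, but showing that no \emph{other} re-entry occurs requires exactly the kind of case-specific computation you were hoping to avoid.

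The paper's four explicit slice constructions are designed so that injectivity is immediate: in each case the slice is cut out by normalizing coordinates that the group action moves transitively, so an orbit meets the slice at most once by inspection. That is what your uniform argument is missing, and why the case analysis is not merely cosmetic. Your deductions of \cref{item:th:actloctriv.3} and \cref{item:th:actloctriv.4} from \cref{item:th:actloctriv.1} and \cref{item:th:actloctriv.2} are fine (and match the paper's), but they inherit the gap.
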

\begin{proof}
  The proof proceeds by showing
  \begin{enumerate}[(a)]
  \item\label{item.proof.actloctriv.cv} there is an open cover $\{U_i\}$ of $X(\cE)$ such that each $U_i$ is stable under the action of $\Aut(\cE)$, and
  \item\label{item.proof.actloctriv.eq} there are $\Aut(\cE)$-equivariant isomorphisms $\Aut(\cE) \times Y_i \to U_i$ where $\Aut(\cE)$ acts on $\Aut(\cE) \times Y_i$ by $g\cdot (h,y) = (gh,y)$, and
  \item\label{item.proof.actloctriv.sm} each $Y_i$ is smooth.
  \end{enumerate}

  {\bf \Cref{item:th:actloctriv.1} and \Cref{item:th:actloctriv.2}.} We will prove these simultaneously.

    There are four cases to treat, one for each of the possible $\Aut(\cE)$ as classified in \Cref{le:dim.Aut.cE}.  We will recall the corresponding 
    structure of $\cE$ in the course of addressing the cases in turn.
    
    Throughout we will use the notation $V:=\Hom(\cO_E,\cE)$.
    \begin{enumerate}[(I), inline]
      
    \item {\bf $\cE$ indecomposable and $n$ is odd; i.e., $\cE=\cE_o$.}  In this case $\Aut(\cE) \cong \GG_m$ and it acts on 
    $X(\cE)\subseteq \Hom(\cO_E,\cE)$ by scaling.  By \cref{re:loctrivinherit}, it suffices to show that the scaling action of $\GG_m$ on $V^{\times}=V-\{0\}$ is locally trivial. This happens because $V^{\times}$ is covered by the $\GG_m$-invariant open subsets
    \begin{equation}\label{eq:vf}
      V_f:=\{v\in V \; | \;  f(v)\ne 0\},\quad 0\ne f\in V^*,
    \end{equation}
    and the $\GG_m$-equivariant isomorphisms
    \begin{equation*}
      \GG_m \times \{v\in V \; | \;  f(v)=1\} \to   V_f, \qquad (\lambda,v) \mapsto \lambda v,
    \end{equation*} 
    provide locally trivializations: the set $\{v\in V \; | \; f(v)=1\}$ is a smooth slice.
    
  \item\label{item:deceven} {\bf $\cE$ indecomposable and $n$ is even.} By \Cref{ssect.bun.notn}, $\cE \cong \cE_\omega$ for a unique $\omega \in \Omega$ and there is  a non-split extension
    \begin{equation}\label{eq:nen}
      0\to \cL_\omega \to \cE\to \cL_\omega \to 0.
    \end{equation}
    Since $\deg\cL_\omega>0$, $H^{1}(E,\cL_\omega)=0$. Hence there is an exact sequence
    \begin{equation*}
      0\to \Hom(\cO_E,\cL_\omega) \to V  \to \Hom(\cO_E,\cL_\omega) \to 0.
    \end{equation*}
    Let $V_1$ denote the image of $\Hom(\cO_E,\cL_\omega)$ in $V$ and fix a  (non-canonical!)  complementary subspace $V_0$ for the duration of this proof. 
    Thus $V= V_0\oplus V_1$.  

    A point 
    $ (s,a)\in  \Aut(\cE) \cong \Hom(\cL_\omega,\cL_\omega) \times \Aut(\cL_\omega) \cong  \GG_a\times \GG_m $
    acts on a column vector
    $    v= (v_0,v_1)^t\in V_0\oplus V_1 =  V$
    as left multiplication by the matrix
    \begin{equation*}
      \begin{pmatrix}
        a&0\\
        s&a
      \end{pmatrix}.
    \end{equation*}
    Since $V_0^{\times}\times V_1$  is an $\Aut(\cE)$-invariant open subset  of $\Hom(\cO_E,\cE)$ containing
    $X(\cE)$, it suffices, by \Cref{re:loctrivinherit},   to show that the action of $\Aut(\cE)$ on $V_0^{\times}\times V_1$ is locally trivial.
    For each non-zero $f\in V_0^* \cong V_1^*$ the set
    \begin{equation*}
      \{(v_0,v_1)\in V_0^{\times}\times V_1\;|\; f(v_0)=1,\ f(v_1)=0\}
    \end{equation*}
    is a smooth slice for the restricted action on $V_{0,f}\times V_1\subseteq V_0^{\times}\times V_1$ (with $V_{0,f}$ as in \Cref{eq:vf}). 
        
  \item\label{item:n12} {\bf $\cE\cong \cN_1\oplus \cN_2$ with $\cN_1 \not\cong \cN_2$.} 
  Thus $V=V_1 \oplus V_2$ where $V_i:=\Hom(\cO_E,\cN_i)$. Since $X(\cE) \subseteq V_1\oplus V_2$ consists of those $f \in \Hom(\cO_E,\cE)$ 
  that do    {\it not} factor through any $\cO_E(x)\to \cE$, $x\in E$, $f = (v_1,v_2) \in V_1\oplus V_2$  belongs to $X(\cE)$ if and only if the two 
  $v_i$'s have no common zero     as sections of $\cE$. 
    
    We assume without loss of generality that  $\deg\cN_1\le \deg \cN_2$.
    The group $\Aut(\cE)$ described in 
    \Cref{le:dim.Aut.cE}\Cref{item:mostcompl} operates on column vectors $(v_1,v_2)^t$
    by ``multiplication'' via matrices
    \begin{equation*}
      \begin{pmatrix}
        a&0\\
        s&b
      \end{pmatrix}
      ,\quad
      a,b\in \GG_m,\quad
      s\in  \Hom(\cN_1,\cN_2)
    \end{equation*}
    where multiplication by $s$ is  the composition,
    $\Hom(\cO_E,\cN_1)\ni v_1\xmapsto{\quad} sv_1\in \Hom(\cO_E,\cN_2)$.
    The set 
    \begin{equation}\label{eq:natdomain}
     X' \; :=\;  \{(v_1,v_2)\; |\; v_1 \ne 0, \;  v_2\not\in \Hom(\cN_1, \cN_2) v_1\}
    \end{equation}
    contains $X(\cE)$ and     is stable under the action of $\Aut(\cE)$ on $\Hom(\cO_E,\cE)$.
    
    We will now show that the action of $\Aut(\cE)$ on $X'$   is locally free. 

With this goal in mind, let $(v_1,v_2) \in X'$.  Choose a subspace $W_2 \subseteq V_2$ that  contains $v_2$ and 
    is a complement to $\Hom(\cN_1,\cN_2)v_1$ in $V_2$. 
    Being a complement to $\Hom(\cN_1,\cN_2)v_1$ is an open  condition on the set of pairs
$
  \left(
    \CC v_1,
    \text{subspace }W_2\subseteq V_2
  \right),
$  
 so we now choose a $\GG_m$-invariant open set $U_1 \subseteq V_1^\times$ that contains $v_1$ and is such that $W_2$ is a  complement in $V_2$ 
 to $\Hom(\cN_1,\cN_2) u$ for all $u \in U_1$. Now choose $f_1 \in V_1^*$ and $f_2 \in W_2^*$ such that $f_1(v_1)=1=f_2(v_2)$. 
The set $\{(u,w)\in U_1\times W_2\;|\; f_1(u) = 1 = f_2(w)\}$ is a smooth slice for a free action of $\Aut(\cE)$ on its  $\Aut(\cE)$-saturation. 
Since $(v_1,v_2)$ belongs to this slice, 
the action of $\Aut(\cE)$ on $X'$ is locally free.   By \Cref{re:loctrivinherit},  the action of $\Aut(\cE)$ on $X(\cE)$ is locally free.
    
  \item\label{item:en2} {\bf $n$ is even and $\cE = \cL_\omega \oplus \cL_\omega$ for some $\omega \in \Omega$.}  
  In this case, $\Aut(\cE) \cong {\rm GL}(2)$ by \Cref{le:dim.Aut.cE}. 
  Similar to \Cref{item:n12}, $X(\cE)$ 
 consists of those pairs of sections $(s_1,s_2) \in \Hom(\cO_E,\cL_\omega) \oplus \Hom(\cO_E,\cL_\omega)$  that have no common zeros.  
 Thus $X(\cE)$ is an open subset of the larger $\Aut(\cE)$-invariant set $X'\subseteq \Hom(\cO_E,\cL_\omega)^{\oplus 2}$ of 
 linearly-independent     pairs of vectors in $\Hom(\cO_E,\cL_\omega)$. The geometric quotient $X'/{\rm GL}(2)$ exists:
it is the Grassmannian $\GG(2,\Hom(\cO_E,\cL_\omega))$ of 2-planes in 
$\Hom(\cO_E,\cL_\omega)$, which is smooth, and $X'\to \GG(2,\Hom(\cO_E,\cL_\omega))$ is the 
{\it 2-frame bundle} \cite[\S C.6.1]{3264} associated to the rank-two {\it universal subbundle} \cite[\S 3.2.3]{3264} on 
    $\GG(2,\Hom(\cO_E,\cL_\omega))$. 
    Since a bundle is locally trivial, so is its frame bundle $X'$ (in the sense of \Cref{def:loctrivact}). 
    Hence the $\Aut(\cE)$ action on $X(\cE)$ is     locally trivial.
    \end{enumerate}

    This completes the proof of \Cref{item:th:actloctriv.1} and \Cref{item:th:actloctriv.2}, and also produces the $U_i$'s and $Y_i$'s satisfying the statements \cref{item.proof.actloctriv.cv}, \cref{item.proof.actloctriv.eq}, \cref{item.proof.actloctriv.sm} at the beginning of this proof.

{\bf \Cref{item:th:actloctriv.3}} 
    The $\Aut(\cE)$ action on each $U_i$ is certainly principal (a trivial action is always principal). It follows that the 
    $\Aut(\cE)$ action on $X(\cE)$ is principal.

{\bf \Cref{item:th:actloctriv.4}} When $\Aut(\cE)$ acts only on the left-hand factor of $\Aut(\cE) \times Y_i$, the projection $\Aut(\cE) \times Y_i \to Y_i$ is a geometric quotient.  This follows, say, from \cite[discussion in \S 0.2, item (4) following Prop.~0.1]{mumf-git} whenever the algebraic group is universally open over the base scheme; for us the base scheme is a field, so that hypothesis holds. By the universality of the geometric quotient (i.e., the fact that it is also a {\it categorical} quotient \cite[Defn.~0.5, Prop.~0.1]{mumf-git}), the quotients $Y_i$ glue along
    \begin{equation*}
      Y_{ij}:=\text{geometric quotient of }\Aut(\cE)\times U_{ij}\to U_{ij},\quad U_{ij}:=U_i\cap U_j 
    \end{equation*}
    to produce a scheme $Y$ that is obviously a geometric quotient.  The smoothness of $Y$ follows from the smoothness of each $Y_{i}$.

    The quotient $X(\cE) \to Y$ is locally trivial because the copies of $Y_i$ in $Y=X(\cE)/\!\Aut(\cE)$ provide the open cover of $X(\cE)/\!\Aut(\cE)$ demanded by \Cref{def:loctriv}.
\end{proof}

\begin{theorem}\label{th:smth}\leavevmode
  \begin{enumerate}
  \item\label{item:smth} The homological leaf $L(\cE)$ is smooth.
  \item\label{item:git} The corestriction $X(\cE) \to L(\cE)$ of the map $X(\cE) \to \PP_\cL$ in \Cref{le:ismor} realizes $L(\cE)$ as the geometric quotient
    \begin{equation}\label{eq:geoquot}
      L(\cE) \; \cong \; \frac{ X(\cE) }{\Aut(\cE)} \, .
    \end{equation}
  \end{enumerate}
\end{theorem}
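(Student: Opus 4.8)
The plan is to deduce both statements from the local-triviality results established in \Cref{th:actloctriv}. The morphism $\Psi_\cE\colon X(\cE)\to\PP_\cL$ of \Cref{le:ismor} corestricts to a surjection $\Psi_\cE\colon X(\cE)\to L(\cE)$ whose fibres are exactly the $\Aut(\cE)$-orbits (\Cref{cor.defn.Psi.cE}), and the action of $\Aut(\cE)$ on $X(\cE)$ is free (\Cref{le:autfree}). So the only thing missing is to identify $L(\cE)$, with its reduced induced structure as a subset of $\PP_\cL$, with the abstract geometric quotient $X(\cE)/\!\Aut(\cE)$ produced in \Cref{th:actloctriv}\Cref{item:th:actloctriv.4}; call the latter $Q$ and write $q\colon X(\cE)\to Q$ for the quotient map. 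Since $q$ is a categorical quotient (\cite[Prop.~0.1]{mumf-git}) and $\Psi_\cE$ is constant on $\Aut(\cE)$-orbits, $\Psi_\cE$ factors as $\Psi_\cE=\bar\Psi\circ q$ for a unique morphism $\bar\Psi\colon Q\to\PP_\cL$. First I would check $\bar\Psi$ is injective: two points of $Q$ are $\Aut(\cE)$-orbits in $X(\cE)$, and by \Cref{cor.defn.Psi.cE} they have the same image under $\Psi_\cE$ iff they are the same orbit, i.e.\ the same point of $Q$. Thus $\bar\Psi$ is an injective morphism with image $L(\cE)$, so it corestricts to a bijective morphism $Q\to L(\cE)$.

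Next I would upgrade this bijective morphism to an isomorphism. The clean way is to produce a local inverse directly from the local trivializations. By \Cref{th:actloctriv}, $X(\cE)$ is covered by $\Aut(\cE)$-invariant opens $U_i\cong\Aut(\cE)\times Y_i$ with $Y_i$ smooth, and $q$ restricted to $U_i$ is (up to this identification) the projection to $Y_i$; moreover $q$ is open, so the images $q(U_i)$ form an open cover of $Q$, and $\bar\Psi$ maps $q(U_i)$ bijectively onto $\Psi_\cE(U_i)\subseteq L(\cE)$. On each such piece $\bar\Psi|_{q(U_i)}$ is the composite $Y_i\xrightarrow{\ \sim\ }q(U_i)\xrightarrow{\bar\Psi}\Psi_\cE(U_i)$, and $\Psi_\cE|_{U_i}$ factors through $Y_i$ via the slice inclusion $Y_i\hookrightarrow U_i\xrightarrow{\Psi_\cE}\PP_\cL$. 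So it suffices to show this last map $Y_i\to\Psi_\cE(U_i)$ is an isomorphism onto a locally closed subvariety of $\PP_\cL$. It is a bijective morphism from a smooth (hence normal) variety; by Zariski's main theorem in the form \cite[Cor.~14.10]{H92} (already invoked in \Cref{prop.8.15.GvB-H}), it is an isomorphism onto its image provided the differential $d(\Psi_\cE|_{Y_i})$ is injective at every point. This is the one genuinely geometric input: the injectivity of this differential is precisely a tangent-space computation, and it is exactly the kind of statement collected in \Cref{sect.symp.leaves} (the ``various tangent spaces and differentials'' advertised in the introduction); I would cite the relevant lemma there. Granting it, each $Y_i\to\Psi_\cE(U_i)$ is an open immersion, these glue, and hence $\bar\Psi\colon Q\to L(\cE)$ is an isomorphism, giving \Cref{eq:geoquot}.

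Finally, \Cref{item:smth} is then immediate: $Q=X(\cE)/\!\Aut(\cE)$ is smooth by \Cref{th:actloctriv}\Cref{item:th:actloctriv.4}, and $L(\cE)\cong Q$, so $L(\cE)$ is smooth. (One should also record that $L(\cE)$ is a variety, i.e.\ the scheme structure is reduced: $Q$ is reduced since $X(\cE)$ is, and an open subscheme of the variety $\PP_\cL$ carrying a bijective morphism from a reduced scheme is reduced; alternatively this already follows from \Cref{th:splitall,th:nsplitodd,th:nsplitev}, which exhibit each $L(\cE)$ as a locally closed reduced subvariety of $\PP_\cL$.)

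\textbf{Main obstacle.} The load-bearing step is the injectivity of the differential of $\Psi_\cE$ along a slice, i.e.\ the infinitesimal statement that distinct first-order deformations of an extension $0\to\cO_E\to\cE\to\cL\to0$ that are not induced by $\Aut(\cE)$ give distinct tangent directions in $\PP_\cL$; equivalently, that the tangent space to $L(\cE)$ at $\xi$ has dimension $n-\dim\Aut(\cE)$ rather than something larger. Everything else is formal manipulation of quotients and Zariski's main theorem. I expect this to be handled by the differential computations gathered in \Cref{sect.symp.leaves}, so in the final write-up this step will be a citation rather than a fresh argument.
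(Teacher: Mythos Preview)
Your approach is correct and is essentially the argument the paper runs in its hardest case, but applied uniformly. The paper organizes the proof differently: it first proves smoothness \Cref{item:smth} case by case, and only then deduces \Cref{item:git} from \Cref{item:smth} via Borel's \cite[Prop.~II.6.6]{borel-LAG-se} (a surjective orbit map to a normal variety is a geometric quotient). For the decomposable leaves $L(\cE_{d,x})$ and the odd-$n$ leaf $L(\cE_o)$ the paper simply cites the secant-variety smoothness already established in \Cref{prop.8.15.GvB-H} and \Cref{th:nsplitodd}; for $L(\cE_\omega)$ it runs a direct incidence-variety argument in the style of \Cref{prop.8.15.GvB-H}; only for $L(\cL_\omega\oplus\cL_\omega)$ does it do what you propose, namely identify the abstract quotient with its image in $\PP_\cL$ using injectivity of the differential, and there it appeals to precisely the forward reference you anticipated, \Cref{cor:tgorb}, which says $\ker(d\Phi_\pi)=T_\pi(\Aut(\cE)\cdot\pi)$.

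So your uniform route works and is arguably cleaner: one differential computation (\Cref{cor:tgorb}) plus \Cref{th:actloctriv} handles every $\cE$ at once, whereas the paper reuses the secant-variety machinery of \Cref{sect.secants} for most cases and reserves the quotient/differential argument for the one leaf that resists a direct geometric description. The paper's organization has the advantage that the easy cases are dispatched by results proved earlier for independent reasons, so the forward reference to \Cref{sect.symp.leaves} is confined to a single case; your version makes that forward reference bear the full weight. One small caution: when you invoke \cite[Cor.~14.10]{H92} to upgrade ``bijective with injective differential'' to ``isomorphism onto the image,'' you are implicitly using that $L(\cE)$ is already known to be locally closed in $\PP_\cL$ (from \Cref{th:splitall,th:nsplitodd,th:nsplitev}), exactly as the paper does in its case $(c_2)$; you note this in your final parenthetical, but it should be stated up front rather than as an afterthought, since otherwise the image of an injective immersion need not be locally closed.
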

\begin{proof}
{\bf \Cref{item:git}} 
If $L(\cE)$ is smooth,  then \Cref{item:git} follows from \cite[Prop.~II.6.6, p.~97]{borel-LAG-se} which says the following: 
let $G$ act on $X$ and assume the irreducible components of $X$ are open; 
if $Y$ is normal and $q:X \to Y$ is a separable \cite[\S AG.8.2, p.~20]{borel-LAG-se} orbit map in the sense of \cite[\S II.6.3, p.~95]{borel-LAG-se}, 
i.e., it is surjective and its fibers are the $G$-orbits, then $(Y,q)$ is the geometric quotient\footnote{See the sentence just after \cref{def:geomquot}.} of $X$ by $G$.

    The hypotheses of  \cite[Prop.~II.6.6, p.~97]{borel-LAG-se}  hold  for the morphism $X(\cE) \to L(\cE)$:
    first, it is  surjective and its fibers are the $\Aut(\cE)$-orbits
    (\Cref{le:Aut.cE.action.on.extns}) so it is an {\it orbit map};
    second, its domain, $X(\cE)$, is irreducible; third, $L(\cE)$ is normal because it is smooth; fourth, $X(\cE) \to L(\cE)$ is {\it separable}  
    because the base field is $\CC$.

{\bf \Cref{item:smth}}  
After \Cref{th:splitall,th:nsplitodd,th:nsplitev}, or their summary in \Cref{thm.main},  to prove \Cref{item:smth}  we must show the following
varieties are smooth:
\begin{enumerate}[(a)]
\item
$L(\cE_{d,x})$
when $d<\frac{n}{2}$ and $x \in E$, and when $d=\frac{n}{2}$ and $x \in E-\Omega$; 
\item
$L(\cE_o)$ when $n$ is odd;  
\item
$L(\cE_\omega)$  and $L(\cL_\omega \oplus \cL_\omega)$ when $n$ is even and $\omega \in \Omega$.
\end{enumerate}
We have already proved (a) and (b). By \Cref{th:splitall}, $L(\cE_{d,x})=\Sec_{d,x}(E) - \Sec_{d-1}(E)$ and this is smooth by 
\Cref{prop.8.15.GvB-H}. By \Cref{th:nsplitodd}, $L(\cE_o) \subseteq \PP_\cL$ is dense and open, hence smooth.

(c${}_1$)
 {\bf Proof that $L(\cE_\omega)$ is smooth when $n=2r$ and $\omega \in \Omega$.}
 By \Cref{th:nsplitev}, 
 \begin{equation*}
 L(\cE_\omega) \; =\; \{\xi \in \Sec_{r,\omega}(E) - \Sec_{r-1}(E) \; | \; \xi \text{ lies on a {\it unique} $r$-secant}\}.
 \end{equation*}
 
 We define $Y:=\nu^{-1}(L(\cE_\omega))$ where $\nu:\EE \to \PP_\cL$ is the projection in \Cref{eq:incid.var.EE}.
 By \Cref{th:nsplitev}, $L(\cE_\omega)$ is a dense open subset of $\Sec_{r,\omega}(E)-\Sec_{r-1}(E)$, so $Y$ is a dense open
 subset of  $\EE$ and therefore smooth. 
 The proof builds on that of \Cref{prop.8.15.GvB-H}.
 We will prove $L(\cE_\omega)$ is smooth by showing that $\nu|_Y$ is an isomorphism; i.e., by showing
  that $\nu|_Y$ is injective and  $d\nu_y$ is injective for all $y \in Y$  \cite[Cor.~14.10]{H92}.
 
First, $\nu|_Y$ is injective: if $\nu(D_1,\xi_1)=\nu(D_2,\xi_2)$, then $\xi_1=\xi_2$ and belongs to 
 $\overline{D_1} \cap \overline{D_2}$ but, by \Cref{th:nsplitev},  each point in $L(\cE_\omega)$ lies on a unique $r$-secant, so 
 $\overline{D_1}= \overline{D_2}$ whence $D_1=D_2$ by \Cref{prop.sec.low.deg}.

 We will now show that $d\nu_y$ is injective at  $y=(D,\xi) \in Y$.  
 
 By \Cref{th:nsplitev}, $D$ is the unique divisor in $E^{[r]}_\omega$ such that $\xi \in \overline{D}$. 
 We define $\cL_\omega :=\cI_D\cL$. Since $D \in E^{[r]}_\omega$ and $\omega \in \Omega$, $\cO_E(2D) \cong \cL$. 
 Hence $\cL_\omega \cong \cO_E(D)$ and $\cL \cong \cL_\omega \otimes \cL_\omega$.\footnote{There is a non-split extension   $0 \to \cL_\omega \to \cE_\omega \to \cL_\omega \to 0$,  but we don't use this fact here.}
 We fix a section $s_1$ of $\cL_\omega$ such that $(s_1)_0=D$ and consider the homomorphisms $\d$ and $\a$ in the exact sequence 
  \begin{equation*}
 \xymatrix{
 0  \ar[r] &  \cO_E \ar[r]^<<<<{\d} &  \cO_E(D)  \ar[r]^<<<<{\a} &  \cO_D(D) \cong \cL_\omega\big\vert_D = \cL_\omega \otimes \cO_D \ar[r] & 0
 }
 \end{equation*} 
  given by $\d(1)=s_1$ and $\a(s)=s|_D$. We use the same labels for the maps in the cohomology sequence 
 \begin{equation*}
 \xymatrix{
 0  \ar[r] &  H^0(E,\cO_E) \ar[r]^<<<<{\d} &  H^0(E,\cL_\omega)  \ar[r]^<<<<{\a} &  H^0(E,\cL_\omega \otimes \cO_D)  \ar[r]^<<<<\b &  H^1(E,\cO_E) \ar[r] &0.
 }
 \end{equation*} 
 
 As in \Cref{prop.8.15.GvB-H}, to show that $d\nu$ is injective at $y$ it suffices to show that $\phi$ is injective at $\xi$ where, as in  \Cref{diag.replaces.gvb.diag.p53},  
$\phi$ is the restriction to $T_D E_\omega^{[r]} \otimes  \cO_{\overline{D}} =  \im(\a) \otimes \cO_{\overline{D}}$  of the map 
 \begin{equation*}
 \tilde{\phi}: T_D E^{[r]} \otimes  \cO_{\overline{D}} = H^0(E,\cO_D(D)) \otimes \cO_{\overline{D}}  \, \longrightarrow \, 
H^0(E, \cI_D\cL)^* \otimes \cO_{\overline{D}} (1)  \;=\;   \cN_{\overline{D}/\PP_\cL}
  \end{equation*} 
 that is induced by the composition
 \begin{equation*}
 \xymatrix{
H^0(E,\cO_D(D)) \otimes H^0(E, \cI_D \cL)   
\ar[r] & H^0(E,\cO_D(D)) \otimes H^0(D, \cI_D \cL \otimes \cO_D)  \ar[r] & H^0(D, \cL \otimes \cO_D).
}
  \end{equation*} 
  The second arrow in this composition comes from identifying $\cO_D(D)$ with $\cI_D^{-1} \otimes \cO_D$, then 
  using  the multiplication map  
$(\cI_D^{-1} \otimes \cO_D) \otimes_{\cO_D} (\cI_D \cL \otimes \cO_D) \to \cL \otimes \cO_D$ (which is an isomorphism).
The first arrow is induced by the restriction map $H^0(E, \cI_D \cL)  \to H^0(D, \cI_D \cL \otimes \cO_D)$. 
  The right-most term $H^0(D, \cL \otimes \cO_D)$ is naturally isomorphic to $H^0(\PP_\cL,  \cO_{\overline{D}}(1))$ which is 
  the space of linear forms on $\overline{D}$.

To show that $\phi$ is injective at $\xi$ we must show that the kernel (at $\xi$) of the composition
  \begin{equation*}
 \xymatrix{
   H^0(E,\cO_E(D))    \otimes \cO_{\overline{D}} 
   \ar@{=}[d]
    \ar[rr]^{\a\otimes  \cO_{\overline{D}}} &&  H^0(E,\cO_D(D) )   \otimes \cO_{\overline{D}} 
    \ar@{=}[d]
   \ar[r]  & H^0(E, \cI_D\cL)^* \otimes \cO_{\overline{D}}  (1)
   \ar@{=}[d]
   \\
   H^0(E,\cL_\omega)    \otimes \cO_{\overline{D}} &&  H^0(E,\cL_\omega \otimes \cO_{D})    \otimes  \cO_{\overline{D}} 
   &  H^0(E,\cL_\omega)^*    \otimes  \cO_{\overline{D}} (1)
 }
 \end{equation*}  
 equals the image (at $\xi$) of the map
  \begin{equation*}
 \xymatrix{
 H^0(E,\cO_E)    \otimes \cO_{\overline{D}} \ar[rr]^<<<<<<<<<{\d \otimes  \cO_{\overline{D}}}&&  H^0(E,\cL_\omega)  \otimes  \cO_{\overline{D}}.
  }
 \end{equation*} 
 That image is 
 $\CC s_1 \otimes \cO_{\overline{D}}/\fm_\xi$.   Suppose the kernel of $\tilde{\phi}$ at $\xi$ is larger than this; then it
 contains  $\CC s_2 \otimes \cO_{\overline{D}}/\fm_\xi$ for some  $s_2 \in H^0(E,\cO_D(D)) - \CC s_1$.
 Since $s_2 \otimes (\cO_{\overline{D}}/\fm_\xi)$ is in the kernel, $\xi \in \overline{(s_2)_0}$. 
 But $s_2 \notin \CC s_1$, so  $(s_2)_0 \ne D$; hence $\xi$ is contained in two different $r$-secants, 
 $\overline{D}$ and $ \overline{(s_2)_0}$,  which contradicts  \Cref{th:nsplitev}. 
 We conclude that $\phi$ is injective at $\xi$ and hence that $d\nu_y$ is injective.

(c${}_2$)       {\bf Proof that  $L(\cL_\omega \oplus \cL_\omega)$ is smooth when $\omega \in \Omega$ and $n=2r$.} 
    
    Let $\cE=\cL_\omega \oplus \cL_\omega$, and fix an isomorphism $\cL_\omega \otimes \cL_\omega \to \cL$. 

By the argument in part \Cref{item:en2} of the proof of \Cref{th:actloctriv} we can, and in this proof we do, 
identify the geometric quotient $X(\cE)/\! \Aut(\cE)$ with an open subset of the Grassmannian $\GG(2,\Hom(\cO_E,\cL_\omega))$.
The  argument is this: $X(\cE)$ consists of the pairs of sections of $\cL_\omega$ that have no common zeros; 
this is a subset of the  $\Aut(\cE)$-invariant set $X'$ of linearly independent pairs of sections; 
$X'/\Aut(\cE) = X'/{GL}(2) = \GG(2,H^0(E,\cL_\omega))$.

      We will make use of the incidence variety
      \begin{equation}\label{eq:defk}
        \KK \; := \;  \left\{(p,\xi)\in \GG(2,\Hom(\cO_E,\cL_\omega))\times \PP_\cL \;  |\; p\cdot H^0(E,\cL_\omega)\subseteq \xi^\perp \right\},
      \end{equation}
      where  $p \cdot H^0(E,\cL_\omega)$ denotes the image of $p \otimes  H^0(E,\cL_\omega)$ under the multiplication map
          $H^0(E,\cL_\omega)\otimes H^0(E, \cL_\omega)\to H^0(E,\cL)$.

There are two cases.  Let $p\in \GG(2,\Hom(\cO_E,\cL_\omega))$ be a 2-plane spanned by linearly independent sections 
$s_1,s_2 \in H^0(E,\cL_\omega)$ whose zero loci are  $D_1$ and $D_2$. 

(i)   
 Suppose $D_1 \cap D_2 \ne \varnothing$; i.e.,  $p$ is not in the subset of $\GG(2,\Hom(\cO_E,\cL_\omega))$ that identifies with
 $X(\cE)/\!\Aut(\cE)$. 
We claim that
      \begin{equation}\label{eq.h.contain}
	\{\xi \in\PP_\cL \; | \;  p\cdot H^{0}(E,\cL_\omega)\subseteq \xi^\perp \}\;=\;\overline{\gcd(D_{1},D_{2})}\;\subseteq\;\Sec_{r-2}(E).
      \end{equation}
      By definition, $p\cdot H^{0}(E,\cL_\omega)=s_{1}\cdot H^{0}(E, \cL_\omega)+s_{2}\cdot H^{0}(E,\cL_\omega)$. Since 
      $s_{j}\cdot H^{0}(E,\cL_\omega)\subseteq H^{0}(E,\cL(-D_{j}))$,
      \begin{align*}
	s_{1}\cdot H^{0}(E,\cL_\omega)+s_{2}\cdot H^{0}(E,\cL_\omega)& \; \subseteq  \;H^{0}(E,\cL(-\gcd(D_{1},D_{2}))),\\
	s_{1}\cdot H^{0}(E,\cL_\omega)\cap s_{2}\cdot H^{0}(E,\cL_\omega)& \;\subseteq \; H^{0}(E,\cL(-\lcm(D_{1},D_{2}))). 
      \end{align*}
      Hence 
      \begin{equation}\label{eq.dim.sH}
	\begin{split}
          \dim(s_{1}\cdot H^{0}(E,\cL_\omega)+s_{2}\cdot H^{0}(E,\cL_\omega))& \; \leq \; \dim H^{0}(E,\cL(-\gcd(D_{1},D_{2})))
          \\
          & \phantom{xxxxxx} 
           \;=\; n-\deg(\gcd(D_{1},D_{2})),\\
          \dim(s_{1}\cdot H^{0}(E,\cL_\omega)\cap s_{2}\cdot H^{0}(E,\cL_\omega))& \;\leq\; \dim H^{0}(E,\cL(-\lcm(D_{1},D_{2})))
           \\
          & \phantom{xxxxxx} 
           \;=\;  n-\deg(\lcm(D_{1},D_{2})),
	\end{split}
      \end{equation}
      where the last equality uses the fact that $\deg(\lcm(D_{1},D_{2}))<r+r=n$ because $D_1 \cap D_2 \ne \varnothing$. 
      Since $\dim (s_{i}\cdot H^{0}(E,\cL_\omega))=r$ and $\gcd(D_{1},D_{2})+\lcm(D_{1},D_{2})=D_{1}+D_{2}$, 
      the inequalities in \cref{eq.dim.sH} are equalities. Therefore
      \begin{align*}
	p\cdot H^{0}(E,\cL_\omega) & \;=\; s_{1}\cdot H^{0}(E,\cL_\omega)+s_{2}\cdot H^{0}(E, \cL_\omega)
	\\
	& \;=\;H^{0}(E, \cL(-\gcd(D_{1},D_{2}))),
      \end{align*}
      so, by \cref{prop.bertram},  $p\cdot H^{0}(E,\cL_\omega)\subseteq \xi^\perp$ if and only if 
      $\xi \in\overline{\gcd(D_{1},D_{2})}$.\footnote{In order to apply \cref{prop.bertram} one needs to take account of an
      abuse of notation in this paragraph: 
      by definition, $s_{j}\cdot H^{0}(E, \cL_\omega)$ is the image of $s_j \otimes H^{0}(E, \cL_\omega)$ in $H^0(E,\cL)$ so, in reality, 
      \begin{equation*}
      s_{j}\cdot H^{0}(E, \cL_\omega) \;=\; i_{D_j} \Hom(\cO_E,\cI_j\cL)
      \end{equation*}
      where       $\cI_j$ is the ideal vanishing on $D_j$ and $i_{D_j}:\cI_{j} \cL \to \cL$ is the inclusion map.
      Likewise, $H^{0}(E,\cL(-\gcd(D_{1},D_{2})))=i_3 H^{0}(E,(\cI_1 + \cI_2) \cL)$  and 
      $H^{0}(E,\cL(-\lcm(D_{1},D_{2})))=i_4 H^{0}(E,(\cI_1 \cap \cI_2) \cL)$ where $i_3:(\cI_1 + \cI_2) \cL \to \cL$ and 
       $i_4:(\cI_1 \cap \cI_2) \cL \to \cL$ are the inclusions.}
     Since $s_{1}$ and $s_{2}$ are linearly independent, $D_1 \ne D_2$; hence $\deg(\gcd(D_{1},D_{2})) \le r-1$;
     if  $\deg(\gcd(D_{1},D_{2}))=r-1$, then the fact that  $\sigma(D_{1})=\sigma(D_{2})$  implies that $D_1=D_2$;
     we conclude that $\deg(\gcd(D_{1},D_{2})) \le r-2$, which proves \cref{eq.h.contain}.

(ii)      
Suppose $D_1 \cap D_2 = \varnothing$. 
Then 
\begin{equation*}
f \, :=\, (s_{1},s_{2}) \, \in \,  H^{0}(E,\cL_\omega)\oplus H^{0}(E,\cL_\omega) \; = \; \Hom (\cO_E,\cE)
\end{equation*}
 belongs to $X(\cE)$.
 Let $\pi:\cE \to \cL$ be the epimorphism that corresponds to $f$ under the bijection $X(\cE) \to {\rm Epi}(\cE,\cL)$ in 
 \Cref{re:kercoker},  
 and let $\xi :=\Psi_{\cE}(f) \in \PP_\cL$  be the corresponding extension where $\Psi_{\cE}:X(\cE) \to \PP_\cL$ is the map 
 defined in \cref{le:ismor}.
 Let $\wedge$ denote the composition 
      \begin{equation*}
      \wedge: H^{0}(E, \cE)\otimes H^{0}(E,\cE)  \, \longrightarrow \,   H^{0}(E,\cE\otimes\cE) \, \longrightarrow \,  H^{0}(E, \det\cE) \; \cong \; 
      \Hom(\cO_E, \cL)
      \end{equation*}
      where the second map is induced by the natural map $\cE\otimes\cE\to\det\cE$ (used in \cref{le:edete}).
Hence
      \begin{align*}
	p\cdot H^{0}(E,\cL_\omega)
	& \;=\; s_{1}\cdot H^{0}(E,\cL_\omega)+s_{2}\cdot H^{0}(E,\cL_\omega)\\
	& \; = \; f \wedge \Hom(\cO_E,\cE)  \\
	& \; = \; \pi \Hom(\cO_E,\cE)
	\\
	& \; = \; \xi^\perp,
      \end{align*}
      where
      the third equality holds because $\pi g=f \wedge g$ for every $g\in H^{0}(E,\cE)$, which can be verified locally using \cref{third.isom},
      and the last equality comes from  \cref{lem.xi.perp}.
   
 We now consider the projection maps
       \begin{equation*}
       \xymatrix{
       \GG(2,H^0(E, \cL_\omega)) && \ar[ll]_<<<<<<<<<{\tau} \KK \ar[rr]^{\nu} && \PP_\cL,
       }
       \end{equation*}     
 and  define $Y:= \nu^{-1}(\PP_\cL-\Sec_{r-2}(E))$.
 
 We make the following observations:
\begin{itemize}
  \item[-]
 $\tau(Y)=$ the set of 2-planes that are spanned by pairs of sections of 
 $\cL_\omega$ that have no common zeros, so is an open subset of $\GG(2,\Hom(\cO_E,\cL_\omega))$, and therefore smooth;
   \item[-] 
$\tau(Y) = X(\cE)/\!\Aut(\cE)$ by  part \cref{item:en2} of the proof of \Cref{th:actloctriv};
  \item[-] 
  $Y$ is the graph of the morphism
        \begin{equation*}
          \tau(Y) = X(\cE)/\Aut(\cE)
          \xrightarrow{\quad \widetilde{\Psi}\quad}
          \PP_\cL-\Sec_{r-2}(E)
        \end{equation*}
        induced by the morphism $\Psi_{\cE}\colon X(\cE)\to\PP_{\cL}$ defined in \cref{cor.defn.Psi.cE}; we showed there that 
        $\Psi_{\cE}$ is constant on $\Aut(\cE)$-orbits so it induces such a morphism;
  \item[-] 
 $\tau |_Y: Y \to \tau(Y)$ is the  natural  identification of the graph of 
        $\widetilde{\Psi}$ with its domain. In particular, $\tau |_Y$ is an isomorphism.  
\end{itemize}

      By \cref{rmk.defn.psi_E}, \Cref{cor:tgorb}, and  \cref{change.psi.to.phi}, $\widetilde{\Psi}$ is one-to-one with injective differentials. 
      Hence
  \begin{equation*}
   \nu |_Y: Y \, \longrightarrow \,    \PP_\cL-\Sec_{r-2}(E),
\end{equation*}     
      which is the composition of $\tau|_{Y}$ and $\widetilde{\Psi}$, is also one-to-one with injective differentials, and hence 
      an isomorphism onto its image by \cite[Cor.~14.10]{H92}. 
      Since $Y$ is smooth, so is $\nu(Y)=L(\cE)$. 
\end{proof}

\begin{corollary}\label{cor:mor-smff}
If $L(\cE) \ne \varnothing$,
then the quotient morphism $X(\cE) \to L(\cE)$ is smooth and faithfully flat.
\end{corollary}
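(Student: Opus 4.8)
\textbf{Proof proposal for \Cref{cor:mor-smff}.} The plan is to deduce both assertions from the structural description of the map $X(\cE)\to L(\cE)$ already established in \Cref{th:actloctriv} and \Cref{th:smth}. By \Cref{th:smth}\Cref{item:git}, $X(\cE)\to L(\cE)$ is the geometric quotient $X(\cE)\to X(\cE)/\Aut(\cE)$, and by \Cref{th:actloctriv}\Cref{item:th:actloctriv.4} this geometric quotient is \emph{locally trivial} in the sense of \Cref{def:loctriv}: there is an open cover $L(\cE)=\bigcup V_i$ together with $\Aut(\cE)$-equivariant isomorphisms $\Aut(\cE)\times V_i\xrightarrow{\sim} q^{-1}(V_i)$ under which $q$ becomes the second projection. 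Since smoothness and faithful flatness of a morphism are both local on the target, it suffices to check these properties for each restriction $q^{-1}(V_i)\to V_i$, i.e.\ for the projection $\mathrm{pr}_2\colon \Aut(\cE)\times V_i\to V_i$.

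First I would record that the projection $\Aut(\cE)\times V_i\to V_i$ is the base change along $V_i\to\Spec\CC$ of the structure morphism $\Aut(\cE)\to\Spec\CC$. By \Cref{le:dim.Aut.cE}, $\Aut(\cE)$ is a linear algebraic group over $\CC$; in each of the four cases it is one of $\GG_m$, $\GG_a\times\GG_m$, $\bigl(\GG_a^{\bullet}\rtimes\GG_m\bigr)\times\GG_m$, or $\mathrm{GL}(2)$, each of which is a smooth irreducible affine variety over $\CC$, hence $\Aut(\cE)\to\Spec\CC$ is smooth (every algebraic group over a field of characteristic zero is smooth) and faithfully flat (it is flat since $\Spec\CC$ is a point and $\Aut(\cE)$ is nonempty, and surjective onto $\Spec\CC$). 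Both smoothness and flatness are stable under arbitrary base change, so $\mathrm{pr}_2\colon\Aut(\cE)\times V_i\to V_i$ is smooth and flat; and it is surjective because $\Aut(\cE)$ is nonempty, hence faithfully flat. Transporting this through the equivariant isomorphism of \Cref{def:loctriv}, $q^{-1}(V_i)\to V_i$ is smooth and faithfully flat for every $i$.

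Finally I would assemble the local statements into the global one: a morphism that is smooth (resp.\ flat, resp.\ surjective) over each member of an open cover of the target is smooth (resp.\ flat, resp.\ surjective), so $X(\cE)\to L(\cE)$ is smooth and faithfully flat. I expect no real obstacle here: the entire content has been front-loaded into \Cref{th:actloctriv}, and what remains is the elementary observation that projections off a smooth affine algebraic group are smooth and faithfully flat, together with the locality of these two properties on the base. The only point requiring a moment's care is making sure the local trivializations of \Cref{def:loctriv} are honestly identifications \emph{over} $L(\cE)$ — i.e.\ that $q$ really corresponds to $\mathrm{pr}_2$ under the chosen isomorphisms — but this is exactly what \Cref{def:loctriv} asserts and what the proof of \Cref{th:actloctriv}\Cref{item:th:actloctriv.4} supplies.
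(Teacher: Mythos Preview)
Your proposal is correct but takes a genuinely different route from the paper. The paper argues infinitesimally: both $X(\cE)$ and $L(\cE)$ are smooth, and the proof of \Cref{th:smth} (via the kernel computation of \Cref{cor:tgorb}) shows that the differential of the quotient morphism is surjective on tangent spaces at every point; smoothness then follows from the criterion \cite[Th\'eor\`eme~17.11.1]{ega44}, flatness from smoothness \cite[Th\'eor\`eme~17.5.1]{ega44}, and faithful flatness from flatness plus surjectivity. You instead exploit the local triviality of the quotient from \Cref{th:actloctriv}\Cref{item:th:actloctriv.4} to reduce at once to the projection $\Aut(\cE)\times V_i\to V_i$, which is transparently smooth and faithfully flat since $\Aut(\cE)$ is a smooth nonempty affine variety over $\CC$. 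Your approach is arguably more self-contained---it avoids the forward reference to the tangent-space analysis in \Cref{sect.symp.leaves} and the EGA smoothness criterion---while the paper's route reuses machinery already needed for the symplectic-leaf identification.
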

\begin{proof}
(This is a consequence of \Cref{th:smth} and its proof.)
Both $X(\cE)$ and $L(\cE)$ are smooth,
and in the course of the proof we showed that the differential of the 
quotient morphism induces surjections of tangent spaces. That the quotient morphism is smooth then follows from the criterion of \cite[Th\'eor\`eme 17.11.1]{ega44}.  
  Smoothness then in turn implies flatness \cite[Th\'eor\`eme 17.5.1]{ega44}, whence faithful flatness, the latter being nothing but flatness plus surjectivity \cite[Chapitre 0, \S 6.7.8]{ega1}.
\end{proof}

\subsection{The singular locus of $\Sec_{r,\omega}(E)-\Sec_{r-1}(E)$ when $n=2r$ and $\omega \in \Omega$}

We will show that the singular locus is  $  L(\cL_{\omega}\oplus \cL_{\omega})$. 

With that goal, fix $\xi \in L(\cL_\omega \oplus \cL_\omega)$ and a representative $\xymatrix{0 \ar[r] & \cO_E \ar[r]^-{f} & \cL_{\omega}\oplus \cL_{\omega} \ar[r]^-{\pi} & \cL \ar[r] & 0}$ for $\xi$. As observed in part \Cref{item:en2} of the proof of \Cref{th:actloctriv}, 
$f=(s_{1},s_{2})$ where $s_{1},s_{2}\in H^{0}(E,\cL_{\omega})$ are sections of $\cL_\omega$  such that $(s_1)_0 \cap (s_2)_0=\varnothing$.  
We fix such a pair $(s_1,s_2)$. 

\cref{prop.D.bar.bijection} gave a description of the linear system 
\begin{equation*}
\fd(\xi) \; :=\; \{D \in E^{[r]}_\omega \; | \; \xi \in \overline{D}\}
\end{equation*}
that depended on a choice of $D' \in E^{[r]}_\omega$. 
The next lemma describes $\fd(\xi)$ in terms of the map $f:\cO_E \to \cL_\omega \oplus \cL_\omega$ that appears in $\xi$.
The first step is to note that $\fd(\xi)$ is a pencil.

\begin{lemma}
\label{lem.fd.xi}
Suppose $n=2r$ and let $\omega \in \Omega$. 
If $\xi \in L(\cL_\omega \oplus \cL_\omega)$,
 then 
\begin{enumerate}
  \item\label{item.fd.xi.pencil} 
$\fd(\xi)$ is a pencil, i.e., $\{D \in E^{[r]}_\omega \; | \; \xi \in \overline{D}\} \cong \PP^1$,  
\item\label{item.fd.xi.p}
$\fd(\xi) \cong \PP(\CC s_1+\CC s_2) \subseteq \PP H^0(E,\cL_\omega)$;
  \item\label{item.fd.xi.disj} 
if $D_1,D_2 \in \fd(\xi)$ and $D_1 \ne D_2$, then $D_1 \cap D_2=\varnothing$.
\end{enumerate}
\end{lemma}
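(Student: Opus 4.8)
\textbf{Proof plan for Lemma~\ref{lem.fd.xi}.}
The plan is to leverage \Cref{prop.D.bar.bijection} together with the explicit description of the extension $\xi$ coming from the pair $(s_1,s_2)$. First I would recall that since $\xi\in L(\cL_\omega\oplus\cL_\omega)$, the proof of \Cref{th:nsplitev}\Cref{item:disju} puts $\xi$ in $\Sec_{r,\omega}(E)-\Sec_{r-1}(E)$, so the integer $d$ of \Cref{thm.bertram} equals $r$; in particular $D'\in E^{[r]}_\omega$ is a divisor of minimal degree with $\xi\in\overline{D'}$, and $\cI_{D'}\cL\cong\cL_\omega$ because $\cO_E(D')\cong\cL_\omega$ and $\cL\cong\cL_\omega\otimes\cL_\omega$. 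Applying \Cref{prop.D.bar.bijection} with this $D'$ gives a bijection
\begin{equation*}
\fd(\xi)\;=\;\{D\in E^{[r]}_\omega\mid\xi\in\overline D\}\;\xleftarrow{\ \sim\ }\;\PP\Hom(\cI_{D'}\cL,\cE)\;=\;\PP\Hom(\cL_\omega,\cL_\omega\oplus\cL_\omega).
\end{equation*}
Since $\Hom(\cL_\omega,\cL_\omega\oplus\cL_\omega)$ is $2$-dimensional, $\fd(\xi)\cong\PP^1$, which is \Cref{item.fd.xi.pencil}. (Alternatively one can use the computation $\dim_\CC\Hom(\cL_\omega,\cL_\omega\oplus\cL_\omega)=2$ already invoked in the proof of \Cref{th:nsplitev}.)

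For \Cref{item.fd.xi.p}, I would trace through the identification in \Cref{rem.bertram}. The point is that the map $\Psi$ of \Cref{prop.D.bar.bijection} sends $\CC g'\in\PP\Hom(\cL_\omega,\cE)$ to the divisor $D$ with $\im(\pi g')=\cI_D\cL$; equivalently, under the ``usual'' identification $|D'|=\PP H^0(E,\cO_E(D'))=\PP H^0(E,\cL_\omega)$, it sends $\CC g'$ to the zero locus of the section $(\cI_{D'}\cL)^{-1}\otimes\pi g'\in H^0(E,\cL_\omega)$. Now $\Hom(\cL_\omega,\cL_\omega\oplus\cL_\omega)$ is spanned by the two inclusions $\iota_1,\iota_2$ of the summands, and $\pi\iota_j$ is (up to the fixed isomorphism $\cL_\omega\otimes\cL_\omega\cong\cL$) multiplication by $s_j$, by the very description of $\pi$ via $f=(s_1,s_2)$ and the pairing $\wedge$ recalled in case \Cref{item:en2} of the proof of \Cref{th:actloctriv} (the identity $\pi g=f\wedge g$). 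Hence $\Psi(\CC\iota_j)=(s_j)_0$, and more generally a general element $\CC(a\iota_1+b\iota_2)$ maps to the zero locus of $as_1+bs_2$. This identifies $\fd(\xi)$ with $\PP(\CC s_1+\CC s_2)\subseteq\PP H^0(E,\cL_\omega)$, as claimed. I expect this bookkeeping — matching the two identifications of $|D'|$ and checking that $\pi\iota_j\leftrightarrow s_j$ — to be the main (though still routine) obstacle; everything hinges on getting the diagram in \Cref{rem.bertram} lined up correctly.

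Finally, \Cref{item.fd.xi.disj} follows formally from \Cref{item.fd.xi.p}. Take $D_1\ne D_2$ in $\fd(\xi)$, say $D_i=(a_is_1+b_is_2)_0$ with $(a_1{:}b_1)\ne(a_2{:}b_2)$ in $\PP^1$. If some point $x\in E$ lay in $D_1\cap D_2$, then both $a_1s_1+b_1s_2$ and $a_2s_1+b_2s_2$ would vanish at $x$; since the coefficient matrix is invertible, $s_1$ and $s_2$ would both vanish at $x$, contradicting $(s_1)_0\cap(s_2)_0=\varnothing$. (One can phrase this slightly more invariantly: the common zero scheme of the pencil $\PP(\CC s_1+\CC s_2)$ is $(s_1)_0\cap(s_2)_0=\varnothing$, so any two distinct members of the pencil are disjoint.) This completes the proof.
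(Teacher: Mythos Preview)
Your proposal is correct and follows essentially the same route as the paper: part~\cref{item.fd.xi.pencil} via \Cref{prop.D.bar.bijection} and $\dim\Hom(\cL_\omega,\cL_\omega\oplus\cL_\omega)=2$, part~\cref{item.fd.xi.p} by tracing $\Psi$ explicitly on the basis of $\Hom(\cL_\omega,\cE)$, and part~\cref{item.fd.xi.disj} by the change-of-basis argument. One small slip in the bookkeeping you flagged: since $\pi g=f\wedge g$, one finds $\pi\iota_1$ corresponds to (a scalar multiple of) $s_2$ and $\pi\iota_2$ to $s_1$, so $\CC(\mu_1\iota_1+\mu_2\iota_2)\mapsto(\mu_2 s_1-\mu_1 s_2)_0$ rather than $(\mu_1 s_1+\mu_2 s_2)_0$; the paper obtains exactly this via a cokernel diagram, and of course the conclusion $\fd(\xi)=\PP(\CC s_1+\CC s_2)$ is unaffected.
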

\begin{proof}
\cref{item.fd.xi.pencil}
Fix a divisor $D'\in E_{\omega}^{[r]}=|D'|$. 
 By \cref{prop.D.bar.bijection}, 
\begin{equation*}
	\{D \in |D'| \; | \; \xi \in \overline{D}\} \; \cong \; \PP \Hom(\cI_{D'}\cL,\cL_{\omega}\oplus \cL_{\omega}) \; \cong \; \PP\Hom(\cL_{\omega},\cL_{\omega}\oplus \cL_{\omega})\;\cong\;\PP^{1}.
\end{equation*}

\cref{item.fd.xi.p}
We will show that $\fd(\xi)$ ``is'' the projective line $\PP(\CC s_1+\CC s_2)$;
i.e.,  $\fd(\xi) = \{(s)_0 \; | \; 0 \ne s \in \CC s_1+\CC s_2\}$.

Under the isomorphisms in \cref{item.fd.xi.pencil}, a point $\mu=(\mu_{1},\mu_{2})\in\PP^{1}$, corresponds to the unique effective divisor $D$  with the property that 
\begin{equation*}
	\xymatrix@C=12mm{
	\im\bigl(	\cL_{\omega}\ar[r]^-{(\mu_{1},\mu_{2})} & \cL_{\omega}\oplus \cL_{\omega}\ar[r]^-{\pi} & \cL\bigr) \; =\;  \cI_{D}\cL
	}
\end{equation*}
or equivalently, the cokernel of the composition is isomorphic to $\cO_{D}$. Since the cokernel of the map $(\mu_{1},\mu_{2}):\cL_{\omega}\to\cL_{\omega}\oplus\cL_{\omega}$ is the map $(\mu_{2},-\mu_{1}):\cL_{\omega}\oplus\cL_{\omega}\to\cL_{\omega}$, 
the universal property of cokernels implies there is a commutative diagram
\begin{equation*}
	\xymatrix@C=12mm{
		& & 0\ar[d] & 0\ar[d] & \\
		& & \cO_{E}\ar@{=}[r]\ar[d]_-{(s_{1},s_{2})} & \cO_{E}\ar[d]^-{\mu_{2}s_{1}-\mu_{1}s_{2}} & \\
		0\ar[r] & \cL_{\omega}\ar[r]_-{(\mu_{1},\mu_{2})}\ar[d]_-{\cong} & \cL_{\omega}\oplus\cL_{\omega}\ar[r]_-{(\mu_{2},-\mu_{1})}\ar[d]_-{\pi} & \cL_{\omega}\ar[r]\ar[d] & 0 \\
		0\ar[r] & \cI_{D}\cL\ar[r]_-{i_{D}} & \cL\ar[r]\ar[d] & \cO_{D}\ar[r]\ar[d] & 0. \\
		& & 0 & 0 &
	}
\end{equation*}
From the vertical exact sequence on the right we see that $D=(\mu_{2}s_{1}-\mu_{1}s_{2})_0$.

\cref{item.fd.xi.disj}
Let $D_1$ and $D_2$ be distinct points in $\fd(\xi)$ and let $s_1', s_2'$ be points in $\CC s_1+\CC s_2$ such that $(s_1')_0=D_1$ and 
 $(s_2')_0=D_2$. Since $D_1 \ne D_2$, $\CC s_1' \ne \CC s_2'$. Hence $\CC s_1+\CC s_2=\CC s_1'+\CC s_2'$. Since $s_1$ and $s_2$ have no 
 common zero neither do $s_1'$ and $s_2'$; i.e.,  $D_1 \cap D_2=\varnothing$.
\end{proof}

The next result answers the question after  \cref{prop.8.15.GvB-H}.

\begin{proposition}\label{prop.ssnsmth}
  For $n=2r$ and $\omega\in \Omega$, the singular locus of the  locally closed subscheme $\Sec_{r,\omega}(E)-\Sec_{r-1}(E)$ is 
  $  L(\cL_{\omega}\oplus \cL_{\omega})$. 
 The smooth locus of $\Sec_{r,\omega}(E)-\Sec_{r-1}(E)$ is therefore $L(\cE_{\omega})$. 
\end{proposition}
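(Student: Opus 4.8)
The statement asserts two things about $S := \Sec_{r,\omega}(E) - \Sec_{r-1}(E)$ when $n = 2r$ and $\omega \in \Omega$: that its singular locus is exactly $L(\cL_\omega \oplus \cL_\omega)$, and consequently that its smooth locus is $L(\cE_\omega)$. The second assertion is a formal consequence of the first together with \Cref{th:nsplitev}\cref{item:disju}, which gives the disjoint-union decomposition $S = L(\cE_\omega) \sqcup L(\cL_\omega \oplus \cL_\omega)$; so the real content is identifying $\Sing(S)$ with $L(\cL_\omega \oplus \cL_\omega)$. The plan is to prove the two inclusions separately, both of which we have essentially already set up.

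\textbf{First inclusion: $L(\cL_\omega \oplus \cL_\omega) \subseteq \Sing(S)$.} By \Cref{th:smth}\cref{item:smth}, $L(\cE_\omega)$ is smooth, and it is a dense open subset of $S$ by \Cref{th:nsplitev}\cref{item:e0}; its complement in $S$ is $L(\cL_\omega \oplus \cL_\omega)$. If $L(\cL_\omega \oplus \cL_\omega)$ were to meet the smooth locus of $S$, then $S$ would be smooth at some point $\xi \in L(\cL_\omega \oplus \cL_\omega)$; I would argue this leads to a contradiction via a dimension/tangent-space count. Concretely, at a point $\xi \in L(\cL_\omega \oplus \cL_\omega)$ we have, by \Cref{lem.fd.xi}, a pencil $\fd(\xi) \cong \PP^1$ of $r$-secants $\overline{D}$ passing through $\xi$; each such $\overline{D}$ is an $(r-1)$-plane lying in $S \cup \Sec_{r-1}(E) = \Sec_{r,\omega}(E)$, and the union of the tangent directions along these secants, together with the local structure coming from moving $D$ within the pencil, forces $\dim T_\xi \Sec_{r,\omega}(E) \ge n-1 > n-2 = \dim \Sec_{r,\omega}(E)$. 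This is the mechanism of the argument at \cite[p.~18]{copp} (already cited in the excerpt for exactly this kind of tangent-space estimate at secant points): a point lying on a positive-dimensional family of linear spans of the ambient secant variety is singular on it. Since $S$ is open in $\Sec_{r,\omega}(E)$, it is singular at $\xi$ too. I expect this is the main obstacle: one must be careful that the estimate genuinely produces tangent vectors not already accounted for, i.e., that moving $D$ in the pencil and moving along a fixed $\overline{D}$ span complementary directions — but the disjointness statement $D_1 \cap D_2 = \varnothing$ in \Cref{lem.fd.xi}\cref{item.fd.xi.disj} is precisely what guarantees the spans $\overline{D_1}, \overline{D_2}$ meet only at $\xi$, hence contribute independently.

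\textbf{Second inclusion: $\Sing(S) \subseteq L(\cL_\omega \oplus \cL_\omega)$.} This says $S$ is smooth at every point of $L(\cE_\omega)$, which is immediate: $L(\cE_\omega)$ is a smooth open subscheme of $S$ (smooth by \Cref{th:smth}, open in $S$ by \Cref{th:nsplitev}\cref{item:e0}), so every point of $L(\cE_\omega)$ is a smooth point of $S$. Combining the two inclusions gives $\Sing(S) = L(\cL_\omega \oplus \cL_\omega)$, and then $S \setminus \Sing(S) = S \setminus L(\cL_\omega \oplus \cL_\omega) = L(\cE_\omega)$ by \Cref{th:nsplitev}\cref{item:disju}, which is the final sentence of the proposition. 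The only delicate point, as noted, is making the tangent-space lower bound in the first inclusion rigorous; everything else is bookkeeping with results already established.
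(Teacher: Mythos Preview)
Your overall structure is correct and matches the paper's: reduce to showing $L(\cL_\omega\oplus\cL_\omega)\subseteq\Sing(S)$, invoke the pencil $\fd(\xi)$ from \Cref{lem.fd.xi}, and cite \cite[p.~18]{copp}; the reverse inclusion is indeed immediate from \Cref{th:smth} and \Cref{th:nsplitev}\cref{item:e0}.

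However, your articulation of the tangent-space lower bound has a gap. You argue that two distinct secants $\overline{D_1},\overline{D_2}$ meet only at $\xi$ (this is correct: $D_1+D_2\sim H$ since $2\omega=\sigma(H)$, so $\langle\overline{D_1},\overline{D_2}\rangle=\overline{D_1+D_2}$ has dimension $n-2$, forcing $\dim(\overline{D_1}\cap\overline{D_2})=0$). But the span of two such $(r-1)$-planes is only $(n-2)$-dimensional, which equals $\dim S$ and does \emph{not} by itself force singularity. Your phrase ``together with the local structure coming from moving $D$ within the pencil'' is exactly where the missing dimension must come from, and you leave it vague.

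The paper closes this gap with a sharper observation: the embedded tangent space at $\xi$ contains $\overline{D}$ for \emph{every} $D\in\fd(\xi)$, hence contains every point of every such $D$. Now the pencil $\fd(\xi)$ \emph{covers $E$}: for any $p\in E$, the linear functional $s\mapsto s(p)$ on the $2$-dimensional space $\CC s_1+\CC s_2$ has nontrivial kernel, so some $D=(s)_0$ in the pencil contains $p$ (uniquely, by \Cref{lem.fd.xi}\cref{item.fd.xi.disj}). Therefore $E$ lies in the embedded tangent space; since $E$ spans $\PP_\cL$, the embedded tangent space is all of $\PP_\cL$, of dimension $n-1>n-2$. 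This is the precise mechanism behind the \cite{copp} argument you cite, and it is what your proof sketch needs to make the first inclusion rigorous.
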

\begin{proof}
  The second claim follows from the first and \Cref{th:nsplitev}\cref{item:disju},
  so we will prove the first claim.

  The argument parallels \cite[p.~18, proof of Theorem, part 1]{copp}.

Let $D\in E_{\omega}^{[r]}$ be such that $\xi\in\overline{D}$. Since $\overline{D}$ is a linear space contained in $\Sec_{r,\omega}(E)$, it is tangent to $\Sec_{r,\omega}(E)-\Sec_{r-1}(E)$ at $\xi$. Therefore, $\overline{D}$ is contained in the embedded tangent space to $\Sec_{r,\omega}(E)-\Sec_{r-1}(E)$ at $\xi$ (see, for example, \cite[p.~17, Notation]{copp} for the definition of the embedded tangent space). In particular, $D$ itself is contained in this embedded tangent space. But there is a $\PP^1$ of such $D$'s by the previous lemma and every point in $E$ belongs to one such divisor (in fact, a unique one by disjointness) 
so $E$ is contained in the embedded tangent space. Since $E$ is not contained in any hyperplane in $\PP_{\cL}$, the embedded tangent space is $\PP_{\cL}$. Hence $\Sec_{r,\omega}(E)-\Sec_{r-1}(E)$ is singular at $\xi$, proving that $L(\cL_{\omega}\oplus \cL_{\omega})$ is contained in the singular locus of $\Sec_{r,\omega}(E)-\Sec_{r-1}(E)$.

The complement of $L(\cL_{\omega}\oplus \cL_{\omega})$ is $L(\cE_{\omega})$, which is smooth by \cref{th:smth}\cref{item:smth}.
 The singular locus of $\Sec_{r,\omega}(E)-\Sec_{r-1}(E)$ is  therefore $L(\cL_{\omega}\oplus \cL_{\omega})$.
\end{proof}

\section{The $L(\cE)$'s are the symplectic leaves}
\label{sect.symp.leaves}

The symplectic leaves for $(\PP_\cL,\Pi)$ are the members of the unique partition of $\PP_\cL$ into 
connected immersed submanifolds $Y$ such that $T_\xi Y=$ the image of $\Pi_\xi: T_\xi^*\PP_\cL \longrightarrow T_\xi \PP_\cL$ for every $\xi \in Y$.   
Since the $L(\cE)$'s are smooth and (Zariski-)locally closed, they are connected immersed submanifolds of $\PP_\cL$. 

We begin with descriptions of $T_\xi \PP_\cL$ and $T_\xi^* \PP_\cL$ that are adapted to our earlier analysis and notation.
Although $\Pi_\xi:T_\xi^{*} \PP_\cL \to T_\xi \PP_\cL$  is described in \cite[\S2.1, p.~3]{HP3} we need a slightly different description.

\subsection{The tangent and cotangent spaces $T_\xi \PP_\cL$ and $T_\xi^* \PP_\cL$ at a point $\xi \in \PP_\cL$}
Let
\begin{equation*}
\GG \; :=\; \GG(n-1,\Hom(\cO_E,\cL))
\end{equation*}
be the Grassmannian of codimension-one subspaces of $\Hom(\cO_E,\cL)$. 
We will consider the isomorphism 
\begin{equation*}
F:\PP_\cL\to \GG, \qquad  F(\xi)=\xi^\perp,
\end{equation*}
given by
\begin{align}
\PP_{\cL}   &   \; = \;   \GG(1,\Ext^1(\cL,\cO_E)) \;   = \; \GG(1,\Hom(\cO_E,\cL)^*)  \; \stackrel{\sim}{\longrightarrow}   \;   \GG 
\label{eq:identifications}
\\
&     \phantom{xxxxxxxxxxx} \CC\xi   \phantom{xixxx}  =  \phantom{xxx} \im(\pi_*)^\perp   \phantom{xxixxx} \xmapsto{\quad \quad}   \phantom{i} \im(\pi_*)=\xi^\perp,
\notag
  \end{align}
 and its differential   $dF_\xi:T_\xi \PP_\cL \to T_{\xi^\perp} \GG$.

\begin{lemma}\label{lem.T.xi.PL}
Let $\xi \in \PP_\cL$ be the isomorphism class of a non-split extension 
\begin{equation*} 
    \xymatrix{
    0 \ar[r] &  \cO_E \ar[r]^f & \cE  \ar[r]^\pi \ar[r] & \cL  \ar[r] &  0.
    }
  \end{equation*}
The tangent spaces to $\PP_\cL$ at $\xi$  and to $\GG$ at $\xi^\perp$ are  
   \begin{align}
    T_\xi \PP_\cL  & \;=\; \Hom\big( \CC \xi, \Ext^1(\cL,\cO_E)/\CC \xi \big)   \qquad \text{and} 
     \label{eq:T.xi.0}
   \\
T_{\xi^\perp}\GG    &  \;  = \;     \Hom\big(\xi^\perp,\Hom(\cO_E,\cL)/\xi^\perp\big) 
    \label{eq:T.xi.1}
    \\
  & \; \cong \;  \big\{ \a: \Hom(\cO_E,\cE) \to  \Hom(\cO_E,\cL)/{\xi^\perp} \; | \; \a(f)=0\big\}.
  \label{eq:T.xi.2}
  \end{align}
 \end{lemma}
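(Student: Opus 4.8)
\textbf{Plan of proof for \Cref{lem.T.xi.PL}.}

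The strategy is to prove the three displayed identities in turn, with \cref{eq:T.xi.0} and \cref{eq:T.xi.1} being instances of the standard description of the tangent space to a Grassmannian, and \cref{eq:T.xi.2} being an elementary linear-algebra reformulation using the exact sequence obtained by applying $\Hom(\cO_E,-)$ to $\xi$.

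\emph{Step 1: the tangent space to a Grassmannian.} Recall the well-known fact (e.g.\ \cite[Lecture~16]{H92} or \cite[\S 3.2.4]{3264}) that if $W$ is a finite-dimensional vector space and $U\subseteq W$ is a subspace, then the tangent space to the Grassmannian $\GG(\dim U, W)$ at the point $[U]$ is canonically $\Hom(U, W/U)$. Applying this with $W=\Ext^1(\cL,\cO_E)$ and $U=\CC\xi$ (so that $\GG(1,W)=\PP_\cL$ after the identification $\PP_\cL = \GG(1,\Ext^1(\cL,\cO_E))$) gives \cref{eq:T.xi.0}. Applying it with $W=\Hom(\cO_E,\cL)$ and $U=\xi^\perp$ (which has codimension one, by the Serre-duality discussion in \cref{sect.SD}, and equals $\im(\pi_*)$ by \cref{lem.xi.perp}) gives \cref{eq:T.xi.1}. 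One should remark that the isomorphism $F:\PP_\cL\to\GG$ of \cref{eq:identifications} identifies these two descriptions via $dF_\xi$; this is automatic from naturality of the Grassmannian tangent-space identification under the linear isomorphism $\Ext^1(\cL,\cO_E)\cong\Hom(\cO_E,\cL)^*$ of \cref{eq:SD.identification.2}, together with the canonical isomorphism $\GG(1,V^*)\cong\GG(\dim V-1,V)$ sending a line to its annihilator.

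\emph{Step 2: rewriting $T_{\xi^\perp}\GG$ via the exact sequence for $\xi$.} Here is the step that requires a small argument, and I expect it to be the only non-formal part. Apply $\Hom(\cO_E,-)$ to the non-split extension $\xi$ to get the exact sequence
\begin{equation*}
\xymatrix{
0\ar[r] & \Hom(\cO_E,\cO_E) \ar[r]^-{f_*} & \Hom(\cO_E,\cE) \ar[r]^-{\pi_*} & \Hom(\cO_E,\cL) \ar[r]^-{\delta} & \Ext^1(\cO_E,\cO_E) \ar[r] & 0,
}
\end{equation*}
where the right-hand term vanishes onto $0=\Ext^1(\cO_E,\cE)$ as in the proof of \cref{lem.xi.perp}; in particular $\pi_*$ has image $\xi^\perp$ and $\Hom(\cO_E,\cE)/\ker(\pi_*)\cong\xi^\perp$. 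Now, a homomorphism $\beta:\xi^\perp\to\Hom(\cO_E,\cL)/\xi^\perp$ pulls back along the surjection $\pi_*:\Hom(\cO_E,\cE)\twoheadrightarrow\xi^\perp$ to a homomorphism $\alpha:=\beta\circ\pi_*:\Hom(\cO_E,\cE)\to\Hom(\cO_E,\cL)/\xi^\perp$, and since $f=f_*(\id_{\cO_E})\in\ker(\pi_*)$ we have $\alpha(f)=0$. Conversely, any $\alpha:\Hom(\cO_E,\cE)\to\Hom(\cO_E,\cL)/\xi^\perp$ with $\alpha(f)=0$ kills $f_*(\Hom(\cO_E,\cO_E))=\ker(\pi_*)$ — because $\Hom(\cO_E,\cO_E)=\CC$ is spanned by $\id_{\cO_E}$, so $\ker(\pi_*)=\CC f$ — hence factors uniquely through $\pi_*$ as $\alpha=\beta\circ\pi_*$ for a unique $\beta:\xi^\perp\to\Hom(\cO_E,\cL)/\xi^\perp$. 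This establishes the claimed bijection between the two descriptions \cref{eq:T.xi.1} and \cref{eq:T.xi.2}, and it is clearly linear in $\beta$, hence an isomorphism of vector spaces.

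\emph{Remarks on obstacles.} There is essentially no serious obstacle: the whole lemma is bookkeeping around the Serre-duality identifications of \cref{sect.SD} and the standard Grassmannian tangent-space formula. The one point to be careful about is matching conventions — that $\xi^\perp=\im(\pi_*)$ (from \cref{lem.xi.perp}) and that the point of $\GG$ corresponding to $\xi$ under $F$ is indeed $\xi^\perp$ and not its annihilator or dual — so the proof should explicitly invoke \cref{lem.xi.perp} and \cref{eq:identifications} at the right moments. A final sentence noting that $\delta$ has image all of $\Ext^1(\cO_E,\cO_E)\cong\CC$, so $\xi^\perp$ is genuinely a hyperplane and the codimension bookkeeping in Step 1 is correct, would round things off.
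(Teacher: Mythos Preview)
Your proposal is correct and follows essentially the same approach as the paper: both invoke the standard Grassmannian tangent-space identification $T_{[U]}\GG(k,W)=\Hom(U,W/U)$ for \cref{eq:T.xi.0} and \cref{eq:T.xi.1}, and both establish \cref{eq:T.xi.2} by noting that $\xi^\perp=\im(\pi_*)\cong\Hom(\cO_E,\cE)/\CC f$ via the exact sequence obtained from $\Hom(\cO_E,-)$ applied to $\xi$. Your version is slightly more explicit about the bijection $\beta\leftrightarrow\alpha=\beta\circ\pi_*$ and about convention-matching with $F$ and \cref{lem.xi.perp}, but the substance is the same.
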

 \begin{proof}  
The equalities in \cref{eq:T.xi.0}  and  \Cref{eq:T.xi.1}  follow from the proof of \cite[Thm.~3.5, p.~96]{3264} (see, for example, the sentence just after its proof).

To establish the isomorphism in \Cref{eq:T.xi.2}, we consider the exact sequence
   \begin{equation*}
\label{eq:xi.perp.2}
\xymatrix{
0 \ar[r] &  \Hom(\cO_E,\cO_E) \ar[r]^-{f_*}  & \Hom(\cO_E,\cE) \ar[r]^-{\pi_*}  &  \Hom(\cO_E,\cL).
}
  \end{equation*}
  where $f_*(a)=fa$ and $\pi_*(b)=\pi b$. Since  
\begin{equation*}
 \im(\pi_*)   \; \cong \;  \frac{\Hom(\cO_E,\cE)}{\ker(\pi_*)}   \; = \;   \frac{\Hom(\cO_E,\cE)}{\im(f_*)} 
 \; = \;   \frac{\Hom(\cO_E,\cE)}{\CC f} \, ,
\end{equation*} 
we have
\begin{equation*}
  \Hom\big(\im(\pi_*),\Hom(\cO_E,\cL)/\im(\pi_*)\big) \; \cong \;   \Hom\bigg( \frac{\Hom(\cO_E,\cE)}{\CC f} \,  , \,  \frac{\Hom(\cO_E,\cL)}{\im(\pi_*)}  \bigg),
  \end{equation*} 
thus giving the isomorphism in \cref{eq:T.xi.2}.
\end{proof}

\begin{lemma}
  \label{lem.T*.xi.PL}
Suppose  $\xi \in \PP_\cL$ is isomorphic to $  \xymatrix{     0 \ar[r] &  \cO_E \ar[r]^f & \cE  \ar[r]^\pi \ar[r] & \cL  \ar[r] &  0.     }$
  \begin{enumerate}
  \item\label{item:lem.T*.xi.PL-1} The cotangent space to $\PP_\cL$ at $\xi$ is $T^*_\xi \PP_\cL = \; \xi^\perp \subseteq \Hom(\cO_E,\cL)$.
  \item\label{item:lem.T*.xi.PL-2} The cotangent space to $\GG$ at $\xi^\perp = \im(\pi_*)$ is
    \begin{align}
      T^*_ {\xi^\perp} \GG &   \;=\;   \Hom\big(\Hom(\cO_E,\cL)/\xi^\perp, \xi^\perp \big)  
                             \notag
      \\
                           &   \; \cong \;   \{ \a:\Hom(\cO_E,\cL) \to \xi^\perp \; | \; \a(\xi^\perp)=0\}.  
                             \label{eq:T*.xi.1}
    \end{align}
  \item\label{item:lem.T*.xi.PL-3} Let $\d:\Hom(\cO_E,\cL) \to \Ext^1(\cO_E,\cO_E)$ be the connecting homomorphism, and fix $s \in (t\d)^{-1}(1)$.  The map $T^*_ {\xi^\perp} \GG \to T_\xi ^* \PP_\cL $, $\a \mapsto \a(s)$, that sends $\a$ in \cref{eq:T*.xi.1} to $\a(s)$ is an isomorphism.
  \end{enumerate} 
\end{lemma}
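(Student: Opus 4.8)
\textbf{Plan of proof for \Cref{lem.T*.xi.PL}.}
The three parts are of increasing depth but all reduce to unwinding the Serre-duality identifications of \cref{sect.SD} together with the standard description of tangent and cotangent spaces of a Grassmannian. For \cref{item:lem.T*.xi.PL-1}, the plan is to recall that under the identification $\PP_\cL = \PP\Ext^1(\cL,\cO_E) = \PP\Hom(\cO_E,\cL)^*$ (coming from \cref{eq:SD.identification.2}), the point $\xi$ corresponds to the line $\CC\xi \subseteq \Hom(\cO_E,\cL)^*$, and the cotangent space to a projective space $\PP V$ at a point $\CC v$ is canonically $(V/\CC v)^* = v^\perp \subseteq V^*$. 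Here $V = \Hom(\cO_E,\cL)^*$, so $V^* = \Hom(\cO_E,\cL)$ and the annihilator of $\CC\xi$ in $\Hom(\cO_E,\cL)$ is, by definition \cref{eq:xi.perp1}, \cref{defn.xi.perp2}, exactly $\xi^\perp$. This is a one-line identification once the dualities are in place.

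For \cref{item:lem.T*.xi.PL-2}, the plan is to invoke the standard fact (dual to the computation of $T_{\xi^\perp}\GG$ used in \Cref{lem.T.xi.PL}, cf.\ the proof of \cite[Thm.~3.5, p.~96]{3264}) that for a point $W \in \GG(k,V)$ one has $T_W^*\GG = \Hom(V/W, W)$. Applying this with $W = \xi^\perp$ and $V = \Hom(\cO_E,\cL)$ gives the displayed equality, and the isomorphism with $\{\a : \Hom(\cO_E,\cL)\to\xi^\perp \mid \a(\xi^\perp)=0\}$ is just the universal property of the quotient $\Hom(\cO_E,\cL)/\xi^\perp$ — a homomorphism out of the quotient is the same as a homomorphism out of $\Hom(\cO_E,\cL)$ killing $\xi^\perp$. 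Nothing here requires anything beyond \cref{eq:T.xi.1} and its dual.

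For \cref{item:lem.T*.xi.PL-3}, the point is that $dF_\xi : T_\xi\PP_\cL \to T_{\xi^\perp}\GG$ is an isomorphism (since $F$ is an isomorphism of varieties, \cref{eq:identifications}), hence so is its dual $T^*_{\xi^\perp}\GG \to T^*_\xi\PP_\cL$; the task is to identify that dual map with $\a\mapsto\a(s)$. Here $s$ is chosen so that $t(\d(s)) = 1$, i.e.\ $s$ maps to a generator of $\Ext^1(\cO_E,\cO_E)$ under the connecting map $\d$ in the long exact sequence obtained by applying $\Hom(\cO_E,-)$ to $\xi$; note such $s$ exists and $\d(s)\neq 0$ exactly because $\xi$ is non-split, and $s\notin\xi^\perp$ by \cref{lem.xi.perp}. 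The plan is: restricting $\a \in T^*_{\xi^\perp}\GG$ (viewed as a map $\Hom(\cO_E,\cL)\to\xi^\perp$ vanishing on $\xi^\perp$) along the line $\CC s$ lands in $\xi^\perp = T^*_\xi\PP_\cL$, and since $\Hom(\cO_E,\cL) = \xi^\perp \oplus \CC s$, the map $\a\mapsto\a(s)$ is a linear bijection onto $\xi^\perp$. One then checks this bijection is compatible with the dual of $dF_\xi$ under the identification \cref{eq:T.xi.0} of $T_\xi\PP_\cL$ with $\Hom(\CC\xi, \Ext^1(\cL,\cO_E)/\CC\xi)$; concretely, the pairing of a tangent vector at $\xi$ (a first-order deformation of the line $\CC\xi$, equivalently of the corresponding hyperplane $\xi^\perp$) with a cotangent vector $\a$ should come out as the natural evaluation, and tracing through the Serre-duality pairing \cref{eq:SD} shows the normalization $t(\d(s))=1$ is precisely what makes this an isometry rather than a nonzero scalar multiple.

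\textbf{Anticipated main obstacle.} Parts \cref{item:lem.T*.xi.PL-1} and \cref{item:lem.T*.xi.PL-2} are essentially bookkeeping. The real content is in \cref{item:lem.T*.xi.PL-3}: one must check that the map $\a\mapsto\a(s)$ is not merely \emph{an} isomorphism but \emph{the} canonical one dual to $dF_\xi$, and in particular is independent of the choice of $s \in (t\d)^{-1}(1)$ (different choices differ by an element of $\xi^\perp$, on which $\a$ vanishes — so this part is fine, but it should be remarked). The delicate point is matching the Serre-duality sign/scaling conventions across \cref{eq:SD}, \cref{eq:SD.identification}, \cref{eq:SD.identification.2} and the Grassmannian differential; I expect the cleanest route is to verify the claim after passing to the pairing $T_\xi\PP_\cL \times T^*_\xi\PP_\cL \to \CC$ directly, using the explicit pullback description of $\xi\cdot s$ from the proof of \cref{lem.xi.perp}, rather than trying to transport $dF_\xi$ abstractly.
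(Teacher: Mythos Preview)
Your plan for parts \cref{item:lem.T*.xi.PL-1} and \cref{item:lem.T*.xi.PL-2} is correct and matches the paper's proof essentially verbatim: both are immediate from dualizing the descriptions of $T_\xi\PP_\cL$ and $T_{\xi^\perp}\GG$ given in \Cref{lem.T.xi.PL}.

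For part \cref{item:lem.T*.xi.PL-3} you are over-reading the statement. The lemma only asserts that $\a\mapsto\a(s)$ is \emph{an} isomorphism of vector spaces, not that it is the transpose of $dF_\xi$ under the identifications already made. The paper's proof is accordingly one line: since $\dim\big(\Hom(\cO_E,\cL)/\xi^\perp\big)=1$, any choice of $s\notin\xi^\perp$ (and $s\in(t\d)^{-1}(1)$ is such a choice, because $\xi^\perp=\ker\d$ by \Cref{lem.xi.perp}) gives a splitting $\Hom(\cO_E,\cL)=\xi^\perp\oplus\CC s$, and evaluation at $s$ is then visibly a linear bijection from $\{\a:\Hom(\cO_E,\cL)\to\xi^\perp\mid\a(\xi^\perp)=0\}$ onto $\xi^\perp$. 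Your observation that the map is independent of the choice of $s\in(t\d)^{-1}(1)$ (since two choices differ by an element of $\xi^\perp$, where $\a$ vanishes) is correct and worth recording, but the compatibility with $dF_\xi$ and the Serre-duality normalization that you flag as the ``anticipated main obstacle'' is simply not part of what is being claimed or proved here.
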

\begin{proof}
{\bf \Cref{item:lem.T*.xi.PL-1}}  The cotangent space $T^*_xM$ at a point $x$ on a manifold $M$ is the dual to $T_xM$, so
    \begin{align*}
      T_\xi^*\PP_\cL & \;=\;  \Hom \! \big( \CC \xi, \Ext^1(\cL,\cO_E)/\CC \xi \big)^* 
      \\
                     &\; = \;  \Hom \! \big(\!\Ext^1(\cL,\cO_E)/\CC \xi,  \CC \xi \big) 
      \\
                     & \; = \;  \Hom\!\big(\!\Hom(\cO_E,\cL)^*/\CC \xi,  \CC \xi \big). 
    \end{align*}
    The last of these spaces is the kernel of the map $\Hom\!\big(\!\Hom(\cO_E,\cL)^*, \CC \xi \big)  \to \Hom(\CC\xi,\CC \xi)$, $\rho \mapsto \rho|_{\CC \xi}$, 
    which is $\xi^\perp$.

{\bf \Cref{item:lem.T*.xi.PL-2}} Since  $T_{\xi^\perp} \GG=\Hom\!\big(\xi^\perp,\Hom(\cO_E,\cL)/\xi^\perp \big)$,
    \begin{align*}
      T_{\xi^\perp}^* \GG & \;=\; \Hom\!\big(\xi^\perp,\Hom(\cO_E,\cL)/\xi^\perp \big)^* 
      \\
                          &\; = \;  \Hom\!\big(\Hom(\cO_E,\cL)/\xi^\perp, \xi^\perp \big),
    \end{align*}
    as claimed.

{\bf \Cref{item:lem.T*.xi.PL-3}}  Since $\dim(\Hom(\cO_E,\cL)/\xi^\perp)=1$, the last of these spaces is isomorphic  to $\xi^\perp$; one such isomorphism is implemented by fixing a
    point $s \in (t\d)^{-1}(1) \subseteq \Hom(\cO_E,\cL)$ and sending each $\a:\Hom(\cO_E,\cL) \to \xi^\perp$ that vanishes on $\xi^\perp$ 
    to $\a(s) \in \xi^\perp$.
\end{proof}

Fix $\xi \in \PP_\cL$ and a representative 
 \begin{equation} \label{eq:dpsi}
    \xi: \quad 
    \xymatrix{
    0 \ar[r] &  \cO_E \ar[r]^f & \cE  \ar[r]^\pi \ar[r] & \cL  \ar[r] &  0
    }
  \end{equation}
for it.  Thus $f$ is in $X(\cE)$ and $\pi \in {\rm Epi}(\cE,\cL)$ is the corresponding epimorphism.

We will consider the differentials of the vertical maps in the commutative diagram 
\begin{equation}
\label{change.psi.to.phi}
\xymatrix{
X(\cE) \ar[rr]^\tau \ar[d]_\Psi  && {\rm Epi}(\cE,\cL) \ar[dll] | -{\Psi\tau^{-1}}  \ar[d]^\Phi
\\
 \PP_\cL  \ar[rr]_F && \GG,             
}
\end{equation}  
where $\GG=\GG(n-1,\Hom(\cO_E,\cL))$, $\Psi:=\Psi_\cE$ (defined in \cref{cor.defn.Psi.cE}), 
$\tau$ is the restriction of the linear isomorphism $\theta \nu_*:\Hom(\cO_E,\cE) \to \Hom(\cE,\cL)$ in \cref{re:kercoker}, 
and $\Phi$ is the map $\pi \mapsto \im(\pi_*)=\xi^\perp$. By definition, $\Psi(f)=\xi$ so the image of $\Psi$ is $L(\cE)$.
 
For brevity, we will sometimes write  $X$ rather than $X(\cE)$.

The tangent spaces at points in the spaces in \cref{change.psi.to.phi} are
\begin{align*}
 T_fX(\cE)  & \; = \; \Hom(\cO_E,\cE)
 \\
 T_\xi \PP_\cL  & \;=\; \Hom\big( \CC \xi, \Ext^1(\cL,\cO_E)/\CC \xi \big), 
 \\   
 T_{\xi^\perp} \GG      &  \; =  \;     \Hom\big(\xi^\perp,\Hom(\cO_E,\cL)/\xi^\perp\big),  \qquad \text{and} 
\\
T_\pi {\rm Epi}(\cE,\cL) & \; = \; \Hom(\cE,\cL),
\end{align*}
the first of these because $X(\cE)$ is a dense open subset of $\Hom(\cO_E,\cE)$, and 
the last of these because ${\rm Epi}(\cE,\cL)$ is a non-empty Zariski-open subset of  $\Hom(\cE,\cL)$.

 Our next result, \cref{le:dPhi}, gives an explicit description of the differential $d\Phi_\pi$. 
 Its proof is expressed in the language of {\it infinitesimal deformations}. See \cite[Exers.~II.2.8, II.8.6, II.8.7, III.4.10]{hrt} for some general background, 
and \cite[pp.~18--20]{copp} for an application of these techniques very close in spirit to what we need. 
For the convenience of some readers, and these authors, we review some details about tangent spaces to Grassmannians in 
\Cref{ssect.T_x.GG}.

The case of interest concerns a point $\xi \in \PP_\cL$  that is represented by \Cref{eq:dpsi}.
The differentials of the morphisms in \cref{change.psi.to.phi}  give linear maps
\begin{equation*}
\label{diffl.change.psi.to.phi}
\xymatrix{
\Hom(\cO_E,\cE) \ar[rr]^{d\tau_f} \ar[d]_{d\Psi_f}  && \Hom(\cE,\cL)  \ar[d]^{d\Phi_\pi}
\\
\Hom\big( \CC \xi, \Ext^1(\cL,\cO_E)/\CC \xi \big)  \ar[rr]_{dF_\xi}  && \Hom\big(\xi^\perp,\Hom(\cO_E,\cL)/\xi^\perp\big).    
}         
\end{equation*}  
We will also use the following fact: since $\xi^\perp=\im(\pi_*)$, there is an  exact sequence 
\begin{equation}
\label{eq:ses.xi.perp}
\xymatrix{
0 \ar[r] & \Hom(\cO_E,\cO_E) \ar[r]^{f_*} & \Hom(\cO_E,\cE) \ar[r]^>>>>>{\pi_*} & \xi^\perp    \ar[r] & 0
}
\end{equation}
where $f_*(a)=fa$. 
 
\begin{lemma}
\label{le:dPhi}
Let $\varphi \in \Hom(\cE,\cL)=T_\pi {\rm Epi}(\cE,\cL)$.
Then $( d\Phi_\pi)(\varphi)$ belongs to
\begin{align}
T_{\xi^\perp} \GG   &\;=\;   \Hom\!\big(\xi^\perp,\Hom(\cO_E,\cL)/\xi^\perp\big) 
\notag
\\
& \; \cong \; \{ \a : \Hom(\cO_E,\cE) \to \Hom(\cO_E,\cL)/\xi^\perp \; | \; \a(f)=0\}
\label{isom.T.xi.perp.GG}
\end{align}
 has two interpretations:
 \begin{enumerate}
 \item\label{item:dPhi.1}  as a map $\xi^\perp \to \Hom(\cO_E,\cL)/\xi^\perp$,
   \begin{equation*}
  d\Phi_\pi(\varphi)(\pi s) = \varphi s +\xi^\perp
   \end{equation*}
   for all $\pi s \in \xi^\perp$;
   equivalently,  if we first fix a   linear map  $\mu:  \xi^\perp   \to \Hom(\cO_E,\cE)$  such that $\pi_* \mu=\id_{\xi^\perp}$, then $ d\Phi_\pi(\varphi)(x) = \varphi \circ \mu(x) + \xi^\perp$  for all $x \in \xi^\perp$;
\item\label{item:dPhi.2} 
 as a map $\Hom(\cO_E,\cE) \to \Hom(\cO_E,\cL)/\xi^\perp$ that vanishes at $f$, 
  \begin{equation*}
  d\Phi_\pi(\varphi)(s) \;=\;   \varphi s+ \xi^\perp;
 \end{equation*}
 i.e., $d\Phi_\pi(\varphi)$ is the composition
 \begin{equation}
 \label{eq:d.psi.f.varphi}
 \xymatrix{ 
 \Hom(\cO_E,\cE) \ar[r]^>>>>>{\varphi_*} &  \Hom(\cO_E,\cL) \ar[r]^>>>>>>\gamma &   \Hom(\cO_E,\cL)/\xi^\perp
 }
 \end{equation}
  where $\gamma(v) = v+\xi^\perp$ and $\varphi_*(a)=\varphi a$.
\end{enumerate}
 \end{lemma}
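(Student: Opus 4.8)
\textbf{Proof proposal for \Cref{le:dPhi}.}
The plan is to compute the differential $d\Phi_\pi$ directly from the description of tangent spaces to the Grassmannian $\GG=\GG(n-1,\Hom(\cO_E,\cL))$ and of the map $\Phi:{\rm Epi}(\cE,\cL)\to\GG$, $\pi\mapsto\im(\pi_*)$. First I would recall (from \Cref{ssect.T_x.GG}) that if $W\subseteq V$ is a codimension-one subspace, then a tangent vector at $W\in\GG(\dim W,V)$ is represented by a first-order deformation $W_\epsilon=\{w+\epsilon\psi(w)\mid w\in W\}$ of $W$ inside $V[\epsilon]/(\epsilon^2)$, and the assignment $\psi\mapsto(w\mapsto\psi(w)+W)$ identifies this tangent space with $\Hom(W,V/W)$. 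Applying this with $V=\Hom(\cO_E,\cL)$ and $W=\xi^\perp=\im(\pi_*)$ gives the first displayed identification in the statement, and the isomorphism \cref{isom.T.xi.perp.GG} is exactly the one already recorded in \cref{eq:T.xi.2} of \Cref{lem.T.xi.PL}, transported along the exact sequence \cref{eq:ses.xi.perp} (with $f_*$ identifying $\CC f$ with the kernel of $\pi_*$).

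Next I would take a tangent vector $\varphi\in\Hom(\cE,\cL)=T_\pi{\rm Epi}(\cE,\cL)$ and form the first-order curve $\pi+\epsilon\varphi:\cE\to\cL[\epsilon]$; since ${\rm Epi}(\cE,\cL)$ is open in $\Hom(\cE,\cL)$ this is a legitimate tangent vector, and $\pi+\epsilon\varphi$ is still epic mod $\epsilon^2$. Its image under $\Phi$ is $\im((\pi+\epsilon\varphi)_*)\subseteq\Hom(\cO_E,\cL)[\epsilon]$, i.e. the deformation of $\xi^\perp$ spanned by the elements $(\pi+\epsilon\varphi)_*(s)=\pi s+\epsilon\,\varphi s$ as $s$ ranges over $\Hom(\cO_E,\cE)$. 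Writing a general element of $\xi^\perp$ as $\pi s$ for some $s\in\Hom(\cO_E,\cE)$ (possible since $\pi_*$ is surjective onto $\xi^\perp$), the deformation sends $\pi s\mapsto\pi s+\epsilon\,\varphi s$, so by the dictionary of the previous paragraph $d\Phi_\pi(\varphi)$ is the map $\pi s\mapsto\varphi s+\xi^\perp$. This is precisely formula \cref{item:dPhi.1}; the ``equivalently'' clause is just the observation that choosing a linear splitting $\mu:\xi^\perp\to\Hom(\cO_E,\cE)$ of $\pi_*$ (so $\pi\circ\mu(x)=x$, hence $x=\pi s$ with $s=\mu(x)$) rewrites the same formula as $x\mapsto\varphi\circ\mu(x)+\xi^\perp$.

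For \cref{item:dPhi.2} I would then transport $d\Phi_\pi(\varphi)$ across the isomorphism \cref{isom.T.xi.perp.GG}: under that isomorphism a map $\alpha:\xi^\perp\to\Hom(\cO_E,\cL)/\xi^\perp$ corresponds to $\alpha\circ\pi_*:\Hom(\cO_E,\cE)\to\Hom(\cO_E,\cL)/\xi^\perp$, which automatically vanishes on $\ker(\pi_*)=\CC f$. Composing the formula from \cref{item:dPhi.1} with $\pi_*$ gives $s\mapsto\varphi s+\xi^\perp$ directly, which is \cref{eq:d.psi.f.varphi} with $\varphi_*(a)=\varphi a$ and $\gamma$ the quotient map; well-definedness (independence of the choice of $s$ with $\pi s$ fixed, equivalently vanishing at $f$) follows because $\varphi(f\cdot a)=(\varphi f)a$ lies in $\xi^\perp=\im(\pi_*)$ — indeed $\varphi f\in\Hom(\cO_E,\cL)=\xi^\perp$ by \cref{lem.xi.perp} since $\Hom(\cO_E,\cL)$ has dimension $n$ and so does $\xi^\perp$ here, wait — more carefully, $\varphi\circ f\in\Hom(\cO_E,\cL)$ and one checks $\varphi f\in\im(\pi_*)$ using that $\pi f=0$ together with \cref{lem.varphi.f}, which gives $\Hom(\cE,\cL)f=\pi\Hom(\cO_E,\cE)=\xi^\perp$. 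I expect the only genuine subtlety to be this last compatibility check — confirming that the two displayed formulas are well defined and that the identification \cref{isom.T.xi.perp.GG} is the correct one to use — everything else being a routine unwinding of the first-order deformation describing a tangent vector to a Grassmannian; the computation itself is short once the deformation-theoretic set-up is in place.
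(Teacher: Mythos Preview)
Your proposal is correct and follows essentially the same approach as the paper's proof: both compute $d\Phi_\pi$ by passing to the $\CC[\varepsilon]$-thickening $\pi+\varepsilon\varphi$, identifying $\im(\pi+\varepsilon\varphi)_*$ as the first-order deformation of $\xi^\perp$, and reading off the tangent vector via the dictionary of \Cref{ssect.T_x.GG}; the well-definedness check using \Cref{lem.varphi.f} (that $\varphi f\in\xi^\perp$) is also exactly what the paper does. Your momentary hesitation about $\varphi f$ resolves correctly, and the only cosmetic difference is that the paper writes the deformed subspace as $\{\pi s+\varepsilon(\pi s'+\varphi s)\}$ with both $s,s'$ ranging over $\Hom(\cO_E,\cE)$, making the $\varepsilon\cdot\xi^\perp$ part explicit, whereas you suppress the $\pi s'$ terms (harmlessly, since they vanish in the quotient).
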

\begin{proof}
We use the ideas and notation in \Cref{ssect.T_x.GG}.

When we write $T_\pi{\rm Epi}(\cE,\cL) = \Hom(\cE,\cL)$ we are identifying $\varphi$, which belongs to $\Hom(\cE,\cL)$,
with the homomorphism $\pi+\varepsilon \varphi: \cE \otimes \CC[\varepsilon] \to \cL \otimes \CC[\varepsilon]$ 
over the ring $\CC[\varepsilon]$ of dual numbers. 
By  \Cref{ssect.T_x.GG}, the $\varepsilon$-thickening of the map $\Phi$ in \cref{change.psi.to.phi}  is the map  $   \widetilde{\Phi}$ given by the formula
  \begin{align*}
    \widetilde{\Phi}(\pi+\varepsilon\varphi )  &  \; :=\;     \im (\pi+\varepsilon \varphi)_* 
    \\
    & \phantom{:} \;=\;   \{(\pi+\varepsilon\varphi)\circ (s+\varepsilon s') = \pi s+\varepsilon(\pi s'+\varphi s) \; | \;  s,s'\in \Hom(\cO_E,\cE)\}.
  \end{align*}
 The image  $\im (\pi+\varepsilon \varphi)_* $ is a $\CC[\varepsilon]$-point of $\GG$.
 The map $\widetilde{\Phi}$ specializes back to $\Phi$.

{\bf \Cref{item:dPhi.1}} 
 The discussion in \Cref{ssect.T_x.GG} 
 shows that, regarded as a map  $\xi^\perp \to \Hom(\cO_E,\cL)/\xi^\perp$, the tangent vector 
  $d\Phi_{\pi}(\varphi)$ is the map $\pi s \mapsto y+\xi^\perp$ where $y \in  \Hom(\cO_E,\cL)$ is the unique element modulo $\xi^\perp$ such that 
  \begin{equation*}
    \pi s+\varepsilon y \in \im(\pi+\varepsilon \varphi)_*.
  \end{equation*}  
  Since $\pi s+\varepsilon \varphi s \in \im(\pi+\varepsilon \varphi)_*$, we may take $y= \varphi s$; thus $d\Phi_\pi(\varphi)(\pi s) = \varphi s +\xi^\perp$.
  This gives the first description of  $d\Phi_\pi(\varphi)$ in \Cref{item:dPhi.1}. 
  
  To verify the second description of $d\Phi_\pi(\varphi)$ in \Cref{item:dPhi.1}, use the fact that every $x$ in $\xi^\perp$ equals $\pi s$ for some $s \in \Hom(\cO_E,\cE)$, then observe that 
  $\pi_*(s)= \pi s = \pi_*( \mu(\pi s))$ whence
$s-\mu(x)=s - \mu(\pi s) \in \ker(\pi_*)= \CC f$; then, since $\varphi f \in \xi^\perp$, by \cref{lem.varphi.f}, $\varphi s - \varphi \circ \mu(x) \in \xi^\perp$; i.e., $\varphi s +\xi^\perp = \varphi \circ \mu(x) +\xi^\perp$.\footnote{It is implicit in this paragraph that $\varphi \circ \mu(x) + \xi^\perp$ does not depend on the choice of $\mu$. 
One can also see this  directly by the following argument: 
if $\mu_1,\mu_2 : \xi^\perp \to \Hom(\cO_E,\cE)$ are such that $\pi_* \mu_1= \pi_* \mu_2=\id_{\xi^\perp}$, then $(\mu_1-\mu_2)(x) \in \ker(\pi_*)=\CC f$ for all 
$x \in \xi^\perp$;  hence $\varphi \circ (\mu_1-\mu_2)(x) \in \CC \varphi  f \subseteq \im(\pi_*) \subseteq  \xi^\perp$; thus 
$\varphi \circ \mu_1(x) + \xi^\perp = \varphi \circ \mu_2(x) + \xi^\perp$.}

{\bf \Cref{item:dPhi.2}} 
As an element belonging to the set in \cref{isom.T.xi.perp.GG} the tangent vector $d\Phi_\pi(\varphi)$ is the composition 
\begin{equation*}
\Hom(\cO_E,\cE) \to \Hom(\cO_E,\cE)/\CC f \stackrel{\sim}{\longrightarrow} \xi^\perp \to \Hom(\cO_E,\cL)/\xi^\perp, \quad s \mapsto s +\CC f \mapsto \pi s \mapsto \varphi s +\xi^\perp;
\end{equation*}   
i.e., $s \mapsto \varphi s +\xi^\perp =  \gamma(\varphi s) =( \gamma \circ \varphi_*)(s)$, which is the map in \Cref{eq:d.psi.f.varphi}. The proof is complete.\footnote{Since
the map displayed on the previous line factors through $\Hom(\cO_E,\cE)/\CC f$ it sends $f$ to 0, i.e., \
$\varphi f \in \xi^\perp$, because $\pi f=0$. It follows that $\varphi f=\pi s'$ for some $s' \in \Hom(\cO_E,\cE)$.}
\end{proof}

\begin{proposition}\label{pr:whichker}
\label{lem.xi.pi.0}
If $\varphi \in \Hom(\cE,\cL)$, then 
\begin{equation*}
\xi \cdot \varphi =0  \; \Longleftrightarrow \;  d\Phi_\pi(\varphi) =0 \;  \Longleftrightarrow \;  \varphi \in \pi \End(\cE).
\end{equation*}
Equivalently, 
\begin{equation} 
\label{eq:pi.End.cE}
\{ \varphi \in \Hom(\cE,\cL)  \; | \; \xi \cdot \varphi =0\}  \;=\; \pi \End(\cE) \;=\;  \ker(d\Phi_\pi).
\end{equation} 

\end{proposition}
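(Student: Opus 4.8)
The statement has three equivalent conditions on $\varphi \in \Hom(\cE,\cL)$, so I would prove the cycle of implications $\xi\cdot\varphi = 0 \Rightarrow \varphi \in \pi\End(\cE) \Rightarrow d\Phi_\pi(\varphi) = 0 \Rightarrow \xi\cdot\varphi = 0$, or some convenient rearrangement. The cleanest route is probably: first identify $\{\varphi : \xi\cdot\varphi = 0\}$ with $\pi\End(\cE)$ by a direct homological argument, then identify $\pi\End(\cE)$ with $\ker(d\Phi_\pi)$ using the explicit formula for $d\Phi_\pi$ from \cref{le:dPhi}.

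\textbf{Step 1: $\{\varphi \in \Hom(\cE,\cL) : \xi\cdot\varphi = 0\} = \pi\End(\cE)$.} Here $\xi\cdot\varphi$ is the pushforward of the extension $\xi = [0\to\cO_E\xrightarrow{f}\cE\xrightarrow{\pi}\cL\to 0]$ along $\varphi:\cE\to\cL$, living in $\Ext^1(\cL,\cL)$. Apply $\Hom(\cL,-)$... actually, more directly: $\xi\cdot\varphi = 0$ means the pushout sequence splits, equivalently $\varphi$ extends to a map $\cE\to\cO_E$ along $f$ — no, I should be careful about which variance. The pushforward $\varphi_*\xi$ fits in a pushout diagram with top row $0\to\cL\to (\cE\oplus\cL)/\cO_E\to\cL\to 0$ and a vertical map $\cE\to(\cE\oplus\cL)/\cO_E$; this splits iff there is $\psi:\cE\to\cL$ with the appropriate compatibility, which unwinds to: $\xi\cdot\varphi=0$ iff $\varphi$ factors as $\varphi = \pi'\circ(\text{something})$... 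The cleanest formulation: applying $\Hom(-,\cL)$ to $\xi$ gives the exact sequence $\Hom(\cE,\cL)\xrightarrow{f^*}\Hom(\cO_E,\cL)\xrightarrow{\delta}\Ext^1(\cL,\cL)$, and by general homological algebra $\delta(f^*(\text{--}))$... no — I want the connecting map in $\Hom(\cL,-)$ applied to $\xi$, which is $\Hom(\cL,\cE)\xrightarrow{\pi_\circ}\Hom(\cL,\cL)\xrightarrow{\partial}\Ext^1(\cL,\cO_E)$, with $\partial(\id_\cL) = \xi$. For the pushforward: $\varphi_*\xi = 0$ iff $\varphi$ lifts through $\pi$, i.e. iff there exists $\psi \in \End(\cE)$ with $\varphi = \pi\psi$; this is precisely the statement that $\varphi$ lies in the image of $\pi_\circ:\End(\cE)\to\Hom(\cE,\cL)$, $\psi\mapsto\pi\psi$, which is the content of the long exact sequence obtained by applying $\Hom(\cE,-)$ to $\xi$: $\End(\cE)\xrightarrow{\pi\circ-}\Hom(\cE,\cL)\xrightarrow{\xi\cdot-}\Ext^1(\cE,\cO_E)$. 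This gives the first equality cleanly.

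\textbf{Step 2: $\pi\End(\cE) = \ker(d\Phi_\pi)$.} Using \cref{le:dPhi}\cref{item:dPhi.2}, $d\Phi_\pi(\varphi)$ is the composition $\Hom(\cO_E,\cE)\xrightarrow{\varphi_*}\Hom(\cO_E,\cL)\xrightarrow{\gamma}\Hom(\cO_E,\cL)/\xi^\perp$. So $d\Phi_\pi(\varphi) = 0$ iff $\varphi_*(\Hom(\cO_E,\cE)) \subseteq \xi^\perp = \pi\Hom(\cO_E,\cE)$ (using \cref{lem.xi.perp}), i.e. iff $\varphi\Hom(\cO_E,\cE)\subseteq \pi\Hom(\cO_E,\cE)$. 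The inclusion $\pi\End(\cE)\subseteq\ker(d\Phi_\pi)$ is immediate: if $\varphi = \pi\psi$ then $\varphi b = \pi(\psi b)\in\pi\Hom(\cO_E,\cE)$ for all $b$. For the reverse, suppose $\varphi\Hom(\cO_E,\cE)\subseteq\pi\Hom(\cO_E,\cE)$; I need to produce $\psi\in\End(\cE)$ with $\varphi = \pi\psi$, i.e. lift $\varphi$ through $\pi$. Apply $\Hom(\cE,-)$ to $\xi$: the obstruction to lifting lies in $\Ext^1(\cE,\cO_E)$, namely the obstruction is $\xi\cdot\varphi$. So I'm back to Step 1 — which is fine, this just means I should organize the proof as Step 1 (identifying the first set with $\pi\End(\cE)$) plus the easy inclusion $\pi\End(\cE)\subseteq\ker(d\Phi_\pi)$ plus the observation that $\ker(d\Phi_\pi)\subseteq\{\varphi:\xi\cdot\varphi=0\}$. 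That last containment needs care: I'd argue that $\varphi\Hom(\cO_E,\cE)\subseteq\pi\Hom(\cO_E,\cE) = \Hom(\cE,\cL)f$ by \cref{lem.varphi.f}, so in particular $\varphi f \in \Hom(\cE,\cL)f$, and combined with a dimension/rank count on the map $\pi_* : \Hom(\cO_E,\cE)\to\Hom(\cO_E,\cL)$ one deduces that $\varphi$ factors through $\pi$ — but this deserves a cleaner homological argument, e.g. that $\varphi$ kills $f$ up to $\pi\End(\cE)$ is not quite enough; I'd instead directly show $\ker(d\Phi_\pi)$ equals the kernel of $\xi\cdot(-)$ by noting both equal $\pi\End(\cE)$, closing the triangle of equalities.

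\textbf{Main obstacle.} The genuinely substantive point is the reverse inclusion $\ker(d\Phi_\pi)\subseteq\pi\End(\cE)$ — i.e. showing that if $\varphi:\cE\to\cL$ sends $\Hom(\cO_E,\cE)$ into $\pi\Hom(\cO_E,\cE)$ then $\varphi$ itself factors through $\pi$. Passing from a statement about induced maps on global sections to a statement about sheaf morphisms is not automatic, so the safest approach is to bypass it entirely: prove Step 1 homologically (identifying $\{\varphi : \xi\cdot\varphi = 0\}$ with $\pi\End(\cE)$ via the long exact $\Hom(\cE,-)$ sequence applied to $\xi$, where $\Hom(\cE,\cO_E) = 0$ by \cref{cor.mapE.to.N} need not even be invoked — we just need exactness at $\Hom(\cE,\cL)$), then separately verify $\pi\End(\cE)\subseteq\ker d\Phi_\pi$ (trivial) and $\ker d\Phi_\pi\subseteq\{\varphi:\xi\cdot\varphi=0\}$. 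For this last one: $d\Phi_\pi(\varphi) = 0$ means $\gamma\circ\varphi_* = 0$, i.e. $\mathrm{im}(\varphi_*)\subseteq\xi^\perp$; by \cref{lem.xi.perp}, $\xi^\perp = \Hom(\cE,\cL)f$; but I claim this already forces $\xi\cdot\varphi = 0$ because $\xi\cdot\varphi$ is detected by whether $\varphi$ lifts, and the condition on sections suffices when combined with the fact (from the proof of \cref{lem.xi.perp}, via Serre duality) that $\xi\cdot\varphi \in \Ext^1(\cE,\cO_E)$ pairs against $\Hom(\cO_E,\cE)$ appropriately. Concretely: $\xi\cdot\varphi = 0$ in $\Ext^1(\cE,\cO_E)$ iff its image under $f^*:\Ext^1(\cE,\cO_E)\to\Ext^1(\cO_E,\cO_E)$ vanishes — no, that's not an iso. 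I think the honest resolution is that all three sets literally coincide with $\pi\End(\cE)$, and the role of \cref{le:dPhi} is exactly to rewrite $\ker d\Phi_\pi$ as $\{\varphi : \varphi\Hom(\cO_E,\cE)\subseteq\pi\Hom(\cO_E,\cE)\}$; one then shows this last set equals $\pi\End(\cE)$ by the universal property of the cokernel applied pointwise — since $\coker(f) \cong \cL$ and a local computation shows $\varphi$ vanishes on the image of $f$ hence factors through $\cL$, giving $\psi$ — wait, $\varphi$ is already defined on $\cE$. The point rather is: the hypothesis says $\varphi f_*(a) = \varphi(fa)$ lies in $\pi\Hom(\cO_E,\cE)$ for every $a\in\End(\cO_E) = \CC$, so in particular $\varphi f \in \pi\Hom(\cO_E,\cE) = \Hom(\cE,\cL)f$, meaning $\varphi f = \rho f$ for some $\rho\in\Hom(\cE,\cL)$; then $(\varphi-\rho)f = 0$, so $\varphi - \rho$ factors through $\coker(f) = \cL$, giving $\varphi - \rho = \chi\pi$ for some $\chi\in\End(\cL) = \Hom(\cL,\cL)\cdot$... hmm $\chi:\cL\to\cL$, so $\varphi - \rho \in \End(\cL)\pi$. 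This isn't closing nicely either. The cleanest is: I expect the intended proof simply cites the long exact sequence from applying $\Hom(\cE,-)$ and $\Hom(-,\cL)$ to $\xi$ together with \cref{le:dPhi}, and the real content — which I'd flag as the step requiring the most care — is matching the Serre-duality pairing $\xi\cdot\varphi$ with the differential $d\Phi_\pi$, for which the formula in \cref{le:dPhi}\cref{item:dPhi.2} expressing $d\Phi_\pi(\varphi)$ as $\gamma\circ\varphi_*$ is precisely designed.
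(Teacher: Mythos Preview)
Your overall architecture matches the paper's: Step 1 via the long exact sequence for $\Hom(\cE,-)$ applied to $\xi$ is exactly right and is how the paper establishes $\{\varphi:\xi\cdot\varphi=0\}=\pi\End(\cE)$; the easy inclusion $\pi\End(\cE)\subseteq\ker(d\Phi_\pi)$ is also how the paper proceeds. Where you stall is precisely where the paper invests its one nontrivial idea, and you circle around it without landing.

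The implication you flag as the obstacle, $d\Phi_\pi(\varphi)=0 \Rightarrow \xi\cdot\varphi=0$, is closed in the paper by the non-degeneracy of the Serre duality pairing
\[
\Ext^1(\cE,\cO_E)\times\Hom(\cO_E,\cE)\longrightarrow\Ext^1(\cO_E,\cO_E)\cong\CC,\qquad (\eta,s)\mapsto \eta\cdot s.
\]
Concretely: $d\Phi_\pi(\varphi)=0$ says $\varphi s\in\xi^\perp=\pi\Hom(\cO_E,\cE)$ for every $s\in\Hom(\cO_E,\cE)$, so $\varphi s=\pi s'$ for some $s'$. Then, using associativity of the Yoneda product and the fact (from your Step 1, applied to $\varphi=\pi=\pi\circ\id_\cE$) that $\xi\cdot\pi=0$, one gets
\[
(\xi\cdot\varphi)\cdot s \;=\; \xi\cdot(\varphi s) \;=\; \xi\cdot(\pi s') \;=\; (\xi\cdot\pi)\cdot s' \;=\; 0.
\]
Since this holds for all $s$ and the pairing is non-degenerate, $\xi\cdot\varphi=0$. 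Your attempts to instead promote the section-level inclusion $\varphi\Hom(\cO_E,\cE)\subseteq\pi\Hom(\cO_E,\cE)$ directly to a factorization $\varphi=\pi\psi$ (via cokernels, or via $\varphi f=\rho f$, etc.) are not going to close without this pairing argument or something equivalent; the section-level statement genuinely does not determine the sheaf map without the duality input. You gesture at Serre duality at the end, but the specific mechanism---pairing $\xi\cdot\varphi\in\Ext^1(\cE,\cO_E)$ against all of $\Hom(\cO_E,\cE)$ and invoking non-degeneracy---is the step you are missing.
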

\begin{proof}
Applying the functor $\Hom(\cE,-)$ to $\xi$ yields the exact sequence
\begin{equation}
\label{xi.dot.pi}
\xymatrix{
\cdots \ar[r] &
 \Hom(\cE,\cE)  \ar[r]^-{\pi \circ -} &    \Hom(\cE,\cL)  \ar[r]^-\delta & \Ext^1(\cE,\cO_E) \ar[r]& \Ext^1(\cE,\cE) \ar[r] & \cdots.
}
\end{equation}
The first equality in \cref{eq:pi.End.cE}  holds because  $\ker(\d)=\im(\pi \circ -)$: 
the left-most term in \cref{eq:pi.End.cE} is the kernel of $\d$ because $\d(\varphi)= \xi \cdot \varphi$, and 
the middle term in \cref{eq:pi.End.cE} is the image of $(\pi \circ-): \Hom(\cE,\cE) \to \Hom(\cE,\cL)$.

Suppose $d\Phi_\pi(\varphi)=0$. 
Then  $\varphi s  \in \xi^\perp = \im(\pi_*)$ for all $s \in \Hom(\cO_E,\cE)$; i.e., $\varphi s= \pi s'$ 
for some $s' \in \Hom(\cO_E,\cE)$. 
But $\xi \cdot \pi=0$ by  the first equality in \cref{eq:pi.End.cE}, so $(\xi\cdot \varphi) \cdot s =0$ for all $s \in  \Hom(\cO_E,\cE)$. 
In other words, $(\xi\cdot \varphi, s)$ is in the kernel of the pairing 
\begin{equation*}
    \Ext^1(\cE,\cO_E)\times \Hom(\cO_E,\cE) \, \longrightarrow \, \Ext^1(\cO_E,\cO_E) \; \cong \; \CC
  \end{equation*}
  for all $s \in  \Hom(\cO_E,\cE)$; but this pairing is non-degenerate so $\xi\cdot \varphi=0$. Hence $\varphi \in   \pi \End(\cE)$.

Suppose $\varphi \in \pi \End(\cE)$. Then  $\varphi \Hom(\cO_E,\cE)  \subseteq  \pi \Hom(\cO_E,\cE)$. Thus,
  given $s \in \Hom(\cO_E,\cE)$ there is an $s' \in \Hom(\cO_E,\cE)$ such that $\varphi   s= \pi s' \in \xi^\perp$.
  Hence $\varphi s + \xi^\perp=0$; i.e., $d\Phi_\pi(\varphi)=0$.   
\end{proof}

Part \cref{item.cor.tgorb.ker} of the next result says that the differential $d\Phi$ vanishes precisely on the tangent spaces to the $\Aut(\cE)$-orbits in ${\rm Epi}(\cE,\cL)$.
Since $\Aut(\cE)$ acts freely on ${\rm Epi}(\cE,\cL)$, $d\Phi$ has constant rank.

\begin{corollary}\label{cor:tgorb}  \label{cor:ctrk}
If $L(\cE) \ne \varnothing$, and $\pi \in {\rm Epi}(\cE,\cL)$, then
\begin{enumerate}
  \item\label{item.cor.tgorb.ker} 
   the kernel of $d\Phi_\pi: T_\pi {\rm Epi}(\cE,\cL) \to T_{\xi^\perp} \GG$ is $T_\pi (\Aut(\cE) \cdot \pi)$;
     \item\label{item.cor.tgorb.im} 
the image of $d\Phi_\pi$ equals $\{ \xi\cdot \varphi \; | \; \varphi \in \Hom(\cE,\cL)\} \subseteq  \Ext^1(\cE,\cO_E)$;
\end{enumerate}
  The differential $d\Phi$ has constant rank. 
\end{corollary}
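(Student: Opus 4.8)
The claim to be proved is \Cref{cor:tgorb}, whose three assertions all concern the differential $d\Phi_\pi$ of the map $\Phi:{\rm Epi}(\cE,\cL) \to \GG$, $\pi \mapsto \im(\pi_*) = \xi^\perp$. The plan is to read off parts \cref{item.cor.tgorb.ker} and \cref{item.cor.tgorb.im} from \Cref{lem.xi.pi.0} together with the explicit formula for $d\Phi_\pi$ in \Cref{le:dPhi}, and then deduce the constant-rank statement from part \cref{item.cor.tgorb.ker} via \Cref{le:autfree}. None of the three steps requires new geometry; the work is in correctly identifying tangent spaces to orbits with the algebraic objects that \Cref{lem.xi.pi.0} produces.

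\textbf{Part \cref{item.cor.tgorb.ker}.} I would start from \Cref{lem.xi.pi.0}, which gives $\ker(d\Phi_\pi) = \pi\End(\cE)$ inside $T_\pi{\rm Epi}(\cE,\cL) = \Hom(\cE,\cL)$. It remains to identify $\pi\End(\cE)$ with the tangent space to the orbit $\Aut(\cE)\cdot\pi$. The orbit map $\Aut(\cE)\to {\rm Epi}(\cE,\cL)$, $g\mapsto \pi g^{-1}$ (or $g \mapsto \pi g$, depending on the side of the action; by \Cref{le:Aut.cE.action.on.extns} the relevant action on epimorphisms is $g\cdot \pi = \pi g^{-1}$), has differential at the identity equal to $\gamma \mapsto -\pi\gamma$ for $\gamma\in \Lie(\Aut(\cE)) = \End(\cE)$, since $\Aut(\cE)$ is a Zariski-open subset of the vector space $\End(\cE)$ and composition with $\pi$ is linear. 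Hence the image of this differential is exactly $\pi\End(\cE)$. Because $\Aut(\cE)$ acts freely on $X(\cE) \cong {\rm Epi}(\cE,\cL)$ (\Cref{le:autfree}\cref{item.le.autfree.free}), the orbit map is injective with injective differential, so $T_\pi(\Aut(\cE)\cdot\pi)$ has dimension $\dim\Aut(\cE) = \dim\End(\cE)$ and coincides with $\pi\End(\cE)$. This yields $\ker(d\Phi_\pi) = T_\pi(\Aut(\cE)\cdot\pi)$.

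\textbf{Part \cref{item.cor.tgorb.im} and the constant-rank conclusion.} For the image, \Cref{le:dPhi}\cref{item:dPhi.2} writes $d\Phi_\pi(\varphi)$ as the composition $\gamma\circ\varphi_*$ with $\gamma:\Hom(\cO_E,\cL)\to\Hom(\cO_E,\cL)/\xi^\perp$ the quotient map and $\varphi_*(a)=\varphi a$; under the Serre-duality identification of \cref{sect.SD} and \Cref{lem.T*.xi.PL}, $\Hom(\cO_E,\cL)/\xi^\perp = \Hom(\cO_E,\cL)/\im(\pi_*)$ is the cokernel of $\pi_*$ in the long exact sequence obtained by applying $\Hom(\cO_E,-)$ to $\xi$, hence is identified with a subspace of $\Ext^1(\cO_E,\cO_E)$; chasing this through, $d\Phi_\pi(\varphi)$ corresponds precisely to the class $\xi\cdot\varphi \in \Ext^1(\cE,\cO_E)$, matching the description in the proof of \Cref{lem.xi.pi.0} via the sequence \cref{xi.dot.pi}. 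So $\im(d\Phi_\pi) = \{\xi\cdot\varphi \mid \varphi\in\Hom(\cE,\cL)\}$, which is the image of $\delta$ in \cref{xi.dot.pi}, a subspace of $\Ext^1(\cE,\cO_E)$; this is part \cref{item.cor.tgorb.im}. Finally, by the rank–nullity theorem $\rank(d\Phi_\pi) = \dim\Hom(\cE,\cL) - \dim\ker(d\Phi_\pi) = n - \dim\End(\cE)$, and by part \cref{item.cor.tgorb.ker} (or directly from \Cref{le:dim.Aut.cE}) $\dim\End(\cE) = \dim\Aut(\cE)$ is the same constant for every $\pi$ (as $\pi$ ranges over a fixed ${\rm Epi}(\cE,\cL)$ with $\cE$ fixed), so $d\Phi$ has constant rank.

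\textbf{Main obstacle.} The only delicate point is the identification in part \cref{item.cor.tgorb.im}: one must check that the two a priori different descriptions of $d\Phi_\pi(\varphi)$ — as an element of $\Hom(\xi^\perp, \Hom(\cO_E,\cL)/\xi^\perp)$ coming from \Cref{le:dPhi}, and as the extension class $\xi\cdot\varphi \in \Ext^1(\cE,\cO_E)$ — agree under the Serre-duality and cokernel identifications already set up in \cref{sect.SD,lem.T.xi.PL,lem.T*.xi.PL}. This is bookkeeping rather than a genuine difficulty, but it is where care is needed to make sure the two exact sequences (the one defining $\xi^\perp = \im\pi_*$ and the one \cref{xi.dot.pi} defining $\xi\cdot\varphi$) are matched compatibly; the computation in the proof of \Cref{lem.xi.pi.0} already does essentially this, so I would simply cite and reorganize it.
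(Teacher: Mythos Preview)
Your proposal is correct and follows essentially the same approach as the paper. For part \cref{item.cor.tgorb.ker} both arguments invoke \Cref{lem.xi.pi.0} to get $\ker(d\Phi_\pi)=\pi\End(\cE)$ and then identify this with the tangent space to the orbit via the orbit map (the paper cites principality from \Cref{th:actloctriv}\cref{item:th:actloctriv.3} rather than \Cref{le:autfree}, but the effect is the same); for part \cref{item.cor.tgorb.im} the paper is slightly more concrete than your ``chasing identifications'': it observes directly that, as maps $\Hom(\cO_E,\cE)\to(\text{a one-dimensional target})$, both $d\Phi_\pi(\varphi)=\gamma\circ\varphi_*$ and $s\mapsto \xi\cdot\varphi s$ have the same kernel $\varphi_*^{-1}(\xi^\perp)$, hence differ by an isomorphism $\alpha:\Hom(\cO_E,\cL)/\xi^\perp\to\Ext^1(\cO_E,\cO_E)$ --- this is exactly the bookkeeping you flag as the main obstacle, made explicit.
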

\begin{proof}
Let $G=\Aut(\cE)$. 

\cref{item.cor.tgorb.ker}
  Let $a:G \to G\! \cdot \! \pi$ be the map $g \mapsto g \cdot \pi$. Since $a$ is an isomorphism by the principality of the action of $G$ (\cref{th:actloctriv}\cref{item:th:actloctriv.3}) and $T_1G$ naturally identifies with $\End(\cE)$, the differential
  \begin{equation*}
    da_1:T_1G
    \xrightarrow{\quad}
    T_{\pi}(G.\pi)
  \end{equation*}
  at the identity is an isomorphism $\End(\cE) \to T_\pi(G .\pi) =\pi \End(\cE) \subseteq \Hom(\cE,\cL)= T_\pi {\rm Epi}(\cE,\cL)$.  By \Cref{pr:whichker}, $\ker(d\Phi_\pi) = \pi \End(\cE) =T_\pi (G.\pi)$.

\cref{item.cor.tgorb.im}
  Fix a non-zero $\varphi \in \Hom(\cE,\cL)$. Let $\varphi_*:\Hom(\cO_E,\cE) \to \Hom(\cO_E,\cL)$ be the map $s \mapsto \varphi s$. 
 The kernel of $d\Phi_\pi(\varphi)=\gamma \circ \varphi_*:\Hom(\cO_E,\cE) \to  \Hom(\cO_E,\cL)/\xi^\perp$, given by $s \mapsto \varphi s + \xi^\perp =\varphi _*(s) + \xi^\perp $, is $\varphi_*^{-1}(\xi^\perp)$. The kernel of  $\xi_{*}\varphi_{*}:\Hom(\cO_E,\cE) \to \Ext^1(\cO_E,\cO_E)$, given by $s \mapsto \xi\cdot \varphi s =  \xi\cdot \varphi_*( s)$,  is also $\varphi_*^{-1}(\xi^\perp)$. 
 Since $\xi^\perp$ has codimension one in $\Hom(\cO_E,\cL)$, $\varphi_*^{-1}(\xi^\perp)$ has codimension one in $\Hom(\cO_E,\cE)$. 
There is therefore an isomorphism $\a:  \Hom(\cO_E,\cL)/\xi^\perp  \to  \Ext^1(\cO_E,\cO_E)$ such that $\a \circ d\Phi_\pi(\varphi) =(\xi\cdot \varphi)_*$.

Finally, all orbits $\Aut(\cE) \! \cdot \! \pi$ are isomorphic to $\Aut(\cE)$ 
so they, and their tangent spaces, have the same dimension. 
\end{proof}

Consider a Lie group $G$ acting smoothly on a smooth manifold $X$.  If the action of $G$ on $X$ is free and proper, then $X/G$ is a smooth manifold and the quotient $\psi:X \to X/G$ is a smooth submersion, by \cite[Thm.~21.10]{lee2013introduction}, and $G.x$ is an embedded submanifold of $X$ \cite[Prop.~21.7]{lee2013introduction}. It now follows from \cite[Prop.~5.8]{lee2013introduction} that
 \begin{equation*}
  \ker\big(d\psi_x:T_xX \to T_{\psi(x)}(X/G)\big) \;=\; T_x(G.x).
   \end{equation*}
 Since  $\dim(X/G)=\dim X-\dim(G.x)$ it follows that $d\psi_x$ is surjective and hence, for all $x \in X$, there are natural isomorphisms
 \begin{equation*}
 T_{\psi(x)}(X/G) \; \cong \; T_xX/T_x(G.x).
   \end{equation*}

  \begin{theorem}
  \label{th:leleaves}
  The symplectic leaves for $(\PP_\cL,\Pi)$ are the homological leaves $L(\cE)$.
  \end{theorem}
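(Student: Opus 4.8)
The plan is to show that the homological leaves $L(\cE)$ are precisely the symplectic leaves by checking that each $L(\cE)$ is a connected immersed submanifold whose tangent space at every point $\xi$ coincides with the image of the Poisson anchor $\Pi_\xi : T_\xi^*\PP_\cL \to T_\xi\PP_\cL$; since the symplectic leaves are by definition the members of the \emph{unique} partition by such submanifolds, the $L(\cE)$'s must then be exactly the symplectic leaves. The ingredients are all available: the $L(\cE)$'s are smooth (\Cref{th:smth}\cref{item:smth}) and locally closed (\Cref{th:splitall,th:nsplitodd,th:nsplitev}), hence connected immersed submanifolds, and they partition $\PP_\cL$. So the whole content is the tangent space computation.

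First I would fix $\xi \in L(\cE)$ with a representative $0 \to \cO_E \xrightarrow{f} \cE \xrightarrow{\pi} \cL \to 0$, so that $f \in X(\cE)$ and $\pi \in {\rm Epi}(\cE,\cL)$. By \Cref{th:smth}\cref{item:git}, $L(\cE) = X(\cE)/\Aut(\cE)$ as a geometric quotient, the quotient map $\Psi = \Psi_\cE$ is smooth (\Cref{cor:mor-smff}), and via the identification $\tau$ of \cref{change.psi.to.phi} this is the same as the quotient ${\rm Epi}(\cE,\cL) \to L(\cE)$, which after composing with $F : \PP_\cL \xrightarrow{\sim} \GG$ is the map $\Phi : \pi \mapsto \xi^\perp$. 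The smooth-submersion discussion preceding the theorem, applied to the free and proper action of $\Aut(\cE)$ on ${\rm Epi}(\cE,\cL)$, gives $T_\xi L(\cE) \cong T_\pi{\rm Epi}(\cE,\cL)/T_\pi(\Aut(\cE)\cdot\pi)$, and by \Cref{cor:tgorb}\cref{item.cor.tgorb.ker,item.cor.tgorb.im} this identifies $dF_\xi(T_\xi L(\cE))$ inside $T_{\xi^\perp}\GG$ with the image of $d\Phi_\pi$, namely $\{\xi\cdot\varphi \mid \varphi \in \Hom(\cE,\cL)\} \subseteq \Ext^1(\cE,\cO_E)$. Equivalently, transporting back through $dF_\xi$, $T_\xi L(\cE)$ is the image of the map $\Hom(\cE,\cL) \to T_\xi\PP_\cL$, $\varphi \mapsto \xi\cdot\varphi$. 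In particular $\dim L(\cE) = n - \dim\End(\cE)$, consistent with \Cref{le:rightdim,le:dim.Aut.cE}.

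It then remains to identify this subspace of $T_\xi\PP_\cL$ with $\im\Pi_\xi$. This is exactly where the input from Hua--Polishchuk enters: by \cite[Prop.~2.3]{HP3}, the rank of $\Pi$ at $\xi$ equals $n - \dim\End(\cE)$ where $\cE = m(\xi)$, and (as recorded in the remark after \Cref{le:rightdim}) this matches $\dim L(\cE)$ exactly. So I would argue that $\im\Pi_\xi$ and $T_\xi L(\cE)$ are both subspaces of $T_\xi\PP_\cL$ of the same dimension, and that one contains the other: the containment $\im\Pi_\xi \subseteq T_\xi L(\cE)$ should follow from the fact that $\Pi$ is a Poisson \emph{tensor} and $L(\cE)$ is (the image of) a constructible piece of $\PP_\cL$ that is stable under the flows generated by Hamiltonian vector fields — more precisely, $\Pi_\xi$ factors through the differential of $\Phi$ by the description of $\Pi$ as a Massey product in \cite[Lem.~2.1, Prop.~5.8]{HP3}, so $\im\Pi_\xi$ lies in the image of $d\Phi_\pi$, i.e. in $T_\xi L(\cE)$. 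Equality of dimensions then forces $\im\Pi_\xi = T_\xi L(\cE)$. I would present this by citing \cite[Prop.~2.3]{HP3} for the dimension and for the compatibility of $\Pi_\xi$ with the extension-theoretic description, then invoking the uniqueness of the symplectic foliation to conclude.

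The main obstacle is the last step: turning the numerical coincidence $\rank\Pi_\xi = \dim L(\cE)$ into an \emph{equality of subspaces} $\im\Pi_\xi = T_\xi L(\cE)$ rather than merely equal dimensions. What makes this work is that \cite[Prop.~2.3]{HP3} does not only compute the rank; it describes $\Pi_\xi$ itself in terms of the extension $\xi$ (via a triple Massey product whose target is read off from $\Ext^1(\cE,\cO_E)$), which is precisely the space through which $d\Phi_\pi$ is computed in \Cref{le:dPhi} and \Cref{cor:tgorb}. So the real task is bookkeeping: matching the Hua--Polishchuk description of $\Pi_\xi$ with the map $\varphi \mapsto \xi\cdot\varphi$ of \Cref{cor:tgorb}\cref{item.cor.tgorb.im}, under the Serre-duality identifications of \Cref{sect.SD} and the tangent-space descriptions of \Cref{lem.T.xi.PL,lem.T*.xi.PL}. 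Once that identification is in place, smoothness of $L(\cE)$, local closedness, and the uniqueness clause in the definition of symplectic leaves finish the proof with no further analysis.
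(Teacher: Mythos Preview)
Your proposal is correct and follows essentially the same approach as the paper: reduce to showing $\im\Pi_\xi = T_\xi L(\cE)$ at each point, identify $T_\xi L(\cE)$ with $\im d\Phi_\pi$ via the quotient description, and then use \cite[Prop.~2.3]{HP3} to match this with $\im\Pi_\xi$.

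Two remarks on execution. First, your initial argument for the containment $\im\Pi_\xi \subseteq T_\xi L(\cE)$ (``$L(\cE)$ is stable under Hamiltonian flows'') is circular: stability under Hamiltonian flows is what you are trying to prove. You rightly abandon it in favor of the HP3 description. Second, the paper carries out the ``bookkeeping'' you flag as the main obstacle more directly than via containment plus dimension count: it uses the commutative diagram from the \emph{proof} of \cite[Prop.~2.3]{HP3} (relating $\Pi_\xi$ to the connecting map $\delta:\Hom(\cE,\cL)\to\Ext^1(\cE,\cO_E)$, $\varphi\mapsto\xi\cdot\varphi$) to obtain the identity $\xi\cdot\varphi = \Pi_\xi(\varphi f)\cdot\pi$, and then checks that $\eta\mapsto\eta\cdot\pi$ is an isomorphism from $\im\Pi_\xi\subseteq\Ext^1(\cL,\cO_E)/\CC\xi$ onto $\im d\Phi_\pi\subseteq\Ext^1(\cE,\cO_E)$ (surjectivity from the identity just stated, injectivity from the long exact sequence for $\Hom(-,\cO_E)$ applied to $\xi$). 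This avoids a separate dimension/rank appeal, though your route would also work once the containment is established via the same diagram.
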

  \begin{proof}
  Let $G=\Aut(\cE)$.
By \cref{cor:tgorb}, 
\begin{equation*}
\ker(d\Phi_\pi: T_\pi  {\rm Epi}(\cE,\cL) \to T_{\xi^\perp} \GG) \; = \; T_\pi (G.\pi)
\end{equation*}
so, by the paragraph before the statement of this theorem and \cref{le:dPhi}\Cref{item:dPhi.2},
 \begin{align*}
 T_{\Phi(\pi)} \bigg(\frac{ {\rm Epi}(\cE,\cL)}{\Aut(\cE)} \bigg) & \; \cong \; \frac{T_\pi {\rm Epi}(\cE,\cL)}{T_\pi(G.\pi)} 
 \\
 & \;=\; \text{the image of } d\Phi_\pi 
  \\
 & \;=\; \{  d\Phi_\pi(\varphi) = \gamma \circ \varphi_*:\Hom(\cO_E,\cE) \to  \Hom(\cO_E,\cL)/\xi^\perp \; | \; \varphi \in \Hom(\cE,\cL)\}.
 \end{align*}
 Hence, 
 to prove that the $L(\cE)$'s are the symplectic leaves we must show that, under the identification via $dF_{\xi}:T_{\xi}\PP_{\cL}\to T_{\xi^{\perp}}(\GG)$ induced by the isomorphism $F:\PP_{\cL}\to\GG$, that
  \begin{equation*}
  \text{the image of  $\Pi_\xi$}\;=\;\text{the image of $d\Phi_\pi$}. 
  \end{equation*}
 
We will show that the map $\im(\Pi_\xi) \to \im (d\Phi_\pi)$, $\eta \mapsto \eta \cdot \pi$ is an isomorphism. 
 
 First, it follows from the exact sequence
 \begin{equation*}
\xymatrix{
0 \ar[r] & \Hom(\cL,\cL) \ar[r]^-{-\circ \pi}  & \Hom(\cE,\cL) \ar[r]^-{-\circ f} & \Hom(\cO_E,\cL) \ar[r]^-{\cdot \xi}  & \Ext^1(\cL,\cL)  \ar[r] & 0
}
\end{equation*}
that  $\xi^\perp = \{\varphi f \; | \; \varphi \in \Hom(\cE,\cL)\}$.

The proof of \cite[Prop.~2.3]{HP3} says that $\Pi_\xi$, which is their $\Pi_\phi$, fits into the commutative diagram
\begin{equation}
\label{eq:HP.diag}
\xymatrix{
\ar[d]_{\delta}  \Hom(\cE,\cL) \ar[rr]^{-\circ f} && \xi^\perp \ar[d]^{\Pi_\xi} \, \ar@{^{(}->}[r] & \Hom(\cO_E,\cL) 
\\
\Ext^1(\cE,\cO_E)  &&   \ar[ll] \Ext^1(\cL,\cO_E)/\CC \xi   
}
\end{equation}
where the bottom arrow is $\eta + \CC\xi \mapsto \eta \cdot \pi$ and $\d$ is the connecting homomorphism in \cref{xi.dot.pi}, 
namely $\delta(\varphi)=\xi \cdot \varphi$. 
Thus $\xi\cdot \varphi = \Pi_\xi(\varphi f) \cdot \pi $.   
Hence, as a map to $\im (d\Phi_\pi)=\{\xi \cdot \varphi \; | \; \varphi \in \Hom(\cE,\cL)\}$, $\eta + \CC\xi \mapsto \eta \cdot \pi$ is surjective. 
It follows from the exact sequence
\begin{equation*}
0 \to \Hom(\cO_E,\cO_E) \to \Ext^1(\cL,\cO_E) \stackrel{\cdot  \pi}{\longrightarrow} \Ext^1(\cE,\cO_E) \to \Ext^1(\cO_E,\cO_E)  \to 0
\end{equation*}
that $\eta + \CC\xi \mapsto \eta \cdot \pi$ is also injective.\footnote{The image of the map $\Hom(\cO_E,\cO_E) \to \Ext^1(\cL,\cO_E)$ in  
the exact sequence 
\begin{equation*}
0 \to \Hom(\cO_E,\cO_E) \to \Ext^1(\cL,\cO_E) \stackrel{\cdot  \pi}{\longrightarrow} \Ext^1(\cE,\cO_E) \to \Ext^1(\cO_E,\cO_E)  \to 0
\end{equation*}
 is $\CC \xi$ so $\{\eta \in \Ext^1(\cL,\cO_E) \; | \; \eta\cdot \pi =0\} = \CC\xi$.}
The proof is now complete.
\end{proof}

\subsection{(Quasi-)Affineness of $L(\cE)$}

\begin{proposition}\label{pr:affine-reductive}
  The leaf $L(\cE)$ is
  \begin{enumerate}
  \item\label{item:aff} 
  affine if either
    \begin{enumerate}
    \item 
    \label{item:aff.a}
    $n$ is odd  and $\cE$ is indecomposable, or
    \item 
     \label{item:aff.b}
     $n$ is even and $\cE$ is a direct sum of two line bundles of degree $\frac n2$;
    \end{enumerate}
  \item\label{item:naffev} quasi-affine but not affine if  $n$ is even and $\cE$ is indecomposable.  
  \end{enumerate}      
\end{proposition}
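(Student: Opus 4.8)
The key tool throughout is the description of $L(\cE)$ as the geometric quotient $X(\cE)/\Aut(\cE)$ (Theorem~\ref{th:smth}), together with the case-by-case description of $X(\cE)$ as the complement of a hypersurface in $\Hom(\cO_E,\cE)$ (Proposition~\ref{pr:z-codim1}, Proposition~\ref{pr:deghyp}) and the case-by-case description of $\Aut(\cE)$ (Lemma~\ref{le:dim.Aut.cE}). For each of the three situations I would first identify the relevant quotient concretely and then read off (quasi-)affineness or its failure.

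\textbf{Case \cref{item:aff.a} ($n=2r+1$, $\cE=\cE_o$ indecomposable).} Here $\Aut(\cE)\cong\GG_m$ acting by scaling on $V:=\Hom(\cO_E,\cE)$, so $L(\cE_o)=\PP X(\cE_o)$, the projectivization of the complement of the hypersurface $Z_{\cE_o}\cup\{0\}$. By Proposition~\ref{pr:deghyp}\cref{item.deghyp.ow}, $\PP Z_{\cE_o}$ is an irreducible hypersurface of degree $n$ in $\PP V=\PP^{n-1}$; in particular it is a nonempty effective divisor, and it is ample since $\mathrm{Pic}(\PP^{n-1})=\ZZ$ is generated by $\cO(1)$. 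By Corollary~\ref{cor:isaff} (or the ampleness remark in its footnote), the complement $\PP X(\cE_o)=\PP^{n-1}-\PP Z_{\cE_o}$ is affine. This also matches Theorem~\ref{th:nsplitodd}: $L(\cE_o)=\PP_\cL-\Sec_r(E)$, and $\Sec_r(E)$ is the degree-$n$ hypersurface cut out by $F$.

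\textbf{Case \cref{item:aff.b} ($n=2r$, $\cE=\cL_\omega\oplus\cL_\omega$).} Here $\Aut(\cE)\cong\mathrm{GL}(2)$, and as explained in part \cref{item:en2} of the proof of Theorem~\ref{th:actloctriv}, $X(\cE)$ sits inside the frame bundle $X'$ of linearly independent pairs of sections of $\cL_\omega$, with $X'/\mathrm{GL}(2)=\GG(2,\Hom(\cO_E,\cL_\omega))$; thus $L(\cE)=X(\cE)/\Aut(\cE)$ is an open subscheme of this Grassmannian. To see it is affine, I would instead argue via Proposition~\ref{pr:deghyp}\cref{item.deghyp.two}: $\PP Z_{\cE}$ is a hypersurface (of degree $2$) in $\PP H^0(E,\cE)$, so $\PP X(\cE)$ is affine by Corollary~\ref{cor:isaff}; and $L(\cE)$ is the further quotient of $\PP X(\cE)$ by the residual $\PP\Aut(\cE)=\mathrm{PGL}(2)$ acting freely (Lemma~\ref{le:autfree}) and properly with geometric quotient (Theorem~\ref{th:smth}). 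Since $\mathrm{PGL}(2)$ is reductive and $\PP X(\cE)$ is affine, the geometric quotient—being a good quotient of an affine variety by a reductive group—is affine; as the quotient map is already geometric here, $L(\cE)$ is this affine GIT quotient. Alternatively, and perhaps more cleanly, one can observe that $\Sec_{r,\omega}(E)-\Sec_{r-1}(E)$ minus its singular locus $L(\cL_\omega\oplus\cL_\omega)$ is $L(\cE_\omega)$ (Proposition~\ref{prop.ssnsmth}); since $\Sec_{r,\omega}(E)$ is a degree-$r$ hypersurface in $\PP^{n-1}$ (Corollary~\ref{cor.room}) and $\Sec_{r-1}(E)$ is cut out on it by one further degree-$r$ equation (Corollary~\ref{cor.room}\,(1)), $L(\cL_\omega\oplus\cL_\omega)=\Sec_{r,\omega}(E)\cap\Sec_{r,\omega'}(E)$ for any second member $\omega'$ of the pencil; this exhibits $L(\cL_\omega\oplus\cL_\omega)$ as the complement in the affine variety $\Sec_{r,\omega}(E)-\Sec_{r-1}(E)$ of\ldots{} actually it is the singular locus, so I would favor the GIT argument.

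\textbf{Case \cref{item:naffev} ($n=2r$, $\cE=\cE_\omega$ indecomposable).} Quasi-affineness is immediate: $L(\cE_\omega)$ is a locally closed subvariety of $\PP_\cL$ (Theorem~\ref{th:nsplitev}), in fact an open dense subvariety of the irreducible hypersurface $\Sec_{r,\omega}(E)$, hence quasi-affine once we note an open subset of a projective variety sitting in the affine $\Sec_{r,\omega}(E)-(\text{hyperplane section})$\ldots{} more directly, $L(\cE_\omega)=X(\cE_\omega)/\Aut(\cE_\omega)$ with $X(\cE_\omega)$ quasi-affine (Remark~\ref{rmk.defn.psi_E}) and the quotient being an orbit space of a free action, which stays quasi-affine. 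The real content is \emph{non}-affineness. The expected obstacle, and the main point of the proof, is to show $L(\cE_\omega)$ is not affine. The strategy I would use: by Theorem~\ref{th:nsplitev}\cref{item:e1} and Proposition~\ref{prop.ssnsmth}, $\Sec_{r,\omega}(E)-\Sec_{r-1}(E)$ is the disjoint union of the open dense $L(\cE_\omega)$ and the closed $L(\cL_\omega\oplus\cL_\omega)$, which has dimension $n-4$, i.e.\ codimension $2$ in the $(n-2)$-dimensional $\Sec_{r,\omega}(E)$. So $L(\cE_\omega)$ is obtained from the (locally closed, hence here affine by Case \cref{item:aff.b}'s closure considerations) variety $\Sec_{r,\omega}(E)-\Sec_{r-1}(E)$ by removing a closed subset of codimension $2$. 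If $\Sec_{r,\omega}(E)-\Sec_{r-1}(E)$ were affine then, being normal (it is smooth away from $L(\cL_\omega\oplus\cL_\omega)$, but singular along it—so I must be careful: normality may fail). The cleanest route: $\Sec_{r,\omega}(E)$ is a hypersurface in $\PP^{n-1}$, hence Cohen--Macaulay, hence satisfies Serre's $S_2$; its open subset $\Sec_{r,\omega}(E)-\Sec_{r-1}(E)$ is then $S_2$ as well, and an $S_2$ affine (equivalently, any normal affine, but $S_2$ suffices for the Hartogs extension of regular functions) variety cannot have a regular function extended across a codimension-$\geq 2$ closed set fail to extend; so $\mathcal{O}(L(\cE_\omega))=\mathcal{O}(\Sec_{r,\omega}(E)-\Sec_{r-1}(E))$. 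Therefore if $L(\cE_\omega)$ were affine, the inclusion $L(\cE_\omega)\hookrightarrow \Sec_{r,\omega}(E)-\Sec_{r-1}(E)$ would be an isomorphism (a dominant open immersion of affine varieties inducing an isomorphism on global functions is an isomorphism), contradicting that $L(\cL_\omega\oplus\cL_\omega)\neq\varnothing$. Hence $L(\cE_\omega)$ is not affine. I expect the delicate steps to be: verifying that $\Sec_{r,\omega}(E)-\Sec_{r-1}(E)$ (or a suitable affine open of it) satisfies $S_2$ so that the Hartogs-type extension applies, and confirming $\mathrm{codim}\,L(\cL_\omega\oplus\cL_\omega)\geq 2$ from $\dim L(\cL_\omega\oplus\cL_\omega)=n-4$ (Theorem~\ref{th:nsplitev}), which requires $n\geq 4$, i.e.\ $r\geq 2$—true since $n\geq 3$ and $n$ even forces $n\geq 4$.
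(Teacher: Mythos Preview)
Your approach for \cref{item:aff} is essentially the paper's: $\PP X(\cE)$ is affine (\Cref{cor:isaff}) and $L(\cE)=\PP X(\cE)/\PP\Aut(\cE)$ with $\PP\Aut(\cE)$ reductive, so GIT gives an affine quotient. One omission: in \cref{item:aff.b} you only treat $\cE=\cL_\omega\oplus\cL_\omega$, but the statement covers all direct sums of two degree-$\tfrac n2$ line bundles, including $\cN_1\not\cong\cN_2$. There $\PP\Aut(\cE)\cong\GG_m$ (since $d_2-d_1=0$ in \Cref{le:dim.Aut.cE}\cref{item:mostcompl}), still reductive, so the same argument works; just say so.

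For \cref{item:naffev} there are two genuine gaps.

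\emph{Quasi-affineness.} Your claim that ``the quotient being an orbit space of a free action \ldots\ stays quasi-affine'' is false in general: $(\AA^n-\{0\})/\GG_m=\PP^{n-1}$ is a free quotient of a quasi-affine variety that is not quasi-affine. What makes the present case work is that $\PP\Aut(\cE_\omega)\cong\GG_a$ is \emph{unipotent}; the paper invokes \cite[Prop.~3]{fm-ga}, which says that geometric quotients of (quasi-)affine varieties by unipotent groups are quasi-affine. You need some such result; freeness alone is not enough.

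\emph{Non-affineness.} Your Hartogs/$S_2$ argument is the right idea, but the final step (``a dominant open immersion of affine varieties inducing an isomorphism on global functions is an isomorphism'') requires the ambient $S:=\Sec_{r,\omega}(E)-\Sec_{r-1}(E)$ to be affine, which you never establish. It \emph{is} affine---by \Cref{cor.room}, $\Sec_{r-1}(E)$ is cut out on the projective variety $\Sec_{r,\omega}(E)$ by another degree-$r$ hypersurface in the pencil, hence is the support of an ample effective divisor---but you must say this. The paper sidesteps the issue by citing \cite[Prop.~5.1]{neem_steins}: if $U\subseteq X$ is open affine in a noetherian scheme then $X-U$ has pure codimension one; applied with $X=S$ and $U=L(\cE_\omega)$, the codimension-two complement $L(\cL_\omega\oplus\cL_\omega)$ immediately rules out affineness of $U$. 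Either route works once completed.
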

\begin{proof}
By \Cref{th:smth}\cref{item:git}, $L(\cE)$ is the geometric quotient $X(\cE)/\Aut(\cE)$. 
Hence  $L(\cE)$ is also the geometric quotient $\PP X(\cE) / \PP\! \Aut(\cE)$.

The scheme $\PP X(\cE)$ being acted upon is affine by \Cref{cor:isaff} and, by \Cref{le:dim.Aut.cE},  $\PP\! \Aut(\cE)$ is 
  \begin{itemize}
  \item[-]
   trivial when $\cE$ is indecomposable of odd degree;
  \item[-]
   the additive group $\GG_a$ when $\cE$ is indecomposable of even degree;
  \item[-] 
  the semidirect product $\GG_a^{d_2-d_1}\rtimes \GG_m=\GG_a^{n-2d_1}\rtimes \GG_m $ 
    when $\cE\cong \cN_1\oplus \cN_2$ where $d_1=\deg\cN_1\le \deg\cN_2=d_2$ and $\cN_1\not\cong \cN_2$;
  \item[-]
   ${\rm PGL}(2)$ when $\cE\cong \cL_\omega \oplus \cL_\omega$. 
  \end{itemize}

\cref{item:aff}
 In \cref{item:aff.a} and \cref{item:aff.b}, $\PP\Aut(\cE)$ is {\it reductive} in (\cite[Defn.~1.4]{mumf-git}), as seen from the above list. 
 The geometric quotient is therefore affine by \cite[Thm.~1.1 and Amplification~1.3]{mumf-git}, for example.

\cref{item:naffev}
When $n=2r$ and $\cE$ is indecomposable, $\cE \cong \cE_\omega$ for a unique $\omega \in \Omega$. 
By \Cref{th:nsplitev}\Cref{item:disju}, 
\begin{equation*}
L(\cE_\omega)\sqcup L(\cL_\omega \oplus \cL_\omega) \;=\; \Sec_{r,\omega}(E) \, - \,  \Sec_{r-1}(E).
\end{equation*}
By \Cref{th:nsplitev}\Cref{item:e0} and \Cref{item:e1}, $L(\cE_\omega)$ has dimension $n-2$ and 
$L(\cL_\omega \oplus \cL_\omega)$ has dimension $n-4$.
We will use \cite[Prop.~5.1]{neem_steins} which says the following: 
If $U \subseteq X$ is a Zariski-open affine subset of a scheme $X$ and $f:Y \to X$ is a morphism from a noetherian scheme $Y$, then
$Y-f^{-1}(U)$  has pure codimension one in $Y$. We apply {\it loc. cit.} with $X=Y=L(\cE_{\omega})\sqcup L(\cL_{\omega}\oplus L_{\omega})$,
$U=L(\cE_{\omega})$ and $f=\id_X$: since $U$ has codimension two in $Y$, it is not affine.  
 
 However, $L(\cE_{\omega})$ is quasi-affine by \cite[Prop.~3]{fm-ga} because it equals
  $\PP X(\cE_\omega)/\PP\Aut(\cE_\omega)$ and $\PP X(\cE_\omega)$ is affine (\Cref{cor:isaff}) and 
   $\PP\Aut(\cE_\omega)\cong \GG_a$ is unipotent \cite[\S 17.5]{hmph-lag}.
\end{proof}

\begin{example}\label{ex:dd}
  Suppose $n=2r \ge 4$. Assume $\cE = \cE_{r,x}$ for some $x \in E-\Omega$. Thus $\cE= \cN_1 \oplus \cN_2$ where $\cN_1$ and $\cN_2$ are non-isomorphic line bundles of degree $r$.  
 By \Cref{pr:affine-reductive}\cref{item:aff}\Cref{item:aff.b}, $L(\cE)$ is affine. We now give  a direct proof of this to show how the 
 familiar  $n=4$ case extends to larger even $n$. When $n=4$, $E \subseteq \PP_\cL \cong \PP^3$ is contained in a pencil of quadrics, four of which are singular and, 
 if $Q$ is one of the smooth quadrics, then $Q-E$ is an affine variety (and a symplectic leaf).\footnote{If $Q$ is one of the singular quadrics,
  then it is a cone whose vertex is a 
 symplectic leaf (the vertices are the leaves in \cref{th:nsplitev}\cref{item:e1}), 
 and $Q-(E \cup \text{the vertex})$ is a symplectic leaf that is quasi-affine but not affine (\cref{pr:affine-reductive}\cref{item:naffev}); the leaves  $Q-(E \cup \text{the vertex})$ are those in \cref{th:nsplitev}\cref{item:e0}.
 More about the geometry of $E \subseteq \PP^3$  can be found in \cite{Hulek86}.}
  
 Clearly, $\Aut(\cN_1 \oplus \cN_2) \cong \CC^\times \times \CC^\times$.

Let $D_1$ and $D_2$ be effective divisors such that $\cN_1 \cong \cO_E(D_1)$ and  $\cN_2 \cong \cO_E(D_2)$. 
 
Let $f=(s_1,s_2):\cO_E \to \cE=\cN_1 \oplus \cN_2$ where  $s_i \in \Hom(\cO_E,\cN_i)=H^0(E,\cN_i)$.  
Let $(s_i)_0 \in E^{[r]}$ be the divisor of zeroes of $s_i$.
Then $f$ belongs to $X(\cE)$ (i.e., $\coker(f)$ is torsion-free) if and only if $s_1 \ne 0$, $s_2 \ne 0$ and $(s_1)_0  \cap (s_2)_0=\varnothing$.
Hence
\begin{equation}
\label{eq:bad.maps}
 Z_\cE \cup \{0\} \; \supseteq \;  \big( \Hom(\cO_E,\cN_1)  \times \{0\} \big)  \;   \cup   \;   \big( \{0\} \times \Hom(\cO_E,\cN_2)   \big),
\end{equation}
and
\begin{equation*}
X(\cE) \;=\; \Hom(\cO_E, \cE)-Z_\cE \cup \{0\}  \; \subseteq \;  \big( \Hom(\cO_E,\cN_1) - \{0\} \big)  \times \big( \Hom(\cO_E,\cN_2)   - \{0\}  \big) .
\end{equation*}
Since each copy of $\CC^\times$ in $\Aut(\cE)$ acts on the appropriate $\Hom(\cO_E,\cN_i)$ by scaling, 
  \begin{equation*}
L(\cE) \; \cong \; X(\cE)/\Aut(\cE) \; \subseteq \;  \PP H^0(E,\cN_1)   \times   \PP H^0(E,\cN_2)   \; \cong \;    \PP^{r-1}\times \PP^{r-1}.  
  \end{equation*}
  (When $n=4$, the right-hand side is a smooth quadric in $\PP_\cL \cong \PP^3$ that contains $E$.)

It now follows from the condition that $(s_1)_0 \cap (s_2)_0=\varnothing$ that the image of $Z_{\cE}\cup \{0\}$    under the map
    \begin{equation*}
    \Hom(\cO_E,\cE)\,-\,   (\text{the right-hand side of \cref{eq:bad.maps}})       \, \longrightarrow \, \PP^{r-1}\times \PP^{r-1}
  \end{equation*}
  is the effective divisor $D \in \Div(\PP^{r-1}\times \PP^{r-1})$ consisting of those pairs
  \begin{equation*}
    (s_1,s_2) \; \in \;  \PP H^0(E,\cN_1)\times \PP H^0(E,\cN_2)
  \end{equation*}
  whose zero loci, which are effective degree-$r$ divisors, equivalent to $D_1$ and $D_2$ respectively, do {\it not} intersect.
  For a generic $s_1 \in \PP H^0(E,\cN_1)$, $(s_1)_0$  consists of $r$ distinct points on $E$. 
  Avoiding these in the zero locus of $s_2 \in \PP H^0(E,\cN_2)$ means avoiding $r$ hyperplanes in $\PP H^0(E,\cN_2)$. Hence 
  the intersection of $D$ with the generic $\PP^{r-1}$ fiber of the second projection
  \begin{equation*}
    \PP^{r-1}\times \PP^{r-1}\, \longrightarrow \, \PP^{r-1}
  \end{equation*}
  consists of $r$ hyperplanes. Interchanging the roles of $\cN_1$ and $\cN_2$, the same goes for the other projection. 
  But the Picard group of $\PP^{r-1}\times \PP^{r-1}$ is $\ZZ\oplus \ZZ$ (\cite[Exer.~III.12.6]{hrt}) and the class of $D$ in it is 
  $(r,r)$, so  $D$ is ample; its complement is  therefore affine (see the footnote to the proof of \cref{cor:isaff}).
  \reqed
\end{example}

\begin{remark}
  \Cref{pr:affine-reductive}\Cref{item:naffev} is an example of a $\GG_a$-action on an affine scheme whose geometric quotient is quasi-affine
  but not affine; \cite[Ex.~2.5]{gp1} is another such example, as is the quotient
  \begin{equation*}
    {\textrm SL}(2)\to {\textrm SL}(2)/\left(\text{upper triangular unipotent matrices}\right) \; \cong \; \AA^2-\{(0,0)\}
  \end{equation*}
  that makes an oblique appearance in the above proof of non-affineness. 

  However, under mild conditions that do not concern us here, quotients of quasi-affine schemes by actions of
  unipotent-group  are   quasi-affine \cite[Prop.~3 and Thm.~4]{fm-ga}. This is not so for $\GG_m$:
  \begin{equation*}
    \text{(quasi-affine)}
    \quad
    \AA^n - \{0\}\xrightarrow{\qquad} (\AA^n - \{0\})/\GG_m \; \cong \; \PP^{n-1}
    \quad
    \text{(non-quasi-affine)}.
  \end{equation*}
  See \cite[Thm.~3]{faunt-cat}, though, for a positive result: if a linear algebraic group does {\it not} map onto $\GG_m$ then quotients of quasi-affine varieties by its actions are quasi-affine (under mild conditions).  \reqed
\end{remark}

\appendix

\section{Isomorphism classes of extensions}
\label{sect.appx.extns}

Let $\Bbbk$ be a commutative ring with identity, and let $\cA$ be a $\Bbbk$-linear abelian category.
Given objects $A$ and $C$ in $\cA$, we define the category $\AA(C,A)$ as follows: The objects in $\AA(C,A)$ are the exact sequences
\begin{equation}
\label{defn.nx.bugs}
0 \to A' \to B' \to C' \to 0
\end{equation}
 in which $A' \cong A$ and $C' \cong C$. 
 We call  \cref{defn.nx.bugs}  an {\sf extension} of $C'$ by $A'$.
A morphism $\xi_1 \to \xi_2$ between exact sequences
\begin{equation}
\label{eq:defn.morphism}
\xymatrix{
\xi_i: \;0 \ar[r] & A _i \ar[r]^{f_i}  & B_i \ar[r]^{g_i} &   C_i \ar[r]   &  0 , \qquad (i=1,2), 
}
\end{equation}
is a triple $(\mu,\nu,\tau)$ such that the  diagram
\begin{equation}
\label{eq:comm.diag}
\xymatrix{
0 \ar[r] & A_1 \ar[r]^{f_1}  \ar[d]_{\mu} & B_1 \ar[r]^{g_1}\ar[d]^{\nu} &C_1  \ar[d]^{\tau}\ar[r]  & 0
\\
0 \ar[r] & A_2 \ar[r]_{f_2} & B_2 \ar[r]_{g_2} &C_2 \ar[r] & 0
}
\end{equation}
 commutes.\footnote{Mac Lane has a section on the category of short exact sequences in his book \cite[Ch.~XII \S6]{Mac95}.}
If $(\mu,\nu,\tau)$ are such that this  diagram commutes, then $(\mu,\nu,\tau)$
 is an isomorphism if and only if $\mu$, $\nu$ and $\tau$ are isomorphisms; we then write $\xi_1 \cong \xi_2$.

If $f:A \to B$ is monic and $g_i:B \to C_i$, $i=1,2$, are cokernels for $f$, then the sequences 
\begin{equation*}
0 \to A \stackrel{f}{\longrightarrow} B \stackrel{g_i}{\longrightarrow} C_i \to 0 \qquad (i=1,2)
\end{equation*}
are isomorphic.

$\AA$ is a $\Bbbk$-linear category: the addition $(\mu,\nu,\tau)+(\mu',\nu',\tau'):=(\mu+\mu',\nu+\nu',\tau+\tau')$ gives $\Hom_\AA(\xi_1,\xi_2)$ the structure of an abelian group; it is a $\Bbbk$-module with scalar multiplication given by $\lambda(\mu,\nu,\tau):=(\lambda\mu,\lambda\nu,\lambda\tau)$ for $\lambda \in \Bbbk$.

Assume $\Bbbk$ is a field. \cref{prop.isom.extns} shows that in the situation of interest in this paper, where $A$ and $C$ are sheaves of sections of stable vector bundles on $E$, the points in the projective space $\PP\Ext^1_\cA(C,A)$ of 1-dimensional subspaces of $\Ext^1_\cA(C,A)$ are the isomorphism classes of exact sequences $0 \to A' \to B' \to C' \to 0$ in which $A' \cong A$ and $C' \cong C$.

Let $\a:A' \to A$ and $\gamma:C' \to C$ be isomorphisms. The map that sends an extension 
\begin{equation}
\label{eq:extn1}
\xymatrix{
\xi': \quad 0 \ar[r] & A' \ar[r]^{f}  & B' \ar[r]^{g}&C' \ar[r]  & 0  
}
\end{equation}
to the extension 
\begin{equation}
\label{eq:extn2}
\xymatrix{
\xi: \quad      0 \ar[r] & A \ar[r]^{ f\a^{-1}}  & B' \ar[r]^{\gamma g}&C \ar[r]  & 0  
}
\end{equation}
induces a $\Bbbk$-linear isomorphism 
\begin{equation*}
\phi_{\a,\gamma}:\Ext^1_\cA(C',A') \to \Ext^1_\cA(C,A).
\end{equation*}
Let $\Phi_{\a,\gamma}$ be the unique morphism such that the diagram
\begin{equation}
\label{eq:defn.Phi}
\xymatrix{
\Ext^1_\cA(C',A') -\{0\}  \ar[r]^{\phi_{\a,\gamma}} \ar[d] &  \Ext^1_\cA(C,A) - \{0\} \ar[d]
\\
\PP\Ext^1_\cA(C',A') \ar[r]_{\Phi_{\a,\gamma}} &  \PP\Ext^1_\cA(C,A)
}
\end{equation}
commutes.
Although $\phi_{\a,\gamma}$ depends on the choice of $\a$ and $\gamma$,  \cref{prop.isom.extns} shows that  in the situation of 
interest in this paper the isomorphism $\Phi_{\a,\gamma}$ is the same for all choices of $\a$ and $\gamma$ and, as a consequence, 
$\PP\Ext^1_\cA(C,A)$ is in natural bijection with isomorphism classes of non-split exact sequences 
$0 \to A' \to B' \to C' \to 0$ in which $A' \cong A$ and $C' \cong C$.

\begin{proposition}
\label{prop.isom.extns}
Let $\Bbbk$ be a field, and $A$ and $C$ objects in a $\Bbbk$-linear abelian category $\cA$.
If $\End(A)=\End(C)=\Bbbk$, then the map $\Phi_{\a,\gamma}$ in \cref{eq:defn.Phi} does not depend on the choice of $\a$ or $\gamma$,
and  the map
\begin{equation}
\label{eq:isom.extns}
\begin{cases}
\; \text{non-split exact sequences}
\\
\; 0 \to A' \to B' \to C' \to 0
\end{cases}
\Bigg\vert 
\; 
 \text{$A' \cong A$ and $C' \cong C$}
\Bigg\} \Bigg\slash \!\!  \cong
\quad \longrightarrow \quad
\PP\Ext^1_\cA(C,A)
\end{equation}
that sends \cref{eq:extn1} to  \cref{eq:extn2}  is a well-defined bijection that does not depend on the choice of $\a$ or $\gamma$.
\end{proposition}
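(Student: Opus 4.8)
The plan is to first establish that $\Phi_{\a,\gamma}$ is independent of the choices, and then derive the bijectivity statement as a formal consequence. For the independence: suppose $\a_1,\a_2 : A' \to A$ and $\gamma_1,\gamma_2 : C' \to C$ are two choices of isomorphisms. Since $\End(A) = \Bbbk$, the composite $\a_2 \a_1^{-1} \in \Aut(A)$ is a non-zero scalar, say $\a_2 = \lambda \a_1$; likewise $\gamma_2 = \mu \gamma_1$ for some non-zero scalar $\mu$. I would then compute directly that $\phi_{\a_2,\gamma_2} = (\mu/\lambda)\,\phi_{\a_1,\gamma_1}$ (or the analogous scalar — the precise constant is irrelevant), by chasing the definition in \cref{eq:extn1}--\cref{eq:extn2}: replacing $\a$ by $\lambda\a$ rescales the monomorphism $f\a^{-1}$ by $\lambda^{-1}$, and replacing $\gamma$ by $\mu\gamma$ rescales the epimorphism $\gamma g$ by $\mu$, and such rescalings of the structure maps of an extension multiply its class in $\Ext^1$ by the corresponding scalar (this is standard; it can be seen from the Yoneda description of $\Ext^1$, or from the fact that multiplying $f$ by a unit and $g$ by a unit is realised by an automorphism of $B'$ composed with scalars on $A$ and $C$). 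Since $\phi_{\a_2,\gamma_2}$ and $\phi_{\a_1,\gamma_1}$ differ by a non-zero scalar, the induced maps on projective spaces coincide: $\Phi_{\a_2,\gamma_2} = \Phi_{\a_1,\gamma_1}$. This is the step where care is needed with bookkeeping, but there is no real obstacle.

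Given independence of the choices, I would next show the map \cref{eq:isom.extns} is well defined. An isomorphism class of non-split extension is represented by some $\xi'$ as in \cref{eq:extn1} with $A' \cong A$, $C' \cong C$; choosing isomorphisms $\a,\gamma$ produces $\xi$ as in \cref{eq:extn2}, and hence a point $\Bbbk\cdot[\xi] \in \PP\Ext^1_\cA(C,A)$. Independence of $\a,\gamma$ has just been proved. It remains to check that isomorphic extensions $\xi_1' \cong \xi_2'$ (via a triple $(\mu,\nu,\tau)$ as in \cref{eq:comm.diag}) go to the same point: here $\mu \in \Iso(A_1',A_2')$, and one simply observes that composing the chosen isomorphisms appropriately (e.g.\ using $\a_2 \mu$ in place of $\a_1$) shows the two resulting classes in $\Ext^1_\cA(C,A)$ are equal, again using $\End(A)=\End(C)=\Bbbk$ to absorb the ambiguity. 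So the map is well defined.

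Finally, bijectivity. For surjectivity: any non-zero class $e \in \Ext^1_\cA(C,A)$ is represented by a non-split exact sequence $0 \to A \to B \to C \to 0$, which is itself an object of $\AA(C,A)$ (take $A' = A$, $C' = C$, and $\a = \id_A$, $\gamma = \id_C$), and its image under \cref{eq:isom.extns} is $\Bbbk e$. For injectivity: suppose two non-split extensions $\xi_1', \xi_2'$ map to the same point. Transporting both to extensions of $C$ by $A$ via chosen isomorphisms, we get classes $[\xi_1], [\xi_2] \in \Ext^1_\cA(C,A)$ with $\Bbbk[\xi_1] = \Bbbk[\xi_2]$, hence $[\xi_2] = \lambda[\xi_1]$ for a non-zero scalar $\lambda$. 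But $\lambda[\xi_1]$ is the class of the extension obtained from $\xi_1$ by rescaling, say, the monomorphism by $\lambda$, which is isomorphic as an extension to $\xi_1$ (via the automorphism $\lambda\cdot\id_B$ of the middle term, together with $\id$ on $A$ and $C$ — or more simply via scalars on $A$ and $C$). Hence $\xi_1 \cong \xi_2$ as extensions of $C$ by $A$, and pulling back along the chosen isomorphisms $\a_i,\gamma_i$ gives $\xi_1' \cong \xi_2'$. Thus \cref{eq:isom.extns} is injective, completing the proof. The only genuinely delicate point throughout is the scalar-rescaling lemma — that multiplying the structure maps of a short exact sequence by units multiplies the $\Ext^1$-class by the corresponding unit — and I would either cite the Yoneda/pushout-pullback description of $\Ext^1$ or record it as a short direct verification.
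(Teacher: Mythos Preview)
Your proposal is correct and uses the same essential idea as the paper: since $\End(A)=\End(C)=\Bbbk$, all relevant isomorphisms are scalars, so the resulting classes in $\Ext^1_\cA(C,A)$ differ only by non-zero scalars and hence coincide in projective space. The organization differs slightly: you prove independence of $\Phi_{\a,\gamma}$ from $(\a,\gamma)$ first (for fixed $A',C'$) and then deduce well-definedness on isomorphism classes, whereas the paper proves well-definedness in one pass for two isomorphic extensions with possibly different endpoints $A_i,C_i$ and different choices $\a_i,\gamma_i$, obtaining independence as the special case $A_1=A_2$, $C_1=C_2$. Both routes work; your decomposition separates concerns cleanly, while the paper's single argument is slightly more economical.

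One small slip: in your injectivity argument, the triple $(\id_A,\lambda\cdot\id_B,\id_C)$ does \emph{not} give an isomorphism between $\xi_1$ and $\lambda\xi_1$ (check the square involving the epimorphism $g$). Your parenthetical alternative ``via scalars on $A$ and $C$'' is the correct one: e.g., if $\lambda\xi_1$ is realized by rescaling the monomorphism, then $(\lambda^{-1}\cdot\id_A,\id_B,\id_C)$ does the job. This is effectively what the paper writes out as well.
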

\begin{proof}
Consider two non-split isomorphic exact sequences
\begin{equation}
\label{eq:two.ses}
\xymatrix{
\xi_i: \;0 \ar[r] & A_i \ar[r]^{f_i}  & B_i \ar[r]^{g_i}&C_i \ar[r]  & 0 \qquad (i=1,2)
}
\end{equation}
in which $A_i \cong A$ and $C_i \cong C$. 
There is a commutative diagram 
\begin{equation*}
\xymatrix{
0 \ar[r] & A_1 \ar[r]^{ f_1}  \ar[d]_{\mu} & B_1 \ar[r]^{g_1}  \ar[d]_{\nu} &C_1 \ar[r] \ar[d]_{\tau} & 0 
\\
0 \ar[r] & A_2 \ar[r]_{ f_2}  & B_2 \ar[r]_{g_2}&C_2 \ar[r]  & 0 
}
\end{equation*}
in which $\mu$, $\nu$, and $\tau$ are isomorphisms.

Fix isomorphisms $\a_i:A_i \to A$ and $\gamma_i:C_i \to C$. For brevity write $\phi_i=\phi_{\a_i,\gamma_i}$ and $\Phi_i=\Phi_{\a_i,\gamma_i}$. Then $\phi_1(\xi_1)$ and $\phi_2(\xi_2)$ are the exact sequences
\begin{equation}
\label{eq:phi.two.ses}
\xymatrix{
  0 \ar[r] & A \ar[r]^{ f_i\a_i^{-1}}  & B_i \ar[r]^{\gamma_i g_i}&C \ar[r]  & 0  \qquad (i=1,2).
}
\end{equation}
The maps  $\lambda:=\a_2\mu\a_1^{-1}$ and  $\theta:=\gamma_2\tau\gamma_1^{-1}$  are isomorphisms $A \to A$ and $C \to C$, respectively.

Since the diagram
\begin{equation*}
\xymatrix{
\phi_1(\xi_1): \quad    
0 \ar[r] & A \ar[r]^{ f_1\a_1^{-1}}  \ar[d]_{ \lambda } & B_1  \ar[d]_{\nu} \ar[r]^{\gamma_1 g_1}&C \ar[r] \ar[d]^{\theta} & 0 
\\
\phi_2(\xi_2): \quad    
0 \ar[r] & A \ar[r]_{ f_2\a_2^{-1}}  & B_2 \ar[r]_{\gamma_2 g_2}&C \ar[r]  & 0 
}
\end{equation*}
commutes, $(\lambda,\nu,\theta)$ is an isomorphism $\phi_1(\xi_1) \to \phi_2(\xi_2)$. 

We will now show that  the equivalence classes  of $\phi_1(\xi_1)$ and $\phi_2(\xi_2)$, which belong to $\Ext^1_\cA(C,A)$,  are
 scalar multiples of each other.

Because $\End_\cA(A)=\End_\cA(C)=\Bbbk$, $\lambda$ and $\theta$ are scalar multiples of the identity maps $\id_A$ and $\id_C$,
so, with a small abuse of notation, we can view $\lambda$ and $\theta$ as scalars and deduce that the diagram
 \begin{equation*}
\xymatrix{
0 \ar[r] & A \ar[rr]^{f_1\a_1^{-1}}  \ar@{=}[d]  && B_1 \ar[rr]^{\theta  \gamma_1 g_1}\ar[d]^{\nu} && C  \ar@{=}[d]  \ar[r]  & 0
\\
0 \ar[r] & A \ar[rr]_{\lambda  f_2\a_2^{-1}} && B_2 \ar[rr]_{\gamma_2 g_2} &&C \ar[r] & 0
}
\end{equation*}
commutes. But the rows of this diagram are scalar multiples of $\phi_1(\xi_1)$ and $\phi_2(\xi_2)$ so  the equivalence classes  of $\phi_1(\xi_1)$ and $\phi_2(\xi_2)$ become equal in $\PP\Ext^1_\cA(C,A)$, i.e., $\Phi_1(\xi_1)=\Phi_2(\xi_2)$.
This completes the proof that the map in \cref{eq:isom.extns} is well defined.

If we consider the case $A_1=A_2$ and $C_1=C_2$, the argument above also shows that the morphisms 
\begin{equation*}
\Phi_1,\Phi_2: \PP\Ext^1_\cA(C_1,A_1)  \, \longrightarrow \,  \PP \Ext^1_\cA(C,A)
\end{equation*}
are the same.

The map  in \cref{eq:isom.extns} is certainly surjective. It remains to show it is injective.

Let $\xi_1$ and $\xi_2$ be non-split exact sequences as in \cref{eq:two.ses}. 
As before, fix   isomorphisms $\a_i:A_i \to A$ and $\gamma_i:C_i \to C$,
and write $\phi_1(\xi_1)$ and $\phi_2(\xi_2)$ for  the exact sequences in \cref{eq:phi.two.ses}. Suppose that $\Phi_1(\xi_1)=\Phi_2(\xi_2)$. Then the equivalence classes containing $\phi_1(\xi_1)$ and $\phi_2(\xi_2)$ are scalar multiples of each other.
Hence there is a non-zero scalar, $\d \in \Bbbk$, such that $\d\phi_1(\xi_1)$ and $\phi_2(\xi_2)$ are equivalent, i.e., there is 
an isomorphism $\nu:B_1 \to B_2$ such that the diagram
\begin{equation*}
\xymatrix{
0 \ar[r] & A \ar[rr]^{f_1\a_1^{-1}}  \ar@{=}[d]  && B_1 \ar[rr]^{\d  \gamma_1 g_1}\ar[d]^{\nu} && C  \ar@{=}[d]  \ar[r]  & 0
\\
0 \ar[r] & A \ar[rr]_{f_2\a_2^{-1}} && B_2 \ar[rr]_{\gamma_2 g_2} &&C \ar[r] & 0
}
\end{equation*}
commutes. It follows that the diagram
\begin{equation*}
\xymatrix{
0 \ar[r] & A_1 \ar[r]^{ f_1}  \ar[d]_{\a_2^{-1}\a_1} & B_1 \ar[r]^{g_1}  \ar[d]_{\nu} &C_1 \ar[r] \ar[d]^{\d\gamma_2^{-1}\gamma_1} & 0 
\\
0 \ar[r] & A_2 \ar[r]_{ f_2}  & B_2 \ar[r]_{g_2}&C_2 \ar[r]  & 0 
}
\end{equation*}
commutes. Thus, $\xi_1 \cong \xi_2$. The map in \cref{eq:isom.extns} is therefore injective.
\end{proof}

\subsubsection{Equivalence of extensions}
Two  extensions
\begin{equation*}
\xymatrix{
\xi_i: \;0 \ar[r] & A \ar[r]^{f_i}  & B_i \ar[r]^{g_i}&C \ar[r]  & 0 && (i=1,2)
}
\end{equation*}
are  {\sf equivalent}, denoted $\xi_1 \equiv \xi_2$, if there is a commutative diagram
\begin{equation*}
\xymatrix{
0 \ar[r] & A \ar[r]^{f_1}  \ar@{=}[d] & B_1 \ar[r]^{g_1}\ar[d]^{\cong} &C  \ar@{=}[d] \ar[r]  & 0
\\
0 \ar[r] & A \ar[r]_{f_2} & B_2 \ar[r]_{g_2} &C \ar[r] & 0.
}
\end{equation*}

By definition, $\Ext^1_\cA(C,A)$ is the set of equivalence classes of 
extensions of $C$ by $A$.
It is a $\Bbbk$-module.
If $p$ is an odd prime and $\pi:\ZZ \to \ZZ/\ZZ p$ the map $\pi(x)=x+\ZZ p$, then the sequences 
$0 \to \ZZ \stackrel{p}{\longrightarrow} \ZZ \stackrel{\pi}{\longrightarrow} \ZZ/\ZZ p \to 0$
and $0 \to \ZZ \stackrel{p}{\longrightarrow} \ZZ \stackrel{2\pi}{\longrightarrow} \ZZ/\ZZ p \to 0$  are isomorphic but not equivalent.

If $X$ and $Y$ are objects in $\cA$ there is a ``composition'' map
\begin{equation}
\label{eq:Ext1.bimodule}
\Hom(X,A) \times \Ext^1_\cA(Y,X) \times \Hom(C,Y) \; \longrightarrow \; \Ext^1_\cA(C,A), \quad (\a,\xi,\b) \mapsto \a \cdot \xi \cdot \b.
\end{equation}

\section{Group actions on objects in rigid monoidal categories}
\label{ssect.rigid.symm.mon.cats}

Let $(\cC,\otimes,\mathbf{1})$ be a rigid monoidal category (see \cite{egno} for the terminology on monoidal categories).

In this section, the letter $G$ always denotes a group.

A \emph{$G$-action} on an object $x\in\cC$ is a group homomorphism $\rho_{x}:G\to\Aut(x)$. If $G$ acts on $x$ and $y$, then it acts on $x\otimes y$ by $\rho_{x\otimes y}=\rho_{x}\otimes\rho_{y}$. 
It also acts on the \emph{set} $\Hom_{\cC}(x,y)$ by
\begin{equation*}
	\alpha\triangleright f\;:=\;\rho_{y}(\alpha)\circ f\circ\rho_{x}(\alpha)^{-1}\quad \text{($\alpha\in G$, $f\in\Hom_{\cC}(x,y)$)}.
\end{equation*}
If we are given a $G$-action on only one of $x$ and $y$ we can impose the trivial action of $G$ on the other one and so obtain an action of $G$ on $\Hom_{\cC}(x,y)$.

A morphism $f\in\Hom_{\cC}(x,y)$ is said to be \emph{$G$-equivariant} if $\alpha\triangleright f=f$ for all $\alpha\in G$.

\begin{proposition}\label{prop.equiv.map.mor}
	Suppose $G$ acts on $x,y,z,w\in\cC$. If a map $\varphi:\Hom_{\cC}(x,y)\to\Hom_{\cC}(z,w)$ is $G$-equivariant, then $\varphi$ preserves $G$-equivariance of morphisms.
\end{proposition}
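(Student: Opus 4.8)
The statement is essentially a formal consequence of the definitions, so the plan is to unwind what ``$G$-equivariant'' means for $\varphi$ and apply it to a $G$-equivariant morphism. Let $f \in \Hom_{\cC}(x,y)$ be $G$-equivariant, so that $\alpha \triangleright f = f$ for all $\alpha \in G$; we must show $\varphi(f) \in \Hom_{\cC}(z,w)$ is $G$-equivariant, i.e.\ $\alpha \triangleright \varphi(f) = \varphi(f)$ for all $\alpha \in G$. The hypothesis that $\varphi : \Hom_{\cC}(x,y) \to \Hom_{\cC}(z,w)$ is $G$-equivariant means precisely that $\varphi$ intertwines the two $G$-actions: $\varphi(\alpha \triangleright g) = \alpha \triangleright \varphi(g)$ for every $g \in \Hom_{\cC}(x,y)$ and every $\alpha \in G$.

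\textbf{Key step.} Fix $\alpha \in G$ and compute
\begin{equation*}
\alpha \triangleright \varphi(f) \;=\; \varphi(\alpha \triangleright f) \;=\; \varphi(f),
\end{equation*}
where the first equality is the $G$-equivariance of the map $\varphi$ applied to $g = f$, and the second uses the assumed $G$-equivariance of the morphism $f$. Since $\alpha$ was arbitrary, $\varphi(f)$ is $G$-equivariant. This is the whole argument; no monoidal structure is actually needed beyond the ambient setup already fixed in the section, and no rigidity is used here.

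\textbf{Anticipated obstacle.} There is no real obstacle: the only thing to be careful about is bookkeeping, namely keeping straight that ``$\varphi$ is $G$-equivariant'' refers to $\varphi$ as a map between the two $G$-sets $\Hom_{\cC}(x,y)$ and $\Hom_{\cC}(z,w)$ (with the actions $\triangleright$ defined just above the proposition), whereas ``$f$ is $G$-equivariant'' and ``$\varphi(f)$ is $G$-equivariant'' refer to morphisms in $\cC$ being fixed points of the respective $\triangleright$-actions. Once this is noted, the proof is the two-line display above. I would write it as a short paragraph rather than a displayed multi-step argument.
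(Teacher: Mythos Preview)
Your proof is correct and is essentially identical to the paper's own argument: both fix $\alpha\in G$ and compute $\alpha\triangleright\varphi(f)=\varphi(\alpha\triangleright f)=\varphi(f)$ using first the $G$-equivariance of $\varphi$ and then that of $f$.
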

\begin{proof}
	Suppose $f\in\Hom_{\cC}(x,y)$ is $G$-equivariant. If $\alpha\in G$, then $\alpha\triangleright f=f$ so $\alpha\triangleright\varphi(f)=\varphi(\alpha\triangleright f)=f$.
\end{proof}

\begin{proposition}\label{prop.comp.equiv}
	Suppose $G$ acts on $x,x',y\in\cC$. If $f:x\to x'$ is $G$-equivariant, then the maps
	\begin{enumerate}
		\item $f_{*}:\Hom_{\cC}(y,x)\to\Hom_{\cC}(y,x')$, $h\mapsto f\circ h$, and
		\item $f^{*}:\Hom_{\cC}(x',y)\to\Hom_{\cC}(x,y)$, $h\mapsto h\circ f$,
	\end{enumerate}
	are $G$-equivariant. 
	
	Consequently, the maps $f_{*}$ and $f^{*}$ preserve $G$-equivariance of morphisms.
\end{proposition}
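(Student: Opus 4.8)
The statement to prove is \Cref{prop.comp.equiv}: given a $G$-equivariant morphism $f\colon x\to x'$ in a rigid monoidal category $\cC$ on which $G$ acts, the induced maps $f_{*}\colon\Hom_{\cC}(y,x)\to\Hom_{\cC}(y,x')$ and $f^{*}\colon\Hom_{\cC}(x',y)\to\Hom_{\cC}(x,y)$ are $G$-equivariant, and consequently preserve $G$-equivariance of morphisms. The plan is to verify equivariance of $f_{*}$ and $f^{*}$ by a direct computation with the definition of the $G$-action on Hom-sets, and then deduce the ``consequently'' clause immediately from \Cref{prop.equiv.map.mor}.

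First I would recall the relevant definition: for $\alpha\in G$ and $h\in\Hom_{\cC}(y,x)$, the action is $\alpha\triangleright h=\rho_{x}(\alpha)\circ h\circ\rho_{y}(\alpha)^{-1}$, and similarly on every other Hom-set in sight. Then, for $f_{*}$, I would compute, for $\alpha\in G$ and $h\in\Hom_{\cC}(y,x)$,
\begin{equation*}
\alpha\triangleright f_{*}(h)\;=\;\rho_{x'}(\alpha)\circ(f\circ h)\circ\rho_{y}(\alpha)^{-1}\;=\;\bigl(\rho_{x'}(\alpha)\circ f\circ\rho_{x}(\alpha)^{-1}\bigr)\circ\bigl(\rho_{x}(\alpha)\circ h\circ\rho_{y}(\alpha)^{-1}\bigr),
\end{equation*}
where I have inserted $\rho_{x}(\alpha)^{-1}\circ\rho_{x}(\alpha)=\id_{x}$. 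The first parenthesized factor is $\alpha\triangleright f$, which equals $f$ because $f$ is $G$-equivariant; the second is $\alpha\triangleright h$. Hence $\alpha\triangleright f_{*}(h)=f\circ(\alpha\triangleright h)=f_{*}(\alpha\triangleright h)$, which is exactly the $G$-equivariance of the map $f_{*}$. The computation for $f^{*}$ is the mirror image: for $h\in\Hom_{\cC}(x',y)$,
\begin{equation*}
\alpha\triangleright f^{*}(h)\;=\;\rho_{y}(\alpha)\circ(h\circ f)\circ\rho_{x}(\alpha)^{-1}\;=\;\bigl(\rho_{y}(\alpha)\circ h\circ\rho_{x'}(\alpha)^{-1}\bigr)\circ\bigl(\rho_{x'}(\alpha)\circ f\circ\rho_{x}(\alpha)^{-1}\bigr)\;=\;(\alpha\triangleright h)\circ f\;=\;f^{*}(\alpha\triangleright h),
\end{equation*}
again using $f$'s equivariance to replace $\alpha\triangleright f$ by $f$. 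This establishes that both $f_{*}$ and $f^{*}$ are $G$-equivariant maps of Hom-sets.

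Finally, the ``consequently'' clause follows with no further work: $f_{*}$ and $f^{*}$ are $G$-equivariant maps between Hom-sets, so by \Cref{prop.equiv.map.mor} they carry $G$-equivariant morphisms to $G$-equivariant morphisms. I do not anticipate any genuine obstacle here; the only point requiring any care is the bookkeeping of which object's action appears where (the interior cancellation inserts $\rho_{x}$ in the $f_{*}$ case and $\rho_{x'}$ in the $f^{*}$ case), and making sure the trivial-action convention on whichever of $y$ is not acted upon is harmless — but since the argument only uses the abstract action on Hom-sets, it applies verbatim whether the $G$-action on $y$ is genuine or trivial.
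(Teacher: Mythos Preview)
Your proof is correct and essentially the same as the paper's: both are direct computations unwinding the definition of the $G$-action on Hom-sets, using the equivariance of $f$ (in the form $\rho_{x'}(\alpha)\circ f=f\circ\rho_x(\alpha)$) to pass from one side to the other, and both invoke \Cref{prop.equiv.map.mor} for the final clause. The only cosmetic difference is that the paper starts from $f_*(\alpha\triangleright h)$ and applies the equivariance relation directly, whereas you start from $\alpha\triangleright f_*(h)$ and insert $\rho_x(\alpha)^{-1}\rho_x(\alpha)$; the paper also leaves the $f^*$ case to the reader as ``dual'' while you write it out.
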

\begin{proof}
	Let $\alpha\in G$ and $h\in\Hom_{\cC}(y,x)$. Since $f$ is $G$-equivariant, $f\circ\rho_{x}(\alpha)=\rho_{x'}(\alpha)\circ f$, so
	\begin{align*}
		f_{*}(\alpha\triangleright h)&=f_{*}(\rho_{x}(\alpha)\circ h\circ\rho_{y}(\alpha)^{-1})=f\circ\rho_{x}(\alpha)\circ h\circ\rho_{y}(\alpha)^{-1}=\rho_{x'}(\alpha)\circ f\circ h\circ \rho_{y}(\alpha)^{-1}\\
		&=\rho_{x'}(\alpha)\circ f_{*}(h)\circ\rho_{y}(\alpha)^{-1}=\alpha\triangleright f_{*}(h).
	\end{align*}
	The second statement is proved dually. The last statement follows from \cref{prop.equiv.map.mor}.
\end{proof}

Given an object $x \in \cC$, let $x^*$ denote its left dual (when $\cC$ is symmetric, which it will be after the next result, 
the left dual is isomorphic to the right dual, but we will consistently use the {\it left} dual).

If $G$ acts on $x$, then it also acts on $x^{*}$; for each $\alpha\in G$, the automorphism $\rho_{x}(\alpha):x\to x$ induces the automorphism $\rho_{x}(\alpha)^{*}:x^{*}\to x^{*}$, and $\rho_{x^{*}}(\alpha):=(\rho_{x}(\alpha)^{*})^{-1}$ defines the action on $x^{*}$.

\begin{proposition}\label{prop.adj.equiv}
	Suppose $G$ acts on $x,y,z\in\cC$. The bijection
	\begin{equation}\label{eq.adj.map}
		\Hom_{\cC}(y\otimes x,z)\to\Hom_{\cC}(y,z\otimes x^{*})
	\end{equation}
	given by the adjunction $(-\otimes x) \dashv (-\otimes x^*)$ is $G$-equivariant, so it preserves $G$-equivariance of morphisms.
\end{proposition}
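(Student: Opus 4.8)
The statement to prove is \Cref{prop.adj.equiv}: that the adjunction bijection $\Hom_{\cC}(y\otimes x,z)\to\Hom_{\cC}(y,z\otimes x^{*})$ is $G$-equivariant. The natural strategy is to write the adjunction isomorphism explicitly in terms of the coevaluation $\mathrm{coev}_{x}:\mathbf{1}\to x\otimes x^{*}$ and evaluation $\mathrm{ev}_{x}:x^{*}\otimes x\to\mathbf{1}$ of the rigid structure, and then check that each building block is $G$-equivariant. Recall that the adjunction sends $f\colon y\otimes x\to z$ to the composite
\begin{equation*}
	y\;\xrightarrow{\;\mathrm{id}_{y}\otimes\mathrm{coev}_{x}\;}\;y\otimes x\otimes x^{*}\;\xrightarrow{\;f\otimes\mathrm{id}_{x^{*}}\;}\;z\otimes x^{*},
\end{equation*}
and the inverse sends $g\colon y\to z\otimes x^{*}$ to the composite $y\otimes x\xrightarrow{g\otimes\mathrm{id}_{x}}z\otimes x^{*}\otimes x\xrightarrow{\mathrm{id}_{z}\otimes\mathrm{ev}_{x}}z$ (suppressing associators throughout).

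\textbf{Key steps.} First I would record the (standard) fact that $\mathrm{coev}_{x}$ and $\mathrm{ev}_{x}$ are $G$-equivariant for the $G$-action on $x$ and the induced action on $x^{*}$; this is precisely the compatibility that makes $\rho_{x^{*}}(\alpha)=(\rho_{x}(\alpha)^{*})^{-1}$ the ``right'' action, and it follows from the defining naturality/zig-zag identities together with the definition of the dual of a morphism. (Here $\mathbf{1}$ carries the trivial $G$-action.) Second, since the action on a tensor product is the tensor product of the actions, the morphisms $\mathrm{id}_{y}\otimes\mathrm{coev}_{x}$ and $f\otimes\mathrm{id}_{x^{*}}$ are built by tensoring equivariant morphisms, hence are equivariant whenever $f$ is — but I do not want to assume $f$ is equivariant, only that $f$ is an arbitrary morphism. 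So the cleaner route is: for a fixed $\alpha\in G$ and arbitrary $f\in\Hom_{\cC}(y\otimes x,z)$, compute $\alpha\triangleright(\text{adjoint of }f)$ directly by conjugating the displayed composite by the relevant automorphisms, use equivariance of $\mathrm{coev}_{x}$ and functoriality of $\otimes$ to slide the automorphisms through, and identify the result as the adjoint of $\alpha\triangleright f$. Concretely, $\alpha\triangleright(\text{adj}(f))=\rho_{z\otimes x^{*}}(\alpha)\circ(f\otimes\mathrm{id})\circ(\mathrm{id}\otimes\mathrm{coev}_{x})\circ\rho_{y}(\alpha)^{-1}$; inserting $\rho_{y\otimes x\otimes x^{*}}(\alpha)^{-1}\rho_{y\otimes x\otimes x^{*}}(\alpha)$ in the middle, using $(\rho_{z}(\alpha)\otimes\rho_{x^{*}}(\alpha))\circ(f\otimes\mathrm{id})=(\rho_{z}(\alpha)f\otimes\rho_{x^{*}}(\alpha))$ and the equivariance $\rho_{x\otimes x^{*}}(\alpha)\circ\mathrm{coev}_{x}=\mathrm{coev}_{x}$, one collapses this to $((\rho_{z}(\alpha)\circ f\circ\rho_{y\otimes x}(\alpha)^{-1})\otimes\mathrm{id})\circ(\mathrm{id}\otimes\mathrm{coev}_{x})=\text{adj}(\alpha\triangleright f)$. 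The final clause of the proposition — that the bijection preserves $G$-equivariance of morphisms — is then immediate from \Cref{prop.equiv.map.mor}.

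\textbf{The main obstacle.} None of this is deep; the only real care needed is bookkeeping with associativity constraints and making sure the $G$-action on $x^{*}$ really does make $\mathrm{ev}_{x}$ and $\mathrm{coev}_{x}$ equivariant — i.e., verifying the compatibility of $\rho_{x^{*}}(\alpha)=(\rho_{x}(\alpha)^{*})^{-1}$ with the duality data. This is the step I would write out most carefully (it is essentially: the dual of an automorphism is characterized by $\mathrm{ev}_{x}\circ(\phi^{*}\otimes\mathrm{id}_{x})=\mathrm{ev}_{x}\circ(\mathrm{id}_{x^{*}}\otimes\phi)$ for $\phi\in\Aut(x)$, from which equivariance of $\mathrm{ev}$ follows directly and equivariance of $\mathrm{coev}$ follows by the analogous identity or by duality). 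Everything else is a routine diagram chase of the kind already performed in \Cref{prop.comp.equiv}. I would also remark that the same argument applies verbatim with left duals replaced by right duals, so the choice of left dual made in the text is harmless here.
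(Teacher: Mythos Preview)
Your proposal is correct and follows essentially the same approach as the paper: write the adjunction via $\mathrm{id}\otimes\mathrm{coev}$ followed by $f\otimes\mathrm{id}$, then use that the compatibility $(\rho_x(\alpha)\otimes\rho_{x^*}(\alpha))\circ\mathrm{coev}_x=\mathrm{coev}_x$ (which is exactly the definition of the dual morphism $\rho_x(\alpha)^*$) lets you slide the conjugation through to obtain $\mathrm{adj}(\alpha\triangleright f)$. The paper organizes this as a single commutative diagram rather than your ``insert $\rho^{-1}\rho$'' manipulation, but the content is identical.
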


\begin{proof}
	Let $f\in\Hom_{\cC}(y\otimes x,z)$. There is a commutative diagram
	\begin{equation}\label{eq.adj.act}
		\begin{tikzcd}[column sep=30mm]
			y\ar[r,"\id\otimes\operatorname{coev}"]\ar[d,"\rho_{y}(\alpha)"'] & y\otimes x\otimes x^{*}\ar[r,"f\otimes\id"]\ar[d,"\rho_{y}(\alpha)\otimes\rho_{x}(\alpha)\otimes\rho_{x}(\alpha)^{*-1}"] & z\otimes x^{*}\ar[d,"\rho_{z}(\alpha)\otimes\rho_{x}(\alpha)^{*-1}"] \\
			y\ar[r,"\id\otimes\operatorname{coev}"'] & y\otimes x\otimes x^{*}\ar[r,"(\alpha\,\triangleright f)\otimes\id"'] & z\otimes x^{*}
		\end{tikzcd}
	\end{equation}
	(in which we have omitted the isomorphism $(y\otimes x)\otimes x^{*}\to y\otimes (x\otimes x^{*})$); the left-hand square is $y\otimes(-)$ applied to the diagram
	\begin{equation*}
		\begin{tikzcd}
			\mathbf{1}\ar[r,"\operatorname{coev}"]\ar[d,"\id"'] & x\otimes x^{*}\ar[d,"\rho_{x}(\alpha)\otimes\rho_{x}(\alpha)^{*-1}"] \\
			\mathbf{1}\ar[r,"\operatorname{coev}"'] & x\otimes x^{*}
		\end{tikzcd}
	\end{equation*}
	whose commutativity follows from the definition of the left dual $\rho_{x}(\alpha)^{*}$.
	
	The first and the second rows of \cref{eq.adj.act} are the images of $f$ and $\alpha\triangleright f$, respectively, under the map \cref{eq.adj.map},  
so the commutativity of \cref{eq.adj.act} implies that \cref{eq.adj.map} is $G$-equivariant.
\end{proof}

For the rest of this section we assume $\cC$ is symmetric with braiding $c_{x,y}:x\otimes y\to y\otimes x$, and  abelian. 

We define the second exterior power $\wedge^{2}x$ of $x\in\cC$ to be the image of the morphism
\begin{equation*}
	\begin{tikzcd}[column sep=large]
		x \otimes x \ar[r,"1-c_{x,x}"] & x\otimes x.
	\end{tikzcd}
\end{equation*}
If $G$ acts on $x$, then $\rho_{x\otimes x}(\alpha)=\rho_{x}(\alpha)\otimes\rho_{x}(\alpha)$ induces an automorphism $\rho_{\wedge^{2}x}(\alpha)$ of $\wedge^{2}x$. Thus $G$ also acts on $\wedge^{2}x$, and the quotient morphism $x\otimes x\to\wedge^{2}x$ is $G$-equivariant.

\begin{remark}\label{rem.equiv.ex}
	The equivariance of the maps $\nu$ in \cref{le:edete} and $\theta\circ\nu_{*}$ in \cref{re:kercoker} can now be deduced as follows: the group $\Aut(x)$ acts on $x$ and the quotient morphism $x\otimes x\to\wedge^{2}x$ is $\Aut(x)$-equivariant, so the morphism $\nu:x\to\wedge^{2}x\otimes x^{*}$ obtained by the adjunction $(-\otimes x) \dashv (-\otimes x^*)$ is $\Aut(x)$-equivariant by \cref{prop.adj.equiv}. Therefore the map $\Hom_{\cC}(\mathbf{1},x)\to\Hom_{\cC}(x,\wedge^{2}x)$ that is the composition
	\begin{equation*}
		\begin{tikzcd}
			\Hom_{\cC}(\mathbf{1},x)\ar[r,"\nu_{*}"] & \Hom_{\cC}(\mathbf{1},\wedge^{2}x\otimes x^{*})\ar[r,"\operatorname{adj}"] & \Hom_{\cC}(x,\wedge^{2}x),
		\end{tikzcd}
	\end{equation*}
	where the latter map is the adjunction, is $\Aut(x)$-equivariant by \cref{prop.comp.equiv,prop.adj.equiv}, where $\mathbf{1}$ 
is equipped with the trivial action of $\Aut(x)$.
\end{remark}

\section{Tangent spaces to points on Grassmannians}
\label{ssect.T_x.GG}

 Let $\CC[\varepsilon]:=\CC\oplus \CC \varepsilon$ be the  ring of dual numbers; i.e., $\varepsilon$ is a formal variable whose square is zero.
  
  As a preliminary, we recall two equivalent pictures of the tangent space to the Grassmannian $\GG(k,V)$ of $k$-planes in a 
  finite-dimensional vector space $V$. 
  With that in mind, fix a $k$-dimensional subspace $V' \subseteq V$ and consider it as a point of $\GG(k,V)$.

  On the one hand, \cite[Thm.~3.5]{3264} gives an identification
  \begin{equation}\label{eq:tggrashom}
    T_{V'}\GG(k,V) \; = \;  \Hom(V',V/V')
  \end{equation}
  that is canonical in the sense that it glues over all $V'$ to give an isomorphism of bundles.

  On the other hand, by \cite[Exer.~II.2.8]{hrt},
  \begin{equation*}
    T_{V'}\GG(k,V) \;=\;  \{\Psi:\Spec\CC[\varepsilon]\to \GG(k,V)\ |\ \Psi(\text{closed point})=V'\}
  \end{equation*}
 By the universal property of the Grassmannians (as in \cite[Exer.~VI-18]{EH00} and \cite[discussion immediately preceding \S 3.2.4]{3264}), this amounts to
  \begin{align*}
    T_{V'}\GG(k,V) \; \cong&\quad\text{rank-$k$ $\CC[\varepsilon]$-summands } V'_{\varepsilon}\subseteq V[\varepsilon]:=V\otimes_{\CC}\CC[\varepsilon] \text{ for which}
                             \numberthis\label{eq:tggraseps}
    \\
                           &\quad  
                             \text{the map $V'_{\varepsilon}/\varepsilon V'_{\varepsilon}\to V=  V[\varepsilon]/\varepsilon V[\varepsilon]$   is an isomorphism onto } V'.
  \end{align*}
Items \cref{item.tangent.a}, \cref{item.tangent.b}, \cref{item.tangent.c} below describe the natural bijection between \Cref{eq:tggrashom} and \Cref{eq:tggraseps}. First, let $\iota:V' \to V$ and $\a:V \to V/V'$
be the inclusion and quotient maps, and fix a linear map $\b:V/V' \to V$ such that $\a\b=\id_{V/V'}$.

\begin{enumerate}[(a),inline]
\item\label{item.tangent.a}
Given $\theta \in  \Hom(V',V/V')$ as in \Cref{eq:tggrashom}, we define  
    \begin{equation}
    \label{eq:V_eps.defn}
   V'_{\varepsilon} \; := \;   \{x+\varepsilon \b\theta(x) \; | \; x \in V'\}+ \varepsilon V' \;=\; \varepsilon V' + \im(\iota+\varepsilon \b\theta).
    \end{equation}

Roughly speaking, the construction $\theta\mapsto V'_{\varepsilon}$ sends $\theta$ to an isomorphic copy of its graph:
 first, $V'_{\varepsilon}$ always contains $\varepsilon V'\subset V[\varepsilon]$;  second, there is an isomorphism; 
      \begin{equation*} 
   \mu:     V\times (V/V')   \, \longrightarrow \, V[\varepsilon]/\varepsilon V',    \qquad \mu(w,z) :=  w+\varepsilon \beta(z) + \varepsilon V';
      \end{equation*}
third, $V'_{\varepsilon}/\varepsilon V'  = \mu(\Gamma)$ where $\Gamma:= \{(x,\theta(x))\in V\times V/V'\ |\ x \in V'\}$
is the graph of $\theta$.     

\item\label{item.tangent.b}
  Conversely, given $V'_{\varepsilon}$ as in \Cref{eq:tggraseps}, 
 the condition that the map $V'_{\varepsilon}/\varepsilon V'_{\varepsilon}\to V$
  is an isomorphism onto $V'$ implies that $V'_\varepsilon \cap \varepsilon V = \varepsilon V'_\varepsilon$; hence 
  for each $x\in V'$ there is an element $y \in V$, unique modulo $V'$, such that  $x+\varepsilon y \in V'_{\varepsilon}$;
 we may therefore define $ \theta \in  \Hom(V',V/V')= T_{V'}\GG(k,V) $ by the formula
    \begin{equation}
      \label{eq:V_eps.theta.defn}
      \theta(x) \; := \; y+V' 
    \end{equation}
    to recover the equality in \Cref{eq:tggrashom}. 

\item\label{item.tangent.c}
The procedures in \cref{item.tangent.a} and \cref{item.tangent.b}  are mutual inverses.

Suppose we are given $\theta_1 \in  \Hom(V',V/V')$. Define $V'_{\varepsilon}:=\varepsilon V' + \im(\iota+\varepsilon \b\theta_1)$ as in \cref{eq:V_eps.defn}.
If $x \in V'$, then $x+\varepsilon \b\theta_1(x)=(\iota+\varepsilon \b\theta_1)(x) \in V'_\varepsilon$ so, in \cref{item.tangent.b}, we can take $y=\b\theta_1(x)$; the
map $\theta_2$ defined by \cref{eq:V_eps.theta.defn} is therefore $\theta_2(x)=\b\theta_1(x) + V'=\a\b\theta_1(x) =\theta_1(x)$; i.e., $\theta_2=\theta_1$. 

Suppose we are given $V'_{\varepsilon,1}$ satisfying \Cref{eq:tggraseps}. Define $\theta$ by \cref{eq:V_eps.theta.defn},
then define $V'_{\varepsilon,2}$ as in \cref{item.tangent.a}; i.e., $V'_{\varepsilon,2}:=\varepsilon V' + \text{im}(\iota+\varepsilon \b\theta)$. 
Let $x \in V'$ and let $y \in V$ be such that $x+\varepsilon y \in  V'_{\varepsilon,1}$; since $\theta(x)= y+V'=\a(y)$, 
 \begin{equation*}
 (\iota+\varepsilon \b\theta)(x) \;=\;  x+\varepsilon \b\a(y) \;=\;  x+\varepsilon y + \varepsilon( \b\a(y)-y) \; \in \; V'_{\varepsilon,1}+ \varepsilon V' =V'_{\varepsilon,1}
 \end{equation*}
 because $\beta\alpha(y)-y \in \ker(\a)=V'$. Hence $\text{im}(\iota+\varepsilon \b\theta) \subseteq V'_{\varepsilon,1}$ and $V'_{\varepsilon,2}\subseteq 
 V'_{\varepsilon,1}$.  But $\dim V'_{\varepsilon,2}=\dim V'_{\varepsilon,1}$  so  $V'_{\varepsilon,2}=V'_{\varepsilon,1}$.  
\end{enumerate}



\def\cprime{$'$}
\providecommand{\bysame}{\leavevmode\hbox to3em{\hrulefill}\thinspace}
\providecommand{\MR}{\relax\ifhmode\unskip\space\fi MR }
\providecommand{\MRhref}[2]{%
  \href{http://www.ams.org/mathscinet-getitem?mr=#1}{#2}
}
\providecommand{\href}[2]{#2}

\end{document}